\newtheorem{theorem}{Theorem}[section]
\newtheorem{lemma}[theorem]{Lemma}
\newtheorem{assumption}[theorem]{Assumption}
\newtheorem{remark}[theorem]{Remark}
\numberwithin{equation}{section}
\newcommand{\RomanNumeralCaps}[1]
    {\MakeUppercase{\romannumeral #1}}
\author{Albert S. Berahas \and Raghu Bollapragada \and  Jiahao Shi}
\date{}
\newcommand{\papertitle}{Modified Line Search Sequential Quadratic Methods for Equality-Constrained Optimization with Unified Global and Local Convergence Guarantees}
\newcommand{\paperauthora}{Albert S. Berahas}
\newcommand{\paperauthoraaffiliation}{Department of Industrial and Operations Engineering, University of Michigan}
\newcommand{\paperauthorb}{Raghu Bollapragada}
\newcommand{\paperauthorbaffiliation}{Operations Research Program, University of Texas at Austin}
\newcommand{\paperauthorc}{Jiahao Shi}
\title{\papertitle}
\author{\paperauthora\footnotemark[2] \footnotemark[1]
   \and \paperauthorb\footnotemark[3]
   \and \paperauthorc\footnotemark[2]\ }
\begin{document}

\maketitle

\renewcommand{\thefootnote}{\fnsymbol{footnote}}
\footnotetext[2]{\paperauthoraaffiliation. (\url{\paperauthoraemail}, \url{\paperauthorcemail})}
\footnotetext[3]{\paperauthorbaffiliation. (\url{\paperauthorbemail})}
% \footnotetext[3]{\paperauthorcaffiliation. (\url{\paperauthorcemail})}
\footnotetext[1]{Corresponding author.}
\renewcommand{\thefootnote}{\arabic{footnote}}

%%%%%%%%%%%%%%%%%%%%%%%%%%%%
% Abstract
%%%%%%%%%%%%%%%%%%%%%%%%%%%%
\begin{abstract}{
In this paper, we propose a method that has foundations in the line search sequential quadratic programming paradigm for solving general nonlinear equality constrained optimization problems. The method employs a carefully designed modified line search strategy that utilizes second-order information of both the objective and constraint functions, as required, to mitigate the Maratos effect. Contrary to classical line search sequential quadratic programming methods, our proposed method is endowed with global convergence and local superlinear convergence guarantees. Moreover, we extend the method and analysis to the setting in which the constraint functions are deterministic but the objective function is stochastic or can be represented as a finite-sum. We also design and implement a practical inexact matrix-free variant of the method. Finally, numerical results illustrate the efficiency and efficacy of the method.}
\end{abstract}

\maketitle

%\tableofcontents

%%%%%%%%%%%%%%%%%%%%%%%%%%%%%%%%%%%%%%%%
%%%%%%%%%%%%%%%%%%%%%%%%%%%%%%%%%%%%%%%%

%\input{abstract.tex}

% \js{Comments about abstract
% \begin{itemize}
% %     \item We should be careful about "a carefully designed modified sufficient decrease strategy since we can increase the merit function", and having "sufficient decrease" seems too technical in the abstract. \asb{Great point. Maybe we can say ``a carefully designed modified line search strategy''. What do you think?} \js{I would probably say ``The method has
% % foundations in the line search sequential quadratic programming paradigm, employs a carefully designed modified line search strategy 
% % utilizing
% % second-order information of both the objective and constraint functions''. } \asb{Looks good to me. Can we add the word ``and'' between strategy and utilizing?} \js{Sure} \asb{Great, can you make the change? Thank you.}
%     \item Should we mention the word "Marato's effect" in the abstract? I think we should. \asb{Sure. Can you add this and then we can make edits?} 
%\asb{@Raghu, what do you think about having Maratos in the Abstract?}
% \end{itemize}
% }

%% Notation changes

%%%%%%%%%%%%%%%%%%%%%%%%%%%%%%%%%%%%%%%%
%%%%%%%%%%%%%%%%%%%%%%%%%%%%%%%%%%%%%%%%

\section{Introduction}

In this paper, we propose a %second-order 
sequential quadratic programming (SQP) algorithm for solving equality constrained optimization problems of the form
\begin{align}
    \min_{x \in \mathbb{R}^n} \;f(x) \quad \text{ s.t. } c(x) = 0,
\label{problem.deterministic}
\end{align}
where $f: \mathbb{R}^n \rightarrow \mathbb{R}$ and $c: \mathbb{R}^n \rightarrow \mathbb{R}^m$ are twice continuously differentiable nonlinear functions. Problems of this structure arise in a plethora of science and engineering applications; see \cite{rao2019engineering,deb2012optimization} and the references therein. Our proposed algorithm has foundations in the line search SQP paradigm, utilizes second-order information of %both 
the objective and constraint functions, and employs a carefully designed modified line search strategy. 
The %We %Specifically, we propose, 
%analyze and implement a line search SQP 
method is endowed with both global convergence and local fast (superlinear) convergence guarantees. This is contrary to classical line search (and other) SQP methods that assume proximity of the initial iterate to the solution and a unit step size to prove analogous results; see e.g., ~\cite{han1976superlinearly,powell1978convergence,palomares1976superlinearly}.

Moreover, we extend the modified line search SQP methodology to the equality constrained stochastic setting, i.e., \begin{align}
    \min_{x \in \mathbb{R}^n} \; f(x) = \mathbb{E}[F(x,\omega)] \quad \text{ s.t. }  c(x) = 0,
\label{problem.expected_risk}
\end{align}
where $\omega$ is a random variable with probability space $(\Omega, \mathcal{F}, P)$ that represents the stochasticity in the problem, $F: \mathbb{R}^n \times \Omega \rightarrow \mathbb{R}$, and $\mathbb{E}[\cdot]$ is the expectation taken with respect to the distribution of $\omega$. In this problem setting, it is assumed that one has access to exact values of the constraint function and its derivatives, but only stochastic estimates of the objective function and its derivatives. Such problems arise in numerous %important 
application areas including machine learning \cite{marquez2017imposing, roy2018geometry}, optimal power flow \cite{ wood2013power, vrakopoulou2014stochastic}, statistical estimation \cite{geyer1991constrained, chatterjee2016constrained}, portfolio optimization \cite{uryasev2013stochastic, ziemba2014stochastic}, PDE-constrained optimization \cite{rees2010optimal,biegler2003large}, and simulation-based optimization \cite{andradottir1998simulation, fu1994optimization}. As a special case, and since the distribution of $\omega$ is often unknown in practice, we also consider the empirical risk variant \cite{vapnik1991principles} of \eqref{problem.expected_risk}, i.e., 
\begin{align}
    \min_{x \in \mathbb{R}^n}  \; f(x) =  \frac{1}{N} \sum_{i=1}^N f_i(x) = \frac{1}{N} \sum_{i=1}^N F(x,\omega_i) \quad \text{ s.t. } c(x) = 0,
\label{problem.empirical_risk}
\end{align}
where %$f: \mathbb{R}^n \rightarrow \mathbb{R}$ and $c: \mathbb{R}^n \rightarrow \mathbb{R}^m$ are twice continuously differentiable, 
$N$ denotes the number of samples, and $f_i: \mathbb{R}^n \rightarrow \mathbb{R}$ (for $i \in \{1,2,\dots,N\}$) denote the component functions that are twice continuously differentiable, $\omega_i$ is a random realization generated from the distribution $P$, and $F: \mathbb{R}^n \times \Omega \rightarrow \mathbb{R}$. %\todos{where XXX} \todos{(not consistent with definitions in expected risk problem)} where $N$ denotes the number of samples. 
The proposed approaches for solving~\eqref{problem.expected_risk} and~\eqref{problem.empirical_risk} employ inexact objective function %(and derivative) 
information, and as such the %(line search) 
strategies need to be modified appropriately. Under reasonable assumptions on the stochasticity, % in the information employed and a mechanism for diminishing the approximation errors at appropriate rates, 
we show local fast (superlinear) convergence rates analogous to those in the deterministic setting for these %both 
problem classes.%,%of problems,
%~\eqref{problem.expected_risk} and \eqref{problem.empirical_risk}.

%%%%%%%%%%%%%%%%%%%%%%%%%%%%%%%%%%%%%%%%
%%%%%%%%%%%%%%%%%%%%%%%%%%%%%%%%%%%%%%%%
\subsection{Background and Related Work}

As a disclaimer, we note that this is not an extensive review of methods for solving problems of the form~\eqref{problem.deterministic}, \eqref{problem.expected_risk} and~\eqref{problem.empirical_risk}. Rather, it is a concise summary of the works most closely related to our proposed approach. We begin with a detour to the unconstrained (deterministic and stochastic) settings and the fast local convergence of Newton's method, and then discuss SQP methods for equality constrained deterministic and stochastic optimization.

In the  unconstrained  setting, it is well established that Newton's method, under reasonable assumptions, is globally convergent and has fast local convergence~\cite{polak1971computational,NoceWrig06}. This requires a globalization mechanism (e.g., line search) that controls the length of the step when the iterates are far from stationarity, and results in an adaptive algorithm with eventual fast local quadratic convergence. In the unconstrained stochastic setting, several second-order methods have been proposed; see e.g., \cite{berahas2020investigation,roosta2019sub,bollapragada2019exact,erdogdu2015convergence,pilanci2017newton}. We focus on \cite{bollapragada2019exact,roosta2019sub}. In \cite{bollapragada2019exact}, a subsampled Newton method is analyzed, and under appropriate sampling (gradient and Hessian) conditions, global linear and local superlinear convergence guarantees are established in expectation. In \cite{roosta2019sub}, the same method is explored and high probability convergence guarantees are derived. We note that for both approaches (and all other stochastic Newton methods listed above) the fast local results are derived under a proximity to the optimal solution assumption and under the assumption that the unit step size is employed. Our goal is to derive analogous results for the equality constrained setting without the last two assumptions.

Classical deterministic second-order SQP methods have proven robust and efficient in practice across many applications~\cite{schittkowski2012nonlinear,gill2005snopt}, and are endowed with local superlinear convergence guarantees; see e.g., \cite{wilson1963simplicial,han1976superlinearly,powell1978convergence,palomares1976superlinearly}. To establish such fast local guarantees, the results critically rely on a proximity assumption between the starting point and the optimal solution and on the assumption that the unit step size is employed. The former assumption depends on problem specific, often unknown, quantities, and employing the unit step size far from the solution can lead to undesirable behavior. To alleviate these issues and to globalize SQP methods, several techniques have been proposed; see e.g., \cite{berahas2021sequential,han1977globally,powell2006fast,vardi1985trust,byrd1987trust}. That said, unlike the unconstrained setting for which (as mentioned above) globalized versions of Newton's method have provable ``two-phase'' behavior (global convergence and local fast convergence) \cite[Theorem 6.2.31]{polak1971computational}, in the equality constrained setting such results cannot be proven due to the Maratos effect \cite[Section 15.5]{NoceWrig06}. In fact, there exist simple counter-examples for which the conditions required for local fast convergence are never guaranteed to be satisfied for classical globalized SQP methods~\cite[Chapter \RomanNumeralCaps{3}.5]{maratos1978exact}. 
Several remedies have been proposed to mitigate these issues; see e.g.,~\cite{schittkowski1982nonlinear,chamberlain1982watchdog,fukushima1986successive} %\cite{maratos1978exact,schittkowski1982nonlinear,gill2017stabilized,powell1986recursive,fukushima1986successive,chamberlain1982watchdog,fletcher2006second,coleman1982nonlinear,mayne1982surperlinearly,bonnans1992avoiding} 
 and  Section~\ref{sec.gap} for more details. On a high level, these approaches either change the algorithm, modify the SQP subproblem, 
 allow for nonmonotonicity or restrict the problem class considered. In this work, %we propose an %\todos{simple} 
%algorithmic approach that utilizes %second-order information and 
we propose a second-order SQP method with a %carefully 
modified line search condition that allows for both global and fast local convergence guarantees without the unit step size and proximity assumptions.

Several methods have been proposed for solving constrained stochastic optimization problems of the form \eqref{problem.expected_risk} and \eqref{problem.empirical_risk}; see e.g., \cite{berahas2021sequential, berahas2022adaptive,berahas2022accelerating,curtis2021inexact,curtis2023sequential,curtis2023worst,berahas2021stochastic,na2023adaptive,na2023inequality,oztoprak2023constrained,fang2022fully,berahas2023sequential}. While stochastic penalty methods have been proposed for this problem class, we focus here on SQP methods due to their demonstrated empirical superiority in both the deterministic \cite{han1977globally,han1979exact,powell2006fast} and stochastic \cite{berahas2021sequential,na2023adaptive} settings. The proposed SQP-type methods are adaptive (line search, trust region or other), and utilize, insofar as possible, well-established techniques from the deterministic setting. That said, with the exception of~\cite{na2022asymptotic,na2023adaptive}, all methods are essentially first-order methods. We should note that the presentation of most of the algorithms is general and allows for second-order information to be utilized, however, the analyses (even in \cite{na2022asymptotic, na2023adaptive}) do not explicitly account for second-order information and no local fast results are established, and numerical experiments consider only first-order variants. Our goal is to develop a practical stochastic SQP method endowed with local superlinear convergence rates.%, while in~\cite{na2022asymptotic,na2023adaptive} the authors focus on global convergence guarantees. 
% There has been a surge of interest in recent years on constrained stochastic optimization problems of the form \eqref{problem.expected_risk} (and \eqref{problem.empirical_risk}); see e.g., \cite{berahas2021sequential, berahas2022adaptive,berahas2022accelerating,curtis2021inexact,curtis2023sequential,curtis2023worst,berahas2021stochastic,na2023adaptive,na2023inequality,oztoprak2023constrained,fang2022fully,berahas2023sequential}. 

%%%%%%%%%%%%%%%%%%%%%%%%%%%%%%%%%%%%%%%%
%%%%%%%%%%%%%%%%%%%%%%%%%%%%%%%%%%%%%%%%
\subsection{Summary of Contributions}
%The contributions of this work can be summarized as follows. 
\begin{itemize}[leftmargin=0.5cm]
    \item We develop, analyze, and implement a second-order line search SQP method with a novel modified line search condition. The method is designed to solve problems of the form \eqref{problem.deterministic}, and is extended to tackle stochastic problems. Our algorithm leverages the global convergence properties of classical line search SQP methods, and employs a carefully modified line search condition, that incorporates curvature information of both objective and constraint functions, as appropriate and required to mitigate the Maratos effect. The condition that triggers the modified line search is adaptive and does not require knowledge of unknown quantities.
    \item To set the stage for our main theoretical results, we derive local convergence guarantees for the SQP method under the proximity and unit step size assumptions. We include this section for completeness and to compare with existing results in the literature, but most importantly to discuss the sampling requirements. Specifically, we utilize adaptive sampling strategies within the SQP methodology (applicable to both problems \eqref{problem.expected_risk} and \eqref{problem.empirical_risk}) and, under reasonable assumptions, show that locally the primal-dual iterates converge to a first-order stationary point at a superlinear rate. These results are natural extensions of the local convergence results of existing deterministic SQP methodologies.
    \item Our main theoretical contributions are as follows. For problems \eqref{problem.deterministic} and \eqref{problem.empirical_risk}, we prove that the iterates generated by the proposed modified line search SQP method, from any starting point, converge to a first-order stationary point (global convergence) and, moreover, eventually, the unit step size is always accepted and the algorithm achieves a superlinear convergence rate asymptotically (local convergence). Our methodology is simple and adaptive, makes adjustments as needed and detected over the course of the optimization, and has provable fast local convergence guarantees, without requiring knowledge of problem specific parameters or the proximity and unit step size assumptions.
    \item To tackle large-scale problems, % (both $n$ and $m$), 
    we propose a practical inexact matrix-free variant of the modified line search SQP method that employs the minimum residual (MINRES) method \cite{choi2011minres} as a subroutine to compute a step. %solve the Newton-SQP linear systems. 
    The method reduces the computational and storage costs and allows for inexact subproblem %linear system 
    solutions. Under reasonable conditions and  a sufficient number of MINRES iterations, the method enjoys an asymptotic linear rate of convergence with constant one half.
    \item Finally, we empirically demonstrate that our proposed method is competitive with a classical SQP method and other adaptations on the CUTEst test problems~\cite{bongartz1995cute}. In the stochastic setting, we compare the performance of our proposed method to a stochastic SQP methodology with an adaptive step size selection scheme~\cite{curtis2023sequential} on constrained binary classification logistic regression problems using the LIBSVM datasets~\cite{chang2011libsvm}. We illustrate the efficiency and robustness of the practical variant.
\end{itemize}

%%%%%%%%%%%%%%%%%%%%%%%%%%%%%%%%%%%%%%%%
%%%%%%%%%%%%%%%%%%%%%%%%%%%%%%%%%%%%%%%%
\subsection{Paper Organization}
We conclude this section by introducing some notation. % used throughout the paper. 
In Section \ref{sec.gap}, we present a discrepancy in global and local convergence guarantees of line search SQP methods due to the Maratos effect, and examine various remedies. General assumptions and preliminaries are given in Section~\ref{sec.assum_notation}. Our proposed modified line search SQP method is presented in Section~\ref{sec.ourmethod}. In Section~\ref{sec.superlinear}, we present local convergence guarantees. The main theoretical results of the proposed methodology are given in Section~\ref{sec.analysis}. In Section~\ref{sec.inexact}, we introduce and analyze a practical inexact variant for the large scale setting. Numerical results (deterministic and stochastic settings) are presented in Section~\ref{sec.numerical}, and concluding remarks are given in~Section~\ref{sec.remark}.

%%%%%%%%%%%%%%%%%%%%%%%%
%%%%%%%%%%%%%%%%%%%%%%%%
\subsection{Notation}\label{sec.notation}

Let $\mathbb{R}$ denote the set of real numbers, $\mathbb{R}^n$ denote the set of $n$-dimensional real vectors, $\mathbb{R}^{m \times n}$ denote the set of $m$-by-$n$-dimensional real matrices, and $\mathbb{N}:=\{0,1,2, \ldots\}$ denote the set of natural numbers. Unless specified, all norms $\| \cdot \|$ are $2$-norms. 
Let $f_k := f (x_k)$, $g_k := \nabla f (x_k)$, and $H_k := \nabla^2 f(x_k)$ denote the value, gradient and Hessian of the objective function, respectively, evaluated at $x_k$. Let $c_k := c(x_k)$, and %let 
$J_k: = \nabla c(x_k)^T$ denote the Jacobian matrix, evaluated at $x_k$.  
% \asb{We refer to the range space of $J_k^T$ as $ \Range\left(J_k^T\right)$ and refer to the null space of $J_k$ as $\Null\left(J_k\right)$.} 
Let $y \in \mathbb{R}^m$ denote the dual variables, and $w = [x^T \;\ y^T ]^T \in \mathbb{R}^{n+m}$ %denote 
a pair of primal-dual variables. 
Let $\mathcal{L}(x,y):= f(x) + y^T c(x)$ denote the Lagrangian function and $W(x,y) \in \mathbb{R}^{n \times n}$ denote the Hessian of the Lagrangian function, where $W_k := W(x_k,y_k) :=  H(x_k) + \sum_{i=1}^m  y_{k, i} \nabla^2 c_i(x_k)$.  
% \asb{Let $\phi : \mathbb{R}^n\times \mathbb{R}_{>0} \rightarrow \mathbb{R}$ denote the $\ell_1$-merit function, defined as $\phi(x,\tau):=\tau f(x) + \| c(x)\|_1$, where $\tau \in \mathbb{R}_{>0}$ is the merit parameter.}
Throughout the paper we use overbars to denote approximations. %approximated quantities. %When considering problems \eqref{problem.expected_risk} or \eqref{problem.empirical_risk}, only estimates of the objective function and its associated derivatives are available.
%% OLD VERSION!
% We use sets $S_k^f, S_k^g, S_k^H \subseteq  \{\omega_1,\omega_2,\dots\}$ to denote the samples %indices 
% used to compute function, gradient, and Hessian estimates, respectively. These samples $(\omega_i)$ are drawn at random from the distribution $P$ or uniformly from the set $\{\omega_1,\omega_2,\dots , \omega_N\}$ in the context of problems \eqref{problem.expected_risk} or \eqref{problem.empirical_risk}, respectively.The approximations based on these sets are
% \begin{align}
%  \bar f_k =  \tfrac{1}{\left|S_{k}^f\right|} \sum_{i \in S_{k}^f} f_{i}\left(x_{k}\right),  \quad 
% \bar g_k = \tfrac{1}{\left|S_{k}^g\right|} \sum_{i \in S_{k}^g} \nabla f_{i}\left(x_{k}\right),  \quad  \bar H_k = \tfrac{1}{\left|S_{k}^H\right|} \sum_{i \in S_{k}^H} \nabla^{2} f_{i}\left(x_{k}\right), \label{eq.Hessian.estimate}
% \end{align}
% where $f_i(x_k) = F(x_k, \omega_i)$, $\nabla f_i(x_k) = \nabla F(x_k, \omega_i)$, and $\nabla^2f_i(x_k) = \nabla^2F(x_k, \omega_i)$.
We use the sets $S_k^f, S_k^g, S_k^H \subseteq  \{\omega_1,\omega_2,\dots\}$ to denote the samples %indices 
used to compute function, gradient, and Hessian estimates, respectively,
\begin{align}
 \bar f_k =  \tfrac{1}{\left|S_{k}^f\right|} \sum_{i \in S_{k}^f} f_{i}\left(x_{k}\right),  \quad 
\bar g_k = \tfrac{1}{\left|S_{k}^g\right|} \sum_{i \in S_{k}^g} \nabla f_{i}\left(x_{k}\right),  \quad  \bar H_k = \tfrac{1}{\left|S_{k}^H\right|} \sum_{i \in S_{k}^H} \nabla^{2} f_{i}\left(x_{k}\right), \label{eq.Hessian.estimate}
\end{align}
where $f_i(x_k) = F(x_k, \omega_i)$, $\nabla f_i(x_k) = \nabla F(x_k, \omega_i)$, and $\nabla^2f_i(x_k) = \nabla^2F(x_k, \omega_i)$. These samples $(\omega_i)$ are drawn at random from the distribution $P$ or uniformly from the set $\{\omega_1,\omega_2,\dots , \omega_N\}$ in the context of problems \eqref{problem.expected_risk} or \eqref{problem.empirical_risk}, respectively.

% \newpage

%%%%%%%%%%%%%%%%%%%%%%%%%%%%%%%%%%%%%%%%
%%%%%%%%%%%%%%%%%%%%%%%%%%%%%%%%%%%%%%%%
\section{Analysis of Deterministic SQP Methods}%: A Gap}
\label{sec.gap}

In this section, we present the line search SQP methodology and its associated global and local convergence guarantees. %(1) XX, (2) XX, and (3) XX. 
The goal is to highlight an existing 
gap in the analysis of line search (and other) SQP methods for solving deterministic equality constrained problems of the form~\eqref{problem.deterministic} and to discuss several remedies. By gap in this setting we refer to the need for proximity and unit step size assumptions or modifications to classical and practical methodologies in order to prove strong global and fast local convergence guarantees. We make this explicit below. 

We begin by introducing the foundation of the classical %and popular 
SQP methodology\footnote{By \emph{classical SQP methodology} we refer to line search SQP methods; e.g., \cite{powell2006fast}, compute a step by solving~\eqref{problem.deterministic}, that utilize the $\ell_1$-merit function, update the merit parameter via dual variables estimates (or other mechanisms), and that impose a sufficient decrease condition. See e.g., \cite[Chapter 18, Algorithm 18.3]{NoceWrig06}} for equality constrained optimization~\cite[Chapter 18]{NoceWrig06}. %, which we present in full detail in Section~\ref{sec.ourmethod}. 
SQP methods are iterative and solve a sequence of subproblems at each iteration in order to compute a step and update the iterate. Specifically, at each iteration $k \in \mathbb{N}$, 
\begin{align}
    x_{k+1} \gets x_k + \alpha_k d_k,
\label{eq.x_update_global}
\end{align}
where $\alpha_k \in \mathbb{R}_{>0}$ is the step size and $d_k \in \mathbb{R}^n$ is the search direction defined as the (possibly inexact) solution of the following SQP suproblem, 
\begin{align}
    \min _{d \in \mathbb{R}^n} \;f(x_k)+  \nabla f(x_k)^T d+\tfrac{1}{2} d^T W_k d \quad \text { s.t. }   c(x_k)+\nabla c(x_k)^{T} d=0, 
\label{eq.SQP_determistic}
\end{align}
where $W_k \in \mathbb{R}^{n \times n}$ is a matrix that captures curvature information of the objective and constraint functions. To the best of our knowledge, the first SQP method was proposed in~\cite{wilson1963simplicial}, with $W_k = \nabla^2 \mathcal{L}(x_k,y_k)$, where $\mathcal{L}(x_k,y_k) = f(x_k) + y_k^Tc(x_k)$ is the Lagrangian function and $y_k \in \mathbb{R}^m$ are the Lagrange multipliers. This method, as well as variants for which $W_k$ is some economical and sufficiently accurate approximation of the Hessian of the Lagrangian, e.g., \cite{han1976superlinearly,powell1978convergence,palomares1976superlinearly}, are endowed with 
% Approximate variants of this method where $W_k$ was set as some economical and sufficiently accurate approximation of the Hessian of the Lagrangian \cite{han1976superlinearly,powell1978convergence,palomares1976superlinearly}. All aforementioned methods are endowed with 
local superlinear convergence guarantees under the assumptions that the unit step size is employed, i.e., $\alpha_k =1$ in \eqref{eq.x_update_global}, and that the starting point $(x_0,y_0)$ is in the neighborhood of a first-order stationary point $(x^*,y^*)$, i.e., points for which 
\begin{align}\label{eq.first_order_stat}
    \nabla_{x}  \mathcal{L}(x^*,y^*) = 0 \quad \text{and} \quad c(x^*) = 0. 
\end{align} 
Unfortunately, the neighborhood is defined by problem-specific unknown parameters, and employing a unit step size far from the solution may lead to undesirable behavior. 

To overcome the two limitations and ensure global convergence, i.e., convergence to a first-order stationary point from any initial starting point, two main algorithmic paradigms have prevailed, line search \cite{han1977globally,powell2006fast} and trust region \cite{vardi1985trust,byrd1987trust}. While these approaches ensure convergence from any starting point, local fast convergence can only be proven under the same proximity and unit step size conditions mentioned above.  Consider a detour to the unconstrained setting. The iterate update form in this setting, \eqref{eq.x_update_global}--\eqref{eq.SQP_determistic}, results in Newton steps. If the classical Armijo sufficient decrease line search condition with the initial trial step size set to unity is employed, it is well established that under reasonable assumptions after sufficiently many iterations, the line search condition will be satisfied with the unit step size~\cite{polak1971computational}. This is known as the ``two-phase'' property of  Newton's method. Perhaps surprisingly, this property does not extend to the general nonlinear equality constrained setting, and as such, there exists a gap between the   globalizing phase and the fast local phase for classical SQP methods. That is, for some problems, the unit step size ($\alpha_k = 1$, in \eqref{eq.x_update_global}) does not satisfy the sufficient decrease condition imposed by classical line search SQP methods irrespective of the proximity of the primal-dual iterate to a first-order stationary point. An example of such a problem is given in \cite{maratos1978exact},
\begin{equation}
 \min \; z_1^2 + z_2^2 \quad
    \text{ s.t. }  \left(z_1+1\right)^2+z_2^2-4 = 0,
\label{eq.counterex}
\end{equation} 
for  $(z_1, z_2) \in \{ \left(z_1+1\right)^2+z_2^2= 4, z_1 \in (-1,1) \}$. An %The 
intuitive explanation for the discrepancy %, even for simple problems such as the one presented above, 
is that unlike the unconstrained setting it is not always possible to simultaneously reduce both the constraint violation and the objective function value along a given search direction, regardless of its magnitude.  
% \todos{unlike the unconstrained setting, it is not always possible to simultaneously reduce both constraint violation and the objective function value along a given search direction, regardless of its magnitude. } %\todos{@Jiahao, I used your text below. I want an intuitive and simple explanation. }
%\js{when the iterate approaches to the boundary of constraint set, the merit function does not decrease sufficiently large compared to the stationarity measure. This unfavorable phenomenon is called Maratos effect in the literature.} \todos{VERSION 2: a step that makes good progress toward a solution and keeps the constraints reasonably small can be rejected by the line search SQP algorithm. This unfavorable phenomenon is known as Maratos effect in the literature.} 
As a result, there are problems for which even steps that make good progress towards a solution and keep the constraint violation reasonably small can be rejected by the line search SQP algorithm. This unfavorable phenomenon is known as the Maratos effect \cite[Section 15.5]{NoceWrig06}.  
% \todos{We should note that this is theoretical and that in practice XXX.}

To mitigate this issue, a sufficient condition is derived in~\cite{maratos1978exact}, related to the curvature in the objective and constraint functions and a measure of progress, that allows for classical line search SQP methods to have provable ``two-phase'' guarantees. %That said, this condition restricts the general problem class. 
Rather than a remedy, the condition restricts the general problem class and highlights the gap in the global/local analysis of classical SQP methods. Our modified line search strategy draws inspiration from this condition.
%Rather than a remedy, the condition highlights the gap in the global/local analysis of classical SQP methods. %, and provides a counter-example and a sufficient condition. 
%Our modified line search conditions draws inspiration from the condition  in \cite{maratos1978exact}. 
Below, we discuss some algorithmic approaches that mitigate the issue and ensure, after a sufficiently large number of iterations, the unit step length is accepted. %Below, we discuss several approaches to mitigate the issue. Each of the proposed remedies ensures that, after a sufficiently large number of iterations, a unit step length is attained. 
This, in conjunction with the established global convergence, yields the asymptotic superlinear convergence rate.

\paragraph{\textbf{Modified Merit Function.}} One remedy is to modify the classical line search SQP method that utilizes the $\ell_1$ merit function and instead use the augmented Lagrangian function as the the merit function \cite{schittkowski1982nonlinear}. By modifying the merit function in this way, the constrained problem can be formulated as an unconstrained problem with $n+m$ variables and the analysis for the line search Newton's method, e.g., \cite[Theorem 6.2.31]{polak1971computational}, can be generalized to this setting. Several works have used similar strategies, e.g., Fletcher's exact penalty function \cite{powell1986recursive} and the (augmented) Lagrangian function \cite{chamberlain1982watchdog,gill2017stabilized}. %uses Lagrangian function, and \cite{gill2017stabilized} uses the augmented Lagrangian function. 
Modifying the merit function is a reasonable approach to fill the gap between local and global convergence analysis. However, the issue of these merit functions is their sensitivity to the merit parameter updating scheme. %(parameters are adaptive over the course of the optimization), and have proven inferior to the simple $\ell_1$ merit function in practice~\todos{\cite{}}.
% One remedy, that rectifies the above challenges and allows for the unit step size asymptotic result, is to modify the classical line SQP method that utilizes the $\ell_1$ merit function and instead use the augmented Lagrangian function as the the merit function \cite{schittkowski1982nonlinear}. By modifying the merit function, the constrained problem can be formulated as an unconstrained problem with $n+m$ variables and the analysis for the line search Newton's method as in  \cite[Theorem 6.2.31]{polak1971computational} can be generalized. Several works have used similar strategies, e.g., \cite{powell1986recursive} uses Fletcher's exact penalty function, \cite{chamberlain1982watchdog} uses Lagrangian function, and \cite{gill2017stabilized} uses the augmented Lagrangian function. Modifying the merit function is a reasonable approach to fill the gap between local and global convergence analysis. However, the issue of these merit functions is their sensitivity to the updating scheme (parameters are adaptive over the course of the optimization), and have proven inferior to the simple $\ell_1$ merit function in practice~\todos{find citations \cite{}}.

\paragraph{\textbf{Modified Subproblem and Update Rule.}} 
%Since constraint violation is the dominant term in the \todos{merit function when the Maratos effect occurs}, 
One can also modify the constraint approximation in the SQP subproblem \eqref{eq.SQP_determistic} %to decrease  constraint violation 
as a correction%to correct for this
; see e.g., \cite{coleman1982nonlinear,fukushima1986successive,mayne1982surperlinearly}. Specifically, after computing the solution to \eqref{eq.SQP_determistic}, an auxiliary subproblem is solved for which the affine approximation to the constraints %in \eqref{eq.SQP_determistic} 
is replaced by the %following 
quadratic approximation, $c_i(x_k)+\nabla c_i(x_k)^{T} d + \frac12 d^T \nabla^2 c_i(x_k) d =0$, for $i=\{1,\dots,m\}$. The algorithm then searches along an arc defined by a combination of the solution of the original and auxilliary subproblems. %, until sufficient decrease in the merit function is achieved. 
Both line search  \cite{coleman1982nonlinear,fukushima1986successive,mayne1982surperlinearly} and trust region  \cite{gabay1982reduced, fletcher2006second} SQP variants have been developed that utilize this strategy. 
% In \cite{coleman1982nonlinear,fukushima1986successive,mayne1982surperlinearly}, the authors utilize this strategy within line search SQP methods, and in \cite{gabay1982reduced, fletcher2006second}, the authors consider similar ideas with trust region SQP methods. 
While this reformulation guarantees that after sufficiently many iterations the unit step size is accepted, this does not come for free as the cost of solving the auxiliary subproblem (even approximately) can be higher than that of solving \eqref{eq.SQP_determistic}. 

\paragraph{\textbf{Watchdog and Other Methods.}} The watchdog method \cite{chamberlain1982watchdog}, a nonmonotone method that do not require or enforce sufficient decrease at every iteration and allows for occasional aggressive (unit step size) steps, presents another alternative. A drawback of this method is the possibly expensive (in terms of function evaluations) restart procedure that is required when %the 
aggressive steps fail. % to. % decrease the merit function. 
Another drawback is the sensitivity to algorithmic parameters such as number of consecutive aggressive steps. %\asb{Other approaches have utilized nonmontone strategies within the line search~\cite{bonnans1992avoiding} }
%An alternative approach~\cite{bonnans1992avoiding}, draws inspiration from nonmonotone methods for unconstrained optimization, by requiring sufficient decrease imposed on the maximum merit function value encountered in recent iterations. This methodology is also sensitive to algorothmic parameters and requires a second-order correction procedure, which diminishes its attractiveness. 
Other approaches have either used nonmonotone line search, trust-region or filter techniques to address the Maratos effect \cite{ulbrich2004superlinear,gu2011secant,chen2020penalty,chen2022line}.
% While other studies \cite{ulbrich2004superlinear,gu2011secant,chen2020penalty,chen2022line} have used either trust-region or filter SQP methods to address the Maratos effect, we omit their detailed discussion. \todos{XXX}

\vspace{0.1cm}

Our approach and remedy, presented in Section~\ref{sec.ourmethod}, is algorithmic. We maintain the same subproblem, merit function and merit parameter update, and carefully modify the sufficient decrease condition as required, and derive asymptotic local superlinear and global convergence guarantees,  %\todos{asymptotic superlinear convergence rate alongside a global convergence guarantee. @Jiahao can you add a very short sentence? Thank you.}, 
i.e., we design and analyze a %second-order 
line search SQP method for general equality constraints endowed with the ``two-phase'' behavior of Newton's method. In contrast to preceding approaches, %where either the problem class is restricted or algorithmic adjustments, such as modifying the subproblem or employing a nonmonotone line procedure, are required, 
our %proposed 
approach strives to preserve the classical line search SQP methodology to the extent possible and %. Our approach 
only deviates from the classical method when the Maratos effect is detected. In %, and in 
such cases, %. In such cases, 
a modified line search condition %sufficient decrease condition %novel, carefully designed line search condition 
is employed to produce more suitable step sizes and guide the iterates away from undesired regions, while maintaining global convergence guarantees and adding fast local convergence. 

We conclude this section with some empirical results on two handpicked examples that showcase the differences between the methods. We compare the classical line search SQP method (\textbf{SQP-L1}) \cite{han1977globally}, the \textbf{2nd-corr}, \textbf{Watchdog}, \textbf{SQP-AugLag} alternatives (discussed above), and our proposed method (\textbf{Our method}). Figure \ref{fig.counterexample} illustrates the performance of the methods on the counter-example \eqref{eq.counterex}, starting from $(z_1, z_2) = (\sqrt{2}-1,\sqrt{2})$, a point that is unfavorable for the \textbf{SQP-L1} method. As expected, \textbf{SQP-L1} produces small step sizes from this starting point, and takes several iterations to reach a point where the unit step size is eventually accepted. All other remedies, including our proposed method, despite traversing different trajectories, accept the unit step size from the second iteration onwards and converge much faster. Figure \ref{fig.ex2} illustrates the performance of the methods on a constrained version of the Rosenbrock problem ($c(z) = (z_1 + 2)^2 +(z_2 - 1)^2 - 9 $), with starting point $(z_1, z_2) = (-1.1,1)$. Notably, the \textbf{SQP-L1} and \textbf{SQP-AugLag} struggle on this problem, and fail to make progress. On the other hand, \textbf{2nd-corr}, \textbf{Watchdog} and \textbf{Our method} accept the unit step size for the majority of the iterations, traverse different trajectories and exhibit fast convergence. The \textbf{2nd-corr} method performs particularly well on this problem, this is not surprising given the constraint is quadratic. Overall, our approach appears to be competitive with contemporary approaches.

\begin{figure}[H]
     \centering
     \begin{subfigure}[b]{1\textwidth}
         \centering
         \includegraphics[width=0.23\textwidth]{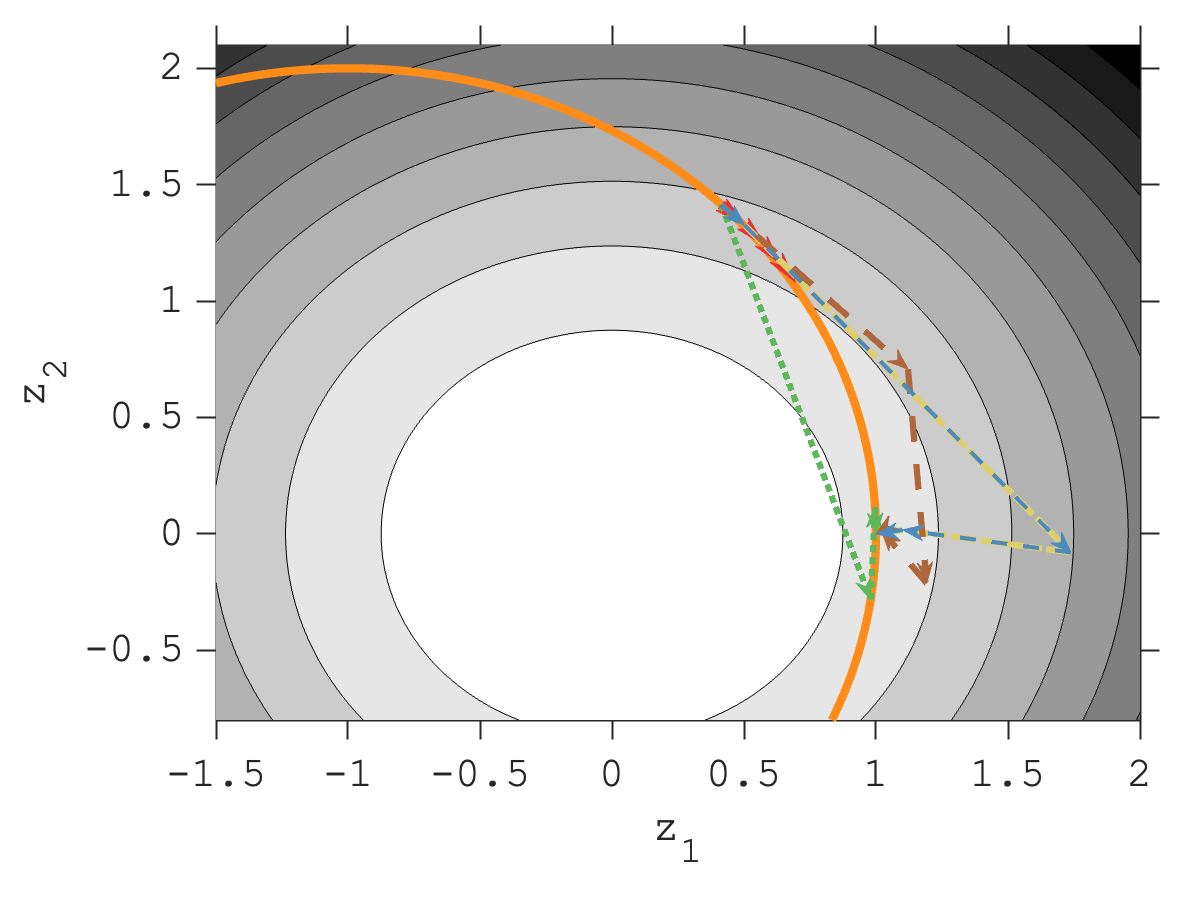}
    \includegraphics[width=0.23\textwidth,clip=true,trim=10 180 50 200]{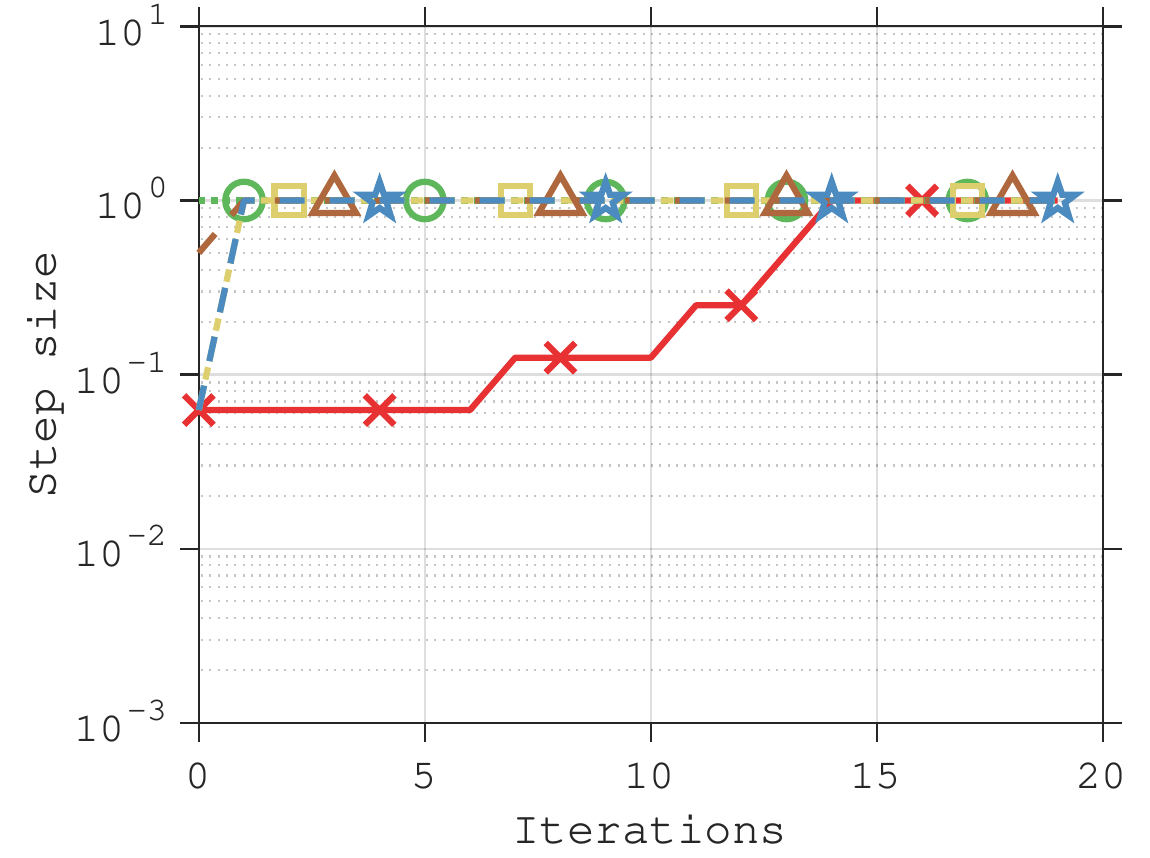}
    \includegraphics[width=0.23\textwidth,clip=true,trim=10 180 50 200]{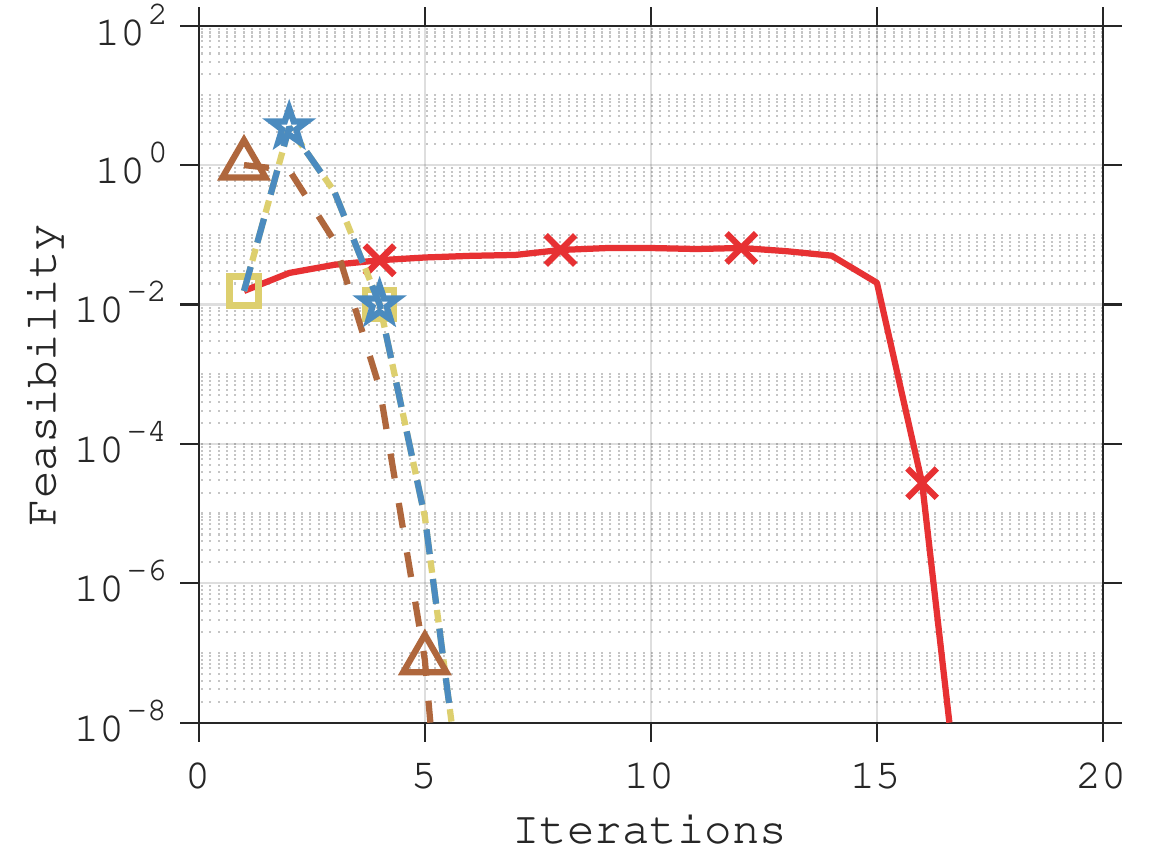} 
    \includegraphics[width=0.23\textwidth,clip=true,trim=10 180 50 200]{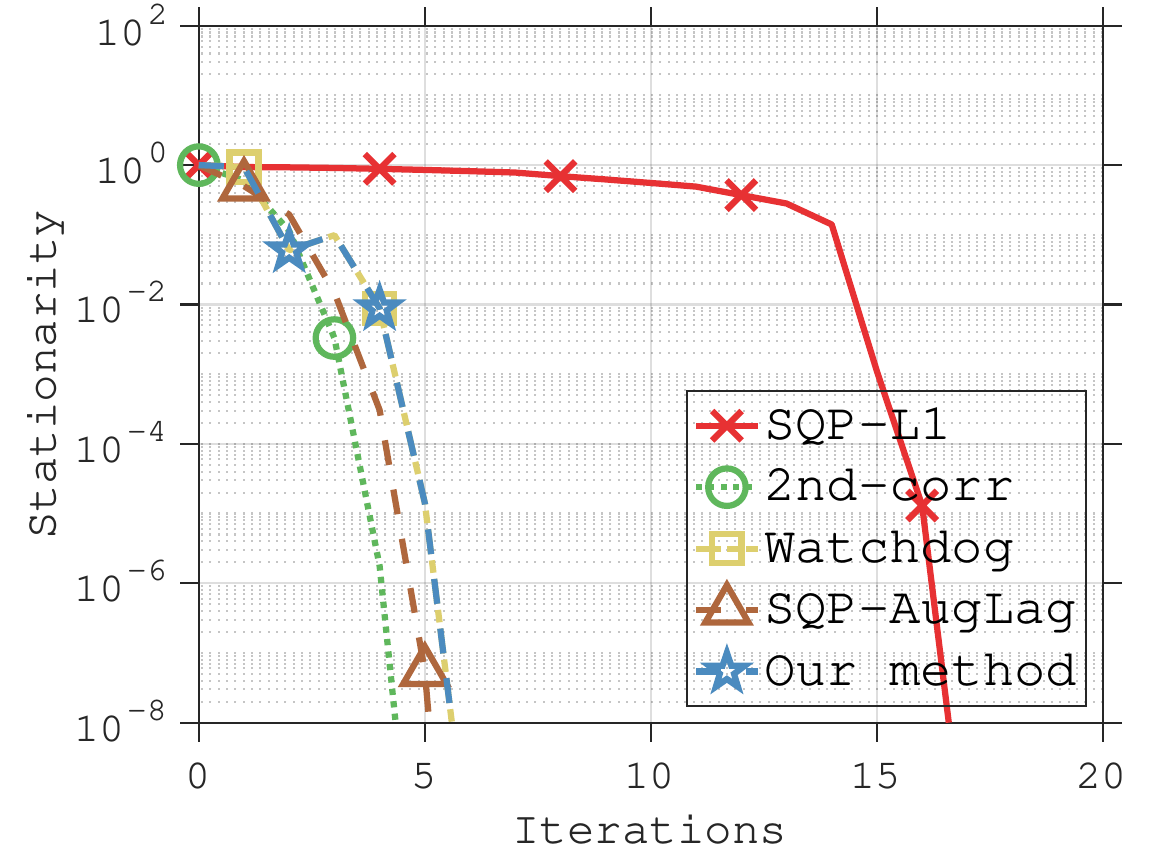}
         \caption{\textbf{Example 1:} Counter-example~\eqref{eq.counterex}, starting point $(\sqrt{2}-1,\sqrt{2})$.}
         \label{fig.counterexample}
     \end{subfigure}
     \hfill
     \begin{subfigure}[b]{1\textwidth}
         \centering
         \includegraphics[width=0.23\textwidth]{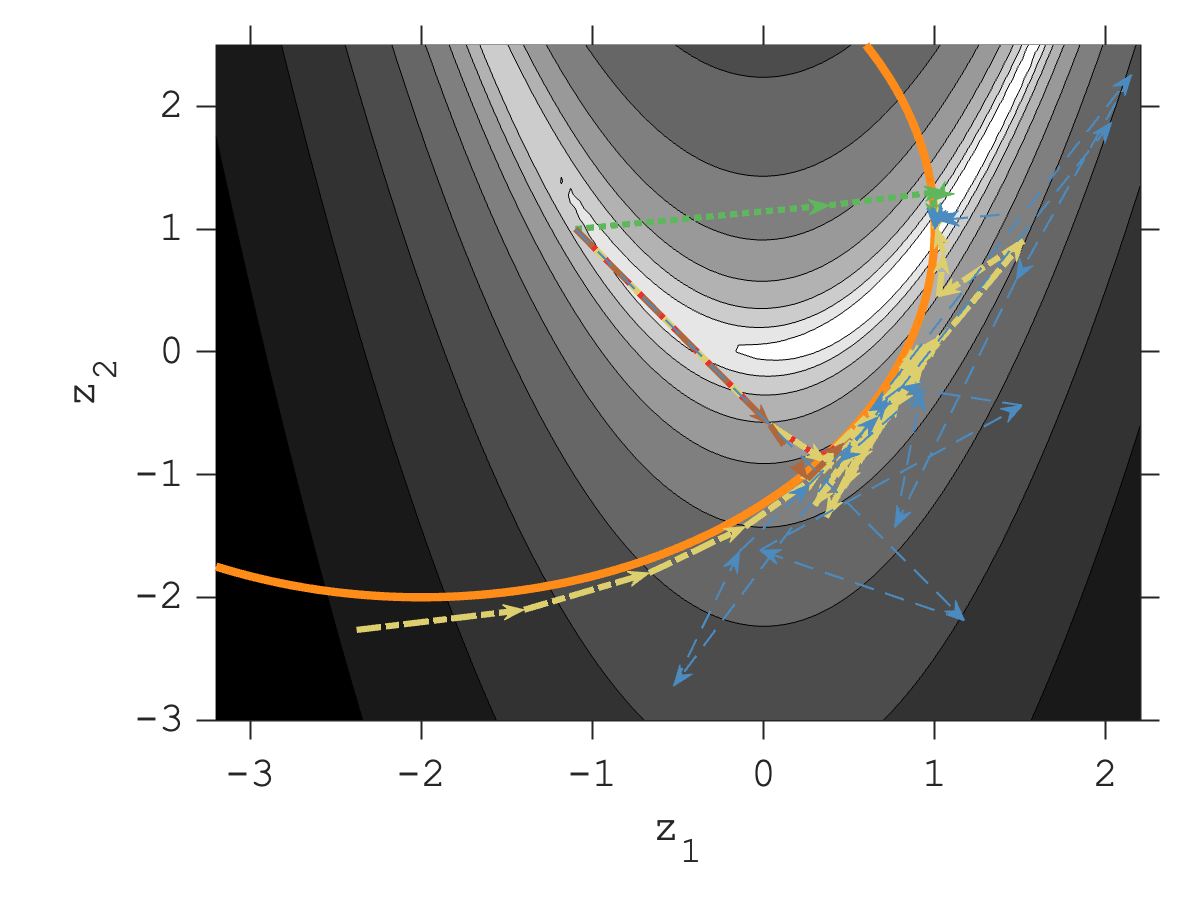}
        \includegraphics[width=0.23\textwidth,clip=true,trim=10 180 50 200]{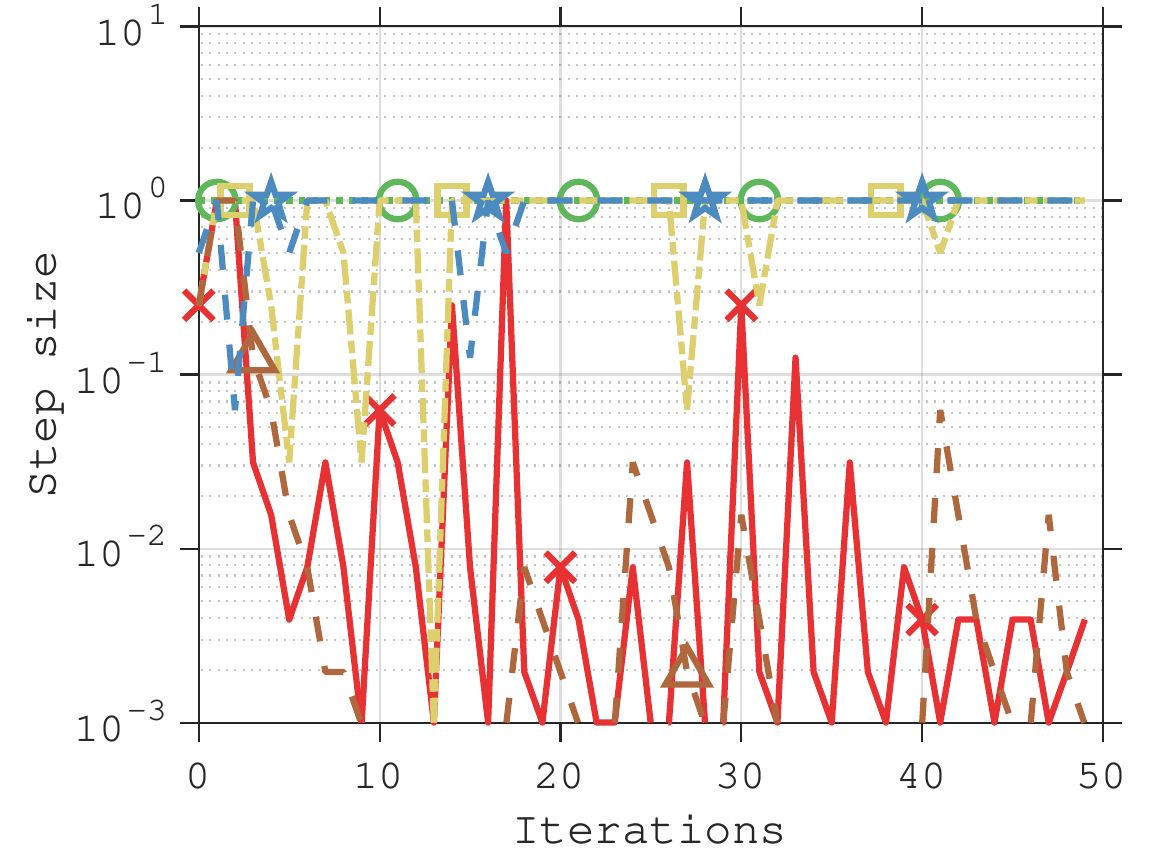}
        \includegraphics[width=0.23\textwidth,clip=true,trim=10 180 50 200]{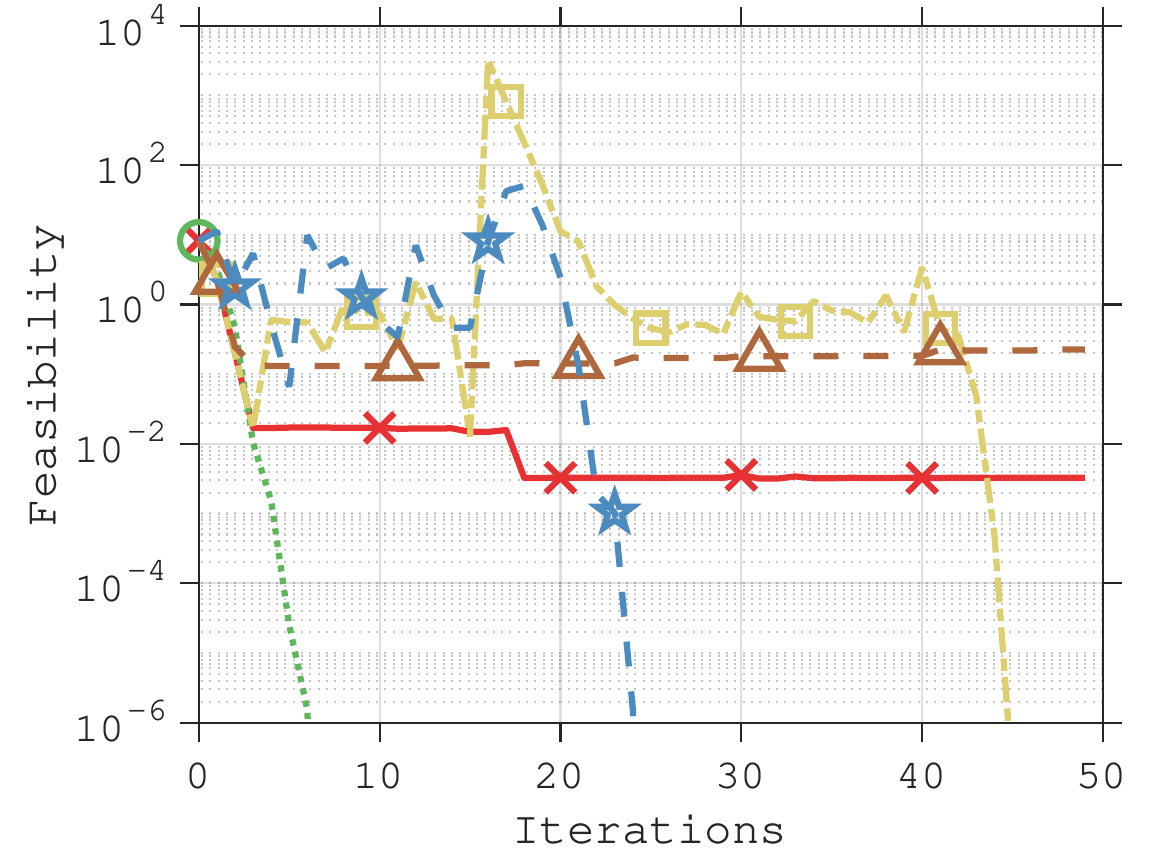}
        \includegraphics[width=0.23\textwidth,clip=true,trim=10 180 50 200]{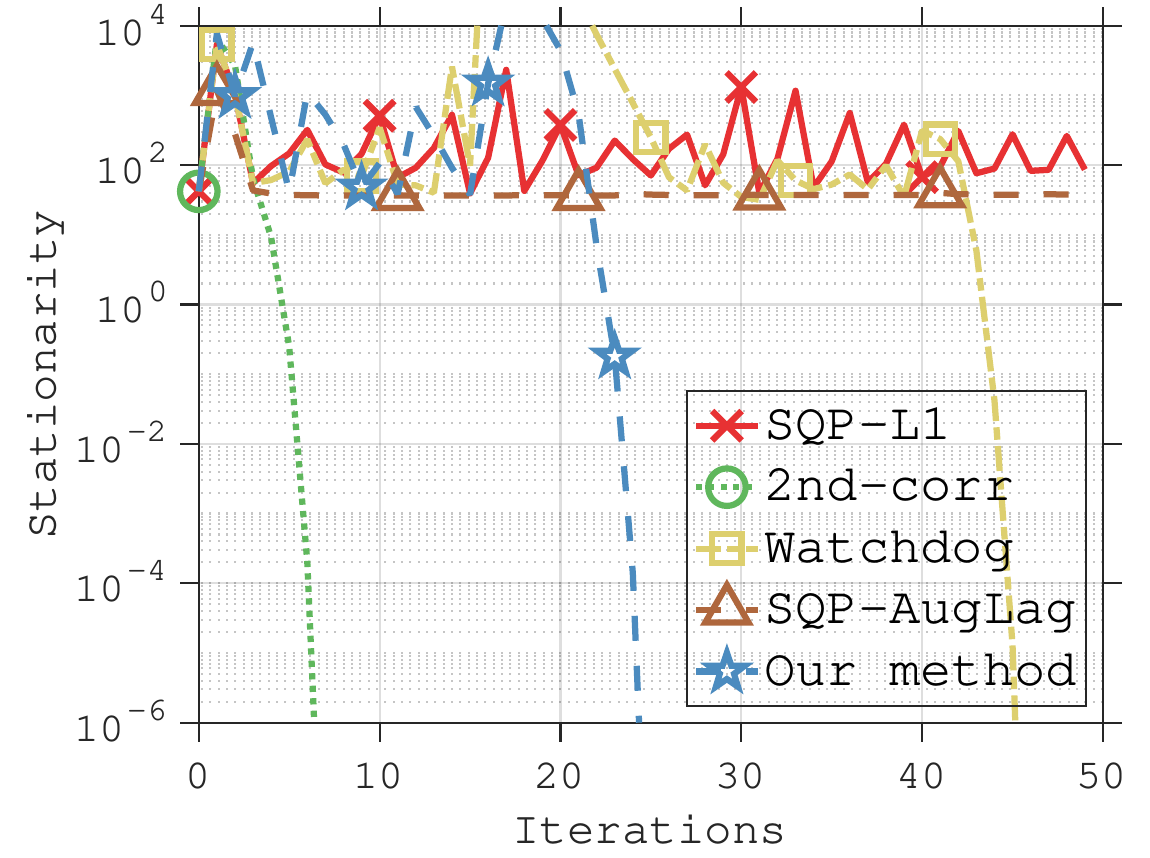}
         \caption{\textbf{Example 2:} $   \min_{z_1, z_2} \;   
(1 - z_1)^2 + 100(z_2 - z_1^2)^2 \text{ s.t. } (z_1 + 2)^2 + (z_2-1)^2 = 9 $, starting point starting point $(-1.1,1)$.  }
         \label{fig.ex2}
     \end{subfigure}
     \vspace{-0.8cm}
        \caption{Comparison of classical line search SQP method (\textbf{SQP-L1}), remedies to address Maratos effect (\textbf{2nd-corr}, \textbf{SQP-AugLag}, \textbf{Watchdog}) and our proposed SQP method (\textbf{Our method}). Trajectory plots (contours in grayscale, constraints in orange), step size, feasibility and stationarity. (Trajectories outside %the delineated 
        region %are 
        excluded.)}
        \label{fig.examples}
\end{figure}

\vspace{-0.75cm}

%%%%%%%%%%%%%%%%%%%%%%%%
%%%%%%%%%%%%%%%%%%%%%%%%
\section{General Assumptions and Preliminaries}\label{sec.assum_notation}

In this section, we first present the general assumptions that are applicable to all the problems considered in this paper and then state and discuss some problem-specific assumptions. 
Additional technical assumptions are stated, as required, in Sections~\ref{sec.superlinear} and \ref{sec.analysis}.
Following each set of assumptions, we introduce  lemmas %associated with the assumptions 
that are used in Sections~\ref{sec.superlinear} and \ref{sec.analysis}. %in the analysis specifically designed to align with the aforementioned assumptions.

We make the following assumption 
about problems~\eqref{problem.deterministic},~\eqref{problem.expected_risk}, and~\eqref{problem.empirical_risk}.

\begin{assumption}
  Let $\mathcal{X} \subseteq \mathbb{R}^{n}$ be an open convex set containing the iterates $\{x_k\}$ and first trial points %$\{x_k+\alpha_{k,j}d_k\}$
   $\{x_k + d_k\}$. %where $\alpha_{k,j}$ is the step size updated in line search and $d_k$ is the search direction computed by SQP method.  
   The objective function $f: \mathbb{R}^{n} \to \mathbb{R}$ is twice continuously differentiable and bounded below over $\mathcal{X}$ with $|f(x)| \le \kappa_f$, the gradients $g: \mathbb{R}^{n} \to \mathbb{R}^{n}$ are Lipschitz continuous with constant $L_1$ and bounded over $\mathcal{X}$ with $\|g(x)\|_2 \le \kappa_g$, and the Hessians $H: \mathbb{R}^{n} \to \mathbb{R}^{n\times n}$ are Lipschitz continuous with constant $L_2$ and bounded over $\mathcal{X}$ with $\|H(x)\|_2 \le \kappa_H$.
  The constraint function $c : \mathbb{R}^{n} \to \mathbb{R}^{m}$ (where $m \leq n$) is twice continuously differentiable and bounded over $\mathcal{X}$ with  $\|c(x)\|_2 \le \kappa_c$ and the associated %its 
  Jacobian matrix $\nabla c^T : \mathbb{R}^{n} \to \mathbb{R}^{m \times n}$ is Lipschitz continuous with constant $\Gamma_1$ and bounded over $\mathcal{X}$. The Hessian of $c(\cdot)$ is Lipschitz continuous with constant $\Gamma_2$. %are 
%  also bounded over $\mathcal{X}$% with $\|c(x)\|_2 \le C$ for $x \in \mathcal{X}$
%  , each gradient $\nabla c_i : \mathbb{R}^{n} \to \mathbb{R}^{n}$ is Lipschitz continuous with constant $\gamma_{i1}$ over $\mathcal{X}$ for all $i \in \{1,\dots,m\}$, each Hessian is Lipschitz continuous with constant $\gamma_{i2}$ over $\mathcal{X}$ for all $i \in \{1,\dots,m\}$, and define $\Gamma_1 = \sum_{i=1}^m \gamma_{i1}$,   $\Gamma_2 = \sum_{i=1}^m \gamma_{i2}$. 
Finally, we assume the singular values of $\nabla c^T$ are bounded away from zero over $\mathcal{X}$ such that $\| (J_k J_k^T)^{-1} J_k  \|_2 \le \kappa_{J^{\dag}}$. 
\label{ass.function}
\end{assumption}

We make a few remarks about Assumption~\ref{ass.function}. %First, we note that problems \eqref{problem.deterministic}, \eqref{problem.expected_risk} and \eqref{problem.empirical_risk} only differ in the objective function. 
Most of the statements in Assumption \ref{ass.function} constitute standard smoothness assumptions for deterministic equality constrained nonlinear optimization problems of the form~\eqref{problem.deterministic}; see e.g., \cite{byrd1987trust, wachter2005line}. With regards to \eqref{problem.expected_risk} and \eqref{problem.empirical_risk}, smoothness assumptions similar to those in Assumption~\ref{ass.function} are common in the unconstrained stochastic setting; see e.g., \cite{bottou2018optimization,sra2012optimization,shi2021sqp}. When considering problems %of the form 
\eqref{problem.expected_risk} and \eqref{problem.empirical_risk}, the iterates generated by the proposed algorithms are stochastic, and admittedly, it is not ideal to assume that the gradient and Hessian of the objective function are bounded over the set of iterates and trial iterates. That said, we believe this assumption is reasonable due to the fact that any well designed algorithm should be driven towards the deterministic feasible region. Moreover, we note that similar assumptions are common in the constrained stochastic optimization literature; see e.g., \cite[Assumption 3.2]{berahas2021sequential}, \cite[Assumption 1]{na2023adaptive}. Finally, the assumption on the boundedness of the singular values of the Jacobian of the constraints is satisfied by the common linear independence constraint 
qualification (LICQ) condition \cite{NoceWrig06}.

The next assumption pertains to the Lipschitz continuity of the gradient and Hessian of the Lagrangian function near a first-order primal-dual stationary point.

\begin{assumption}
For some neighborhood of $w^* = ({x^*}^T, {y^*}^T )^T$ with radius $r\in \mathbb{R}_{>0}$,  where $x^*, y^*$ is a  first-order primal-dual stationary point, i.e., points satisfying \eqref{eq.first_order_stat}, %with radius $r\in \mathbb{R}_{>0}$, 
the gradient and Hessian of the  Lagrangian function, %defined as 
$\mathcal{L}(x,y) = f(x) + y^T c(x) $, are  Lipschitz continuous with constants $L_{\nabla \mathcal{L}}$ and $L_W$, respectively.
% of the Lagrangian function $\mathcal{L}(x,y) = f(x) + y^T c(x) $ is Lipschitz continuous with constant $L_{\nabla \mathcal{L}}$, and the Hessian of the Lagrangian function is Lipschitz continuous with constant $L_W$. 
% In addition, suppose that 
% \begin{align*}
%     \left\| \begin{bmatrix}
%      {g}_k + J_k^T  y_k \\ c_k
% \end{bmatrix} \right\| \le \kappa_{\mathcal{L}}
% \end{align*}
 \label{ass.M.Lipschitz.det1}
\end{assumption}

Assumption \ref{ass.M.Lipschitz.det1} is a consequence of Assumption \ref{ass.function}, and can be regarded as an analogue to the Lipschitz continuity of the Hessian of the objective function assumption in the context of unconstrained optimization and Newton's method \cite[Theorem 3.5]{NoceWrig06}. Assumptions  \ref{ass.function}  and \ref{ass.M.Lipschitz.det1} are required for all problems \eqref{problem.deterministic},  \eqref{problem.expected_risk} and \eqref{problem.empirical_risk}.
% Assumption \ref{ass.M.Lipschitz.det1} is a  consequence of Assumption \ref{ass.function}. Assumption \ref{ass.M.Lipschitz.det1} can be regarded as an %condition 
% analogue %ous 
% to the Lipschitz continuity of the Hessian of the objective function assumption in the context of unconstrained optimization and Newton's method \cite[Theorem 3.5]{NoceWrig06}. %Note that 
% Assumptions  \ref{ass.function}  and \ref{ass.M.Lipschitz.det1} are required for all problems \eqref{problem.deterministic},  \eqref{problem.expected_risk}, and \eqref{problem.empirical_risk}. 

We require an additional assumption common in the stochastic optimization literature in order to establish theoretical results for the fully stochastic regime and problem \eqref{problem.expected_risk}; see e.g., \cite{bottou2018optimization,sra2012optimization}. Specifically, unbiased gradient estimates with bounded variance, and Hessians estimates with bounded variance. 
\begin{assumption}\label{ass.g.var.sto}
There exists $\sigma_g, \sigma_H  \in \mathbb{R}_{>0}$ such that for all $x \in \mathcal{X}$ and $i = 1,2,\dots$, $\mathbb{E}[\nabla f_i(x)] = \nabla f(x)$, $\operatorname{Var} (\nabla f_i(x)) \le \sigma_g^2$ %$\operatorname{tr}\left(\operatorname{Cov}\left(\nabla f_i(x)\right)\right)^2\le \sigma_g^2$ 
and $\|\mathbb{E}[(\nabla^2 f_i(x) - \nabla^2 f(x))^2 ]\|_2  \le \sigma_H^2$.
% \begin{align*}
% \operatorname{tr}\left(\operatorname{Cov}\left(\nabla f_i(x)\right)\right)^2\le \sigma_g^2, \quad \text{and} \quad
%     \|\mathbb{E}[(\nabla^2 f_i(x) - \nabla^2 f(x))^2 ]\|_2  \le \sigma_H^2.
% \end{align*}
\end{assumption}

Assumption~\ref{ass.g.var.sto} pertains only to the derivatives of the objective function of~\eqref{problem.expected_risk} and not the function values. To this end, we make the following disclaimer. In Section~\ref{sec.superlinear}, we present local convergence results (under proximity and unit step size assumptions) for all problem settings, however, our main theoretical results (Section~\ref{sec.analysis}) are only for %pertain only to 
problems~\eqref{problem.deterministic} and \eqref{problem.empirical_risk}. That said, at the expense of additional complexity in the algorithm and analysis, one could extend the results to the fully stochastic setting \eqref{problem.expected_risk} by utilizing approaches for adaptive step sizes developed for the unconstrained stochastic setting \cite{cartis2018global,berahas2021global,paquette2020stochastic}. This is not in the scope of this paper.

In the context of \eqref{problem.empirical_risk}, we replace Assumption~\ref{ass.g.var.sto} with the following assumption.
\begin{assumption}
\label{ass.individual.f.subsampled}
There exists $\kappa
_0, \mu_{0},\kappa_1, \mu_{1}, \kappa_2, \mu_{2} \in \mathbb{R}_{>0}$ such that
for all $x\in \mathcal{X}$ and $i = 1,\dots,N$, $| f_i(x) |  \le  \kappa_0   + \mu_{0} | f(x) |$, $\|\nabla f_i(x) \|_2  \le  \kappa_1   + \mu_{1} \| \nabla f(x) \|_2$, and $\|\nabla^2 f_i(x) \|_2  \le  \kappa_2    + \mu_{2}  \| \nabla^2 f(x) \|_2$.
% \begin{align*}
%     | f_i(x) | \! \le \! \kappa_0  \! +\!  \mu_{0} | f(x) |, \;\!\;\!
%         \|\nabla f_i(x) \|_2\!  \le\!  \kappa_1  \! + \! \mu_{1} \| \nabla f(x) \|_2, \;\!\;\!
%    \|\nabla^2 f_i(x) \|_2 \! \le \! \kappa_2    \! +\!  \mu_{2}  \| \nabla^2 f(x) \|_2.  
% \end{align*}
\end{assumption}

%The next lemma, which characterizes the difference between the approximate quantities and their actual values, stems directly from Assumption\asb{s \ref{ass.function} and} \ref{ass.individual.f.subsampled}.
The next lemma characterizes the errors in the approximations. %difference between the approximate quantities and their actual values.

{\allowdisplaybreaks
\begin{lemma}
\label{lemma.diff.g.g.bar}
    Suppose  Assumptions~\ref{ass.function} and \ref{ass.individual.f.subsampled} hold. Then, for $k \in \mathbb{N}$, 
     %$|f_k - \bar f_k|  \le  2 \left(\tfrac{N-\left|S_{k}^f\right|}{N}\right) (\kappa_0 + \mu_{0} \kappa_f):= \epsilon_{f_k}$, $\left\|g_k -\bar g_k\right\|_2 \le  2 \left(\tfrac{N-\left|S_{k}^g\right|}{N}\right)(\kappa_1 + \mu_{1} \kappa_g):= \epsilon_{g_k}$, and $\left\|H_k -\bar H_k\right\|_2  \leq 2 \left(\tfrac{N-\left|S_{k}^H\right|}{N}\right)(\kappa_2 + \mu_{2} \kappa_H):= \epsilon_{H_k}$.
\begin{align} 
    |f_k - \bar f_k|  &\le  2 \left(\tfrac{N-\left|S_{k}^f\right|}{N}\right) (\kappa_0 + \mu_{0} \kappa_f):= \epsilon_{f_k}, \label{eq.diff.f.f.bar} \\
    \left\|g_k -\bar g_k\right\|_2 &\le  2 \left(\tfrac{N-\left|S_{k}^g\right|}{N}\right)(\kappa_1 + \mu_{1} \kappa_g):= \epsilon_{g_k}, \label{eq.diff.g.g.bar}  \\   
    \left\|H_k -\bar H_k\right\|_2  &\leq 2 \left(\tfrac{N-\left|S_{k}^H\right|}{N}\right)(\kappa_2 + \mu_{2} \kappa_H):= \epsilon_{H_k}.  \label{eq.diff.H.H.bar}  
\end{align}
\end{lemma}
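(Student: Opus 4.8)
The plan is to prove all three inequalities by one and the same elementary argument, since \eqref{eq.diff.f.f.bar}, \eqref{eq.diff.g.g.bar}, and \eqref{eq.diff.H.H.bar} differ only in whether $|\cdot|$ or $\|\cdot\|_2$ is used and in which constants appear. I would therefore carry out the argument in detail for the function-value bound \eqref{eq.diff.f.f.bar} and then note that the other two follow verbatim.

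First, I would fix $k \in \mathbb{N}$ and set $s := |S_k^f|$, and record the pointwise bound on the component functions: combining Assumption~\ref{ass.individual.f.subsampled} with $|f(x_k)| \le \kappa_f$ from Assumption~\ref{ass.function} gives $|f_i(x_k)| \le \kappa_0 + \mu_0 |f(x_k)| \le \kappa_0 + \mu_0 \kappa_f$ for every $i$. Consequently, any convex combination of the $f_i(x_k)$ (in particular the in-sample average $\bar f_k$ and any out-of-sample average) also satisfies this bound. The case $s = N$ is immediate since then $\bar f_k = f_k$, so I assume $s < N$.

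Next, I would split the full average into its in-sample and out-of-sample parts and write it as the convex combination
\[
f_k = \frac{1}{N}\sum_{i=1}^N f_i(x_k) = \frac{s}{N}\,\bar f_k + \frac{N-s}{N}\cdot\frac{1}{N-s}\sum_{i \notin S_k^f} f_i(x_k),
\]
which is valid because $S_k^f$ is a size-$s$ subset of $\{1,\dots,N\}$. Subtracting $\bar f_k$ and simplifying yields the identity
\[
f_k - \bar f_k = \frac{N-s}{N}\left(\frac{1}{N-s}\sum_{i \notin S_k^f} f_i(x_k) - \bar f_k\right),
\]
and then the triangle inequality together with the uniform bound $\kappa_0 + \mu_0\kappa_f$ on each of the two averages gives $|f_k - \bar f_k| \le \frac{N-s}{N}\cdot 2(\kappa_0 + \mu_0\kappa_f) = \epsilon_{f_k}$, which is \eqref{eq.diff.f.f.bar}.

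Finally, I would observe that \eqref{eq.diff.g.g.bar} and \eqref{eq.diff.H.H.bar} are obtained by repeating this computation with $|\cdot|$ replaced by $\|\cdot\|_2$: for the gradient one uses $\|\nabla f_i(x_k)\|_2 \le \kappa_1 + \mu_1\|\nabla f(x_k)\|_2 \le \kappa_1 + \mu_1\kappa_g$ from Assumptions~\ref{ass.individual.f.subsampled} and \ref{ass.function}, and for the Hessian $\|\nabla^2 f_i(x_k)\|_2 \le \kappa_2 + \mu_2\|\nabla^2 f(x_k)\|_2 \le \kappa_2 + \mu_2\kappa_H$, along with the fact that the operator norm of a convex combination of matrices is bounded by the maximum of their operator norms. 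There is no genuine obstacle in this proof; the only point deserving care is performing the convex-combination decomposition so that the prefactor $(N - |S_k^f|)/N$ is retained, since a crude direct application of the triangle inequality to $f_k$ and $\bar f_k$ would only yield the constant $2(\kappa_0 + \mu_0\kappa_f)$ without the shrinking factor that the statement claims.
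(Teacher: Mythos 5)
Your proof is correct and follows essentially the same route as the paper's: both split the full average into its in-sample and out-of-sample contributions so that the prefactor $(N-|S_k^f|)/N$ appears, then apply the triangle inequality together with the uniform bound $\kappa_0+\mu_0\kappa_f$ (resp.\ $\kappa_1+\mu_1\kappa_g$, $\kappa_2+\mu_2\kappa_H$) coming from Assumptions~\ref{ass.individual.f.subsampled} and~\ref{ass.function}. Your convex-combination phrasing is just a repackaging of the paper's identity, and your closing remark about why a crude triangle inequality would lose the shrinking factor is exactly the right point of care.
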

\begin{proof} 
For brevity, we defer the proof to Appendix~\ref{app.lemma_2}. %The proof follows from \cite[eq.~(3.1)]{friedlander2012hybrid} and Assumption~\ref{ass.individual.f.subsampled}.   %For brevity, we defer the proof to Appendix~\ref{app.lemma_2}. 
\end{proof}}
% \begin{proof} The proof follows from \cite[eq.~(3.1)]{friedlander2012hybrid} and Assumption~\ref{ass.individual.f.subsampled}.   %For brevity, we defer the proof to Appendix~\ref{app.lemma_2}. 
% \end{proof}

Lemma~\ref{lemma.diff.g.g.bar} shows %Similar to Lemma~\ref{lemma.g.H.error.bound.expected}, we show 
an inverse relationship between the approximation errors and the sample sizes. %ratio of unused samples. 
In this setting, the sample sizes are capped at $N$, and when $\left|S_{k}^f\right| =\left|S_{k}^g\right| = \left|S_{k}^H\right| = N$, it follows that 
$\epsilon_{f_k} = \epsilon_{g_k} = \epsilon_{H_k} =0$.

% \newpage

%%%%%%%%%%%%%%%%%%%%%%%%
%%%%%%%%%%%%%%%%%%%%%%%%
\section{A Modified Line Search SQP Method}\label{sec.ourmethod}

In this section, we present in detail our proposed second-order modified line search SQP method. The goal is to develop a practical algorithm that exhibits the ``two-phase'' behavior of Newton's method. That is, global convergence to first-order critical points from any starting point, and fast superlinear local convergence after sufficiently many iterations without imposing proximity or unit step size conditions. For simplicity, we present a deterministic variant of the method for solving \eqref{problem.deterministic}, and discuss the necessary algorithmic changes required in order to tackle problems \eqref{problem.expected_risk} and \eqref{problem.empirical_risk}.

Our proposed modified line search SQP method is iterative \eqref{eq.x_update_global}. The search direction is computed by solving an SQP subproblem~\eqref{eq.SQP_determistic}, and then a step size is selected guided by a merit function and a sufficient decrease condition. At every iteration $k \in \mathbb{N}$, a primal-dual search direction $(d_k,\delta_k) \in \mathbb{R}^n \times \mathbb{R}^m$ is computed by solving (possibly inexactly) the subproblem given in~\eqref{eq.SQP_determistic}. Under Assumption~\ref{ass.function}, this is equivalent to solving the following linear system of equations 
\begin{equation}
M_k
\begin{bmatrix}
       {d}_k \\   \delta_k
\end{bmatrix} =   
\begin{bmatrix}
   W_k   & J_k^T \\ J_k & 0
\end{bmatrix}
\begin{bmatrix}
      {d}_k \\  \delta_k
\end{bmatrix} = - \begin{bmatrix}
     {g}_k + J_k^T  y_k \\ c_k
\end{bmatrix},
\label{eq.SQP.det}
\end{equation}
%(Note, a dual search direction $\delta_k \in \mathbb{R}^n$ is also computed.) 
where $W_k \in \mathbb{R}^{n \times n}$ is defined as the Hessian of the Lagrangian. We make the following standard assumptions about the matrices $M_k$ and $W_k$ that appear in~\eqref{eq.SQP.det}. 
\begin{assumption}
    The sequence $\{ W_k\}$ is bounded in norm by $\kappa_W \in \mathbb{R}_{>0}$ over $x \in \mathcal{X}$.  In addition, there exists a constant $\zeta \in \mathbb{R}_{>0}$ such that, for all $k \in \mathbb{N}$, %the matrix $H_k$ has the property that 
     $u^T  W_k u \geq \zeta \|u\|_2^2$ for all $u \in \mathbb{R}^{n}$ such that $J_k u = 0$. 
\label{ass.H}
\end{assumption}
\begin{assumption}
There are $\Lambda, \kappa_M \in \mathbb{R}_{>0}$ such that
$\| M_k^{-1} \|\le \Lambda$, $\| M_k \|\le \kappa_M$. 
\label{ass.M.inverse.bound}
\end{assumption}
Under Assumptions~\ref{ass.function} and \ref{ass.H}, the linear system \eqref{eq.SQP.det} has a unique solution. Moreover, in the deterministic setting and under these assumptions, if $d_k = 0$, then $(x_k,y_k)$ satisfy \eqref{eq.first_order_stat}. Assumption~\ref{ass.H} is common in the equality constrained optimization literature \cite{palomares1976superlinearly,powell1978convergence}. %, SQP and other classes of methods. 
The existence of $\Lambda \in \mathbb{R}_{>0}$ in Assumption \ref{ass.M.inverse.bound} is guaranteed by Assumptions \ref{ass.function} (singular values of  Jacobian bounded from zero) and \ref{ass.H}; see  \cite[Lemma 16.1]{wright1999numerical}. The existence of $\kappa_M \in \mathbb{R}_{>0}$ is guaranteed since both $ W_k$ and $J_k$ are bounded. 

Given a search direction $d_k \in \mathbb{R}^n$, the method proceeds to select a positive step size %in order 
to update the iterate \eqref{eq.x_update_global}. This selection is guided by a merit function. As is common with popular line search SQP methods, we consider the $\ell_1$-merit function% defined as follows,
\begin{align}
\phi(x, \tau) :=\tau f(x)+\|c(x)\|_1, 
    \label{eq.def.phi}
\end{align}
where $\phi: \mathbb{R}^n \times \mathbb{R}_{>0} \rightarrow \mathbb{R}$ and $\tau \in \mathbb{R}_{>0}$ is the merit parameter that is chosen adapatively as the optimization progresses and whose goal is to balance the two possibly competing goals of minimizing the objective function and satisfying the constraints. We make use of a local model of the merit function $l : \mathbb{R}^{n} \times \mathbb{R}_{>0} \times \mathbb{R}^{n} \times \mathbb{R}^{n} \to \mathbb{R}$, defined as 
\begin{equation}
  l(x,\tau,\nabla f(x),d) := \tau (f(x) + \nabla f(x)^T d) + \|c(x) + \nabla c(x)^Td\|_1.
\label{def.localmodel}
\end{equation}
In particular, a critical quantity in our algorithm is the reduction (across a step for which $c(x) + \nabla c(x)^Td = 0$) in the local model of the merit function, $\Delta l : \mathbb{R}^{n} \times \mathbb{R}_{>0} \times \mathbb{R}^{n} \times \mathbb{R}^{n} \to \mathbb{R}$, defined as
\begin{equation}
\begin{aligned}
        \Delta l(x,\tau,\nabla f(x),d) := l(x,\tau,\nabla f(x),0) - l(x,\tau,\nabla f(x),d) = -\tau \nabla f(x)^Td + \|c(x)\|_1.
\label{def.merit_model_reduction}
\end{aligned}
\end{equation}

%As is common in SQP methods, after
After computing a step and before updating the iterate (taking the step) the merit parameter $\tau_k$ is updated. %The updating mechanism is crucial to the success of SQP methods. 
We follow standard approaches \cite{berahas2021sequential,byrd2008inexact}. For some $\sigma, \epsilon_{\tau} \in (0,1)$, the merit parameter is updated as follows,
\begin{equation}
\label{eq.merittrial.1a}
  \tau_k^{trial} \gets \begin{cases} \infty & \text{if } g_k^Td_k + \max\{d_k^T  W_k d_k,0\} \leq 0 \\ \tfrac{(1 - \sigma)\| c_k\|_1}{g_k^Td_k + \max\{d_k^T  W_k  {d}_k,0\}} & \text{otherwise},
\end{cases}
\end{equation}
followed by
\begin{equation}
    \tau_k \gets 
    \begin{cases} \tau_{k-1} & \text{if $\bar\tau_{k-1} \leq  \tau_k^{trial}$} \\ (1-\epsilon_{\tau}) \tau_k^{trial} & \text{otherwise}.
\end{cases}
\label{eq.meritupdate.1a}
\end{equation} 
In both cases, $\tau_k \le  \tau_k^{trial}$ and %it follows that 
$\Delta l\left(x_k, \tau_k, {g}_k, d_k\right) \geq \tau_k \max \left\{d_k^T  W_k  d_k, 0\right\}+\sigma\left\|c_k\right\|_1$. This  
% , and, as shown in % it is shown in 
% \cite{berahas2021sequential} 
% \begin{align*}
%     \Delta l\left(x_k, \tau_k, {g}_k, d_k\right) \geq \tau_k \max \left\{d_k^T  W_k  d_k, 0\right\}+\sigma\left\|c_k\right\|_1.
% \end{align*}
%The above 
inequality is encouraging  as it guarantees $\Delta l\left(x_k, \tau_k, {g}_k, d_k\right)$ is always nonnegative, and is zero only when $\|c_k\|_1 = \|d_k\|  = 0$, points that satisfy first-order stationarity \eqref{eq.first_order_stat}.
%a significant reduction in the merit function when integrated with the line search strategy detailed subsequently}.

Our proposed algorithm computes a step size by ensuring sufficient decrease on the merit function akin to that used in classical line search SQP algorithms, e.g.,  \cite{han1977globally}, 
\begin{align}
     \phi(x_{k} + \alpha_k d_k,\tau_{k}) \le  \phi(x_{k},\tau_{k}) - \eta \alpha_k \Delta l(x_k,\tau_k,g_k,d_k), \label{eq.line_search_cond}
\end{align} 
where $\eta \in (0,1)$, 
until a certain condition is satisfied. The distinctive feature of our %proposed 
approach is a carefully crafted modified line search %sufficient decrease 
condition that is employed as required over the course of the optimization. % and triggered by an adaptive condition. 
The intuition of the condition and modification is to detect and correct for the Maratos effect.

As revealed by classical SQP analysis, under the proximity (iterates to the optimal solution) and unit step size assumptions, the method enjoys a superlinear rate of convergence; see e.g., \cite{han1976superlinearly,powell1978convergence,palomares1976superlinearly}. In Section \ref{sec.superlinear}, we prove a slightly different result and show that there exists a threshold $\gamma_d$ such that if we run the classical line search SQP method until $\|d_j\| \le \gamma_d$ for some iteration $j \in \mathbb{N}$ and simply set $ \alpha_k = 1$ for $k\ge j$, superlinear convergence is retained. However, $\gamma_d$ depends on unknown parameters (see Lemma \ref{theorem.superlinear.convergence.deterministic}), and it is therefore impractical to implement such rule. We propose two modifications to this naive and impractical approach. First, we consider an adaptive threshold parameter ($\gamma_k$ instead of $\gamma_d$), that is %. This parameter is 
updated (reduced), as necessary, over the course of the optimization. Second, since it is not known if $\gamma_k$ is sufficiently small, and employing a unit step size for all iterates for which $\|d_k\| \le \gamma_k$ may be too aggressive, we propose a carefully crafted sufficient decrease condition that incorporates second-order information of the objective and constraint functions, i.e.,
\begin{equation}
\begin{aligned} \label{eq.line_search_cond_modified}
    \phi(x_{k} + \alpha_k d_k,\tau_{k}) &\le  \phi(x_{k},\tau_{k}) - \eta \alpha_k \Delta l(x_k,\tau_k,g_k,d_k)  \\  
    & \quad  +  \frac12 \alpha_{k}^2 \tau_{k}  d_k^T  H_k  d_k + \frac{1}{2} \alpha_k^2  \sum_{i=1}^m \left| d_k^T \nabla^2 c_i(x)  d_k  \right|. 
\end{aligned}
\end{equation}

The final component of the algorithm  is the fact that \eqref{eq.line_search_cond_modified} is satisfied with $\alpha_k = 1$ when $\|d_k\|$ is sufficiently small (Lemma~\ref{lemma.dist.small.imply.alpha.1}). % (proven in Section~\ref{sec.analysis}).
As such, our proposed adaptive mechanism reduces $\gamma_k$ when \eqref{eq.line_search_cond_modified} is triggered 
to ensure that the threshold parameter is eventually, and as needed, sufficiently small. We note that this novel step size condition only needs to be to be triggered when, for small search directions, the classical line search fails to accept the unit step size. Overall, our proposed approach only deviates from the classical line search approach when the occurrence of the Maratos effect is detected.  The full algorithm is given in Algorithm~\ref{alg.SubsampledSQP.practical}.

\begin{algorithm}[ht]
  \caption{Modified Line Search SQP}
  \label{alg.SubsampledSQP.practical}
  \begin{algorithmic}[1]
    \Require $x_0 \in \mathbb{R}^{n}$; $y_0 \in \mathbb{R}^{m}$; $\tau_{-1} \in \mathbb{R}_{>0}$; $\gamma_0   \in \mathbb{R}_{>0}$; $ \nu_{\alpha}, \nu_{\gamma}, \epsilon_{\tau}, \sigma \in (0,1) $. 
    \For{\textbf{all} $k \in \mathbb{N}$}
    % \State Select $S_k^f, S_k^g, S_k^H \subseteq  \{1,2,\cdots, N\}$ and compute ${f}_k$, ${g}_k$, ${H}_k$ by (\ref{eq.Hessian.estimate}). \label{line.obtain.estimate}
    \State Compute 
    ${f}_k$, ${g}_k$, ${H}_k$.% via \eqref{eq.Hessian.estimate}. 
    \label{line.obtain.estimate}
	  \State Compute $({d}_k,{\delta}_k)$ as the solution of \eqref{eq.SQP.sto}.
      \State Set $\tau_k^{trial}$ and $\tau_k$ by \eqref{eq.merittrial.1a} and \eqref{eq.meritupdate.1a}. \label{line.merit.parameter}
      	\If{$\| d_k\|> \gamma_k $} \label{line.if5}  
      \State \textbf{until} \eqref{eq.line_search_cond} is achieved, set $\alpha_k = \nu_{\alpha} \alpha_k$. \label{line: d_large.classical}     
      \Else  \label{line.else}
   \If{ \eqref{eq.line_search_cond} is satisfied with $\alpha_k = 1$} set  $\alpha_k = 1$.
    \label{line.alpha.1}
     % \State  \todos{JIAHAO CHECK...}\label{line: d_small.unit}
      \Else \label{line.else10}
      \State \textbf{until} \eqref{eq.line_search_cond_modified} is achieved, set $\alpha_k = \nu_{\alpha} \alpha_k$. \label{line: d_small.modified}
    \State Set $\gamma_{k+1}  = \nu_{\gamma} \gamma_k$. \label{line: d_small.sigma.reduce} 
      \EndIf    
      \EndIf \label{line.end}
      \State Set $x_{k+1} \gets x_k + \alpha_k {d}_k$ and $ y_{k+1} \gets y_k +  \delta_k $. 
    \EndFor
  \end{algorithmic}
\end{algorithm}

\begin{remark}
    We make a few remarks about Algorithm~\ref{alg.SubsampledSQP.practical}. 
    \begin{itemize}[leftmargin=0.5cm]
        \item \textbf{Adaptive Condition (Line~\ref{line.if5}) and Behavior of $\gamma_k$:} The purpose of the condition in Line \ref{line.if5} is to ensure that $\|d_k\|$ is sufficiently small before any modifications are employed. The else condition is guaranteed to eventually be satisfied (see Lemma \ref{lemma.sigma.exist}) since $\gamma_k$ remains fixed while $\| d_k\| > \gamma_k$. When $\| d_k\| \le \gamma_k$, the method attempts to detect if the Maratos effect is present by checking whether $\alpha_k  = 1$ satisfies \eqref{eq.line_search_cond}. %This comes at the cost of one additional function evaluation at $x_k +  d_k$. 
        If $\alpha_k  = 1$ satisfies \eqref{eq.line_search_cond}, the classical line search condition is good enough and the step size is set to one. Otherwise, we either have Marato's effect or we are not yet sufficiently close to the optimal solution ($\gamma_k$ not sufficiently small). In this case, we consider the modified line search condition \eqref{eq.line_search_cond_modified}, and reduce $\gamma_k$.
        The sequence $\gamma_k$, which is non-increasing, is fundamental to our algorithm. This sequence reduces by a factor of $\nu_{\gamma}$ upon each decrement. Importantly, while $\gamma_k$ experiences a decrease at a linear rate at most, we prove that $\|{d}_k\|$ undergoes a decrease at a superlinear rate after a sufficient number of iterations (see Lemma~\ref{lemma.sigma.exist}), leading to the consistent engagement of Line \ref{line.else}. If $\alpha_k = 1$ does not satisfy \eqref{eq.line_search_cond} infinitely often, the algorithm ends up with $\gamma_k \to 0$.
        \item \textbf{Marato's Condition:} In \cite[Proposition 8]{maratos1978exact} it is mentioned that if 
\begin{align}
        \label{eq.Maratos.cond}
        \tau_{k} d_k^T H_k d_k + \sum_{i=1}^m \left| d_k^T \nabla^2 c_i(x) d_k \right| \le 2 \sigma_m \eta  \Delta l\left(x_k, \tau_k, {g}_k, d_k\right)  
\end{align}
        is satisfied for some $\sigma_m \in \mathbb{R}_{>0}$ when $\|d_k\|$ is sufficiently small, then the Maratos effect is not present. Thus, one option is to only modify the algorithm when \eqref{eq.Maratos.cond} is not satisfied. We deviate from this strategy in two ways. First, we directly use an adaptive condition on the search direction for simplicity. Second, we only apply the modification when $ \alpha_k = 1$ does not satisfy condition \eqref{eq.line_search_cond} to reduce the frequency of employment of the modified line search condition. 
        \item \textbf{Comparison to Classical Sufficient Decrease Condition:} Compared to classical sufficient decrease (on the merit function) conditions employed by SQP methods \eqref{eq.line_search_cond}, condition \eqref{eq.line_search_cond_modified} has two additional terms related to second-order information of the objective and constraint functions. As we show (see Lemma \ref{lemma.dist.small.imply.alpha.1}), $ \alpha_k = 1$ satisfies \eqref{eq.line_search_cond_modified} when $\| d_k\|$ is sufficiently small. Hence, we reduce $\gamma_k$ by a factor of $\nu_{\gamma}$ to ensure asymptotic unit step size acceptance.
        \item  \textbf{Comparison to Existing Remedies:} Similar to the Watchdog method \cite{chamberlain1982watchdog}, our algorithm adopts a nonmonotone line search %sufficient decrease 
        condition when $ \tau_{k} d_k^T H_k d_k + \sum_{i=1}^m \left| d_k^T \nabla^2 c_i(x) d_k \right|$ $\gg 0$. That said, the nonmonotonicity is controlled in our approach, and the step size is selected by condition \eqref{eq.line_search_cond_modified}, and thus is more conservative than the unit step size that is employed by the Watchdog method \cite{chamberlain1982watchdog}. %Alternatives, such as the second-order correction method \cite{fukushima1986successive}, employ more accurate local approximations of the constraint functions, at the cost of at least two possibly more expensive SQP subproblems per iteration.
        \item \textbf{Comparison to Adaptive Step Size Scheme:} Alternative step size schemes that do not require function evaluations have been proposed for the constrained stochastic setting \cite{berahas2021sequential,berahas2022accelerating}. Such schemes are particularly effective in the stochastic setting, however, we refrain from utilizing such approaches and retain a traditional line search approach for two main reasons. First, for these adaptive schemes, asymptotic unit step sizes can only be guaranteed for well-conditioned problems. Second, the necessity of the Lipschitz constants (or an estimate) in these algorithm poses challenges in the implementation since these constants are usually unknown.
        %\js{Firstly, whether asymptotic unit step length can be achieved is dependent of problem condition numbers. Secondly, the requirement for a local Lipschitz estimate scheme poses additional challenges.}
        \item \textbf{Parameters:} Algorithm \ref{alg.SubsampledSQP.practical} has several parameters. Most importantly, $\gamma_0$, the initial value of the threshold parameter. We recommend and set $\gamma_0 \gets \bar \gamma \| d_0\| $ for some $\bar \gamma  \in (0,1)$ to make sure that Line \ref{line.if5} is triggered in the first iteration. The other parameters ($ \tau_{-1}$, $\eta$ and $\nu_{\alpha}$) are universal to all methods considered, with the additional parameter $\nu_{\gamma}$, the reduction factor in the threshold parameter.
    \end{itemize} 
\end{remark}

\subsection{The Stochastic Setting}

The development of adaptive line search SQP methods for the stochastic and finite sum settings is not as straightforward. Several modifications are necessary to ensure convergence (global and fast local) as well as practicality. We focus on the finite sum setting \eqref{problem.empirical_risk} in this section. In this context, we follow the same algorithmic structure as Algorithm~\ref{alg.SubsampledSQP.practical} but replace the exact function (and derivatives) with their stochastic counterparts.

% \js{In summary, our proposed stochastic algorithm follows the same pipeline as Algorithm~\ref{alg.SubsampledSQP.practical} but replace $f_k$, $g_k$, $H_k$, $d_k$, $\delta_k$, $\alpha_k$ with their stochastic counterparts.} 

At every iteration $k \in \mathbb{N}$, we first select subsets of all the data  $S_k^f, S_k^g, S_k^H \subseteq  \{\omega_1,\omega_2,\dots , \omega_N\}$ %according to some sampling conditions 
(specific rules given in Section~\ref{sec.analysis}), compute the estimates of function value, gradient, and Hessian via \eqref{eq.Hessian.estimate} as  $\bar{f}_k$, $\bar{g}_k$, $\bar{H}_k$ (replace $f_k$, $g_k$, $H_k$ Line \ref{line.obtain.estimate} of Algorithm~\ref{alg.SubsampledSQP.practical}), and set $\bar W_k =\bar{H}_k + \sum_{i=1}^m {y}_{k,i} \nabla^2 c_i(x_k) $. Then, we obtain the  search direction $ (\bar d_k, \bar \delta_k) \in \mathbb{R}^n \times \mathbb{R}^m$ by solving a linear system of the form
\begin{equation}
\bar M_k
\begin{bmatrix}
     \bar  {d}_k \\  \bar \delta_k
\end{bmatrix} =   
\begin{bmatrix}
  \bar W_k   & J_k^T \\ J_k & 0
\end{bmatrix}
\begin{bmatrix}
     \bar {d}_k \\ \bar \delta_k
\end{bmatrix} = - \begin{bmatrix}
    \bar {g}_k + J_k^T  \bar{y}_k \\ c_k
\end{bmatrix}.
\label{eq.SQP.sto}
\end{equation}
To ensure the uniqueness of solution to \eqref{eq.SQP.sto} and other theoretical properties (similar to those discussed above for the deterministic), we make the following assumption.
%we make assumptions about $\bar M_k$ and $\bar W_k$ similar to Assumptions~\ref{ass.H} and \ref{ass.M.inverse.bound}. 

\begin{assumption}
Assumptions \ref{ass.H} and  \ref{ass.M.inverse.bound} hold (with the same constants) for the matrices $\bar W_k$  and $\bar M_k$.  
% There exists $\Lambda, \kappa_M \in \mathbb{R}_{>0}$ such that
% $\|\bar M_k^{-1} \|\le \Lambda$, $\|\bar M_k \|\le \kappa_M$. 
\label{ass.M.inverse.bound.subsampled}
\end{assumption}

Further, the update rule for the merit parameter ($\bar \tau_k$) in the finite sum setting is similar to \eqref{eq.merittrial.1a}--\eqref{eq.meritupdate.1a}, with approximations replacing all deterministic quantities. 
The classical and modified line search 
conditions \eqref{eq.line_search_cond} and  \eqref{eq.line_search_cond_modified} are relaxed as follows %isneeds to be changed into 
\begin{align}
    \bar \phi(x_{k} + \bar\alpha_k \bar d_k,\bar\tau_{k}) &\le  \bar \phi(x_{k},\bar\tau_{k}) - \eta \bar\alpha_k \Delta l(x_k,\bar\tau_k,\bar g_k, \bar d_k) + \epsilon_{A_k}, \label{eq.line_search_cond_stoch} \\
    \bar\phi(x_{k} + \bar \alpha_k \bar d_k,\bar \tau_{k}) &\le  \bar\phi(x_{k},\bar \tau_{k}) - \eta \bar\alpha_k \Delta l(x_k,\bar\tau_k,\bar g_k,\bar d_k) \notag \\  & \qquad  +  \tfrac12 \bar\alpha_{k}^2 \bar\tau_{k} \bar d_k^T  H_k \bar d_k + \tfrac{1}{2} \bar\alpha_k^2  \sum_{i=1}^m \left|\bar d_k^T \nabla^2 c_i(x) \bar d_k  \right| + \epsilon_{A_k}, \label{eq.line_search_cond_modified_stoch}
\end{align}
respectively, where $\bar \phi(x_{k},\bar\tau_{k}) = \bar \tau_k 
 \bar f_k + \|c_k\|_1$ and $\epsilon_{A_k} \in \mathbb{R}_{\geq 0}$ is a relaxation term added to account for the noise. The relaxation term is of paramount importance to both the theoretical analysis %of the algorithm 
 as well as the practical implementation. This term is proportional to the accuracy of the function and gradient approximations.
%Note that \eqref{eq.line_search_cond_stoch}, \eqref{eq.line_search_cond_modified_stoch} are different from \eqref{eq.line_search_cond}, \eqref{eq.line_search_cond_modified} in that it involves stochastic quantities and contains a correction term $\epsilon_{A_k}$ on the right hand side proportional to the accuracy of the approximated function value and approximated gradient. This correction provides a relaxation such that the sufficient decrease requirement is satisfied even for the approximated quantities $\bar \tau_k, \bar d_k, \bar f_k, \bar g_k$. The relaxed line search conditions \eqref{eq.line_search_cond_stoch}, \eqref{eq.line_search_cond_modified_stoch} may allow for a larger step size compared to the noise-free one. 
The explicit expression for $\epsilon_{A_k}$ is given in Section \ref{sec.analysis} (Lemma~\ref{lemma.sigma.exist}). Finally, the iterates are updated via
\begin{equation}
    x_{k+1} = x_{k} + \bar{\alpha}_k \bar d_k, \quad \text{and} \quad y_{k+1} = y_k +\bar  {\delta}_k.
    \label{eq.subsample.unit.stepsize.update}
\end{equation}

% \newpage

%%%%%%%%%%%%%%%%%%%%%%%%
%%%%%%%%%%%%%%%%%%%%%%%%
\section{Preliminary Local Convergence Guarantees}
\label{sec.superlinear}

Before we present the theoretical convergence guarantees for Algorithm~\ref{alg.SubsampledSQP.practical} (Section~\ref{sec.analysis}), in this section, we set the stage by formalizing the superlinear convergence results for a general SQP method under the proximity and unit step size assumptions. We consider both the deterministic and stochastic settings, %(\eqref{problem.deterministic}, \eqref{problem.expected_risk}, \eqref{problem.empirical_risk}), 
derive neighborhood conditions, and provide fast local convergence guarantees. In contrast to the conventional neighborhood proximity condition, which assumes $\|w_0 - w^*\|$ is sufficiently small \cite{palomares1976superlinearly,han1976superlinearly,powell1978convergence}, we propose an alternative explicit  condition based on the norm of the search direction $\|d_k\|$ (or $\|\bar d_k\|$). This modification is motivated by the fact that it is computable within an algorithm. %measurable 
%nature 
%of $\|\bar d_k\|$ within Algorithm \ref{alg.SubsampledSQP.practical}. 
%%In this section, w
We show that under reasonable assumptions, when $\|d_k\|$ (or $\|\bar d_k\|$) is below a certain threshold (depends on unknown problem-specific parameters) the conventional neighborhood condition is satisfied. In Section~\ref{sec.analysis}, we analyze the adaptive algorithm presented in Section~\ref{sec.ourmethod} that does not require knowledge of the unknown parameters.

We make the following assumption 
throughout this section. 
\begin{assumption} 
\label{ass.d.bound.imply.difference.det}
There exists $\kappa_d, \mu_M \in \mathbb{R}_{>0}$, such that 
for all $k \in \{j \in \mathbb{N}| \|d_j\| \le \kappa_d\}$, 
%$\|d_k\| \le \kappa_d$ 
it follows that $\tfrac{\mu_M}{2}  \|w_k - w^*\| \le \left\|\begin{bmatrix}
     g_k + J_k^T y_k \\ c_k
\end{bmatrix}\right\|$, where $d_k$ is the solution of %computed via 
\eqref{eq.SQP.det} and $w_k = (x_k^T,y_k^T)^T$ is generated by Algorithm \ref{alg.SubsampledSQP.practical}. %\todos{Change assumption....}
% There exists $\kappa_d, \mu_M \in \mathbb{R}_{>0}$ such that when $\|d_k\| \le \kappa_d$, where $d_k$ is computed via \eqref{eq.SQP.det}, it follows that $\tfrac{\mu_M}{2}  \|w_k - w^*\| \le \left\|\begin{bmatrix}
%      g_k + J_k^T y_k \\ c_k
% \end{bmatrix}\right\|$, where $w_k = (x_k^T,y_k^T)^T$ is generated by Algorithm \ref{alg.SubsampledSQP.practical}.
\end{assumption}
\begin{remark}
   %, when $\|\nabla \mathcal{L}(x,y)\|$ is sufficiently small.
   When $(x_k,y_k)$ satisfies $\|\nabla \mathcal{L}(x_k,y_k)\|$ $\le \Lambda^{-1} \kappa_d$, it follows that $\|d_k\|$  $\le \Lambda \|\nabla \mathcal{L}(x_k,y_k)\| \le \kappa_d$. Thus, Assumption \ref{ass.d.bound.imply.difference.det} is satisfied if $\mathcal{L}(x,y)$ is  $\tfrac{\mu_M}{2}$-strongly convex with respect to $w$. %\todos{RAGHU!!!}%under desired condition.
   Consequently, in this scenario, the iterates converge to a second-order stationary point.  We note that this assumption is not needed for all iterates; instead, it is required only %for iterates obtained 
   after running the algorithm for a sufficiently large number of iterations, such that the iterates enter a locally strongly convex regime. 
 \end{remark}

\subsection{Deterministic Problem~\eqref{problem.deterministic}}

%In this subsection, w
We present local analysis for the SQP method for~\eqref{problem.deterministic} first. %without proof. A more general result is proven in Theorem \ref{theorem.superlinear.convergence.stochastic}. % for \eqref{problem.empirical_risk}.
\begin{theorem}
\label{theorem.quadratic.det}
Suppose Assumptions~\ref{ass.function}, \ref{ass.M.Lipschitz.det1}, \ref{ass.H}, \ref{ass.M.inverse.bound}, and~\ref{ass.d.bound.imply.difference.det}  hold. Let $w_k = [x_k^T, y_k^T]^T$, %$w_k = \begin{bmatrix}
    %x_k \\ y_k
%\end{bmatrix}$, 
$x_{k+1} = x_{k} + d_k$ and $y_{k+1} = y_k + {\delta}_k$, where $(d_k,\delta_k)$ are computed via~\eqref{eq.SQP.det}. Then, for $k \in \mathbb{N}$, $\| w_{k+1} - w^* \| \le \tfrac{\Lambda L_W}{2}  \| w_{k} - w^* \|^2$. 
% \begin{align}
% \label{eq.quadratic.convergence.thm}
%     \| w_{k+1} - w^* \| \le \tfrac{\Lambda L_W}{2}  \| w_{k} - w^* \|^2.  
% \end{align}
If 
%Moreover, if 
the starting point satisfies  $\|d_0\| \le \min\left \{ \tfrac{\mu_M}{2 \kappa_M (1+   2\kappa_{J^{\dagger}} \kappa_W ) \Lambda L_W} %\mu_M/ (2 \kappa_M (1+   2\kappa_{J^{\dagger}} \kappa_W ) \Lambda L_W) 
, \kappa_d \right \}$, %$\|w_{0} - w^* \| \le \frac{1}{\Lambda L_W}$, 
then, $\|w_{k} - w^* \| \to  0$ at a Q-quadratic rate. 
\end{theorem}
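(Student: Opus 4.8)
The plan is to establish the one-step quadratic contraction first, and then bootstrap it into global convergence under the stated initialization condition. For the one-step bound, I would write the Newton-type SQP system \eqref{eq.SQP.det} as an exact-Newton iteration on the primal-dual map $F(w) := \begin{bmatrix} \nabla_x\mathcal{L}(x,y) \\ c(x)\end{bmatrix}$, whose Jacobian at $w_k$ is precisely $M_k$. Thus $w_{k+1} = w_k - M_k^{-1} F(w_k)$, and since $F(w^*) = 0$ by \eqref{eq.first_order_stat}, a standard Newton-error expansion gives
\begin{equation}
w_{k+1} - w^* = M_k^{-1}\left( F(w^*) - F(w_k) - M_k(w^* - w_k) \right),
\end{equation}
so that $\|w_{k+1} - w^*\| \le \|M_k^{-1}\| \cdot \|F(w^*) - F(w_k) - DF(w_k)(w^* - w_k)\|$. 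The first factor is bounded by $\Lambda$ via Assumption~\ref{ass.M.inverse.bound}, and the second factor is bounded by $\tfrac{L_W}{2}\|w_k - w^*\|^2$ using the Lipschitz continuity of $DF = $ (Hessian of the Lagrangian block structure) from Assumption~\ref{ass.M.Lipschitz.det1} together with the integral form of Taylor's theorem. This yields $\|w_{k+1} - w^*\| \le \tfrac{\Lambda L_W}{2}\|w_k - w^*\|^2$, the claimed Q-quadratic step bound.

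For the convergence conclusion, the obstacle is that the quadratic bound is only useful once we know $\|w_k - w^*\|$ is small enough that $\tfrac{\Lambda L_W}{2}\|w_k - w^*\| < 1$; but the hypothesis is phrased in terms of the computable quantity $\|d_0\|$, not $\|w_0 - w^*\|$. So the first real step is to translate the $\|d_0\|$ condition into a proximity condition. From \eqref{eq.SQP.det}, $\|d_0\| \le \|M_0^{-1}\| \cdot \|F(w_0)\| \le \Lambda \|F(w_0)\|$... that is the wrong direction; instead I need the reverse. I would argue: since $\|d_0\| \le \kappa_d$, Assumption~\ref{ass.d.bound.imply.difference.det} applies, giving $\tfrac{\mu_M}{2}\|w_0 - w^*\| \le \|F(w_0)\|$. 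Next I bound $\|F(w_0)\|$ in terms of $\|d_0\|$: the primal part of \eqref{eq.SQP.det} reads $W_0 d_0 + J_0^T\delta_0 = -(g_0 + J_0^T y_0)$ and the dual part $J_0 d_0 = -c_0$. Solving for $\delta_0$ using $(J_0 J_0^T)^{-1}J_0$ and Assumption~\ref{ass.function} gives $\|\delta_0\| \le \kappa_{J^\dagger}\|W_0 d_0 + (g_0 + J_0^T y_0)\|$; then assembling $F(w_0) = -M_0 \begin{bmatrix} d_0 \\ \delta_0\end{bmatrix}$ yields $\|F(w_0)\| \le \kappa_M\|(d_0,\delta_0)\| \le \kappa_M(1 + 2\kappa_{J^\dagger}\kappa_W)\|d_0\|$ (the factor accounting for the $\delta_0$ contribution and the $W_0$ term). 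Combining, $\|w_0 - w^*\| \le \tfrac{2\kappa_M(1+2\kappa_{J^\dagger}\kappa_W)}{\mu_M}\|d_0\|$, and the hypothesized bound on $\|d_0\|$ then forces $\tfrac{\Lambda L_W}{2}\|w_0 - w^*\| \le 1$, in fact strictly less, so $\|w_1 - w^*\| \le \|w_0 - w^*\|$.

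Finally I would close the induction: assuming $\tfrac{\Lambda L_W}{2}\|w_k - w^*\| =: \rho_k \le \rho_0 < 1$, the step bound gives $\rho_{k+1} \le \rho_k^2 \le \rho_0 \rho_k$, so $\rho_k \le \rho_0^{2^k}\to 0$ geometrically-in-the-exponent, hence $\|w_k - w^*\| \to 0$ Q-quadratically. One subtlety to flag: Assumption~\ref{ass.d.bound.imply.difference.det} (and Assumptions~\ref{ass.function}, \ref{ass.H}, \ref{ass.M.inverse.bound}) must hold along the whole iterate sequence, which requires knowing $x_k \in \mathcal{X}$ and $\|d_k\|\le \kappa_d$ for all $k$; since $\|w_k - w^*\|$ is monotonically decreasing from $w_0$, one should verify that the entire Newton sequence stays in the neighborhood where these hold — this is where the $\min\{\cdot,\kappa_d\}$ in the hypothesis is used, ensuring $\|d_k\|\le \kappa_d$ propagates. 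The main obstacle, then, is not the Newton analysis itself (routine) but carefully tracking the two-way comparison between $\|d_k\|$, $\|F(w_k)\|$, and $\|w_k - w^*\|$ so that the purely computable initialization condition is genuinely sufficient.
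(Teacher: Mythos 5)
Your proposal is correct and follows essentially the same route as the paper: the Newton-error expansion $w_{k+1}-w^* = M_k^{-1}(F(w^*)-F(w_k)-M_k(w^*-w_k))$ bounded via $\Lambda$ and $L_W$, the translation of the $\|d_0\|$ condition into $\|w_0-w^*\|\le \tfrac{1}{\Lambda L_W}$ using Assumption~\ref{ass.d.bound.imply.difference.det} together with $\|\delta_0\|\le 2\kappa_{J^\dagger}\kappa_W\|d_0\|$ and $\|F(w_0)\|\le\kappa_M\|(d_0,\delta_0)\|$, and the induction keeping $\tfrac{\Lambda L_W}{2}\|w_k-w^*\|\le\tfrac12$. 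The only minor difference is that your closing worry about propagating $\|d_k\|\le\kappa_d$ for all $k$ is unnecessary: the paper invokes Assumption~\ref{ass.d.bound.imply.difference.det} only at $k=0$ to initialize, after which the induction runs purely on $\|w_k-w^*\|$.
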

\begin{proof}
   Since a more general result is proven in Theorem \ref{theorem.superlinear.convergence.stochastic}, we defer the proof of this theorem to Appendix~\ref{appendix.quadratic.theorem}. 
\end{proof}
% \begin{proof}
%     The proof of this theorem can be found in Appendix~\ref{appendix.quadratic.theorem}.
% \end{proof}

Note that similar local quadratic convergence results for the deterministic SQP method are established in \cite[Theorem 15.2.1]{conn2000trust} and~\cite{NoceWrig06,wilson1963simplicial}. %The quadratic convergence rate is analogous to the local convergence rate of Newton's method in unconstrained problems.

\subsection{Stochastic Problem~\eqref{problem.expected_risk}}
\label{sec.superlinear.stochastic}

At each iteration  $k \in \mathbb{N}$,
given $S_k^g, S_k^H \subseteq  \{1,2,\dots\}$, 
we compute $(\bar d_k, \bar \delta_k)$ via \eqref{eq.SQP.sto} and update the iterates via \eqref{eq.subsample.unit.stepsize.update} with $\bar{\alpha}_k = 1$. %use the following update rule 
% \begin{equation}
%     x_{k+1} = x_{k} +\bar d_k, \quad \text{and} \quad y_{k+1} = y_k +\bar  {\delta}_k.
%     \label{eq.subsample.unit.stepsize.update}
% \end{equation}
Since the step size is predefined (unit), function estimates are not required. Lemma \ref{lemma.d.bound.imply.difference} ensures that when $\|\bar d_k\|$ is sufficiently small, the neighborhood condition is satisfied, and is important in establishing the superlinear convergence result.

%%%%%%%%%%%%%%%%%%%%%%%%%%%%%%%%%%%%

%%%%%%%%%%%%  Lemma 5.3  %%%%%%%%%%% 

%%%%%%%%%%%%%%%%%%%%%%%%%%%%%%%%%%%%

\begin{lemma}
\label{lemma.d.bound.imply.difference}
Suppose Assumptions 
\ref{ass.function}, \ref{ass.M.Lipschitz.det1},  \ref{ass.individual.f.subsampled}, %\ref{ass.H.subsampled}, 
\ref{ass.M.inverse.bound.subsampled}, and \ref{ass.d.bound.imply.difference.det} %, and \ref{ass.epsilon_g_consistent} 
hold. 
   % Suppose that $\epsilon_{H_k} < 2(1-\eta)\kappa_l$. 
   For any  $\kappa_w \in \mathbb{R}_{>0}$ and $k \in \mathbb{N}$, suppose that $ \|g_k - \bar g_k\| \le \min \left\{ \frac{\mu_M \kappa_w}{4}, \frac{\kappa_d}{2\Lambda} \right\}$. 
    %there exists $\kappa_M > 0$ such that 
    When  $\|\bar d_k\|\le \min \left\{ \frac{\mu_M\kappa_w }{4\kappa_M (1+   2\kappa_{J^{\dagger}} \kappa_W )}, \frac{\kappa_d}{2} \right\}$,  it follows that $\|w_{k}-w^{*}\|\le \kappa_w$.
    % \begin{align*}
    %     \|w_{k}-w^{*}\|\le \kappa_w.
    % \end{align*} 
\end{lemma}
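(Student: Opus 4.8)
The plan is to relate $\|w_k - w^*\|$ first to the KKT residual norm via Assumption~\ref{ass.d.bound.imply.difference.det}, and then to bound that residual norm by a multiple of the \emph{stochastic} search direction $\|\bar d_k\|$. First I would verify that the hypotheses place us in the regime where Assumption~\ref{ass.d.bound.imply.difference.det} applies, i.e. that $\|d_k\| \le \kappa_d$, where $d_k$ is the \emph{deterministic} step from \eqref{eq.SQP.det}. Since $d_k - \bar d_k$ solves a system with the same (well-conditioned) matrix but right-hand side differing only in the $g_k - \bar g_k$ block (here using that both $M_k$ and $\bar M_k$ are invertible with $\|\bar M_k^{-1}\| \le \Lambda$ — actually I need $\|M_k^{-1}\|\le\Lambda$ from Assumption~\ref{ass.M.inverse.bound}, and a perturbation argument for the difference), one gets $\|d_k - \bar d_k\| \le \Lambda \|g_k - \bar g_k\|$. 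Combined with the assumed bound $\|g_k - \bar g_k\| \le \kappa_d/(2\Lambda)$ and $\|\bar d_k\| \le \kappa_d/2$, the triangle inequality yields $\|d_k\| \le \kappa_d$, so Assumption~\ref{ass.d.bound.imply.difference.det} gives $\tfrac{\mu_M}{2}\|w_k - w^*\| \le \left\|\begin{bmatrix} g_k + J_k^T y_k \\ c_k \end{bmatrix}\right\|$.

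Next I would bound the KKT residual by $\|\bar d_k\|$ (plus a gradient-error term). Writing the deterministic residual as $\begin{bmatrix} g_k + J_k^T y_k \\ c_k \end{bmatrix} = \begin{bmatrix} \bar g_k + J_k^T \bar y_k \\ c_k \end{bmatrix} + \begin{bmatrix} g_k - \bar g_k \\ 0 \end{bmatrix}$ (taking $y_k = \bar y_k$ since the dual iterate is the same), and using \eqref{eq.SQP.sto} which says $\bar M_k [\bar d_k; \bar\delta_k] = -[\bar g_k + J_k^T\bar y_k;\, c_k]$, the first term equals $-\bar M_k[\bar d_k;\bar\delta_k]$ in norm at most $\|\bar M_k\|\,\|[\bar d_k;\bar\delta_k]\| \le \kappa_M \|[\bar d_k;\bar\delta_k]\|$. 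I then need $\|[\bar d_k;\bar\delta_k]\| \lesssim (1 + 2\kappa_{J^\dagger}\kappa_W)\|\bar d_k\|$; this is the standard estimate recovering $\bar\delta_k$ from $\bar d_k$ via the top block $\bar W_k \bar d_k + J_k^T\bar\delta_k = -(\bar g_k + J_k^T\bar y_k)$ and the pseudoinverse identity with $\|(J_kJ_k^T)^{-1}J_k\|\le \kappa_{J^\dagger}$, using $\|\bar W_k\|\le\kappa_W$ — I'd expect the factor $(1 + 2\kappa_{J^\dagger}\kappa_W)$ to come out of exactly that manipulation (and this is presumably a lemma used already in establishing the merit-parameter and Theorem~\ref{theorem.quadratic.det} estimates, so I can cite it). Putting the pieces together:
\[
\tfrac{\mu_M}{2}\|w_k - w^*\| \le \kappa_M(1 + 2\kappa_{J^\dagger}\kappa_W)\|\bar d_k\| + \|g_k - \bar g_k\|.
\]
Substituting the hypothesized bounds $\|\bar d_k\| \le \tfrac{\mu_M\kappa_w}{4\kappa_M(1+2\kappa_{J^\dagger}\kappa_W)}$ and $\|g_k - \bar g_k\| \le \tfrac{\mu_M\kappa_w}{4}$ makes the right side at most $\tfrac{\mu_M\kappa_w}{4} + \tfrac{\mu_M\kappa_w}{4} = \tfrac{\mu_M\kappa_w}{2}$, hence $\|w_k - w^*\| \le \kappa_w$, as claimed.

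The main obstacle I anticipate is the perturbation step bounding $\|d_k - \bar d_k\|$: strictly, $d_k$ and $\bar d_k$ solve systems with \emph{different} matrices ($M_k$ vs. $\bar M_k$, since $W_k \ne \bar W_k$ when $H_k \ne \bar H_k$), so the clean bound $\|d_k - \bar d_k\|\le \Lambda\|g_k-\bar g_k\|$ requires care — either one argues $d_k$ is never actually needed (only that \emph{some} deterministic quantity is small enough to invoke Assumption~\ref{ass.d.bound.imply.difference.det}) or one absorbs the $W_k - \bar W_k$ discrepancy into a Lipschitz/variance term, which would require also controlling $\|H_k - \bar H_k\|$. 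I would resolve this by noting the roles of $d_k$ here: it only enters through Assumption~\ref{ass.d.bound.imply.difference.det}, which is a hypothesis about the deterministic step; if the paper's intent is that the \emph{adaptive} algorithm controls $\|\bar d_k\|$, the cleanest route is to restate/verify that the assumption's conclusion transfers to the stochastic iterate directly, or to carry the extra $\|W_k - \bar W_k\|\,\|\bar d_k\|$ term (bounded via $\epsilon_{H_k}$ from Lemma~\ref{lemma.diff.g.g.bar}) through the inequalities; the numerical constants in the statement suggest only the gradient error was tracked, so I would double-check whether an implicit smallness assumption on $\|d_k\|$ (rather than $\|\bar d_k\|$) is what is really needed, and phrase the triangle-inequality step accordingly.
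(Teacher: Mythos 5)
Your proposal is correct and follows essentially the same route as the paper's proof: first bound $\|d_k\|\le\|\bar d_k\|+\|d_k-\bar d_k\|\le\|\bar d_k\|+\Lambda\|g_k-\bar g_k\|\le\kappa_d$ so that Assumption~\ref{ass.d.bound.imply.difference.det} applies, then write the exact KKT residual as the sampled residual plus the gradient-error block, bound the former by $\kappa_M(1+2\kappa_{J^{\dagger}}\kappa_W)\|\bar d_k\|$ using the decomposition estimate $\|\bar\delta_k\|\le 2\kappa_{J^{\dagger}}\kappa_W\|\bar d_k\|$, and sum the two $\tfrac{\mu_M\kappa_w}{4}$ contributions. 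On the perturbation subtlety you flag: the paper's proof indeed tracks only the gradient discrepancy, writing $\|d_k-\bar d_k\|\le\Lambda\epsilon_{g_k}$ without carrying a $\|W_k-\bar W_k\|$ term, exactly as you inferred from the constants.
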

{\allowdisplaybreaks
\begin{proof}
  We use the orthogonal decomposition of the search direction $\bar  d_k \in \mathbb{R}^n$, i.e., % given by 
\begin{align}
    \label{eq.orthogonal.sto}
    \bar  d_k = \bar u_k + v_k, \quad \bar u_k \in \operatorname{Null}\left(J_k\right), \quad v_k \in \operatorname{Range}\left(J_k^T\right) \quad \text{for all } k \in \mathbb{N}.
\end{align} 
Let $Z_k $ be an orthogonal basis for
the null space of $J_k$ which under Assumption \ref{ass.function} is a matrix in $\mathbb{R}^{n \times (n-m)}$. 
%Define $Z_k \in \mathbb{R}^{n \times (n-m)}$ as an orthogonal basis for the null space of $J_k$ under assumption \ref{ass.function}. 
By \eqref{eq.SQP.sto}, it follows that
\begin{align}
    v_k &= -J_k^T (J_k J_k^T)^{-1} c_k, \qquad %\label{eq.vk.sto} \\
    \bar u_k  = -Z_k (Z_k^T  \bar W_k Z_k )^{-1} Z_k^T( \bar g_k + J_k^T y_k +  \bar W_k  v_k ), \label{eq.uk.sto} \\
  %\bar d_k &=\bar v_k +\bar u_k \label{eq.dk}\\
  \bar \delta_k &= -(J_k J_k^T)^{-1} J_k ( \bar g_k + J_k^T y_k+ \bar W_k  \bar d_k). \label{eq.deltak.sto}
\end{align}
By \eqref{eq.orthogonal.sto}, \eqref{eq.uk.sto}, \eqref{eq.deltak.sto}, the Cauchy-Schwartz inequality, and Assumption \ref{ass.function}, %\js{,} %and the fact that $ \bar u_k$ is a projection of $ \bar d_k$ that 
{\allowdisplaybreaks
\begin{align}
    \|\bar  \delta_k \| &\le \| (J_k J_k^T)^{-1} J_k  \| \| \bar g_k + J_k^T y_k+\bar W_k  v_k+\bar W_k \bar  u_k\| \notag \\ 
    &\le \kappa_{J^{\dagger}} \left( \| \bar  g_k + J_k^T y_k+ \bar W_k v_k\|+\|\bar W_k \bar  u_k\| \right) \notag \\ 
    &\le  \kappa_{J^{\dagger}}  \left( \| Z_k (Z_k^T \bar  W_k Z_k )  Z_k^T \bar u_k \|  + \|\bar  W_k \bar u_k \| \right)  \notag \\
    %& \todos{\le  \kappa_{J^{\dagger}}  \left( \| Z_k (Z_k^T\bar   W_k Z_k )  Z_k^T \| \| \bar u_k \|  + \| \bar W_k \|  \|\bar u_k \| \right) }\notag \\ 
%&\le  \kappa_{J^{\dagger}} \kappa_W \|\bar u_k \| + \kappa_{J^{\dagger}} \kappa_W \|\bar u_k \| \notag \\ 
&\le   2\kappa_{J^{\dagger}} \kappa_W \|\bar d_k \| \label{eq.delta.ub.d}. 
\end{align}}

% Replacing $\delta_k, g_k, W_k, v_k, u_k, d_k$ in \eqref{eq.delta.ub.d} by their stochastic quantities, we have 
%  \begin{align*}
%     \| \bar \delta_k \| \le   2\kappa_{J^{\dagger}} \kappa_W \|\bar d_k \|. 
% \end{align*}
When $\|\bar d_k\| \le \frac{\kappa_d}{2}$, 
the condition in Assumption \ref{ass.d.bound.imply.difference.det} is satisfied since it follows by Assumptions~\ref{ass.M.inverse.bound.subsampled} that 
\begin{align*}
    \| d_k\| &\le \|\bar d_k\|  + \|d_k - \bar d_k\| \le \|\bar d_k\|  +  \bar M_k^{-1} \begin{bmatrix}
    g_k  -  \bar  g_k\\ 0
\end{bmatrix}  \le \|\bar d_k\|  + \Lambda \epsilon_{g_k}  \le \kappa_d.
\end{align*}
By Assumptions \ref{ass.d.bound.imply.difference.det} and the gradient error condition, 
\begin{align*}
    \|w_k - w^*\|& \le  2\mu_M^{-1} \left\|\begin{bmatrix}
     g_k + J_k^T y_k \\ c_k
\end{bmatrix}\right\| \\& \le  2\mu_M^{-1} \left\|\begin{bmatrix}
   \bar   g_k + J_k^T y_k \\ c_k
\end{bmatrix}+\begin{bmatrix}
   g_k - \bar   g_k \\0
\end{bmatrix}\right\| \\ &\le 2\mu_M^{-1} \left\| \bar M_k \bar M_k^{-1}\begin{bmatrix}
  \bar   g_k + J_k^T y_k \\ c_k
\end{bmatrix}  \right\| + 2\mu_M^{-1} \epsilon_{g_k} 
\\ & \le  2\mu_M^{-1}  \kappa_M \left\|\begin{bmatrix}
 \bar  d_k  \\\bar  \delta_k
\end{bmatrix}\right\|  + \tfrac{\kappa_w}{2}  
\\ & \le  2\mu_M^{-1}  \kappa_M (1+   2\kappa_{J^{\dagger}} \kappa_W ) \| \bar d_k\| + \tfrac{\kappa_w}{2} \le \kappa_w. % \tfrac12 \|w_k - w^*\|.
\end{align*}
% which implies 
% \begin{align*}
%    \|w_k - w^*\| \le 4\mu_M^{-1}  \kappa_M (1+   2\kappa_{J^{\dagger}} \kappa_W ) \| \bar d_k\|. 
% \end{align*}
%The desired result is therefore satisfied. 
\end{proof}}

Our goal is to prove that in expectation $\|w_k-w^*\| \to 0$ at a superlinear rate. %\mathbb{E}[\|w_k-w^*\|] \to 0$ at a superlinear rate. 
If there is a deterministic uniform bound for $\|w_k-w^*\|$, the result follows directly. 
%If there exists a deterministic uniform bound, $\|w_k-w^*\| \le \tfrac{2}{\Lambda L_W}\left(1 - \Lambda\tfrac{\sigma_H}{\sqrt{|S_k^H|}} \right)$, we can prove this result. %that $\mathbb{E}_k[\| w_{k} - w^* \|] \to 0$. 
However, in the general stochastic setting, this assumption does not hold. % in the general stochastic setting. 
Instead, we introduce an %the following 
assumption on the second moment of the distance of the iterates to $w^*$. This assumption is also made in the stochastic unconstrained setting \cite{bollapragada2019exact}.

\begin{assumption}
There exists a constant $\iota \in \mathbb{R}_{>0}$ such that for all $k\in \mathbb{N}$, % we have 
$\mathbb{E}[\|w_k - w^* \|^2] \le \iota ( \mathbb{E}[\|w_k - w^* \|] )^2$.
\label{ass.second.moment.bound}
\end{assumption}

The local superlinear convergence guarantees, under the proximity and unit step size assumptions, of an adaptive sampling SQP method for \eqref{problem.expected_risk} is given below. %presented as follows.
% The local superlinear convergence guarantees, under the proximity and unit step size assumptions, of an adaptive sampling second-order SQP method for problem \eqref{problem.expected_risk} is presented as follows.

\begin{theorem}\label{theorem.superlinear.convergence.stochastic}
Suppose Assumptions  
\ref{ass.function}, \ref{ass.M.Lipschitz.det1},  \ref{ass.g.var.sto}, 
%\ref{ass.H.subsampled}, 
\ref{ass.M.inverse.bound.subsampled},  \ref{ass.d.bound.imply.difference.det}, 
% \ref{ass.M.Lipschitz.det1},  \ref{ass.g.var.sto}, \ref{ass.M.inverse.bound.subsampled}, \ref{ass.epsilon_g_consistent},
\ref{ass.second.moment.bound} hold. Further, suppose that for all $k \in \mathbb{N}$: 
\begin{itemize}
    \item[($\romannumeral1$)]  $ \|g_k - \bar g_k\| \le \min \left\{ \frac{\mu_M }{12\iota L_W \Lambda}, \frac{\kappa_d}{2\Lambda} \right\}$;  
    \item[($\romannumeral2$)]  $\left|S_{k}^g\right| \geq\left|S_{0}^g\right| \beta_{k}^{k}$, with $\left|S_{0}^g\right| \geq\left(6 \sigma_g \iota L_W \Lambda^2 \right)^{2}, \beta_{k}>\beta_{k-1}, \beta_{k} \rightarrow \infty$, and $\beta_{1}>1$; 
    \item[($\romannumeral3$)] $\left|S_{k}^H\right|>\left|S_{k-1}^H\right|$, with $\lim _{k \rightarrow \infty}\left|S_{k}^H\right|=\infty$, and $\left|S_{0}^H\right| \geq\left(4 \sigma_H\Lambda\right)^{2}$.
\end{itemize}
%and that: %we have
Then, if the starting point satisfies $\left\|\bar d_0\right\| \leq \min \left\{ \tfrac{\mu_M}{ 12 \iota \Lambda L_W\kappa_M (1+   2\kappa_{J^{\dagger}} \kappa_W )}, \tfrac{\kappa_d}{2} \right\}$,
% \begin{equation}
%     \label{eq.superlinear.starting.sto}
%     \left\|\bar d_0\right\| \leq \min \left\{ \tfrac{\mu_M}{ 12 \iota \Lambda L_W\kappa_M (1+   2\kappa_{J^{\dagger}} \kappa_W )}, \tfrac{\kappa_d}{2} \right\}  ,
% \end{equation}
it follows that $\mathbb{E}\left[\left\|w_{k}-w^{*}\right\|\right] \rightarrow 0$ at an R-superlinear rate, i.e., there exists a positive sequence $\left\{\xi_{k}\right\}$ such that $\mathbb{E}\left[\left\|w_{k}-w^{*}\right\|\right] \leq \xi_{k}$ and $\tfrac{\xi_{k+1}}{\xi_{k}} \rightarrow 0$.
% \begin{align*}
%     \mathbb{E}\left[\left\|w_{k}-w^{*}\right\|\right] \leq \xi_{k} \quad \text { and } \quad \xi_{k+1} / \xi_{k} \rightarrow 0.
% \end{align*}
\end{theorem}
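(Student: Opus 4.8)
The plan is to view the subsampled SQP iteration \eqref{eq.SQP.sto} as a perturbed Newton step on the KKT residual $F(w):=[\nabla_x\mathcal{L}(x,y)^T,\ c(x)^T]^T$, whose Jacobian at $w$ is exactly $M(w)$, and to reduce everything to a scalar recursion for $\phi_k:=\mathbb{E}[\|w_k-w^*\|]$. (Theorem~\ref{theorem.quadratic.det} is then the deterministic special case $\sigma_g=\sigma_H=0$, $\iota=1$, which is why its proof is deferred.) Writing $w_{k+1}-w^*=\bar M_k^{-1}\big(\bar M_k(w_k-w^*)-F(w_k)\big)-\bar M_k^{-1}[(\bar g_k-g_k)^T,\,0]^T$, using $F(w^*)=0$ with the integral mean value theorem, the Lipschitz continuity of $M$ (Assumption~\ref{ass.M.Lipschitz.det1}), the identity $\bar M_k-M(w_k)=\mathrm{diag}(\bar H_k-H_k,\,0)$, and $\|\bar M_k^{-1}\|\le\Lambda$ (Assumption~\ref{ass.M.inverse.bound.subsampled}), I would obtain the one-step bound
\[
\|w_{k+1}-w^*\|\ \le\ \tfrac{\Lambda L_W}{2}\,\|w_k-w^*\|^2\ +\ \Lambda\,\epsilon_{H_k}\,\|w_k-w^*\|\ +\ \Lambda\,\epsilon_{g_k},
\]
where $\epsilon_{g_k}:=\|\bar g_k-g_k\|$, $\epsilon_{H_k}:=\|\bar H_k-H_k\|$, valid whenever $\|w_k-w^*\|$ lies in the neighborhood of radius $r$ and $\|d_k\|\le\kappa_d$ so that Assumption~\ref{ass.d.bound.imply.difference.det} and the local estimates apply.

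Next I would pass to expectations. Conditioning on the natural filtration $\mathcal{F}_k$ (so $w_k$ is fixed and $S_k^g,S_k^H$ are fresh), Jensen and Assumption~\ref{ass.g.var.sto} give $\mathbb{E}[\epsilon_{g_k}\mid\mathcal{F}_k]\le\sigma_g/\sqrt{|S_k^g|}$ and $\mathbb{E}[\epsilon_{H_k}\mid\mathcal{F}_k]\le\sigma_H/\sqrt{|S_k^H|}$, while the tower rule yields $\mathbb{E}[\epsilon_{H_k}\|w_k-w^*\|]\le(\sigma_H/\sqrt{|S_k^H|})\,\phi_k$, and Assumption~\ref{ass.second.moment.bound} gives $\mathbb{E}[\|w_k-w^*\|^2]\le\iota\,\phi_k^2$. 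Taking total expectations produces
\[
\phi_{k+1}\ \le\ \tfrac{\iota\Lambda L_W}{2}\,\phi_k^2\ +\ \tfrac{\Lambda\sigma_H}{\sqrt{|S_k^H|}}\,\phi_k\ +\ \tfrac{\Lambda\sigma_g}{\sqrt{|S_k^g|}}\ =:\ A\phi_k^2+b_k\phi_k+c_k .
\]
Conditions~($\romannumeral3$) and ($\romannumeral2$) are calibrated precisely so that $b_k\le\tfrac14$ for all $k$ with $b_k\to0$, and $c_k\le\tfrac{1}{6\iota\Lambda L_W}\,\beta_k^{-k/2}$, i.e.\ $c_k$ decays superlinearly; the starting condition $\|\bar d_0\|\le\min\{\cdots\}$ together with Lemma~\ref{lemma.d.bound.imply.difference} (taking $\kappa_w:=\tfrac{1}{3\iota\Lambda L_W}$) and condition~($\romannumeral1$) ensures $\|w_0-w^*\|\le\kappa_w$, whence $A\phi_0\le\tfrac16$.

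I would then close an induction keeping the iterates in the neighborhood: if $\|w_k-w^*\|\le\kappa_w$, then $\|\bar d_k\|\le\Lambda(\|F(w_k)\|+\epsilon_{g_k})\le\Lambda(L'\kappa_w+\epsilon_{g_k})$ (with $L'$ the local Lipschitz constant of $F$, since $F(w^*)=0$), and combining with ($\romannumeral1$) keeps $\|\bar d_k\|$ below the threshold of Lemma~\ref{lemma.d.bound.imply.difference}, which re-certifies the neighborhood and validates the one-step bound at $k$; the scalar recursion then gives $\phi_{k+1}\le(A\phi_k+b_k)\phi_k+c_k\le\tfrac{5}{12}\phi_k+c_k\le\tfrac{11}{12}\kappa_w\le\kappa_w$, propagating the invariant, and since the effective factor $A\phi_k+b_k\to0$ and $c_k\to0$ we get $\phi_k\to0$. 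For the R-superlinear rate, set $\xi_0:=\kappa_w\ge\phi_0$ and $\xi_{k+1}:=A\xi_k^2+b_k\xi_k+c_k$, so $\phi_k\le\xi_k$ by induction and $\xi_k\to0$; then $\xi_{k+1}/\xi_k=A\xi_k+b_k+c_k/\xi_k$ where $A\xi_k\to0$, $b_k\to0$, and $\xi_k\ge c_{k-1}$ gives $c_k/\xi_k\le c_k/c_{k-1}\le\beta_{k-1}^{(k-1)/2}/\beta_k^{k/2}\le\beta_k^{-1/2}\to0$ using that $\{\beta_k\}$ is increasing with $\beta_1>1$; hence $\xi_{k+1}/\xi_k\to0$, the claimed R-superlinear convergence.

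The step I expect to be the main obstacle is reconciling the almost-sure and in-expectation arguments when propagating the neighborhood invariant: the one-step bound above is a pointwise estimate valid only while $\|w_k-w^*\|$ stays within the Lipschitz radius and $\|d_k\|\le\kappa_d$, yet Assumption~\ref{ass.g.var.sto} controls only a second moment of $\epsilon_{H_k}$, so a single atypical Hessian sample could in principle eject $w_{k+1}$ from the neighborhood and break the induction. Making this rigorous requires either a stopping-time / truncation argument (analyze the process up to the first exit of the neighborhood and show that exit probability is dominated by the same superlinearly decaying quantities) or an additional almost-sure bound on $\epsilon_{H_k}$ (automatic in the finite-sum regime via Assumption~\ref{ass.individual.f.subsampled}, but not under Assumption~\ref{ass.g.var.sto} alone); everything else — the perturbed-Newton recursion, the conditional-expectation bookkeeping, and the majorant construction — is routine once the constants in ($\romannumeral1$)--($\romannumeral3$) are tracked carefully.
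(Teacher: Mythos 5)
Your proposal is correct and takes essentially the same route as the paper: the paper's proof consists of one line establishing $\|w_0-w^*\|\le \tfrac{1}{3\iota L_W\Lambda}$ via Lemma~\ref{lemma.d.bound.imply.difference} with $\kappa_w=\tfrac{1}{3\iota\Lambda L_W}$ (exactly your neighborhood step) and then defers to the perturbed-Newton/majorant-sequence argument of the cited subsampled Newton reference, which is precisely the recursion $\phi_{k+1}\le A\phi_k^2+b_k\phi_k+c_k$ you reconstruct, with the same calibration of conditions (i)--(iii). The almost-sure versus in-expectation subtlety you flag in propagating the neighborhood invariant is inherited unchanged from that reference and is not resolved in the paper either.
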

\begin{proof}
It follows by Lemma~\ref{lemma.d.bound.imply.difference} and condition ($\romannumeral1$) that $\|w_0 -  w^{*}\|\le \frac{1}{3 \iota L_W \Lambda}$. The remainder of the proof is similar to \cite[Theorem 2.4]{bollapragada2019exact} if we view the Lagrangian function as the objective function. Thus, for brevity we omit the rest of the proof. 
\end{proof}
\begin{remark} Theorem \ref{theorem.superlinear.convergence.stochastic} is a stochastic analogue of Theorem \ref{theorem.quadratic.det}. The sample growth rates for the gradient and Hessian estimates match those derived for local superlinear guarantees %convergence theorem 
in the unconstrained setting \cite{bollapragada2019exact}, i.e., the gradient sample size is required to grow at a rate faster than geometric and the Hessian sample size is required to grow monotonically. Condition ($\romannumeral1$) is required due to the usage of the $\|\bar d_k\|$ as neighborhood proximity condition. We acknowledge that this condition is relatively strong and is one of the main reasons we focus on problem \eqref{problem.empirical_risk} in Section~\ref{sec.analysis}. 
\end{remark}

\subsection{Finite-sum Problem~\eqref{problem.empirical_risk}}
\label{sec.superlinear.finite.sum}
%\js{Here we present the convergence results for (1) full-batch algorithm;  (2) subsampled algorithm.}

In this subsection, we present local analysis for the SQP method (iterates updated via~\eqref{eq.subsample.unit.stepsize.update}) for solving~\eqref{problem.empirical_risk}. We start with a preliminary lemma that bounds the optimality gap between subsequent iterates. 
\begin{lemma}
\label{lemma.superlinear.lemma.subsampled}
Suppose that Assumptions \ref{ass.function},  \ref{ass.M.Lipschitz.det1}, \ref{ass.individual.f.subsampled}, and~\ref{ass.M.inverse.bound.subsampled} hold. Then, for $k \in \mathbb{N}$, %$\| w_{k+1} - w^* \| \le \tfrac{\Lambda L_W}{2}  \| w_{k} - w^* \|^2  +  \Lambda \epsilon_{H_k} \| w_{k} - w^* \| + \Lambda \epsilon_{g_k}$.
\begin{align*}
    \| w_{k+1} - w^* \| \le \tfrac{\Lambda L_W}{2}  \| w_{k} - w^* \|^2  +  \Lambda \epsilon_{H_k} \| w_{k} - w^* \| + \Lambda \epsilon_{g_k}.
\end{align*}
\end{lemma}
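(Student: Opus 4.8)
The plan is to view the subsampled SQP step \eqref{eq.SQP.sto} with unit step size \eqref{eq.subsample.unit.stepsize.update} as a perturbed Newton iteration on the KKT system and run the standard local analysis while tracking the errors introduced by the subsampled gradient and Hessian. Define the KKT residual map $F(w) := (\nabla_x\mathcal{L}(x,y)^T, c(x)^T)^T = ((g(x)+J(x)^T y)^T, c(x)^T)^T$, so that $F(w^*)=0$ by \eqref{eq.first_order_stat}. Its Jacobian is exactly the Hessian of the Lagrangian, $\nabla F(w) = \begin{bmatrix} W(x,y) & J(x)^T \\ J(x) & 0 \end{bmatrix}$; in particular $\nabla F(w_k)$ equals the exact SQP matrix $M_k$ of \eqref{eq.SQP.det}, while the subsampled matrix obeys $\bar M_k - M_k = \begin{bmatrix} \bar H_k - H_k & 0 \\ 0 & 0 \end{bmatrix}$, since the constraint-Hessian terms in $\bar W_k$ and $W_k$ are built from the same multipliers $y_k$ and cancel. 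Consequently the residual on the right-hand side of \eqref{eq.SQP.sto} equals $F(w_k) + ((\bar g_k - g_k)^T, 0)^T$.

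With this notation, the update is $w_{k+1} - w_k = -\bar M_k^{-1}\big( F(w_k) + ((\bar g_k - g_k)^T, 0)^T\big)$, so using $F(w^*)=0$ together with the fundamental theorem of calculus $F(w_k) = \int_0^1 \nabla F(w^* + t(w_k-w^*))(w_k-w^*)\,dt$ gives
\begin{align*}
w_{k+1} - w^*
&= \bar M_k^{-1}\Big[\Big(\bar M_k - {\textstyle\int_0^1} \nabla F(w^* + t(w_k-w^*))\,dt\Big)(w_k - w^*)\Big] - \bar M_k^{-1}\begin{bmatrix}\bar g_k - g_k \\ 0\end{bmatrix}.
\end{align*}
I then split $\bar M_k - \int_0^1 \nabla F(w^*+t(w_k-w^*))\,dt = (\bar M_k - M_k) + \int_0^1\big(\nabla F(w_k) - \nabla F(w^*+t(w_k-w^*))\big)\,dt$ and bound each piece: the first has norm at most $\|\bar H_k - H_k\| \le \epsilon_{H_k}$ by Lemma~\ref{lemma.diff.g.g.bar} (eq.~\eqref{eq.diff.H.H.bar}), and the second, by Lipschitz continuity of the Lagrangian Hessian with constant $L_W$ (Assumption~\ref{ass.M.Lipschitz.det1}), has norm at most $\int_0^1 L_W(1-t)\,dt\,\|w_k-w^*\| = \tfrac{L_W}{2}\|w_k-w^*\|$. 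Combining with $\|\bar M_k^{-1}\|\le\Lambda$ (Assumption~\ref{ass.M.inverse.bound.subsampled}) and $\|\bar g_k - g_k\| \le \epsilon_{g_k}$ (eq.~\eqref{eq.diff.g.g.bar}) yields
\begin{align*}
\|w_{k+1} - w^*\| \le \Lambda\Big(\tfrac{L_W}{2}\|w_k - w^*\| + \epsilon_{H_k}\Big)\|w_k - w^*\| + \Lambda \epsilon_{g_k},
\end{align*}
which is the claimed bound after distributing the product.

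I do not anticipate a genuine obstacle: this is the standard Newton/SQP contraction recursion with two additive perturbation terms. The points needing care are (i) identifying $\nabla F(w) = \nabla^2\mathcal{L}(x,y)$ so that Assumption~\ref{ass.M.Lipschitz.det1} applies to the KKT Jacobian along the segment $[w^*,w_k]$ — which also tacitly requires $w_k$ to lie within the radius-$r$ neighborhood of $w^*$ on which that Lipschitz bound holds, the regime in which this preliminary lemma is invoked; (ii) verifying that $\bar M_k - M_k$ carries only the Hessian error $\bar H_k - H_k$ and no gradient error, because the $\sum_i y_{k,i}\nabla^2 c_i(x_k)$ terms are identical in $\bar W_k$ and $W_k$; and (iii) the uniform bound $\|\bar M_k^{-1}\|\le\Lambda$, which is granted by Assumption~\ref{ass.M.inverse.bound.subsampled}.
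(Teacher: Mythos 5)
Your proof is correct and is essentially the argument the paper intends: the paper omits the proof and defers to \cite[Lemma 2.2]{bollapragada2019exact}, whose perturbed-Newton decomposition applied to the KKT residual map (Lagrangian as objective, primal-dual pair as variable) is exactly what you have written out, including the cancellation of the constraint-Hessian terms so that $\bar M_k - M_k$ carries only $\bar H_k - H_k$. Your side remark that the Lipschitz bound of Assumption~\ref{ass.M.Lipschitz.det1} is only local, so the segment $[w^*,w_k]$ must lie in the radius-$r$ neighborhood, is a fair (and tacitly shared by the paper) caveat rather than a gap.
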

\begin{proof}
 The proof is similar to \cite[Lemma 2.2]{bollapragada2019exact} if we remove the expectation,
 view the Lagrangian function as the objective function, and view the primal-dual variables as the decision variables. %, and employ  Cauchy-Schwarz inequality. 
 Thus, for brevity we omit the proof.
\end{proof}

We now present the local fast convergence result (analogous to Theorem~\ref{theorem.superlinear.convergence.stochastic}).

\begin{theorem}%(Superlinear convergence)
\label{theorem.superlinear.convergence.deterministic}
Suppose Assumptions 
\ref{ass.function},
\ref{ass.M.Lipschitz.det1}, \ref{ass.individual.f.subsampled}, %\ref{ass.H.subsampled}, 
\ref{ass.M.inverse.bound.subsampled}, and \ref{ass.d.bound.imply.difference.det} hold.  %Let $\left\{w_k\right\}$ be the primal-dual iterates generated by \eqref{eq.SQP.sto} and \eqref{eq.subsample.unit.stepsize.update}. For all $k \in \mathbb{N}$, if it satisfies that 
Further, suppose that for all $k \in \mathbb{N}$:
\begin{itemize}
    \item[($\romannumeral1$)] $ |S_k^g| \ge \left(  1-  \tfrac{ \min \left\{ \mu_M \Lambda, 6 \Lambda L_W \kappa_d, 2\right\}\beta_k^{k/2}}{24 \Lambda^2 L_W (\kappa_1 + \mu_{1} \kappa_g )}\right)  N$ %$ |S_k^g| \ge \left(  1-  \tfrac{ \min \left\{ \tfrac{\mu_M \Lambda}{2}, 3 \Lambda L_W \kappa_d, 1 \right\}\beta_k^{k/2}}{12 \Lambda^2 L_W (\kappa_1 + \mu_{1} \kappa_g )}\right)  N$  
    with $\beta_k < \beta_{k-1}$, $\beta_k\to 0$, $\beta_0 \le 1$; %$\beta_1= \min\left\{ \left( \tfrac{\mu_M \Lambda}{2} \right)^2 , \left(  3 \Lambda L_W \kappa_d \right)^2,1  \right\} $.

%($\romannumeral1$) $ |S_0^g| \ge \left(  1-  \tfrac{\min \left\{ \tfrac{\mu_M}{12 \Lambda L_W}, \tfrac{\kappa_d}{2\Lambda} \right\}}{12 \Lambda^2 L_W (\kappa_1 + \mu_{1} \kappa_g )}\right)  N$,  $ |S_k^g| \ge \left(  1-  \tfrac{ \beta_k^{k/2}}{12 \Lambda^2 L_W (\kappa_1 + \mu_{1} \kappa_g )}\right)  N$ for $k\ge 1$, with $\beta_k < \beta_{k-1}$, $\beta_k\to 0$ and $\beta_1= \min\left\{ \left( \tfrac{\mu_M \Lambda}{2} \right)^2 , \left(  3 \Lambda L_W \kappa_d \right)^2,1  \right\} $.
% $\frac{N-|S_k^g|}{N} \le \frac{ \beta_k^{k/2}}{12 \Lambda^2 L_W (\kappa_1 + \mu_{1} \kappa_g )}$, with $\beta_k < \beta_{k-1}$, $\beta_k\to 0$ and $\beta_0<1$.

    \item[($\romannumeral2$)] $\left|S_k^H\right|>\left|S_{k-1}^H\right|$, $\lim _{k \rightarrow \infty}\left|S_k^H\right|=N$, and $\left|S_0^H\right| \geq \max \left \{ \tfrac{8\Lambda ( \kappa_2 + \mu_{2} \kappa_H) -1}{8\Lambda ( \kappa_2 + \mu_{2} \kappa_H)}N, 1 \right\}$.
\end{itemize}
If the 
%Then, if the 
starting point satisfies 
\begin{equation}
\label{eq.superlinear.starting.finite}
    \left\|\bar d_0\right\| \leq \min \left\{ \tfrac{\mu_M}{ 12 \Lambda L_W\kappa_M (1+   2\kappa_{J^{\dagger}} \kappa_W )}, \tfrac{\kappa_d}{2} \right\}  ,
\end{equation}
%$\left\|\bar d_0\right\| \leq \min \left\{ \tfrac{\mu_M}{ 12 \Lambda L_W\kappa_M (1+   2\kappa_{J^{\dagger}} \kappa_W )}, \tfrac{\kappa_d}{2} \right\}:=\bar \gamma_d$, 
then, 
%we have that 
$\left\|w_k-w^*\right\| \rightarrow 0$ at an $R$-superlinear rate, i.e., there is a positive sequence $\left\{\xi_k\right\}$ such that $\left\|w_k-w^*\right\| \leq \xi_k$ and $\tfrac{\xi_{k+1}}{\xi_k} \rightarrow 0$. 
% $$
% \left\|w_k-w^*\right\| \leq \xi_k \quad \text { and } \quad \xi_{k+1} / \xi_k \rightarrow 0 .
% $$
In addition, for any $k \in \mathbb{N}$, $\left\|w_k-w^*\right\| \le \tfrac{1}{3 \Lambda L_W}$.
\end{theorem}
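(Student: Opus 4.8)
The plan is to run the standard subsampled-Newton local argument of \cite[Theorem 2.5]{bollapragada2019exact}, reading the Lagrangian as the objective, $w$ as the variable, and the primal--dual residual $[\,g_k+J_k^Ty_k;\,c_k\,]$ as the ``gradient,'' starting from the one-step contraction of Lemma~\ref{lemma.superlinear.lemma.subsampled}, namely $\|w_{k+1}-w^*\|\le\tfrac{\Lambda L_W}{2}\|w_k-w^*\|^2+\Lambda\epsilon_{H_k}\|w_k-w^*\|+\Lambda\epsilon_{g_k}$. The first step is to translate the sample-size schedules into noise bounds. Substituting conditions~($\romannumeral1$) and~($\romannumeral2$) into the closed forms of Lemma~\ref{lemma.diff.g.g.bar} gives, for every $k\in\mathbb{N}$, $\Lambda\epsilon_{g_k}\le \tfrac{\beta_k^{k/2}}{6\Lambda L_W}$ and $\Lambda\epsilon_{H_k}\le\tfrac14$ (the latter using the $k=0$ lower bound on $|S_k^H|$ and monotonicity of $|S_k^H|$); moreover, since $\beta_k\to0$ forces $|S_k^g|\to N$ and $|S_k^H|\to N$, both $\epsilon_{g_k}\to0$ and $\epsilon_{H_k}\to0$, with $\epsilon_{H_k}$ non-increasing.

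Next I would handle the base case and the uniform radius bound. The starting-point condition~\eqref{eq.superlinear.starting.finite} is precisely the hypothesis of Lemma~\ref{lemma.d.bound.imply.difference} with $\kappa_w=\tfrac{1}{3\Lambda L_W}$, and the $k=0$ instance of~($\romannumeral1$) (using $\beta_0\le1$) supplies the remaining hypothesis $\|g_0-\bar g_0\|\le\epsilon_{g_0}\le\min\{\tfrac{\mu_M\kappa_w}{4},\tfrac{\kappa_d}{2\Lambda}\}$; hence $\|w_0-w^*\|\le\tfrac{1}{3\Lambda L_W}$. Then I would induct: if $\|w_k-w^*\|\le\tfrac{1}{3\Lambda L_W}$, then $\tfrac{\Lambda L_W}{2}\|w_k-w^*\|^2\le\tfrac16\|w_k-w^*\|$, so Lemma~\ref{lemma.superlinear.lemma.subsampled} and the noise bounds give $\|w_{k+1}-w^*\|\le(\tfrac16+\tfrac14)\|w_k-w^*\|+\tfrac{\beta_k^{k/2}}{6\Lambda L_W}\le\tfrac{5}{12}\cdot\tfrac{1}{3\Lambda L_W}+\tfrac{1}{6\Lambda L_W}=\tfrac{11}{36\Lambda L_W}\le\tfrac{1}{3\Lambda L_W}$, closing the induction and proving the final ``In addition'' assertion. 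Writing $a_k:=\|w_k-w^*\|$, the inequality $a_{k+1}\le\tfrac{5}{12}a_k+\tfrac{\beta_k^{k/2}}{6\Lambda L_W}$ with $\beta_k^{k/2}\to0$ gives $\limsup_k a_k\le\tfrac{5}{12}\limsup_k a_k$, hence $a_k\to0$.

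For the R-superlinear rate I would build the bounding sequence explicitly. Set $c_k:=\tfrac{\Lambda L_W}{2}a_k+\Lambda\epsilon_{H_k}$, so $a_{k+1}\le c_k a_k+\Lambda\epsilon_{g_k}$; from the previous step $c_k\le\tfrac{5}{12}$ for all $k$, and $c_k\to0$ since $a_k\to0$ and $\epsilon_{H_k}\to0$. Put $\bar c_k:=\sup_{j\ge k}c_j$, which is non-increasing with $\bar c_k\to0$, and $b_k:=\beta_k^{k/2}$, for which strict monotonicity $\beta_k<\beta_{k-1}$ yields $b_k/b_{k-1}=\beta_k^{1/2}(\beta_k/\beta_{k-1})^{(k-1)/2}\le\beta_k^{1/2}\to0$. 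Define $\xi_0:=\max\{a_0,1\}$ and $\xi_{k+1}:=\bar c_k\xi_k+\tfrac{b_k}{6\Lambda L_W}$; since $a_{k+1}\le\bar c_k a_k+\tfrac{b_k}{6\Lambda L_W}$, an easy induction gives $a_k\le\xi_k$, and $\xi_k>0$ for all $k$ because $\xi_0\ge1$ and $\xi_{k+1}\ge\tfrac{b_k}{6\Lambda L_W}>0$. The latter also forces $\xi_k\ge\tfrac{b_{k-1}}{6\Lambda L_W}$ for $k\ge1$, so $\tfrac{\xi_{k+1}}{\xi_k}=\bar c_k+\tfrac{b_k}{6\Lambda L_W\,\xi_k}\le\bar c_k+\tfrac{b_k}{b_{k-1}}\le\bar c_k+\beta_k^{1/2}\to0$, which is exactly the asserted R-superlinear rate.

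I expect the principal obstacle to be this last construction: one must combine a contraction factor $\bar c_k$ that decays only through a $\sup$-envelope with an additive perturbation $b_k$ that decays R-superlinearly, and certify that their interaction does not stall the ratio $\xi_{k+1}/\xi_k$ away from $0$ --- the key device being the lower bound $\xi_k\ge b_{k-1}/(6\Lambda L_W)$ --- while also disposing of degenerate cases such as $a_0=0$ or $\epsilon_{g_k}=0$. A secondary, purely bookkeeping difficulty is verifying that the constants buried in conditions~($\romannumeral1$)--($\romannumeral2$) and~\eqref{eq.superlinear.starting.finite} are calibrated so that the effective contraction stays strictly below one (here $\tfrac{5}{12}$) and the radius induction for $\tfrac{1}{3\Lambda L_W}$ closes with room to spare.
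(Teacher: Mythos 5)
Your proof is correct and follows essentially the same route as the paper's: the one-step recursion of Lemma~\ref{lemma.superlinear.lemma.subsampled}, the noise bounds $\Lambda\epsilon_{g_k}\le \beta_k^{k/2}/(6\Lambda L_W)$ and $\Lambda\epsilon_{H_k}\le 1/4$ extracted from conditions ($\romannumeral1$)--($\romannumeral2$), Lemma~\ref{lemma.d.bound.imply.difference} with $\kappa_w=1/(3\Lambda L_W)$ for the base case, and an inductive radius bound with an effective contraction strictly below one. The only (cosmetic) divergence is in the final envelope: the paper uses the max-recursion $\xi_{k+1}=\max\{\xi_k\rho_k,\beta_{k+1}^{(k+1)/4}\}$ inherited from the subsampled-Newton analysis, whereas you use the additive recursion $\xi_{k+1}=\bar c_k\xi_k+b_k/(6\Lambda L_W)$ together with the lower bound $\xi_k\ge b_{k-1}/(6\Lambda L_W)$ --- both devices prevent $\xi_k$ from decaying faster than the perturbation term so that $\xi_{k+1}/\xi_k\to 0$ can be certified.
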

\begin{proof}
For brevity, we defer the proof to Appendix~\ref{app.superlinear.theorem}.
% By condition $(\romannumeral1)$, it follows that
% $\|g_0 - \bar g_0\| \le  \min \left\{ \tfrac{\mu_M }{12 \Lambda L_W}, \tfrac{\kappa_d}{2\Lambda} \right\}$. % is satisfied. 
% Therefore, by the condition on the initial search direction %\eqref{eq.superlinear.starting.finite} 
% and Lemma \ref{lemma.d.bound.imply.difference}, it follows that $\left\|w_0-w^*\right\| \le \tfrac{1}{3 \Lambda L_W}$. In addition, 
%  by Lemma~\ref{lemma.superlinear.lemma.subsampled} 
% \begin{align*}
%   %  &\ \left\|w_{k+1}-w^*\right\| \\ \leq  &\  
% \left\|w_{k+1}-w^*\right\| 
%  \le \tfrac{\Lambda L_W }{2}\left\|w_k-w^*\right\|^2 + \tfrac{2(\kappa_2 + \mu_{2} \kappa_H ) \Lambda  (N-|S_k^H|)}{N}  \left\|w_k-w^*\right\| + \tfrac{ \beta_{k}^{k/2}}{6 \Lambda L_W}. % \\\leq&\ \todos{\tfrac{\xi_k}{3 \Lambda L_W} \left(\tfrac{\xi_k}{6}\right)+\tfrac{\xi_k}{3 \Lambda L_W} \left(\tfrac{1}{4} \tfrac{N-|S_k^H|}{N-|S_0^H|}\right)+\tfrac{1}{3 \Lambda L_W}\left(\tfrac{\beta_{k}^{k/2}}{2}\right) \le \tfrac{\xi_k}{3 \Lambda L_W} \rho_k  \leq \tfrac{\xi_{k+1} }{3 \Lambda L_W}. }
% \end{align*} 
% The rest of the proof %remainder of the 
% follows an induction argument similar to \cite[Lemma 2.2]{bollapragada2019exact}.
\end{proof}

\begin{remark}
The requirements for the rate of increase in the samples sizes of the gradient and Hessian estimates are similar to those in Theorem \ref{theorem.superlinear.convergence.stochastic} with the exception that the sample sizes are capped at $N$.  
% We have similar requirements for the increasing rate of sample sizes to estimate gradient and Hessian in the local superlinear convergence theorem for finite-sum problems \eqref{problem.expected_risk} as in Theorem \ref{theorem.superlinear.convergence.stochastic} \js{except that the sample sizes are capped at $N$.}  
%Specifically, the  sample size required to estimate gradient should converge towards $N$ \rb{faster than geometric} geometrically, while the  sample size required to estimate Hessian should exhibit a monotonic increase towards $N$. %Note that we do not need function evaluations for Theorem \ref{theorem.superlinear.convergence.deterministic} since we assume unit step length. 
If %the sampling conditions is changed into  
$\left|S_k^H\right| = \left|S_k^g\right| = N$ for all $k \in \mathbb{N}$, then this theorem recovers Theorem \ref{theorem.quadratic.det}.  
\end{remark}

% \subsection{Stochastic Conditions/Stochastic Problems/Stochastic Results}

% \newpage

%%%%%%%%%%%%%%%%%%%%%%%%
%%%%%%%%%%%%%%%%%%%%%%%%
\section{Convergence Analysis of Modified Line Search SQP Method}\label{sec.analysis}

In this section, we show that the proposed algorithm (Algorithm \ref{alg.SubsampledSQP.practical}) has an asymptotic superlinear convergence rate %eventually 
without the proximity and unit step size assumptions. Due to the challenges that arise with stochastic problems \eqref{problem.expected_risk} (discussed in Sections~\ref{sec.ourmethod} and \ref{sec.superlinear.stochastic}), %in this section 
we focus on solving finite-sum problems \eqref{problem.empirical_risk}. As a special case, we recover results for the deterministic problem \eqref{problem.deterministic}. Specifically, we show that 
after sufficiently many iterations, the unit step size is always accepted, and that the iterates approach a first-order critical point~\eqref{eq.first_order_stat}. The need for the proximity assumption is circumvented by the adaptivity of the parameter $\gamma_k$ (proximity check) in our proposed algorithm.

We start by presenting two fundamental results that are used in the analysis of SQP methods. The first lemma pertains to the relationship between the reduction in the model of the merit function and the search direction. 

\begin{lemma}
\label{lemma.d.upper.bound.by.Delta}
Suppose Assumptions \ref{ass.function} and \ref{ass.M.inverse.bound.subsampled} hold. 
There exists a constant $\kappa_l \in \mathbb{R}_{>0}$ such that  for all $k \in \mathbb{N}$, %the model reduction satisfies 
$\Delta l(x_{k}, \bar{\tau}_{k}, \bar{g}_{k}, \bar{d}_{k}) \geq \kappa_{l} \bar{\tau}_{k} \left\|\bar{d}_{k}\right\|^{2}$.
\end{lemma}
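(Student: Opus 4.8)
### Proof proposal for Lemma \ref{lemma.d.upper.bound.by.Delta}

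The plan is to exploit the orthogonal decomposition $\bar d_k = \bar u_k + v_k$ with $\bar u_k \in \operatorname{Null}(J_k)$ and $v_k \in \operatorname{Range}(J_k^T)$, exactly as in the proof of Lemma \ref{lemma.d.bound.imply.difference}, and to lower-bound $\Delta l(x_k,\bar\tau_k,\bar g_k,\bar d_k) = -\bar\tau_k \bar g_k^T \bar d_k + \|c_k\|_1$ separately in terms of $\|v_k\|$ and $\|\bar u_k\|$. First I would recall from the merit-parameter update \eqref{eq.merittrial.1a}--\eqref{eq.meritupdate.1a} the key inequality noted in the text, namely $\Delta l(x_k,\bar\tau_k,\bar g_k,\bar d_k) \ge \bar\tau_k \max\{\bar d_k^T \bar W_k \bar d_k, 0\} + \sigma \|c_k\|_1$. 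This already gives two useful bounds: $\Delta l \ge \sigma\|c_k\|_1 \ge \sigma \|c_k\|_2$, and $\Delta l \ge \bar\tau_k \bar d_k^T \bar W_k \bar d_k$ whenever that quantity is nonnegative.

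The next step is to relate $\|c_k\|$ and $\bar d_k^T \bar W_k \bar d_k$ to $\|v_k\|$ and $\|\bar u_k\|$. Since $J_k v_k = -c_k$ (from the second block of \eqref{eq.SQP.sto}) and the singular values of $J_k$ are bounded away from zero (Assumption \ref{ass.function}, via $\|(J_kJ_k^T)^{-1}J_k\| \le \kappa_{J^\dagger}$), we get $\|v_k\| = \|J_k^T(J_kJ_k^T)^{-1}c_k\| \le \kappa_{J^\dagger}\|c_k\|$, hence $\|c_k\| \ge \kappa_{J^\dagger}^{-1}\|v_k\|$, so $\Delta l \ge \sigma \kappa_{J^\dagger}^{-1}\|v_k\|$. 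For the null-space component, I would use Assumption \ref{ass.H}: since $J_k\bar u_k = 0$, we have $\bar u_k^T \bar W_k \bar u_k \ge \zeta\|\bar u_k\|^2$. Writing $\bar d_k^T\bar W_k\bar d_k = \bar u_k^T\bar W_k\bar u_k + 2\bar u_k^T\bar W_k v_k + v_k^T\bar W_k v_k$ and bounding the cross and $v_k$ terms using $\|\bar W_k\|\le \kappa_W$, I get $\bar d_k^T\bar W_k\bar d_k \ge \zeta\|\bar u_k\|^2 - 2\kappa_W\|\bar u_k\|\|v_k\| - \kappa_W\|v_k\|^2$.

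The main obstacle — and the place where the argument needs care — is that $\bar d_k^T\bar W_k\bar d_k$ need not be nonnegative, and even when it is, the lower bound above can be dominated by the negative cross term when $\|v_k\|$ is comparable to $\|\bar u_k\|$. The standard resolution is a case split on whether $\|\bar u_k\|$ is large or small relative to $\|v_k\|$: if $\|\bar u_k\| \le \rho\|v_k\|$ for a suitable threshold $\rho$ (to be chosen depending on $\zeta,\kappa_W,\kappa_{J^\dagger}$), then $\|\bar d_k\| \le (1+\rho)\|v_k\|$, so $\|\bar d_k\|^2 \lesssim \|v_k\|^2$ and the bound $\Delta l \ge \sigma\kappa_{J^\dagger}^{-1}\|v_k\| \gtrsim \|\bar d_k\|$ together with boundedness of $\|\bar d_k\|$ (Assumption \ref{ass.M.inverse.bound.subsampled} plus Assumption \ref{ass.function}, giving $\|\bar d_k\| \le \Lambda(\kappa_g + \kappa_{J^\dagger}\cdots) =: \kappa_{\bar d}$, or more simply $\|\bar d_k\|^2 \le \kappa_{\bar d}\|\bar d_k\|$) yields $\Delta l \gtrsim \|\bar d_k\|^2$; note we also need the $\bar\tau_k$ factor, which we keep since $\bar\tau_k \le \bar\tau_{-1}$ is bounded above, so $\|v_k\| \ge \bar\tau_k (\bar\tau_{-1})^{-1}\|v_k\|$ up to constants — actually cleaner: $\Delta l \ge \sigma\|c_k\|_1 \ge \sigma\bar\tau_k\|c_k\|_1$ is false in general, so instead I would bound $\|c_k\|_1 \ge \bar\tau_k\|c_k\|_1$ using $\bar\tau_k\le 1$ if the merit parameter is initialized below $1$, or absorb $\bar\tau_k \le \bar\tau_{-1}$ into $\kappa_l$. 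In the complementary case $\|\bar u_k\| > \rho\|v_k\|$, the term $\zeta\|\bar u_k\|^2$ dominates $2\kappa_W\|\bar u_k\|\|v_k\| + \kappa_W\|v_k\|^2$ once $\rho$ is chosen large enough (e.g. $\rho$ such that $\zeta\rho^2 > 2\kappa_W\rho + \kappa_W$ with room to spare), so $\bar d_k^T\bar W_k\bar d_k \ge c_0\|\bar u_k\|^2 \ge c_1\|\bar d_k\|^2$ for constants $c_0,c_1>0$, giving $\Delta l \ge \bar\tau_k \bar d_k^T\bar W_k\bar d_k \ge c_1\bar\tau_k\|\bar d_k\|^2$ directly. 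Taking $\kappa_l$ to be the minimum of the constants produced in the two cases completes the proof. I would double-check the threshold arithmetic and the handling of the $\bar\tau_k$ prefactor, but otherwise this is routine.
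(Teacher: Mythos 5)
Your proposal is correct and follows essentially the same route as the proof the paper defers to (Lemma 3.4 of \cite{berahas2021sequential}): the orthogonal decomposition $\bar d_k = \bar u_k + v_k$, the merit-update inequality $\Delta l \ge \bar\tau_k\max\{\bar d_k^T\bar W_k\bar d_k,0\}+\sigma\|c_k\|_1$, and a case split on the relative sizes of the tangential and normal components, with $\bar\tau_k\le\bar\tau_{-1}$ absorbed into $\kappa_l$. One small simplification: in the normal-dominated case you do not need a separate bound $\|\bar d_k\|\le\Lambda(\cdots)$ (which would require bounding $\|\bar g_k\|$ and $\|y_k\|$ beyond the stated assumptions), since the case hypothesis already gives $\|\bar d_k\|\le(1+\rho)\|v_k\|\le(1+\rho)\kappa_{J^\dagger}\kappa_c$ directly from Assumption~\ref{ass.function}.
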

\begin{proof}
    The result is independent of the step size scheme. Similar results are proven in \cite[Lemma 3.4]{berahas2021sequential} %and \cite[Lemma 4.1]{berahas2022accelerating}, 
    and, thus, for brevity we omit the proof.
\end{proof}

The next lemma shows that the merit parameter sequence is bounded below.% by a constant. 

\begin{lemma}\label{lemm.tau}
Suppose Assumptions \ref{ass.function}, \ref{ass.individual.f.subsampled}, %\ref{ass.H.subsampled}, 
and \ref{ass.M.inverse.bound.subsampled} hold.
There exists $\bar \tau_{\min} \in \mathbb{R}_{>0}$ such that 
%For any $\gamma_d \in \mathbb{R}_{>0}$, %there exists $ k_{\bar \tau}$ and $\bar \tau_{\min}$ such that for $k \ge  k_{\bar \tau}$, 
 $\bar \tau_k \ge \bar \tau_{\min}$ for all $k \in \mathbb{N}$. %$k \ge 0$. 
\label{lemma.tau.bound}
\end{lemma}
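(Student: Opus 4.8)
The plan is to show that the merit parameter $\bar\tau_k$ can only be decreased finitely often, and that whenever it is decreased the new value is bounded below by a fixed positive constant; since $\{\bar\tau_k\}$ is monotonically non-increasing by the update rule~\eqref{eq.meritupdate.1a} (with approximations replacing deterministic quantities), this yields the claim. The key is to bound $\bar\tau_k^{trial}$ from below whenever it is finite. Recall that $\bar\tau_k^{trial}$ is finite only when $\bar g_k^T\bar d_k + \max\{\bar d_k^T\bar W_k\bar d_k,0\} > 0$, in which case
\[
\bar\tau_k^{trial} = \frac{(1-\sigma)\|c_k\|_1}{\bar g_k^T\bar d_k + \max\{\bar d_k^T\bar W_k\bar d_k,0\}}.
\]
First I would bound the denominator from above. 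Using the orthogonal decomposition $\bar d_k = \bar u_k + v_k$ as in~\eqref{eq.orthogonal.sto}, the identity $v_k = -J_k^T(J_kJ_k^T)^{-1}c_k$, the bound $\|v_k\|\le\kappa_{J^\dagger}\|c_k\|_1$ (or an analogous $2$-norm bound), and Assumption~\ref{ass.H} (boundedness $\|\bar W_k\|\le\kappa_W$ and the positive-curvature condition $\bar u_k^T\bar W_k\bar u_k\ge\zeta\|\bar u_k\|^2$), one can show $\bar g_k^T\bar d_k + \max\{\bar d_k^T\bar W_k\bar d_k,0\}$ is bounded above by a constant multiple of $\|c_k\|_1$ plus a term controlled by $\|\bar d_k\|$ and $\|\bar g_k + J_k^Ty_k\|$; in turn $\|\bar d_k\|$ is bounded by $\Lambda$ times the right-hand side of~\eqref{eq.SQP.sto}, which by Assumption~\ref{ass.function} and Lemma~\ref{lemma.diff.g.g.bar} is bounded (note $\|\bar g_k\|\le\kappa_g+\epsilon_{g_0}$, $\|c_k\|_1\le\sqrt m\,\kappa_c$, etc.). The standard manipulation (see e.g.~\cite[Lemma 3.5]{berahas2021sequential}) gives $\bar\tau_k^{trial} \ge \dfrac{(1-\sigma)}{\,\kappa_{\text{num}}\,}$ for an explicit constant $\kappa_{\text{num}}>0$ depending only on the problem constants; set $\bar\tau_{\min} := \min\{\bar\tau_{-1},\,(1-\epsilon_\tau)(1-\sigma)/\kappa_{\text{num}}\}$.

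Then I would close the argument: by~\eqref{eq.meritupdate.1a}, either $\bar\tau_k = \bar\tau_{k-1}$, or $\bar\tau_k = (1-\epsilon_\tau)\bar\tau_k^{trial} \ge (1-\epsilon_\tau)(1-\sigma)/\kappa_{\text{num}} \ge \bar\tau_{\min}$. An induction on $k$ starting from $\bar\tau_{-1}\ge\bar\tau_{\min}$ then shows $\bar\tau_k\ge\bar\tau_{\min}$ for all $k\in\mathbb{N}$, as desired.

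The main obstacle I anticipate is the upper bound on the denominator $\bar g_k^T\bar d_k + \max\{\bar d_k^T\bar W_k\bar d_k,0\}$ in the regime where $\|c_k\|_1$ is small but nonzero — one must verify that the denominator scales at least linearly with $\|c_k\|_1$ (so the ratio does not blow up as $\|c_k\|_1\to 0$), which relies on the normal component $v_k$ carrying the constraint violation and the tangential curvature being controlled; this is exactly where Assumption~\ref{ass.H} and the boundedness in Assumption~\ref{ass.function} enter, and it is the one place where some care with the orthogonal splitting is needed rather than a routine estimate. The subsampling only affects this through the uniform bounds on $\bar g_k$ and $\bar W_k$, which Assumption~\ref{ass.individual.f.subsampled} and Lemma~\ref{lemma.diff.g.g.bar} supply, so the finite-sum setting introduces no essential new difficulty beyond replacing deterministic constants with their perturbed counterparts.
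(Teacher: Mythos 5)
Your proposal is correct and is essentially the same argument as the paper's: the paper simply notes via Lemma~\ref{lemma.diff.g.g.bar} that $\|\bar g_k - g_k\| \le 2(\kappa_1+\mu_1\kappa_g)$ uniformly and then invokes \cite[Proposition 3.18]{berahas2021sequential}, which is precisely the lower bound on $\bar\tau_k^{trial}$ (via the orthogonal splitting and the case analysis ensuring the denominator scales with $\|c_k\|_1$) that you reconstruct. Your sketch correctly identifies the one delicate point — that the denominator must vanish at least linearly with $\|c_k\|_1$ — and the subsampling indeed enters only through the uniform bounds on $\bar g_k$, exactly as in the paper.
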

\begin{proof}
%Since we need at least one sample to compute $\bar g_k$, b
By Lemma \ref{lemma.diff.g.g.bar}, 
$  \| \bar g_k - g_k  \|   \le 2 \left(\tfrac{N-1}{N}\right)(\kappa_1 + \mu_{1} \kappa_g )   \le 2 (\kappa_1 + \mu_{1} \kappa_g )$.
The desired result then follows by \cite[Proposition 3.18]{berahas2021sequential}. \end{proof}

It is well established that, under reasonable assumptions, the SQP method with the classical line search condition \eqref{eq.line_search_cond}
%\eqref{eq.line_search_cond_stoch}
is globally convergent~\cite{han1977globally, powell2006fast}. The next lemma implicitly proves such a result for our proposed method for the finite-sum setting \eqref{problem.empirical_risk}.  Specifically, there does not exist a constant $\gamma_{\tilde{k}}\in \mathbb{R}_{>0}$ such that $\|\bar d_k\| > \gamma_{\tilde{k}}$ for all $k > \tilde{k}$, 
%$\gamma_{k}\in \mathbb{R}_{>0}$ such that $\|\bar d_j\| > \gamma_k$ for all $j > k$, 
which implicitly proves global convergence. The result ensures that Line \ref{line.else} of Algorithm~\ref{alg.SubsampledSQP.practical} is triggered infinitely often.

\begin{lemma}
\label{lemma.sigma.exist}
Suppose Assumptions \ref{ass.function}, \ref{ass.individual.f.subsampled}, and \ref{ass.M.inverse.bound.subsampled} hold. 
Let the relaxation parameter in \eqref{eq.line_search_cond_stoch}
%and \eqref{eq.line_search_cond_modified_stoch}
%line  \ref{line: d_large.classical}  of Algorithm \ref{alg.SubsampledSQP.practical} 
be defined as
%Suppose that in line \ref{line: d_large.classical} of Algorithm \ref{alg.SubsampledSQP.practical}, the relaxation parameter is defined as 
\begin{equation}
\label{eq.epsilon.A_k.def}
        \epsilon_{A_k} =  \bar{\tau}_{k} \left( \kappa_{J^{\dag}} \kappa_c  + \zeta^{-1} ( \kappa_g + \epsilon_{g_k} + \kappa_W \kappa_{J^{\dag}} \kappa_c)\right)  \epsilon_{g_k}   + 2\bar \tau_{k} \epsilon_{f_k}, 
\end{equation}
where $\epsilon_{f_k}$ and $\epsilon_{g_k}$ are given in Lemma~\ref{lemma.diff.g.g.bar}. %as in \eqref{eq.diff.f.f.bar} and \eqref{eq.diff.g.g.bar}, respectively. 
Further, suppose that the sampling conditions in line \ref{line.obtain.estimate} of Algorithm \ref{alg.SubsampledSQP.practical} satisfy:
  \begin{itemize}
     \item[($\romannumeral1$)] $\sum_{k = 0}^{\infty}  \frac{N-\left|S_{k}^f\right|}{N}  < \infty$;  $\quad$ \textit{($\romannumeral2$)} $\sum_{k = 0}^{\infty}  \frac{N-\left|S_{k}^g\right|}{N}  < \infty$.
     % \item[($\romannumeral2$)] $\sum_{k = 0}^{\infty}  \frac{N-\left|S_{k}^g\right|}{N}  < \infty$.
 \end{itemize}
% Further suppose that there exists $\epsilon_H$ such that  $\epsilon_{H_k} \le \bar \epsilon_H < 2(1-\eta)\kappa_l$. 
% where 
% \begin{align*}
% \epsilon_{f_k} &=  2 \left(\frac{N-\left|S_{k}^f\right|}{N}\right)(\kappa_0 + \mu_{0} \kappa_f ) \\
%     \epsilon_{g_k} &=  2 \left(\frac{N-\left|S_{k}^g\right|}{N}\right)(\kappa_1 + \mu_{1} \kappa_g ) 
% \end{align*}
%  Suppose that $\epsilon_{H_k} \le \bar \epsilon_H < 2(1-\eta)\kappa_l$. 
% Suppose that \begin{align*}
%       \sum_{j= 0}^{\infty}  \frac{N-\left|S_{j}^f\right|}{N}  < \infty; \quad       \sum_{j= 0}^{\infty}  \frac{N-\left|S_{j}^g\right|}{N}  < \infty
%  \end{align*}
% For any $\gamma_d > 0$, 
For any iteration $\tilde k \in \mathbb{N}$, if $\|\bar d_{\tilde k}\| > \gamma_{\tilde k}$, then there must exist $k > {\tilde k}$ such that %we have 
$\|\bar d_k\| \le \gamma_{\tilde k}$. %for any iteration $k \in \mathbb{N}$.  
\end{lemma}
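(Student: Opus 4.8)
The plan is to argue by contradiction: suppose there exists $\tilde k \in \mathbb{N}$ with $\|\bar d_{\tilde k}\| > \gamma_{\tilde k}$ such that $\|\bar d_k\| > \gamma_{\tilde k}$ for \emph{all} $k > \tilde k$. Since $\gamma_k$ only decreases in Line~\ref{line: d_small.sigma.reduce}, which is reached only when $\|\bar d_k\| \le \gamma_k$, under this assumption $\gamma_k$ would remain fixed at $\gamma_{\tilde k}$ for all $k \ge \tilde k$, so the algorithm would always execute Line~\ref{line: d_large.classical}, i.e., the classical (relaxed) line search condition \eqref{eq.line_search_cond_stoch} is used at every iteration $k \ge \tilde k$. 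I would then run the standard global-convergence argument for line search SQP methods in the inexact/stochastic setting: summing \eqref{eq.line_search_cond_stoch} over $k$ and using Lemma~\ref{lemma.d.upper.bound.by.Delta} (so $\Delta l \ge \kappa_l \bar\tau_k \|\bar d_k\|^2$) together with the lower bound $\bar\tau_k \ge \bar\tau_{\min}$ from Lemma~\ref{lemma.tau.bound} to obtain a contradiction with $\|\bar d_k\| > \gamma_{\tilde k}$ for all $k$.

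More concretely, first I would verify that a unit-or-smaller step size satisfying \eqref{eq.line_search_cond_stoch} always exists (so Line~\ref{line: d_large.classical} terminates): using Assumption~\ref{ass.function} (Lipschitz gradients/Jacobian), a Taylor expansion of $\phi(\cdot,\bar\tau_k)$ along $\bar d_k$, the identity $c_k + J_k \bar d_k = 0$, and the definition of $\Delta l$, one shows $\phi(x_k + \alpha \bar d_k, \bar\tau_k) \le \phi(x_k,\bar\tau_k) - \alpha \Delta l(x_k,\bar\tau_k,\bar g_k,\bar d_k) + O(\alpha^2 \|\bar d_k\|^2)$, plus error terms controlled by $\epsilon_{f_k}, \epsilon_{g_k}$ which are exactly absorbed by $\epsilon_{A_k}$ as defined in \eqref{eq.epsilon.A_k.def}; hence for $\alpha$ bounded below by a positive constant (depending on $\eta$, $\kappa_l$, $\bar\tau_{\min}$, and the Lipschitz constants) the condition holds, and the backtracking loop produces $\bar\alpha_k \ge \underline\alpha > 0$. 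Then, telescoping \eqref{eq.line_search_cond_stoch} from $\tilde k$ to $K$ gives
\begin{equation*}
\bar\phi(x_{\tilde k},\bar\tau_{\tilde k}) - \bar\phi(x_{K+1},\bar\tau_{K+1}) \ge \sum_{k=\tilde k}^{K} \Big( \eta \bar\alpha_k \Delta l(x_k,\bar\tau_k,\bar g_k,\bar d_k) - \epsilon_{A_k} \Big),
\end{equation*}
where I also need to handle the fact that $\bar\tau_k$ changes: since $\{\bar\tau_k\}$ is nonincreasing and bounded below, $\bar\tau_k f_k$ stays bounded (using $|f|\le\kappa_f$), so the left side is bounded above uniformly in $K$. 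The sampling conditions $(\romannumeral1)$ and $(\romannumeral2)$ together with Lemma~\ref{lemma.diff.g.g.bar} make $\sum_k \epsilon_{f_k} < \infty$ and $\sum_k \epsilon_{g_k} < \infty$, and hence $\sum_k \epsilon_{A_k} < \infty$ (using boundedness of $\bar\tau_k$, $\epsilon_{g_k}$, and the constants in \eqref{eq.epsilon.A_k.def}). Therefore $\sum_{k=\tilde k}^\infty \bar\alpha_k \Delta l(x_k,\bar\tau_k,\bar g_k,\bar d_k) < \infty$, so $\Delta l(x_k,\bar\tau_k,\bar g_k,\bar d_k) \to 0$; combining with Lemma~\ref{lemma.d.upper.bound.by.Delta} and $\bar\tau_k \ge \bar\tau_{\min}$ yields $\|\bar d_k\| \to 0$, contradicting $\|\bar d_k\| > \gamma_{\tilde k} > 0$ for all $k > \tilde k$.

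The main obstacle I anticipate is the bookkeeping around the noise/relaxation terms: one must show that the errors introduced by replacing $f_k,g_k$ with $\bar f_k,\bar g_k$ in both the merit value $\bar\phi$ and in the direction $\bar d_k$ (which enters $\Delta l$ and the curvature terms) are genuinely dominated by $\epsilon_{A_k}$ as chosen in \eqref{eq.epsilon.A_k.def} — in particular the $\zeta^{-1}(\kappa_g + \epsilon_{g_k} + \kappa_W\kappa_{J^\dagger}\kappa_c)\epsilon_{g_k}$ piece comes from bounding $\|\bar d_k - d_k\|$ and $\|\bar u_k\|$ via the orthogonal decomposition and Assumptions~\ref{ass.H}, \ref{ass.M.inverse.bound.subsampled}, \ref{ass.M.inverse.bound.subsampled}. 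A secondary technical point is ensuring the lower bound $\underline\alpha$ on the accepted step size is uniform (independent of $k$), which requires the merit parameter lower bound $\bar\tau_{\min}$ and the uniform constants from Assumption~\ref{ass.function}. These are all routine but need care; much of the argument parallels the global-convergence analyses in \cite{berahas2021sequential,han1977globally,powell2006fast}, adapted to the relaxed condition \eqref{eq.line_search_cond_stoch} and the fact that $\gamma_k$ is frozen under the contradiction hypothesis.
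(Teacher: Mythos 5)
Your proposal is correct and follows essentially the same route as the paper's proof: contradiction under the hypothesis that $\|\bar d_k\| > \gamma_{\tilde k}$ for all $k>\tilde k$ (so $\gamma_k$ is frozen and the classical relaxed condition \eqref{eq.line_search_cond_stoch} is always used), a uniform lower bound $\tilde\alpha_{\min}$ on the accepted step size via Taylor expansion, Lemma~\ref{lemma.d.upper.bound.by.Delta}, and Lemma~\ref{lemm.tau}, followed by telescoping the merit-function decrease with the $(\bar\tau_{k+1}-\bar\tau_k)f_{\inf}$ bookkeeping and summability of $\epsilon_{A_k}$ to force $\|\bar d_k\|\to 0$. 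The only cosmetic difference is that the paper concludes $\sum_k\|\bar d_k\|^2<\infty$ directly rather than passing through $\Delta l\to 0$.
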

{\allowdisplaybreaks
\begin{proof}
    We derive the desired result  by contradiction. Suppose that $\|\bar d_k\| > \gamma_{\tilde k} $ for all $k > {\tilde k} $, the algorithm will always select a step size using the classical line search condition \eqref{eq.line_search_cond_stoch} for all $k > {\tilde k} $. %starting from iteration $k$. 
    By Assumption~\ref{ass.function}, \eqref{eq.orthogonal.sto}, \eqref{eq.uk.sto}, and \eqref{eq.epsilon.A_k.def}, and the fact that $\bar \alpha_k \le 1$,
\begin{align*}
   &\ \phi(x_{k}+\bar{\alpha}_{k} \bar{d}_{k}, \bar{\tau}_{k})-\phi(x_{k}, \bar{\tau}_{k})\\ 
   \leq &\  \bar{\alpha}_{k} \bar{\tau}_{k} g_{k}^{T} \bar{d}_{k}- \bar{\alpha}_{k} \left\|c_{k}\right\|_{1}+\tfrac{1}{2}\left(\bar{\tau}_{k} L_1 +\Gamma_1 \right) \bar{\alpha}_{k}^{2}\left\|\bar{d}_{k}\right\|^{2}
    \\
    %= &\ \todos{\bar{\alpha}_{k} \bar{\tau}_{k}\bar g_{k}^{T} \bar{d}_{k} -  \bar{\alpha}_{k} \left\|c_{k}\right\|_{1} + \tfrac{1}{2}\left(\bar{\tau}_{k} L_1+\Gamma_1\right) \bar{\alpha}_{k}^{2}\left\|\bar{d}_{k}\right\|_{2}^{2} + \bar{\alpha}_{k} \bar{\tau}_{k} \bar{d}_{k}^T (  g_{k} - \bar g_{k}) } \\ 
    = &\  - \bar{\alpha}_{k}  \Delta l(x_k,\bar\tau_k,\bar{g}_k,\bar{d}_k) +  \bar{\alpha}_{k} \bar{\tau}_{k} \bar{d}_{k}^T (g_{k} - \bar g_{k}) + \tfrac{1}{2}\left(\bar \tau_k L_1 + \Gamma_1\right) \bar{\alpha}_{k}^{2}\left\|\bar{d}_{k}\right\|^{2} \\ \leq &\   - \bar{\alpha}_{k} \left (1- \tfrac{\bar \tau_k L_1 + \Gamma_1 }{ 2 \bar \tau_k \kappa_l } \bar{\alpha}_{k} \right) \Delta l(x_k,\bar\tau_k,\bar{g}_k,\bar{d}_k) +  \bar{\alpha}_{k} \bar{\tau}_{k} \bar{d}_{k}^T (  g_{k} - \bar g_{k}) \\ 
    %\leq &\   - \bar{\alpha}_{k} \left (1- \tfrac{\bar \tau_k L_1 + \Gamma_1 }{ 2 \bar \tau_k \kappa_l } \bar{\alpha}_{k} \right) \Delta l(x_k,\bar\tau_k,\bar{g}_k,\bar{d}_k) \\ 
    % & \quad +  \bar{\tau}_{k} \left( \kappa_{J^{\dag}} \kappa_c  + \zeta^{-1} ( \kappa_g + \epsilon_{g_k} + \kappa_W \kappa_{J^{\dag}} \kappa_c) \right)  \epsilon_{g_k}\\ 
    \leq &\   - \bar{\alpha}_{k} \left (1- \tfrac{\bar \tau_k L_1 + \Gamma_1 }{ 2 \bar \tau_k \kappa_l } \bar{\alpha}_{k} \right) \Delta l(x_k,\bar\tau_k,\bar{g}_k,\bar{d}_k) + \epsilon_{A_k} -  2\bar \tau_{k} \epsilon_{f_k}.
\end{align*}
When $\bar {\alpha}_k \le \min \left \{1, \tfrac{2\bar{\tau}_{k} \kappa_l (1-\eta)}{\bar{\tau}_{k} L_1 +\Gamma_1 } \right \}$, by Lemma~\ref{lemma.diff.g.g.bar}, condition~\eqref{eq.line_search_cond_stoch} is satisfied. Let  $\tilde\alpha_{\min} := \left\{1, \nu_{\alpha}\tfrac{2\bar{\tau}_{\min} \kappa_l (1-\eta)}{\bar{\tau}_{\min} L_1+\Gamma_1}\right \}$. 
Since $\tfrac{2\bar{\tau}_{k} \kappa_l (1-\eta)}{\bar{\tau}_{k} L_1+\Gamma_{1}} \ge  \tfrac{2\bar{\tau}_{\min} \kappa_l (1-\eta)}{\bar{\tau}_{\min} L_1+\Gamma_{1}} $, 
\eqref{eq.line_search_cond_stoch} is satisfied with $\bar \alpha_k = \tilde \alpha_{\min}$. Hence, for all $k \ge {\tilde k} $, 
\begin{align*}
%\label{eq.tb.telescope}
   &\  \bar\phi(x_{k}+ \bar\alpha_k \bar {d}_{k}, \bar{\tau}_{k+1})-  \bar\phi(x_{k}, \bar{\tau}_{k}) \\  %\leq 
   = &\  \bar  \phi(x_{k}+ \bar\alpha_k \bar{d}_{k},\bar {\tau}_{k+1}) - \bar  \phi(x_{k}+ \bar\alpha_k \bar{d}_{k}, \bar{\tau}_{k}) +    \bar\phi(x_{k}+ \bar\alpha_k \bar{d}_{k},\bar {\tau}_{k})- \bar \phi(x_{k},\bar {\tau}_{k})  \\  \leq &\     ( \bar\tau_{k+1} -  \bar\tau_{k}) f_{\inf} - \eta \tilde \alpha_{\min} \Delta l(x_k,\bar\tau_k,\bar{g}_k,\bar{d}_k)  + \epsilon_{A_k}. %+   ( \bar\tau_{k+1} -  \bar\tau_{k}) f_{\inf}. 
\end{align*}
For any $K> {\tilde k}$, summing over %telescoping the inequalities above for 
$k =  \tilde k, \cdots, K$, by Lemma~\ref{lemm.tau}%it follows that 
\begin{equation}\label{eq.final}
\begin{aligned}
   &\ \sum_{k= \tilde k}^{K}  \bar \phi(x_{k}+\bar {\alpha}_{k} \bar{d}_{k}, \bar{\tau}_{k+1}) - \bar  \phi(x_{k},\bar {\tau}_{k})   \\  \leq &\   -  \eta \tilde \alpha_{\min} \sum_{k=\tilde k}^{K} \Delta l(x_k,\bar\tau_k,\bar{g}_k,\bar{d}_k) +  \sum_{k=\tilde k}^{K} ( \bar\tau_{k+1} -  \bar\tau_{k}) f_{\inf} 
 + \sum_{k=\tilde k}^{K}  \epsilon_{A_k}    \\  \leq &\    -  \eta \tilde \alpha_{\min} \kappa_l \bar \tau_{\min} \sum_{k=\tilde k}^{K} \| \bar d_k\|^2+  \bar  \tau_{-1}   |f_{\inf} |  + \sum_{k=\tilde k}^{K}  \epsilon_{A_k}.
\end{aligned}
\end{equation}
By Assumption \ref{ass.function} the merit function is bounded below, and by conditions ($\romannumeral1$) and ($\romannumeral2$) %it follows that %the correction term  
the relaxation term $\epsilon_{A_k}$ is summable. Thus, by \eqref{eq.final}, $\sum_{k={\tilde k}}^{K} \|\bar  d_k\|^2  <  \infty$, % $\sum_{k={\tilde k}}^{K} \|\bar  d_k\|^2 \le \tfrac{ \bar \phi(x_{\tilde k},\bar  {\tau}_{\tilde k}) - \bar \phi(x_{K}, \bar {\tau}_{K})  +  \bar  \tau_{-1}   |f_{\inf} |  + \sum_{k=\tilde k}^{K}  \epsilon_{A_k} }{\kappa_l \bar  \tau_{\min} \eta \tilde \alpha_{\min} } <  \infty$, 
which leads to a contradiction, and the desired result follows.  
\end{proof}}

% \js{Comments about the sampling condition 
% \begin{equation}
%     \label{eq.sample.issue}
%     \sum_{k = 0}^{\infty}  \tfrac{N-\left|S_{k}^f\right|}{N}  < \infty
% \end{equation}
% (1) It follows by the d'Alembert's ratio test that a sufficient condition  of \eqref{eq.sample.issue} is $$\lim_{k \to \infty}\frac{N - |S_{k+1}^f|}{N - |S_{k}^f|} < 1.$$ We can use this to replace the sampling conditions ($\romannumeral1$), ($\romannumeral2$). 
% \\
% (2) If we want to make conditions like $\lim_{k \to \infty} |S_{k}^f| = N$, one issue is that $|S_{k}^f|$ is an integer. So it doesn't make sense to have such conditions. A more reasonable assumption is that after sufficiently many iterations, we have  $|S_{k}^f| = N$. 
% }

The sampling condition ($\romannumeral1$) (and ($\romannumeral2$) with $|S_{k}^g|$) can be replaced by a sufficient condition following  d'Alembert's ratio test \cite{whittaker1920course}, $\lim_{k \to \infty}\sfrac{(N - |S_{k+1}^f|)}{(N - |S_{k}^f|)} < 1$. Another surrogate condition is to ensure that $|S_{k}^f| = N$ after sufficiently many iterations. %Similar surrogates can be used for condition ($\romannumeral2$). 
%\js{The required sampling conditions ($\romannumeral1$) can be replaced by its sufficient condition $\lim_{k \to \infty}\frac{N - |S_{k+1}^f|}{N - |S_{k}^f|} < 1$ following the d'Alembert's ratio test \cite{whittaker1920course}. Another surrogate condition is that $|S_{k}^f| = N$ after sufficient many iterations. ($\romannumeral2$) could also be replaced by similar surrogate conditions.  }
Lemma \ref{lemma.sigma.exist} shows that Algorithm~\ref{alg.SubsampledSQP.practical} is guaranteed to eventually trigger the \emph{else} condition (Line~\ref{line.else}). Since \eqref{eq.line_search_cond_stoch} is not guaranteed to be satisfied with the unit step size (Line~\ref{line.alpha.1} of Algorithm~\ref{alg.SubsampledSQP.practical}), we need to examine the step sizes returned by the modified line search condition \eqref{eq.line_search_cond_modified_stoch}. Specifically, we show that there exists a lower bound for $\bar \alpha_k$ returned by %line search condition 
\eqref{eq.line_search_cond_modified_stoch}, and  when $\|\bar d_k\|$ is sufficiently small, $\bar \alpha_k = 1$ satisfies \eqref{eq.line_search_cond_modified_stoch}.

% As proved by lemma \ref{lemma.sigma.exist}, the algorithm will  enter line \ref{line.else} infinitely often. Since line \ref{line.alpha.1} is not guaranteed to be satisfied, we need to analyze the property of the step size returned by line search condition \eqref{eq.line_search_cond_modified}. Specifically, we show that (1) there exists a lower bound for $\bar \alpha_k$ returned by  line search condition \eqref{eq.line_search_cond_modified}; (2) when $\|\bar d_k\|$ is sufficiently small, $\bar \alpha_k = 1$ satisfies \eqref{eq.line_search_cond_modified}. 

% Two results are  proved in the next lemma. First, the step size generated by condition \eqref{eq.line_search_cond_modified}  is lower bounded by a positive constant. Second, when $\|w_k - w^*\|$ is small enough, the step length becomes one. 

%%%%%%%%%%%%%%%%%%%%%%%%%%%%%%%%%%%%%%%%%%%%%%%%%%%%%%%%%%%%%

%%%%%%%%%%%%%%%%%%%  Lemma 6.4   %%%%%%%%%%%%%%%%%%%  

%%%%%%%%%%%%%%%%%%%%%%%%%%%%%%%%%%%%%%%%%%%%%%%%%%%%%%%%%%%%

\begin{lemma} 
Suppose the assumptions and conditions in Lemma~\ref{lemma.sigma.exist} hold.  %Suppose Assumptions \ref{ass.function}, \ref{ass.individual.f.subsampled},  and \ref{ass.H.subsampled} %\ref{ass.M.inverse.bound.subsampled} hold. Suppose that $\epsilon_{A_k}$ in \eqref{eq.line_search_cond_modified} is defined as in \eqref{eq.epsilon.A_k.def}. 
% \begin{align*}
%     \epsilon_{A_k} =  \bar{\tau}_{k} \left( \kappa_{J^{\dag}} \kappa_c  + \zeta^{-1} ( \kappa_g + \epsilon_{g_k} + \kappa_W \kappa_{J^{\dag}} \kappa_c)\right)  \epsilon_{g_k}   + 2\bar \tau_{k} \epsilon_{f_k}
% \end{align*}
% where 
% \begin{align*}
% \epsilon_{f_k} &=  2 \left(\frac{N-\left|S_{k}^f\right|}{N}\right)(\kappa_0 + \mu_{0} \kappa_f ) \\
%     \epsilon_{g_k} &=  2 \left(\frac{N-\left|S_{k}^g\right|}{N}\right)(\kappa_1 + \mu_{1} \kappa_g ) 
% \end{align*}
%Further suppose 
Additionally, let $|S_k^H| \ge \left( 1- \tfrac{\bar \epsilon_H}{2(\kappa_2 + \mu_{2} \kappa_H)} \right) N $ where $\bar \epsilon_H \in \left(0,  2(1-\eta)\kappa_l\right)$. 
%$\bar \epsilon_H < 2(1-\eta)\kappa_l$, such that $|S_k^H| \ge \left( 1- \tfrac{\bar \epsilon_H}{2(\kappa_2 + \mu_{2} \kappa_H)} \right) N $. 
For all %every 
$k\in \mathbb{N}$, \eqref{eq.line_search_cond_modified_stoch} is satisfied with %the step size 
$\bar\alpha_{k}  \ge \min \left \{1, \nu_{\alpha} \sqrt{\tfrac{6 \bar{\tau}_{\min} \kappa_l(1-\eta) - 3 \bar{\tau}_{\min} \bar \epsilon_H}{ (\bar{\tau}_{\min} L_2 +\Gamma_2 )\gamma_0 }} \right \}:= \bar \alpha_{\min}$. 
% \begin{align*}
%     \bar\alpha_{k}  \ge \min \left \{1, \nu_{\alpha} \sqrt{\tfrac{6 \bar{\tau}_{\min} \kappa_l(1-\eta) - 3 \bar{\tau}_{\min} \bar \epsilon_H}{ (\bar{\tau}_{\min} L_2 +\Gamma_2 )\gamma_0 }} \right \}:= \bar \alpha_{\min}. 
% \end{align*} 
%Specifically, % when $\|w_k  - w^*\|\le \frac{ 6 \bar{\tau}_{\min} \kappa_l(1-\eta) - 3 \bar{\tau}_{\min} \bar \epsilon_H}{( \bar \tau_{\min} L_2 + \Gamma_2) \Lambda \left ( L_{\nabla \mathcal{L}} +\min \left \{ L_W, \mu_M/4 \right\} \right ) }$, %
Moreover, if $\|\bar d_k\| \le \frac{ 6 \bar{\tau}_{\min} \kappa_l(1-\eta) - 3 \bar{\tau}_{\min} \bar \epsilon_H}{\bar{\tau}_{\min} L_2 +\Gamma_2}$, \eqref{eq.line_search_cond_modified_stoch} is satisfied with $\bar \alpha_k = 1$.
\label{lemma.dist.small.imply.alpha.1}  
\end{lemma}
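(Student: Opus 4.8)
The plan is to mimic the structure of the proof of Lemma~\ref{lemma.sigma.exist}, but now carrying the extra second-order terms that appear in the modified condition~\eqref{eq.line_search_cond_modified_stoch}. First I would expand $\bar\phi(x_k+\bar\alpha_k\bar d_k,\bar\tau_k)-\bar\phi(x_k,\bar\tau_k)$ using a second-order Taylor expansion of $f$ and $c_i$ with the Lipschitz bounds on $H$ and on $\nabla^2 c_i$ from Assumption~\ref{ass.function} (constants $L_2$ and $\Gamma_2$). Crucially, the step $\bar d_k$ satisfies $c_k + J_k\bar d_k = 0$, so the linear part of the constraint expansion cancels $\|c_k\|_1$, leaving a quadratic term $\tfrac12\bar\alpha_k^2 \sum_i |\bar d_k^T\nabla^2 c_i(x)\bar d_k|$ plus an $O(\bar\alpha_k^2\|\bar d_k\|^2)$ remainder controlled by $\Gamma_2\|\bar d_k\|$; similarly for the objective I would keep the term $\tfrac12\bar\alpha_k^2\bar\tau_k\bar d_k^T H_k\bar d_k$ (note the \emph{true} $H_k$, not $\bar H_k$, appears in~\eqref{eq.line_search_cond_modified_stoch}) plus an $L_2\|\bar d_k\|$ remainder. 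After also replacing $\bar g_k$ by $g_k$ at the cost of the $\epsilon_{g_k},\epsilon_{f_k}$ terms that assemble into $\epsilon_{A_k}$ exactly as in Lemma~\ref{lemma.sigma.exist}, and using Lemma~\ref{lemma.d.upper.bound.by.Delta} to write $\|\bar d_k\|^2 \le (\kappa_l\bar\tau_k)^{-1}\Delta l(x_k,\bar\tau_k,\bar g_k,\bar d_k)$, the three second-order terms from the Taylor expansion cancel against the two second-order terms on the right side of~\eqref{eq.line_search_cond_modified_stoch}, and I am left with a cubic-in-$\bar\alpha_k$ remainder of the form $\tfrac16(\bar\tau_k L_2 + \Gamma_2)\bar\alpha_k^3\|\bar d_k\|^3$ that I must dominate by the slack $(1-\eta)\bar\alpha_k\Delta l$.

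The second ingredient is where the Hessian sampling condition enters. Because~\eqref{eq.line_search_cond_modified_stoch} uses $H_k$ while the algorithm builds $\bar W_k$ from $\bar H_k$, the search direction $\bar d_k$ solves a system with $\bar W_k$, and I will need $\bar d_k^T H_k \bar d_k \ge \bar d_k^T \bar H_k \bar d_k - \|H_k - \bar H_k\|\|\bar d_k\|^2 \ge -(\epsilon_{H_k})\|\bar d_k\|^2$ together with the reduction inequality $\Delta l(x_k,\bar\tau_k,\bar g_k,\bar d_k) \ge \bar\tau_k\max\{\bar d_k^T\bar W_k\bar d_k,0\} + \sigma\|c_k\|_1 \ge \bar\tau_k\kappa_l\|\bar d_k\|^2$. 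The hypothesis $|S_k^H| \ge (1 - \bar\epsilon_H/(2(\kappa_2+\mu_2\kappa_H)))N$ forces $\epsilon_{H_k} \le \bar\epsilon_H$ by Lemma~\ref{lemma.diff.g.g.bar}, and the constraint $\bar\epsilon_H \in (0, 2(1-\eta)\kappa_l)$ is exactly what guarantees a positive residual slack $2(1-\eta)\kappa_l - \bar\epsilon_H > 0$ after absorbing the Hessian error into the $\Delta l$ term. Combining everything, the inequality~\eqref{eq.line_search_cond_modified_stoch} reduces to requiring $\tfrac16(\bar\tau_k L_2+\Gamma_2)\bar\alpha_k^2\|\bar d_k\|^3 \le (1-\eta)\kappa_l\bar\tau_k\|\bar d_k\|^2 - \tfrac12\bar\tau_k\bar\epsilon_H\|\bar d_k\|^2$ (roughly), i.e.\ $\bar\alpha_k^2 \le \tfrac{6\bar\tau_k(\kappa_l(1-\eta) - \tfrac12\bar\epsilon_H)}{(\bar\tau_k L_2+\Gamma_2)\|\bar d_k\|}$; bounding $\bar\tau_k \ge \bar\tau_{\min}$ via Lemma~\ref{lemma.tau.bound} (and checking the ratio is monotone in $\bar\tau_k$, as in Lemma~\ref{lemma.sigma.exist}) and $\|\bar d_k\| \le \gamma_0$ (since $\gamma_k$ is non-increasing and by the choice of $\gamma_0$) yields the stated $\bar\alpha_{\min}$ after the backtracking factor $\nu_\alpha$. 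The ``moreover'' claim then follows immediately: if $\|\bar d_k\|$ is small enough that the right-hand side of the $\bar\alpha_k^2$ bound exceeds $1$—precisely when $\|\bar d_k\| \le \tfrac{6\bar\tau_{\min}(\kappa_l(1-\eta) - \tfrac12\bar\epsilon_H)}{\bar\tau_{\min}L_2+\Gamma_2}$—then $\bar\alpha_k = 1$ is admissible.

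I expect the main obstacle to be organizing the Taylor-expansion bookkeeping cleanly: there are five second-order terms (one from $f$, $m$ from the $c_i$, two placed on the right-hand side of~\eqref{eq.line_search_cond_modified_stoch}, and the $\bar H_k$-vs-$H_k$ discrepancy) that must be matched up with the correct signs so that what remains is genuinely a higher-order ($\bar\alpha_k^3\|\bar d_k\|^3$) quantity rather than an uncontrolled $O(\|\bar d_k\|^2)$ term; a sign error anywhere collapses the argument. A secondary subtlety is that $\bar d_k$ is defined via $\bar W_k$, not $H_k$, so the cancellation of the objective's second-order term is not exact and genuinely requires the Hessian-accuracy hypothesis $\epsilon_{H_k} \le \bar\epsilon_H$ and the slack condition $\bar\epsilon_H < 2(1-\eta)\kappa_l$—I would make sure to highlight that this is precisely why those two hypotheses appear in the statement. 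The remaining steps (invoking Lemmas~\ref{lemma.d.upper.bound.by.Delta}, \ref{lemma.tau.bound}, \ref{lemma.diff.g.g.bar}, the monotonicity in $\bar\tau_k$, and $\|\bar d_k\|\le\gamma_0$) are routine and parallel to Lemma~\ref{lemma.sigma.exist}.
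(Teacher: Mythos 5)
Your proposal is correct and follows essentially the same route as the paper's proof: a third-order Taylor expansion of the merit function in which the exact second-order terms cancel against the right-hand side of \eqref{eq.line_search_cond_modified_stoch}, the $\bar H_k$-versus-$\nabla^2 f(x_k)$ discrepancy is absorbed via $\epsilon_{H_k}\le\bar\epsilon_H<2(1-\eta)\kappa_l$, the cubic remainder is dominated using Lemma~\ref{lemma.d.upper.bound.by.Delta} and $\|\bar d_k\|\le\gamma_k\le\gamma_0$, and monotonicity in $\bar\tau_k$ plus the backtracking factor $\nu_\alpha$ yield $\bar\alpha_{\min}$, with the unit-step claim obtained exactly as you describe by setting $\bar\alpha_k=1$ before the $\gamma_0$ substitution. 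The resulting thresholds match the paper's verbatim.
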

\begin{proof}
When $|S_k^H| \ge \left( 1- \frac{\bar \epsilon_H}{2(\kappa_2 + \mu_{2} \kappa_H)} \right) N $, by Lemma~\ref{lemma.diff.g.g.bar}, %\eqref{eq.diff.H.H.bar},} 
it follows that $\epsilon_{H_k} = 2\tfrac{N-|S_k^H|}{N} (\kappa_2 + \mu_{2} \kappa_H)   \le \bar \epsilon_{H} < 2(1-\eta)\kappa_l$, 
% \begin{align*}
%     \epsilon_{H_k} = 2\tfrac{N-|S_k^H|}{N} (\kappa_2 + \mu_{2} \kappa_H)   \le \bar \epsilon_{H} < 2(1-\eta)\kappa_l,
% \end{align*}    
for all $k \in \mathbb{N}$. 
Thus, it follows %by  \eqref{eq.taylor.2nd} %Lemma \ref{lem.1} 
that
{\allowdisplaybreaks
\begin{align*}
%\label{eq.inequality.model.reduction.showing.stepsize}
      &\    \bar \phi(x_{k}+\bar \alpha_k \bar{d}_{k}, \bar{\tau}_{k})- \bar \phi(x_{k}, \bar{\tau}_{k}) \\
    \leq &\  \bar{\alpha}_{k} \bar{\tau}_{k} g_{k}^{T} \bar{d}_{k}- \bar{\alpha}_{k}\left\| c_{k}\right\|_{1}+\tfrac{1}{2}  \bar{\alpha}_{k}^2 \bar \tau_{k} \bar d_k^T  \nabla^2 f(x_k) \bar d_k  \\ 
    &\  +  \tfrac{1}{2}  \bar{\alpha}_{k}^2  \sum_{i=1}^m \left|\bar d_k^T \nabla^2 c_i(x) \bar d_k  \right| + \tfrac16  \bar{\alpha}_{k}^3 \left( \bar{\tau}_{k} L_2 +\Gamma_2 \right)  \|\bar d_k \|^3  + 2 \bar \tau_{k} \epsilon_{f_k}  \\ 
    = &\ \bar{\alpha}_{k} \left(  \bar{\tau}_{k} \bar g_{k}^{T} \bar{d}_{k} -   \left\|c_{k}\right\|_{1} \right) + 
    \tfrac{1}{2}\bar{\alpha}_{k}^2 \bar{\tau}_{k} \bar d_k^T \bar H_k \bar d_k  + \tfrac{1}{2}\bar{\alpha}_{k}^2  \sum_{i=1}^m \left|\bar d_k^T \nabla^2 c_i(x)\bar d_k  \right| %\tfrac{\bar{\alpha}_{k}^2}{2} \left( \bar{\tau}_{k} \bar d_k^T \bar H_k \bar d_k  +   \sum_{i=1}^m \left|\bar d_k^T \nabla^2 c_i(x)\bar d_k  \right| \right) 
    + \bar{\alpha}_{k} \bar{\tau}_{k} (g_k - \bar g_k )^T \bar d_k   \\ 
    &\  + \tfrac12 \bar{\alpha}_{k}^2 \bar{\tau}_{k}  \bar d_k^T ( \nabla^2 f(x_k) - \bar H_k )\bar d_k   +  \tfrac16 \bar{\alpha}_{k}^3  \left( \bar{\tau}_{k} L_2 +\Gamma_2 \right)  \|\bar d_k \|^3 + 2 \bar \tau_{k} \epsilon_{f_k} \\
    \leq &\ -  \bar{\alpha}_{k}  \Delta l(x_k,\bar\tau_k,\bar{g}_k, \bar{d}_k) + \tfrac12 \bar{\alpha}_{k}^2 \bar{\tau}_{k}  \bar \epsilon_H \|\bar d_k\|^2      +  \tfrac16 \bar{\alpha}_{k}^3  \left( \bar{\tau}_{k} L_2 +\Gamma_2 \right)  \|\bar d_k \|^3 \\ &\  +  \tfrac12 \bar{\alpha}_{k}^2 \bar{\tau}_{k} \bar d_k^T \bar H_k \bar d_k  + \tfrac{1}{2}  \bar{\alpha}_{k}^2 \sum_{i=1}^m \left|\bar d_k^T \nabla^2 c_i(x)\bar d_k  \right| + \epsilon_{A_k} \\
    \leq &\  -\bar \alpha_k \left(1 - \tfrac{  \bar \epsilon_H}{2\kappa_l} \bar \alpha_k  -\tfrac{\bar \tau_k L_2 + \Gamma_2 }{ 6 \bar \tau_k \kappa_l } \bar{\alpha}_{k}^2 \|\bar d_k\|  \right)  \Delta l(x_k,\bar\tau_k,\bar{g}_k, \bar{d}_k)   \\ 
    &\  +  \tfrac12 \bar{\alpha}_{k}^2 \bar{\tau}_{k} \bar d_k^T \bar H_k \bar d_k  + \tfrac{1}{2}  \bar{\alpha}_{k}^2 \sum_{i=1}^m \left|\bar d_k^T \nabla^2 c_i(x)\bar d_k  \right| + \epsilon_{A_k} \\
    \leq &\ -\bar \alpha_k \left(1 - \tfrac{  \bar \epsilon_H}{2\kappa_l}  -\tfrac{\bar \tau_k L_2 + \Gamma_2 }{ 6 \bar \tau_k \kappa_l } \bar{\alpha}_{k}^2\gamma_0 \right)  \Delta l(x_k,\bar\tau_k,\bar{g}_k, \bar{d}_k)  \\ &\  +  \tfrac12 \bar{\alpha}_{k}^2 \bar{\tau}_{k} \bar d_k^T \bar H_k \bar d_k  + \tfrac{1}{2}  \bar{\alpha}_{k}^2 \sum_{i=1}^m \left|\bar d_k^T \nabla^2 c_i(x)\bar d_k  \right|  + \epsilon_{A_k}, 
\end{align*}
where the first inequality follows due to Assumption~\ref{ass.function} %, %by 
%\eqref{eq.taylor.2nd} %lemma \ref{lem.1} 
and Lemma~\ref{lemma.diff.g.g.bar}, the second inequality follows by Lemma \ref{lemma.diff.g.g.bar} and the definitions of $ \Delta l(x_k,\bar\tau_k,\bar{g}_k, \bar{d}_k) $ and $ \epsilon_{A_k} $, the third inequality follows by Lemma \ref{lemma.d.upper.bound.by.Delta}, and the last inequality follows by the fact that $\bar \alpha_k \le 1$ and $\|\bar d_k\| \le \gamma_k \le \gamma_0$. When $\bar {\alpha}_k \le \min \left\{1, \sqrt{\tfrac{6 \bar{\tau}_{k} \kappa_l(1-\eta) - 3 \bar{\tau}_{k} \bar \epsilon_H}{ (\bar{\tau}_{k} L_2 +\Gamma_2 )\gamma_0}}  \right \}$, the relaxed line search condition \eqref{eq.line_search_cond_modified_stoch} %\eqref{eq.line_search_cond_modified} 
is satisfied. 
Since 
$ \sqrt{\tfrac{6 \bar{\tau}_{k} \kappa_l(1-\eta) - 3 \bar{\tau}_{k} \bar \epsilon_H}{ (\bar{\tau}_{k} L_2 +\Gamma_2 )\gamma_0}}  \ge \sqrt{\tfrac{6 \bar{\tau}_{\min} \kappa_l(1-\eta) - 3 \bar{\tau}_{\min} \bar \epsilon_H}{ (\bar{\tau}_{\min} L_2 +\Gamma_2 )\gamma_0}}$, the desired conclusion %is 
holds with the line search procedure $\bar \alpha_k =\nu_{\alpha} \bar \alpha_k$. }

%%%%%%%%%%%%%%%% ||d|| version %%%%%%%%%%%%%%

Plugging $\bar \alpha_k = 1$ into the third inequality above, %\eqref{eq.inequality.model.reduction.showing.stepsize}, 
%we get 
{\allowdisplaybreaks
\begin{align}
\label{eq.inequality.model.reduction.showing.stepsize.dist}
    %\begin{split}
        \bar \phi(x_{k}+ \bar{d}_{k}, \bar{\tau}_{k})- \bar \phi(x_{k}, \bar{\tau}_{k})  
 % \leq &\   -    \Delta l(x_k,\bar\tau_k,\bar{g}_k, \bar{d}_k) + \tfrac12  \bar{\tau}_{k}  \bar \epsilon_H \|\bar d_k\|^2       +  \tfrac16 \left( \bar{\tau}_{k} L_2 +\Gamma_2 \right)  \|\bar d_k \|^3 \nonumber\\
 % & \quad  +  \tfrac12  \bar{\tau}_{k} \bar d_k^T \bar H_k \bar d_k  + \tfrac{1}{2}  \sum_{i=1}^m \left|\bar d_k^T \nabla^2 c_i(x)\bar d_k  \right| + \epsilon_{A_k} \nonumber\\
&\  \leq  - \left(1 - \tfrac{  \bar \epsilon_H}{2\kappa_l}   -\tfrac{\bar \tau_k L_2 + \Gamma_2 }{ 6 \bar \tau_k \kappa_l } \|\bar d_k\| \right)  \Delta l(x_k,\bar\tau_k,\bar{g}_k, \bar{d}_k)  \nonumber  \\ 
&\ \qquad  +  \tfrac12  \bar{\tau}_{k} \bar d_k^T \bar H_k \bar d_k  + \tfrac{1}{2}   \sum_{i=1}^m \left|\bar d_k^T \nabla^2 c_i(x)\bar d_k  \right| + \epsilon_{A_k}.
\end{align}
When $\|\bar d_k\| \le \tfrac{ 6 \bar{\tau}_{\min} \kappa_l(1-\eta) - 3 \bar{\tau}_{\min} \bar \epsilon_H}{\bar{\tau}_{\min} L_2 +\Gamma_2}$, 
 it follows that $1 - \frac{  \bar \epsilon_H}{2\kappa_l}   -\frac{\bar \tau_k L_2 + \Gamma_2 }{ 6 \bar \tau_k \kappa_l } \|\bar d_k\| \ge \eta$. Thus, the modified line search condition \eqref{eq.line_search_cond_modified_stoch} %\eqref{eq.line_search_cond_modified}\eqref{eq.line_search_cond_modified} 
 is satisfied. }
\end{proof}

%%%%%%%%%%%%%%%%%%%%%%%%%%%%%%%%%%%%%%%%%%%%%%%%%%%%%%%%%%%%%

%%%%%%%%%%%%%%%%%%%  Theorem 6.5   %%%%%%%%%%%%%%%%%%% 

%%%%%%%%%%%%%%%%%%%%%%%%%%%%%%%%%%%%%%%%%%%%%%%%%%%%%%%%%%%%

We now present %prove 
our main convergence result for Algorithm \ref{alg.SubsampledSQP.practical}. %Under some sampling conditions, both global convergence and asymptotic superlinear convergence rate is proved. 
We establish asymptotic local  superlinear convergence results under appropriate sampling conditions.

\begin{theorem}
 Let $\{w_k\}_{k \in \mathbb{N}}$ be the primal-dual iterates generated by Algorithm \ref{alg.SubsampledSQP.practical}.  Suppose Assumptions 
\ref{ass.function},
\ref{ass.M.Lipschitz.det1}, \ref{ass.individual.f.subsampled}, %\ref{ass.H.subsampled}, 
\ref{ass.M.inverse.bound.subsampled}, and \ref{ass.d.bound.imply.difference.det} hold.
%Suppose Assumptions \ref{ass.function},  \ref{ass.individual.f.subsampled}, \ref{ass.H.subsampled}, \ref{ass.M.inverse.bound.subsampled},  \ref{ass.M.inverse.bound.det1}, and  \ref{ass.d.bound.imply.difference.det} 
%,  and \ref{ass.epsilon_g_consistent} hold.
%Define
% \begin{equation}
%     \label{eq.def.rho_0}
%     \rho_0: =\min \left\{\nu_{\gamma},  \tfrac{\nu_{\gamma}^2 \mu_M  }{4 \kappa_M (1 + 2 \kappa_{J^{\dag}} 
% \kappa_W ) \Lambda \left( L_{\nabla \mathcal{L}} + \mu_M/4  \right)  } \right\}.
% \end{equation}
Further, suppose that the following conditions are satisfied for all $k \in \mathbb{N}$:
 \begin{itemize}
     \item[($\romannumeral1$)] $\sum_{k = 0}^{\infty}  \frac{N-\left|S_{k}^f\right|}{N}  < \infty$; $\quad$ \textit{($\romannumeral2$)} $\sum_{k = 0}^{\infty}  \frac{N-\left|S_{k}^g\right|}{N}  < \infty$; 
     % \item[($\romannumeral2$)] $\sum_{k = 0}^{\infty}  \frac{N-\left|S_{k}^g\right|}{N}  < \infty$;
     \item[($\romannumeral3$)] $|S_k^H| \ge \left( 1- \frac{\bar \epsilon_H}{2(\kappa_2 + \mu_{2} \kappa_H)} \right) N $, where $\bar \epsilon_H < 2(1-\eta)\kappa_l$.
    % There exists $\bar \epsilon_H < 2(1-\eta)\kappa_l$ such that $|S_k^H| \ge \left( 1- \frac{\bar \epsilon_H}{2(\kappa_2 + \mu_{2} \kappa_H)} \right) N $
 \end{itemize}
 Moreover, suppose that for sufficiently large $\hat{k} \in \mathbb{N}$, the following sampling conditions are satisfied for all $k \ge \hat{k}$, where $t = k-\hat{k}$:%let us define $t = k-k'$.
 %The following sampling conditions are satisfied for all $k \ge k'$
\begin{itemize}
   \item[($\romannumeral4$)]  $|S_k^g| \ge \left( 1- \frac{ 
\kappa_w \rho_0 \beta_{t}^{t/2}}{4\Lambda (\kappa_1 + \mu_{1} \kappa_g)}\right) N $ for some $\kappa_w \in \mathbb{R}_{>0}$, with $\beta_{t} < \beta_{t-1}$, $\beta_{t}\to 0$, %$\beta_{0}=1$, 
$\beta_{1} \le \rho_0^4$ and $\rho_0: =\min \left\{\nu_{\gamma},  \tfrac{\nu_{\gamma}^2 \mu_M  }{4 \kappa_M (1 + 2 \kappa_{J^{\dag}} 
\kappa_W ) \max \{ 2 \Lambda  L_{\nabla \mathcal{L}}, \nu_{\gamma} \} }, \tfrac{\mu_M \Lambda}{2} \right\}$;
         \item[($\romannumeral5$)]  $\left|S_{k+1}^H\right|>\left|S_{k}^H\right|$, $\lim _{k \rightarrow \infty}\left|S_k^H\right|=N$, and $\left|S_{\hat{k}}^H\right| \geq \max \left \{ \frac{8\Lambda ( \kappa_2 + \mu_{2} \kappa_H) - \rho_0}{8\Lambda ( \kappa_2 + \mu_{2} \kappa_H)}N, 1 \right\}$.
\end{itemize}
 Then, 
%  $\left\|\begin{bmatrix}
%     g_{k} + J_{k}^T y_{k} \\ c_{k}
% \end{bmatrix} \right\| \to 0$, and 
$\left\|w_k-w^*\right\| \rightarrow 0$ at an $R$-superlinear rate.
% Additionally, $\left\|w_k-w^*\right\| \rightarrow 0$ at an $R$-superlinear rate.   
\label{theorem.global.to.local.practical}
\end{theorem}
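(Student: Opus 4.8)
The plan is to combine the global-convergence behavior guaranteed by Lemma~\ref{lemma.sigma.exist} with the local superlinear machinery already established in Theorem~\ref{theorem.superlinear.convergence.deterministic}, using the adaptivity of $\gamma_k$ to bridge the two phases. The argument proceeds in three stages. First, I would show that the ``else'' branch (Line~\ref{line.else}) is entered infinitely often and, consequently, that either $\gamma_k \to 0$ or the unit step is eventually always accepted in Line~\ref{line.alpha.1}. Indeed, by Lemma~\ref{lemma.sigma.exist} there is no $\tilde{k}$ with $\|\bar d_k\| > \gamma_{\tilde{k}}$ for all $k > \tilde{k}$, so whenever $\gamma_k$ is held fixed the condition $\|\bar d_k\| \le \gamma_k$ is triggered again; each time it is triggered either Line~\ref{line.alpha.1} succeeds (unit step accepted, $\gamma_k$ unchanged) or Line~\ref{line: d_small.modified}--\ref{line: d_small.sigma.reduce} fires and $\gamma_{k+1} = \nu_\gamma \gamma_k$. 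If the latter happens infinitely often then $\gamma_k \to 0$.

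Second, assuming $\gamma_k \to 0$ (the remaining case is handled by a short separate argument, since a fixed $\gamma_k$ for large $k$ together with infinitely many triggers of Line~\ref{line.alpha.1} forces $\|\bar d_k\| \le \gamma_k$ infinitely often, hence along a subsequence $w_k$ approaches the stationary manifold), I would argue that once $\gamma_k$ has become smaller than the problem-dependent threshold $\min\{\tfrac{\mu_M}{12\Lambda L_W \kappa_M(1+2\kappa_{J^\dagger}\kappa_W)},\tfrac{\kappa_d}{2}\}$ appearing in \eqref{eq.superlinear.starting.finite}, the \emph{first} subsequent iteration $\hat k$ at which $\|\bar d_{\hat k}\| \le \gamma_{\hat k}$ serves as a valid ``starting point'' for Theorem~\ref{theorem.superlinear.convergence.deterministic}. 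Concretely, $\|\bar d_{\hat k}\| \le \gamma_{\hat k}$ is below the threshold in \eqref{eq.superlinear.starting.finite}, and conditions ($\romannumeral4$)--($\romannumeral5$) of the present theorem are precisely the sampling conditions ($\romannumeral1$)--($\romannumeral2$) of Theorem~\ref{theorem.superlinear.convergence.deterministic} re-indexed from $\hat k$ (this is why $t = k - \hat k$ and why $\rho_0$ is chosen as it is — it encodes the $\nu_\gamma$-contraction and the constants $\mu_M,\kappa_M$ of the neighborhood condition). I would then invoke Lemma~\ref{lemma.superlinear.lemma.subsampled} and Lemma~\ref{lemma.d.bound.imply.difference} (applied with the gradient error bound $\epsilon_{g_k}$ from Lemma~\ref{lemma.diff.g.g.bar}) to conclude, exactly as in Theorem~\ref{theorem.superlinear.convergence.deterministic}, that $\|\bar d_k\|$ decreases at an R-superlinear rate from iteration $\hat k$ onward and $\|w_k - w^*\| \le \tfrac{1}{3\Lambda L_W}$ thereafter.

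Third, I must close the loop: it is not a priori clear that once the superlinear phase begins, the algorithm does not \emph{leave} it by taking a non-unit step. Here I would use Lemma~\ref{lemma.dist.small.imply.alpha.1}: once $\|\bar d_k\| \le \tfrac{6\bar\tau_{\min}\kappa_l(1-\eta) - 3\bar\tau_{\min}\bar\epsilon_H}{\bar\tau_{\min}L_2 + \Gamma_2}$ — which holds for all large $k$ because $\|\bar d_k\| \to 0$ superlinearly — the modified condition \eqref{eq.line_search_cond_modified_stoch} accepts $\bar\alpha_k = 1$, so the iterate update \eqref{eq.subsample.unit.stepsize.update} is exactly the Newton-SQP step analyzed in Theorem~\ref{theorem.superlinear.convergence.deterministic}, and $\gamma_k$ stabilizes (Line~\ref{line: d_small.sigma.reduce} ceases to fire). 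Thus the superlinear recursion is self-sustaining: $\|w_{k+1}-w^*\| \le \tfrac{\Lambda L_W}{2}\|w_k-w^*\|^2 + \Lambda\epsilon_{H_k}\|w_k - w^*\| + \Lambda\epsilon_{g_k}$, and conditions ($\romannumeral3$), ($\romannumeral4$), ($\romannumeral5$) force $\epsilon_{H_k} \to 0$ and $\epsilon_{g_k}$ to decay fast enough that the R-superlinear bounding sequence $\{\xi_k\}$ of Theorem~\ref{theorem.superlinear.convergence.deterministic} carries over verbatim. I expect the main obstacle to be the bookkeeping in stage two — verifying that the existence of the transition index $\hat k$ is guaranteed rather than assumed (the theorem statement says ``for sufficiently large $\hat k$,'' so I must pin down that such $\hat k$ is reached in finitely many iterations, using $\gamma_k \to 0$ together with Lemma~\ref{lemma.sigma.exist}), and in checking that the gradient-error side conditions required by Lemma~\ref{lemma.d.bound.imply.difference} (namely $\|g_k - \bar g_k\| \le \min\{\tfrac{\mu_M\kappa_w}{4},\tfrac{\kappa_d}{2\Lambda}\}$) are implied by ($\romannumeral2$) and ($\romannumeral4$) for $k \ge \hat k$.
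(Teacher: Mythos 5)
Your high-level architecture matches the paper's: use Lemma~\ref{lemma.sigma.exist} to guarantee the else-branch fires infinitely often, split into the case where Line~\ref{line.else10} fires infinitely often versus not, locate a transition index $\hat k$ once $\gamma_k$ has dropped below a problem-dependent threshold, and then run the local superlinear machinery of Theorem~\ref{theorem.superlinear.convergence.deterministic} from $\hat k$ onward. You also correctly identify that the crux is ``closing the loop'' so the algorithm never leaves the unit-step regime. However, your mechanism for closing the loop is wrong, and this is a genuine gap. You claim that once $\|\bar d_k\|$ is small enough for Lemma~\ref{lemma.dist.small.imply.alpha.1} to accept $\bar\alpha_k=1$ under the modified condition, ``$\gamma_k$ stabilizes (Line~\ref{line: d_small.sigma.reduce} ceases to fire).'' That is false: Line~\ref{line: d_small.sigma.reduce} executes every time the branch at Line~\ref{line.else10} is entered, i.e.\ whenever the \emph{classical} condition \eqref{eq.line_search_cond_stoch} rejects the unit step — regardless of whether the modified condition then accepts $\bar\alpha_k=1$. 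In the case that Line~\ref{line.else10} fires infinitely often (the case you are in), $\gamma_k \to 0$ geometrically and never stabilizes; the danger is precisely that $\gamma_{k+1}=\nu_\gamma\gamma_k$ could drop below $\|\bar d_{k+1}\|$, throwing the algorithm back to Line~\ref{line: d_large.classical} where, by the Maratos effect, the accepted step size can be bounded away from $1$ no matter how close $w_k$ is to $w^*$.

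The missing ingredient is the paper's induction invariant $\|\bar d_{k+1}\| \le \gamma_{k+1} = \gamma_{\hat k}\nu_\gamma^{\,t+1}$: one must show that the superlinearly decaying quantity $\|\bar d_k\|$ (controlled via $\|\bar d_k\| \le \Lambda L_{\nabla\mathcal{L}}\|w_k-w^*\| + \tfrac{\kappa_w\rho_0\beta_t^{t/2}}{2}$ and the recursion of Lemma~\ref{lemma.superlinear.lemma.subsampled}) stays below the only-geometrically decaying $\gamma_k$ at every step. This is exactly why $\kappa_w$, $\gamma_d$ and $\rho_0$ carry the $\nu_\gamma^2$ factors and the $\max\{2\Lambda L_{\nabla\mathcal{L}},\nu_\gamma\}$ term, and why $\rho_t \le \nu_\gamma$ is enforced: it yields $\|\bar d_{k+2}\| \le \tfrac{\gamma_{\hat k}\nu_\gamma^{t+2}}{2} + \tfrac{\nu_\gamma\gamma_d\nu_\gamma^{t+2}}{2} \le \gamma_{\hat k}\nu_\gamma^{t+2}$. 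You gesture at this (``$\rho_0$ encodes the $\nu_\gamma$-contraction'') but never state or prove the invariant, and without it the transition to, and persistence of, the unit-step phase is not established. Separately, your treatment of the remaining case (Line~\ref{line.else10} fires only finitely often) is too weak: concluding that ``along a subsequence $w_k$ approaches the stationary manifold'' does not give R-superlinear convergence of the whole sequence; the paper instead uses the summability argument from the proof of Lemma~\ref{lemma.sigma.exist} to show $\|\bar d_k\|$ is eventually below $\min\{\bar\gamma,\bar\gamma_d\}$ so that Line~\ref{line.alpha.1} fires with $\bar\alpha_k=1$ for \emph{all} large $k$, and then applies Theorem~\ref{theorem.superlinear.convergence.deterministic} directly.
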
 
{\allowdisplaybreaks
\begin{proof}
Conditions ($\romannumeral4$) and ($\romannumeral5$) imply, $\epsilon_{g_k} = 2 \tfrac{N- |S_k^g|}{N} (\kappa_1 + \mu_{1} \kappa_g)  \le \tfrac{ \kappa_w \rho_0 \beta_{t}^{t/2}}{2\Lambda}$ and $\epsilon_{H_{\hat{k}}} = 2 \tfrac{N- |S_{\hat{k}}^H|}{N} (\kappa_2 + \mu_{2} \kappa_H)  \le \tfrac{\rho_0}{4\Lambda}$. 
% \begin{equation*}
%         \epsilon_{g_k} = 2 \tfrac{N- |S_k^g|}{N} (\kappa_1 + \mu_{1} \kappa_g)  \le \tfrac{ \kappa_w \rho_0 \beta_{t}^{t/2}}{2\Lambda} \quad \text{ and } \quad  \epsilon_{H_{\hat{k}}} = 2 \tfrac{N- |S_{\hat{k}}^H|}{N} (\kappa_2 + \mu_{2} \kappa_H)  \le \tfrac{\rho_0}{4\Lambda}.   
% \end{equation*}
It follows by Lemma \ref{lemma.sigma.exist} that $\|\bar d_k\| \le \gamma_k$ infinitely often. We consider %the following 
two cases:
%since $\gamma_k$ is not reduced when $\|\bar d_k\| > \gamma_k$ and the classical line search condition is considered. We discuss the following cases: 
\begin{itemize}
    \item[$(1)$] Line \ref{line.else10} of Algorithm \ref{alg.SubsampledSQP.practical} is triggered infinitely often; %Algorithm \ref{alg.SubsampledSQP.practical} enters Line \ref{line.else10} infinitely often
%    $\bar\alpha_{k,1} = \bar\alpha_{k,2} = 1$ is not satisfied infinitely often.
     \item[$(2)$] Line \ref{line.else10} of Algorithm \ref{alg.SubsampledSQP.practical} is not triggered infinitely often. %Algorithm \ref{alg.SubsampledSQP.practical} does not enter line \ref{line.else10} infinitely often  
     % There exists $k_{\text{unit}}$ such that  $\bar\alpha_{k,1} = \bar\alpha_{k,2} = 1$ for $k \ge k_{\text{unit}}$.
\end{itemize}

Let us first consider case (1). Define 
\begin{equation*}
%\label{eq.def.sigma.w.final.sampling}
\begin{split}
 \kappa_w  &:= \min \ \left \{  \tfrac{\nu_{\gamma}}{3\Lambda L_W } , \tfrac{\kappa_d}{\rho_0} ,  
 \tfrac{\nu_{\gamma}^2 \mu_M  }{8 \kappa_M (1 + 2 \kappa_{J^{\dag}} \kappa_W ) \Lambda^2 L_{\nabla \mathcal{L}}  L_W  },
 \tfrac{\nu_{\gamma}^2 \min \left \{ \tfrac{\kappa_d}{2}, \tfrac{ 6 \bar{\tau}_{\min} \kappa_l(1-\eta) - 3 \bar{\tau}_{\min} \bar \epsilon_H}{\bar{\tau}_{\min} L_2 +\Gamma_2}  \right \}  }{   \max \{ 2 \Lambda  L_{\nabla \mathcal{L}}, \nu_{\gamma}  \} \rho_0 } \right \}, \\
       % \kappa_w  &:= \min \ \left \{  \tfrac{\nu_{\gamma}}{3\Lambda L_W } ,  \tfrac{\nu_{\gamma}^2 \mu_M  }{8 \kappa_M (1 + 2 \kappa_{J^{\dag}} \kappa_W ) \Lambda^2 \left( L_{\nabla \mathcal{L}} + \mu_M/4  \right)  L_W  }  ,  \tfrac{\nu_{\gamma} \min \left \{ \tfrac{\kappa_d}{2}, \tfrac{ 6 \bar{\tau}_{\min} \kappa_l(1-\eta) - 3 \bar{\tau}_{\min} \bar \epsilon_H}{\bar{\tau}_{\min} L_2 +\Gamma_2}  \right \}  }{   \min \left\{\Lambda \left( L_{\nabla \mathcal{L}} + \mu_M/4  \right),  \tfrac{\nu_{\gamma} \mu_M  }{4 \kappa_M (1 + 2 \kappa_{J^{\dag}}  \kappa_W )  } \right\} } \right \}, \\
\gamma_d &: =  \min \left \{  \tfrac{\mu_M \kappa_w }{4 \kappa_M (1 + 2 \kappa_{J^{\dag}} 
\kappa_W ) } , \tfrac{\kappa_d}{2}, \tfrac{ 6 \bar{\tau}_{\min} \kappa_l(1-\eta) - 3 \bar{\tau}_{\min} \bar \epsilon_H}{\bar{\tau}_{\min} L_2 +\Gamma_2}  \right \}.
%\right. \\  &\ \left. \tfrac{\nu_{\gamma} \min \left \{ \tfrac{\kappa_d}{2}, \tfrac{ 6 \bar{\tau}_{\min} \kappa_l(1-\eta) - 3 \bar{\tau}_{\min} \bar \epsilon_H}{\bar{\tau}_{\min} L_2 +\Gamma_2}  \right \}  }{   \min \left\{\Lambda \left( L_{\nabla \mathcal{L}} + \mu_M/4  \right),  \tfrac{\nu \mu_M  }{4 \kappa_M (1 + 2 \kappa_{J^{\dag}}  \kappa_W )  } \right\} }  \right \}.
\end{split}
\end{equation*}
Algorithm \ref{alg.SubsampledSQP.practical} sets $\gamma_{k+1} =\nu_{\gamma}\gamma_k$ when it enters Line \ref{line.else10}. Since $\gamma_{k+1} =\nu_{\gamma}\gamma_k$ happens infinitely often, there must exist some $\tilde k > 0$ such that the algorithm sets $\gamma_{\tilde k + 1} =\nu_{\gamma}\gamma_{\tilde k} \in [\nu_{\gamma}\gamma_d,\gamma_d]$. Suppose that $\left \|\bar d_{\tilde k+ 1} \right\| > \gamma_{\tilde k+ 1} $, it follows by Lemma \ref{lemma.sigma.exist} that  
there must exists $\hat{k} > \tilde k + 1$ such that  $\|\bar d_{\hat{k}} \| \le  \gamma_{\tilde k+ 1}$ for the first time, $\gamma_k$ is therefore not updated and $\gamma_{\hat{k}} = \gamma_{\tilde k+ 1} \in [\nu_{\gamma}\gamma_d,\gamma_d]$. If $\left \|\bar d_{\tilde k+ 1} \right\| \le \gamma_{\tilde k+ 1}$, let $\hat{k} = \tilde k+ 1$ and $t = k - \hat{k}$. We define sequences $\{ \xi_t \}_{t=0}^{\infty}$ and $\{ \rho_t \}_{t=1}^{\infty}$ as follows:
 \begin{align*}
  %\label{def.rho.tau2}
\xi_{t+1}=\max \left\{\xi_t \rho_t, \beta_{t+1}^{(t+1) / 4}\right\}, \quad \xi_0=1, \quad
\rho_t=\tfrac{\Lambda L_W \kappa_w \xi_t}{2}+\Lambda \epsilon_{H_{\hat{k}}} \tfrac{N-|S_k^H|}{N-|S_{\hat{k}}^H|}+\tfrac{\rho_0 \beta_{t}^{t/4}}{2 }.  
 \end{align*}
 We %would like to 
 prove the following results for all $k \ge \hat{k}$ ($t = k-\hat{k}$, $t \ge 0$):
%We use induction to prove that for all $k \ge \hat{k}$ (i.e., $t \ge 0$):
\begin{itemize} 
    \item[\textbf{(a)}]  \textbf{(Unit step size)}: %The step size scheme sets  
    $\bar  \alpha_k =1$;
    \item[\textbf{(b)}] \textbf{(Superlinear convergence)}: $\|w_{k} - w^*\| \le \kappa_w \xi_{t}$;
  \item[\textbf{(c)}] \textbf{(Controlled sequences)}: $ \xi_{t} \le 1, \rho_t < 1$; 
  % \item[(d)] 
  % $\Lambda \left( L_{\nabla \mathcal{L}} + \mu_M/4  \right) \kappa_w \xi_{t+1} \le \gamma_{\hat{k}} \nu_{\gamma}^{t + 1}$, which implies $\|\bar  d_{k+1} \| \le \gamma_{k+1}$
    \item[\textbf{(d)}]  \textbf{(}\boldmath{$\|\bar  d_{k+1} \| \le \gamma_{k+1}$}\textbf{)}:\unboldmath{
 $ 2 \Lambda  L_{\nabla \mathcal{L}}  \kappa_w \xi_{t+1} \le \gamma_{\hat{k}} \nu_{\gamma}^{t + 1}$}. %, which implies $\|\bar  d_{k+1} \| \le \gamma_{k+1}$.
\end{itemize} 
Note that \textbf{(a)}, \textbf{(b)}, \textbf{(c)} are required in Theorem \ref{theorem.superlinear.convergence.deterministic}, we also need \textbf{(d)} to enforce the algorithm to always enter line \ref{line.else}. Otherwise, our algorithm may consider to employing the line search condition \eqref{eq.line_search_cond_stoch} and the resulting step size could be smaller than 1. 

Let us first give two general results. Assume that  \textbf{(a)} and \textbf{(b)} hold  for iteration $k$, and consider iteration $k+1$. By Lemma \ref{lemma.superlinear.lemma.subsampled}, conditions ($\romannumeral4$) and ($\romannumeral5$), it follows that
 \begin{align}
    \left\|w_{k+1}-w^*\right\| %&\leq \tfrac{\Lambda L_W }{2}\left\|w_k-w^*\right\|^2+ \Lambda \|H_k - \bar H_k\| \left\|w_k-w^*\right\|+ \Lambda \|g_k - \bar g_k\|  \notag \\
    & \leq  \tfrac{\Lambda L_W }{2}\left\|w_k-w^*\right\|^2 + \Lambda \epsilon_{H_{k}} \left\|w_{k}-w^*\right\| + \tfrac{\kappa_w  \rho_0 \beta_{t}^{t/2}}{2} \notag \\
    & \leq \tfrac{\Lambda L_W }{2} \kappa_w^2 \xi_t^2  +  \Lambda \epsilon_{H_{\hat{k}}} \tfrac{N-|S_k^H|}{N-|S_{\hat{k}}^H|} \kappa_w \xi_t  + \tfrac{\kappa_w  \rho_0 \beta_{t}^{t/2}}{2} \notag\\
    %& \todos{=\kappa_w \xi_t \left(\tfrac{\Lambda L_W \kappa_w \xi_t}{2}+\Lambda \epsilon_{H_{\hat{k}}} \tfrac{N-|S_k^H|}{N-|S_{\hat{k}}^H|} +\tfrac{\rho_0 \beta_{t}^{t/2}}{2 \xi_t}\right)} \notag \\
    & \leq \kappa_w \xi_t \left(\tfrac{\Lambda L_W \kappa_w \xi_t}{2}+\Lambda \epsilon_{H_{\hat{k}}} \tfrac{N-|S_k^H|}{N-|S_{\hat{k}}^H|} +\tfrac{\rho_0 \beta_{t}^{t/4}}{2 }\right). \label{eq.w_k+1.w_k}
%\\ & = \kappa_w \xi_k \rho_k % \leq \kappa_w \xi_{k+1}. 
\end{align}
Moreover, %Another result is that 
when $\left\|w_{k}-w^*\right\|  \le \kappa_w$, %it follows 
by 
\eqref{eq.SQP.sto}, Assumptions \ref{ass.M.Lipschitz.det1} and condition ($\romannumeral4$) %\ref{ass.M.inverse.bound.det1} %, and \ref{ass.epsilon_g_consistent} 
that 
\begin{align}
        \|\bar d_{k}\| %&\le \left\| -\bar M_{k}^{-1} \begin{bmatrix}    \bar g_{k} + J_{k}^T y_{k} \\ c_{k} \end{bmatrix}\right\| \notag \\ 
\le \| \bar M_{k}^{-1}\| \left\|\begin{bmatrix}
   \bar  g_{k} + J_{k}^T y_{k} \\ c_{k}
\end{bmatrix}\right\| %\notag \\ &
&\le \| \bar M_{k}^{-1}\| \left\|\begin{bmatrix}
    g_{k} + J_{k}^T y_{k} \\ c_{k}
\end{bmatrix} + \begin{bmatrix}
   \bar  g_{k} - g_{k}  \notag \\ 0
\end{bmatrix}  \right\|  \notag \\ &\le \Lambda L_{\nabla \mathcal{L}} %\left( L_{\nabla \mathcal{L}} + \mu_M/4  \right)
\left\|w_{k}-w^*\right\| + \tfrac{ \kappa_w \rho_0 \beta_{t}^{t/2}}{2} .\label{eq.d.ub.by.dist1} 
% \\
%     &\le \Lambda \left( L_{\nabla \mathcal{L}} + \mu_M/4  \right)\kappa_w \xi_k \rho_k   \\ &\le \gamma_{\hat{k}} \nu^{ \mathcal{K} - \hat{k}+1}.
\end{align}

We then use induction to prove \textbf{(a)--%, (b), (c), 
(d)}. For the base case $k =\hat{k}$, since $\|\bar d_{\hat{k}} \| \le \gamma_d$, \textbf{(a)} is satisfied by Lemma \ref{lemma.dist.small.imply.alpha.1}. Since $\rho_0 \le \tfrac{\mu_M \Lambda}{2}$ and $\kappa_w \le \tfrac{\kappa_d}{\rho_0}$, we have
$ \epsilon_{g_{\hat{k}}} \le \tfrac{\kappa_w \rho_0}{2\Lambda} \le \min \left\{ \tfrac{\mu_M \kappa_w}{4}, \frac{\kappa_d}{2\Lambda} \right\}$. 
It then follows by Lemma \ref{lemma.d.bound.imply.difference} that $\|w_{\hat{k}} - w^*\| \le  \kappa_w$  by the definitions of $\gamma_d$ and $\kappa_w$,  \textbf{(b)} is then satisfied. Since $\nu_{\gamma} < 1$,  \textbf{(c)} holds trivially by the definitions of $\xi_0$  and $\rho_0$. By \eqref{eq.w_k+1.w_k}, %the fact that 
$\kappa_w \le   \tfrac{\nu_{\gamma}^2 \mu_M  }{8 \kappa_M (1 + 2 \kappa_{J^{\dag}} 
\kappa_W ) \Lambda^2 L_{\nabla \mathcal{L}} L_W }$, 
%$\kappa_w \le  \min \left\{ \tfrac{\nu_{\gamma}}{3\Lambda L_W } ,  \tfrac{\nu_{\gamma}^2 \mu_M  }{8 \kappa_M (1 + 2 \kappa_{J^{\dag}} \kappa_W ) \Lambda^2 L_{\nabla \mathcal{L}} L_W  } \right\} $,
$\xi_0 = 1$, the definition of $\rho_0$, and conditions ($\romannumeral4$) and ($\romannumeral5$) it follows that 
\begin{align*}
    \left\|w_{\hat{k}+1}-w^*\right\| 
    \le  \kappa_w \xi_0 \left(\tfrac{\Lambda L_W \kappa_w}{2}+\Lambda \epsilon_{H_{\hat{k}}} +\tfrac{\rho_0}{2 }\right)  \le \kappa_w \rho_0.
\end{align*}
Since $\beta_{1}^{1/4} \le \rho_0$, we have $\xi_1 = \rho_0$, and it follows from the definition of $\rho_0$ that  %$2\Lambda  L_{\nabla \mathcal{L}} \kappa_w   \xi_0 \rho_0  \le    \tfrac{\nu_{\gamma}^2 \mu_M \kappa_w }{4 \kappa_M (1 + 2 \kappa_{J^{\dag}} \kappa_W ) } $.
\begin{align}
\label{eq.c.intermediate1}
\max \{ 2\Lambda  L_{\nabla \mathcal{L}} \kappa_w    \rho_0, \nu_{\gamma} \kappa_w    \rho_0  \}  \le    \tfrac{\nu_{\gamma}^2 \mu_M \kappa_w }{4 \kappa_M (1 + 2 \kappa_{J^{\dag}} \kappa_W ) } .  
\end{align} 
Furthermore, since $\kappa_w  \le  \tfrac{\nu_{\gamma}^2}{ \max \{ 2 \Lambda  L_{\nabla \mathcal{L}}, \nu_{\gamma}  \} \rho_0} \min \left \{ \tfrac{\kappa_d}{2}, \tfrac{ 6 \bar{\tau}_{\min} \kappa_l(1-\eta) - 3 \bar{\tau}_{\min} \bar \epsilon_H}{\bar{\tau}_{\min} L_2 +\Gamma_2}  \right \}  $,
\begin{align}
\label{eq.c.intermediate2}
\max \{ 2\Lambda  L_{\nabla \mathcal{L}} \kappa_w   \rho_0, \nu_{\gamma} \kappa_w    \rho_0  \}   \le \nu_{\gamma}^2  \min \left \{  \tfrac{\kappa_d}{2}, \tfrac{ 6 \bar{\tau}_{\min} \kappa_l(1-\eta) - 3 \bar{\tau}_{\min} \bar \epsilon_H}{\bar{\tau}_{\min} L_2 +\Gamma_2}  \right \}.  
\end{align}
% \begin{align*}
%  \Lambda \left( L_{\nabla \mathcal{L}} + \mu_M/4  \right) \kappa_w   \xi_1  \le \nu_{\gamma}^2  \min \left \{  \tfrac{\kappa_d}{2}, \tfrac{ 6 \bar{\tau}_{\min} \kappa_l(1-\eta) - 3 \bar{\tau}_{\min} \bar \epsilon_H}{\bar{\tau}_{\min} L_2 +\Gamma_2}  \right \}.  
% \end{align*}
It then follows from \eqref{eq.d.ub.by.dist1}, \eqref{eq.c.intermediate1},  and \eqref{eq.c.intermediate2}  %and the fact $\rho_0 \le \sqrt{2\Lambda L_{\nabla \mathcal{L}}}$
that 
\begin{align*}
\|\bar d_{\hat{k} + 1}\| \le \Lambda L_{\nabla \mathcal{L}} \kappa_w   \xi_1  + \tfrac{ \kappa_w \rho_0 \nu_{\gamma}}{2}  \le \nu_{\gamma}^2  \gamma_d \le\nu_{\gamma}\gamma_{\hat{k}} \le\gamma_{\hat{k}+1},  
\end{align*}
% \begin{align*}
% \|\bar d_{\hat{k} + 1}\| &\le \Lambda L_{\nabla \mathcal{L}}  \|w_{\hat{k} + 1} - w^* \| + \tfrac{ \kappa_w \rho_0 \beta_{1}^{1/2}}{2}   \le \Lambda L_{\nabla \mathcal{L}} \kappa_w   \xi_1  + \tfrac{ \kappa_w \rho_0 \nu_{\gamma}}{2}  \\  &\le \nu_{\gamma}^2  \min \left \{  \tfrac{\mu_M \kappa_w }{4 \kappa_M (1 + 2 \kappa_{J^{\dag}} 
% \kappa_W ) } , \tfrac{\kappa_d}{2}, \tfrac{ 6 \bar{\tau}_{\min} \kappa_l(1-\eta) - 3 \bar{\tau}_{\min} \bar \epsilon_H}{\bar{\tau}_{\min} L_2 +\Gamma_2}  \right \} \\  &\le \nu_{\gamma}^2  \gamma_d \le\nu_{\gamma}\gamma_{\hat{k}} \le\gamma_{\hat{k}+1}.  
% \end{align*}
% \begin{align*}
% \|\bar d_{\hat{k} + 1}\| &\le \Lambda \left( L_{\nabla \mathcal{L}} + \mu_M/4  \right)  \|w_{\hat{k} + 1} - w^* \| \\ 
%  &\le \Lambda \left( L_{\nabla \mathcal{L}} + \mu_M/4  \right) \kappa_w   \xi_1 \\  &\le \nu_{\gamma}^2  \min \left \{  \tfrac{\mu_M \kappa_w }{4 \kappa_M (1 + 2 \kappa_{J^{\dag}} 
% \kappa_W ) } , \tfrac{\kappa_d}{2}, \tfrac{ 6 \bar{\tau}_{\min} \kappa_l(1-\eta) - 3 \bar{\tau}_{\min} \bar \epsilon_H}{\bar{\tau}_{\min} L_2 +\Gamma_2}  \right \} \\  &\le \nu_{\gamma}^2  \gamma_d \le\nu_{\gamma}\gamma_{\hat{k}}.  
% \end{align*}
and \textbf{(d)} is satisfied for the base case. 

Let us then assume that \textbf{(a)} and \textbf{(b)} hold  for iteration $k$.  At iteration $k+1$, \textbf{(a)} is satisfied since $\|\bar d_{k+1}\| \le \gamma_{k+1} \le \gamma_d$. \textbf{(b)} is satisfied since $\left\|w_{k+1}-w^*\right\| \leq \kappa_w \xi_t \rho_t \le \kappa_w \xi_{t+1}$.
% by \eqref{eq.w_k+1.w_k} and the definition of $\rho_t$ since
%  \begin{align*}
% \left\|w_{k+1}-w^*\right\| \leq \kappa_w \xi_t \rho_t \le \kappa_w \xi_{t+1}. 
% \end{align*}
As for \textbf{(c)}, it follows that 
\begin{align*}
\xi_{t+1} \le \max \left\{ \rho_t, \beta_{1}^{(t+1) / 4}\right\} < 1, \;\ \rho_{t+1} \le  \tfrac{\Lambda L_W \kappa_w}{2}+\Lambda \epsilon_{H_{\hat{k}}} +\tfrac{\rho_0 }{2 } = \rho_0 \le\nu_{\gamma}< 1.
\end{align*}
% \begin{align*}
% \xi_{t+1}&=\max \left\{\xi_t \rho_t, \beta_{t+1}^{(t+1) / 4}\right\} \le \max \left\{ \rho_t, \beta_{1}^{(t+1) / 4}\right\} < 1, \\
%     \rho_{t+1} %&= \tfrac{\Lambda L_W \kappa_w \xi_{t+1}}{2}+\Lambda \epsilon_{H_{\hat{k}}} \tfrac{N-|S_k^H|}{N-|S_{\hat{k}}^H|}+\tfrac{\rho_0 \beta_{t+1}^{(t+1)/4}}{2 }
%   &\le \tfrac{\Lambda L_W \kappa_w}{2}+\Lambda \epsilon_{H_{\hat{k}}} +\tfrac{\rho_0 }{2 } = \rho_0 \le\nu_{\gamma}< 1.
% \end{align*}
% Since $\kappa_w  \le  \tfrac{\nu_{\gamma}^2}{ \rho_0 \nu_{\gamma}} \min \left \{ \tfrac{\kappa_d}{2}, \tfrac{ 6 \bar{\tau}_{\min} \kappa_l(1-\eta) - 3 \bar{\tau}_{\min} \bar \epsilon_H}{\bar{\tau}_{\min} L_2 +\Gamma_2}  \right \}   $,  we have
% \begin{align*}
%  \kappa_w   \xi_0 \rho_0  \le \nu_{\gamma}  \min \left \{  \tfrac{\kappa_d}{2}, \tfrac{ 6 \bar{\tau}_{\min} \kappa_l(1-\eta) - 3 \bar{\tau}_{\min} \bar \epsilon_H}{\bar{\tau}_{\min} L_2 +\Gamma_2}  \right \}.
% \end{align*}
Since \textbf{(a)} and \textbf{(b)} hold  for iteration $k+1$, we can apply \eqref{eq.w_k+1.w_k} again at iteration $k+1$ and obtain $\left\|w_{k+2}-w^*\right\| \leq \kappa_w \xi_{t+1} \rho_{t+1}$ by the definition of $\rho_{t+1}$. 
%  \begin{align*}
% \left\|w_{k+2}-w^*\right\| \leq \kappa_w \xi_{t+1} \rho_{t+1} \le \kappa_w \xi_{t+2}. 
% \end{align*}
It then follows from \eqref{eq.d.ub.by.dist1}, \eqref{eq.c.intermediate1}, and \eqref{eq.c.intermediate2} that   %, the fact that   $\kappa_w   \xi_0 \rho_0  \le    \tfrac{\nu_{\gamma} \mu_M \kappa_w }{4 \kappa_M (1 + 2 \kappa_{J^{\dag}} \kappa_W ) 
\begin{align*}
    \|\bar d_{k+2}\| &\le \Lambda L_{\nabla \mathcal{L}}  \|w_{k+2} - w^* \| + \tfrac{ \kappa_w \rho_0 \beta_{t+2}^{(t+2)/2}}{2}   \le \Lambda L_{\nabla \mathcal{L}}  \kappa_w \xi_{t+1} \rho_{t+1} + \tfrac{ \kappa_w \rho_0 \beta_{1}^{(t+2)/4}}{2}    \\ &\le \tfrac{ \gamma_{\hat{k}} \nu_{\gamma}^{t+1}   \rho_{t+1}}{2}  + \tfrac{ \kappa_w \rho_0 \nu_{\gamma}^{t+2}}{2}
    \le  \tfrac{\gamma_{\hat{k}} \nu_{\gamma}^{t+2}}{2} + \tfrac{\nu_{\gamma} \gamma_d  \nu_{\gamma}^{t+2}}{2}  = \gamma_{\hat{k}} \nu_{\gamma}^{t+2}.    
\end{align*}
\textbf{(d)} is therefore satisfied. We then conclude that $\{\xi_t\} \to 0$ by the fact that $\rho_t \le\nu_{\gamma}$ and $\beta_{t} \to 0$.  %, this implies $\|w_k - w^*\| \to 0$. %that 
% \begin{align}
% \label{eq.g.Lagrangian.to0}
%  \lim_{k \to \infty}        \left\|\begin{bmatrix}
%     g_{k} + J_{k}^T y_{k} \\ c_{k}
% \end{bmatrix} \right\|   \le   \lim_{k \to \infty}  L_{\nabla \mathcal{L}}  \|w_k - w^*\| = 0.
% \end{align}
In addition, ($\romannumeral4$) and ($\romannumeral5$) implies that  $\{\rho_t\} \to 0$. We can conclude that  %$\|w_k - w^*\| \to 0$ at an R-superlinear rate. 
 \begin{align*}
\lim_{t \rightarrow \infty} \tfrac{\xi_{t+1}}{\xi_t} %& =\lim _{t \rightarrow \infty} \tfrac{\max \left\{\xi_t \rho_t, \beta_{t+1}^{(t+1) / 4}\right\}}{\xi_t} \\
=\lim _{t \rightarrow \infty} \max \left\{\rho_t, \tfrac{\beta_{t+1}^{(t+1) / 4}}{\xi_t}\right\} \leq \lim _{t \rightarrow \infty} \max \left\{\rho_t,\left(\tfrac{\beta_{t+1}}{\beta_{t}}\right)^{t / 4} \beta_{t+1}^{1 / 4}\right\} %\\& \leq \lim _{t \rightarrow \infty} \max \left\{\rho_t, \beta_{t+1}^{1 / 4}\right\}
=0.
\end{align*}
Hence, $\|w_k - w^*\| \to 0$ at an R-superlinear rate. 
% \js{$\left\|\begin{bmatrix}
%     g_{k} + J_{k}^T y_{k} \\ c_{k}
% \end{bmatrix} \right\|$ can be proved to converge to 0 at a similar rate by assumption \ref{ass.M.Lipschitz.det1}.} 

As for case when Line \ref{line.else10} of Algorithm \ref{alg.SubsampledSQP.practical} is not triggered infinitely often, 
%As for case (2), 
Algorithm \ref{alg.SubsampledSQP.practical} either enters Line \ref{line: d_large.classical} or Line \ref{line.alpha.1} after some iteration $k_c$. In this scenario, line search condition \eqref{eq.line_search_cond_stoch} is always considered and $\gamma_k$ remains fixed at some $\bar \gamma$ after iteration $k_c$. %th iteration. 
It then follows by the proof of Lemma \ref{lemma.sigma.exist} that there exists $k_d \in \mathbb{R}_{>0}$ such that $\|\bar d_k\| \le \min \{\bar \gamma,\bar \gamma_d\} $  for all $k \ge k_d$ ($\bar \gamma_d$ is defined in Theorem~\ref{theorem.superlinear.convergence.deterministic}), which implies  Algorithm \ref{alg.SubsampledSQP.practical} enters Line \ref{line.alpha.1} for all $k \ge \max\{k_c, k_d\} $. Let $k_s = \max \{ k_c, k_d, \hat k\}$, excluding first $k_s$ iterations, and assuming (without loss of generality) that  $w_{k_s}$ as the starting point, it follows by the definition of $\kappa_w$ and $\rho_0$ that $\kappa_w \rho_0 \le  \tfrac{1}{3\Lambda L_W} \min \left\{ \tfrac{\mu_M \Lambda}{2}, 3 \Lambda L_W \kappa_d, 1 \right\}$. In addition, $\beta_{k}<\beta_{k-1}$ and $\rho_0 < 1$ hold, and as a result, conditions ($\romannumeral1$) and ($\romannumeral2$) in Theorem~\ref{theorem.superlinear.convergence.deterministic} are satisfied. Thus, considering $w_{k_s}$ as the starting point and applying the result from Theorem~\ref{theorem.superlinear.convergence.deterministic} completes the proof. \end{proof}}
%% OLD VERSION
% In addition, the initial $\beta_{k}$ satisfies $\beta_{0} < 1$, and $\beta_{k}<\beta_{k-1}$ holds. Condition ($\romannumeral1$) in Theorem \ref{theorem.superlinear.convergence.deterministic} is therefore satisfied. \js{Condition ($\romannumeral2$) is also satisfied by  $\rho_0 < 1$. The superlinear convergence for $\|w_k - w^*\|$ and $\left\|\begin{bmatrix}
%     g_{k} + J_{k}^T y_{k} \\ c_{k}
% \end{bmatrix} \right\| $ therefore holds if we apply Theorem \ref{theorem.superlinear.convergence.deterministic} and view $w_{\hat{k}}$ as the starting point. }

%On the other hand,  it follows by $\rho < 1$ that 
% \begin{align*}
%     |S_{\hat{k}}^H | &\ge  \left(    \tfrac{8 \Lambda(\kappa_2 + \mu_{2} \kappa_H)-\rho }{8 \Lambda(\kappa_2 + \mu_{2} \kappa_H)  }  \right) N  \ge  \left(    \tfrac{8 \Lambda(\kappa_2 + \mu_{2} \kappa_H) -1 }{8 \Lambda(\kappa_2 + \mu_{2} \kappa_H)  }  \right) N. 
% \end{align*}
% All the sampling conditions required in Theorem \ref{theorem.superlinear.convergence.deterministic} are then satisfied if we view $w_{\hat{k}}$ as the starting point. 
% The superlinear convergence for $\|w_k - w^*\|$ and $\left\|\begin{bmatrix}
%     g_{k} + J_{k}^T y_{k} \\ c_{k}
% \end{bmatrix} \right\| $ therefore hold in this case. % The global convergence of $\left\|\begin{bmatrix}   g_{k} + J_{k}^T y_{k} \\ c_{k} \end{bmatrix} \right\| $ is satisfied by \eqref{eq.g.Lagrangian.to0}.  

\begin{remark}
Theorem \ref{theorem.global.to.local.practical} provides an asymptotic  superlinear convergence rate with respect to $\|w_k - w^*\|$. %generated by Algorithm \ref{alg.SubsampledSQP.practical}
It unifies the  global convergence and the local superlinear convergence results of classical SQP methods. The sampling conditions ($\romannumeral1$), ($\romannumeral2$) are required for the global convergence result. % of consistently employing line search condition \eqref{eq.line_search_cond} (refer to lemma \ref{lemma.sigma.exist}). 
Condition  ($\romannumeral3$) is needed for proving that the modified line search condition  \eqref{eq.line_search_cond_modified_stoch} produces unit step size when $\|\bar d_k\|$ is sufficiently small, and conditions ($\romannumeral4$) and ($\romannumeral5$) are  required for showing the superlinear convergence rate of $\|w_k - w^*\|$ and need to be satisfied after $\hat{k}$ iterations.  
 The iteration $\hat{k}$ determines when $\|\bar d_{\hat{k}}\|$ is sufficiently small (similar to the conditions required in Theorem \ref{theorem.superlinear.convergence.deterministic}). Although $\hat{k}$ depends on unknown problem-specific constants, the initial conditions of ($\romannumeral4$) and ($\romannumeral5$) are guaranteed to be satisfied after a sufficient number of iterations as long as   ($\romannumeral2$) and $\lim_{k \to \infty} \left|S_k^H\right| = N$ hold.
% $\hat{k}$ is determined when $\|\bar d_{\hat{k}}\|$ is sufficiently small and can be regarded as the initial iterate with similar conditions required in Theorem \ref{theorem.superlinear.convergence.deterministic}. Although $\hat{k}$ depends on unknown parameters like Lipschitz constants, the initial conditions of ($\romannumeral4$) and ($\romannumeral5$) are guaranteed to be satisfied for any sufficiently large iterations as long as   ($\romannumeral2$) and $\lim_{k \to \infty} \left|S_k^H\right| = N$ hold. 
\end{remark}

\section{Practical Inexact SQP}
\label{sec.inexact}

% \subsection{Inexact SQP}

In Algorithm \ref{alg.SubsampledSQP.practical}, the exact solution of the linear system \eqref{eq.SQP.sto} is required %in order 
to compute a step. 
% a linear system of equations %the computation of a matrix inverse 
% is required in order to solve the subproblem \eqref{eq.SQP.sto}. 
This poses challenges in large-scale ($n$ and $m$) settings. %where %However, a significant concern arises due to the high computational cost, particularly when 
%the dimensions 
%. % are large. 
In this section, we present an inexact matrix-free variant of Algorithm \ref{alg.SubsampledSQP.practical} 
that utilizes the Minimum Residual (MINRES) method \cite{choi2011minres} to solve \eqref{eq.SQP.sto} at every iteration. 
% In this section, we present an inexact matrix-free variant %of Algorithm \ref{alg.SubsampledSQP.practical} 
% that utilizes an iterative method to solve \eqref{eq.SQP.sto} at every iteration. Specifically, we consider the Minimum Residual (MINRES) method \cite{choi2011minres} in our practical algorithm.
%In specific, we employ the MINRES method \cite{choi2011minres} to make our proposed method more practical. 
The rationale behind choosing MINRES over other widely used approaches lies in the facts that the matrix $\bar M_k$ is not assumed to be positive definite, that the method can be implemented matrix-free, and due to the robust theoretical guarantees and empirical performance~\cite{choi2011minres,fong2012cg,liu2022newton}.
% The rationale behind choosing MINRES over other widely used approaches %like conjugate gradient (CG) 
% lies in the fact that the matrix $\bar M_k$ is not assumed to be positive definite and that it can be implemented matrix-free. The MINRES method stands out due to its robust theoretical guarantees and superior numerical performance \cite{choi2011minres, fong2012cg,liu2022newton}, making it a natural choice in our context.  

Let $(\bar d_k^{(t)} ,\bar \delta_k^{(t)} )$ denote the approximate solution obtained at the $t$-th iteration of the  MINRES method (at the $k$-th iteration of Algorithm \ref{alg.SubsampledSQP.practical}) such that 
\begin{equation}  
\begin{bmatrix}
    \rho_k^{(t)} \\  r_k^{(t)}
\end{bmatrix} =
\begin{bmatrix}
   \bar W_k  & J_k^T \\ J_k & 0
\end{bmatrix}
\begin{bmatrix}
     \bar {d}_k^{(t)} \\ \bar \delta_k^{(t)}
\end{bmatrix}  + \begin{bmatrix}
  \bar  {g}_k + J_k^T  y_k \\ c_k
\end{bmatrix},
\label{eq.SQP.inexact.res}
\end{equation}
where %$(\rho_k^{(t)}, r_k^{(t)})$ 
$\rho_k^{(t)} \in \mathbb{R}^{n}$ and $ r_k^{(t)} \in \mathbb{R}^{m}$ denote the vectors of residuals. % the residual.
In the practical version of our algorithm, the residual vectors are nonzero and we utilize the inexact solutions $\bar{d}_k^{(t)}$, $\bar{\delta}_k^{(t)}$ in lieu of the exact solutions $\bar{d}_k, \bar{\delta}_k$ of subproblem \eqref{eq.SQP.sto} in Algorithm~\ref{alg.SubsampledSQP.practical}. Restrictions are imposed on the residual vectors which in turn provide lower bounds on the number of MINRES iterations. %we simply substitute $\bar{d}_k, \bar{\delta}_k$ with $\bar{d}_k^{(t)}$, $\bar{\delta}_k^{(t)}$ in Algorithm \ref{alg.SubsampledSQP.practical} for a chosen iteration index $t$ where the norm of the residual is deemed sufficiently small. 

For ease of presentation and analysis, we assume the true gradient of the objective function is available and used ($\bar g_k = g_k = \nabla f_k$) and only approximate the Hessian. %utilize the true gradient of the objective function ($\bar g_k = g_k = \nabla f_k$) and only %use 
%approximate the Hessian, as detailed in \eqref{eq.Hessian.estimate}. 
%That said, o
One can consider the setting with inexact gradients at the cost of weaker theoretical results and more complicated analysis. For brevity we do not provide %Although we choose not to provide 
a comprehensive convergence analysis of the practical version of Algorithm \ref{alg.SubsampledSQP.practical}.  Instead, we explore the %, it is still of interest to explore the 
relationship between the accuracy in  \eqref{eq.SQP.inexact.res}, the Hessian sample size, %the number of $|S_k^H|$, 
and the rate of convergence of the inexact method. Similar to Section \ref{sec.superlinear} we make the proximity and unit step size assumptions.  %Thus, henceforth, we assume that We therefore proceed the analysis by assuming that we employ a unit step size similar as in Section \ref{sec.superlinear}. 
% The  update formula of the practical inexact SQP method can then be expressed as 
% $w_{k+1} =  \begin{bmatrix}
%     x_{k+1} \\  y_{k+1}  
% \end{bmatrix} =   \begin{bmatrix}
%     x_{k} \\  y_{k}  
% \end{bmatrix}  + \begin{bmatrix}
%      \bar {d}_k^{(t)} \\ \bar \delta_k^{(t)}
% \end{bmatrix}$.
% \begin{align*}
%   w_{k+1} =  \begin{bmatrix}
%     x_{k+1} \\  y_{k+1}  
% \end{bmatrix} =   \begin{bmatrix}
%     x_{k} \\  y_{k}  
% \end{bmatrix}  + \begin{bmatrix}
%      \bar {d}_k^{(t)} \\ \bar \delta_k^{(t)}
% \end{bmatrix}.
% \end{align*}

The following lemma characterizes the accuracy  of the approximate solutions obtained with respect to the number of MINRES iterations.  

\begin{lemma}
\label{lemma.MINRES.res.convergence}
Suppose Assumptions \ref{ass.function} and \ref{ass.M.inverse.bound.subsampled} hold. There exists $\theta \in (0,1)$ such that for all $k,t\in \mathbb{N}$,
$\left\|  \begin{bmatrix}
    \rho_k^{(t)} \\  r_k^{(t)}
\end{bmatrix}  \right\| \le 2 \theta^t \left\| \begin{bmatrix}
     {g}_k + J_k^T  y_k \\ c_k
\end{bmatrix}   \right\|$. 
% \begin{align*}
% \left\|  \begin{bmatrix}
%     \rho_k^{(t)} \\  r_k^{(t)}
% \end{bmatrix}  \right\| \le 2 \theta^t \left\| \begin{bmatrix}
%      {g}_k + J_k^T  y_k \\ c_k
% \end{bmatrix}   \right\|
% \end{align*}
\end{lemma}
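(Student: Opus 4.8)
The plan is to invoke the standard convergence theory for MINRES applied to a symmetric (indefinite) linear system. The system in \eqref{eq.SQP.inexact.res} has the saddle-point matrix $\bar M_k$ as coefficient matrix, with right-hand side $b_k := -[\,g_k + J_k^T y_k;\, c_k\,]$, and MINRES is initialized at the zero iterate so that the initial residual is exactly $b_k$ (up to sign). Since $\bar M_k$ is symmetric and, by Assumption~\ref{ass.M.inverse.bound.subsampled}, invertible with $\|\bar M_k\| \le \kappa_M$ and $\|\bar M_k^{-1}\| \le \Lambda$, its eigenvalues all lie in the set $[-\kappa_M, -\Lambda^{-1}] \cup [\Lambda^{-1}, \kappa_M]$, i.e., they are bounded away from zero and bounded above uniformly in $k$. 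MINRES minimizes the residual norm over the Krylov subspace, so the residual after $t$ iterations satisfies the usual polynomial bound
\[
\left\| \begin{bmatrix} \rho_k^{(t)} \\ r_k^{(t)} \end{bmatrix} \right\|
= \min_{p \in \mathcal{P}_t,\, p(0)=1} \| p(\bar M_k) b_k \|
\le \Bigl( \min_{p \in \mathcal{P}_t,\, p(0)=1} \max_{\lambda \in \sigma(\bar M_k)} |p(\lambda)| \Bigr) \, \| b_k \|,
\]
where $\mathcal{P}_t$ denotes polynomials of degree at most $t$.

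**The key estimate.**

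Next I would bound the min-max polynomial factor. Because the spectrum lies in the symmetric-about-zero union of two intervals $I^- \cup I^+$ with $I^{\pm} = \pm[\Lambda^{-1}, \kappa_M]$, one uses the classical trick of passing to the variable $\mu = \lambda^2$: writing $p(\lambda) = q(\lambda^2)$ for even polynomials reduces the two-interval problem to a single-interval Chebyshev problem on $[\Lambda^{-2}, \kappa_M^2]$. The standard shifted-and-scaled Chebyshev bound then gives
\[
\min_{\substack{p \in \mathcal{P}_{t},\, p(0)=1}} \max_{\lambda \in I^- \cup I^+} |p(\lambda)|
\le 2\left( \frac{\sqrt{\kappa} - 1}{\sqrt{\kappa}+1} \right)^{\lfloor t/2 \rfloor},
\qquad \kappa := \Lambda^2 \kappa_M^2,
\]
(taking even polynomials of degree $2\lfloor t/2\rfloor$). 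Setting $\theta := \bigl( (\sqrt{\kappa}-1)/(\sqrt{\kappa}+1) \bigr)^{1/2} \in (0,1)$, which depends only on $\Lambda$ and $\kappa_M$ and hence is uniform in $k$, one gets a bound of the form $2\theta^{2\lfloor t/2\rfloor} \le 2\theta^{t-1}$; absorbing the harmless constant (e.g. replacing $\theta$ by a slightly larger $\theta' < 1$, or noting $2\theta^{t-1} = (2/\theta)\theta^t$ and rechoosing constants) yields exactly the claimed $2\theta^t \|b_k\|$. Since $\|b_k\| = \bigl\| [\,g_k + J_k^T y_k;\, c_k\,] \bigr\|$, this is the statement of the lemma.

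**Main obstacle.**

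The one genuinely delicate point is the two-interval Chebyshev estimate: MINRES convergence is often quoted for positive-definite systems, but here $\bar M_k$ is indefinite, so I must use the symmetrized-interval version, which costs the factor-of-two loss in the exponent (convergence governed by $\theta^{t/2}$-type behavior rather than $\theta^t$). I need to be careful that the final constant $\theta$ is genuinely uniform in $k$ — this is exactly what Assumption~\ref{ass.M.inverse.bound.subsampled} buys, since the spectral interval $[\Lambda^{-1}, \kappa_M]$ does not vary with $k$. I would also note that MINRES uses the zero initial guess so there is no initial-iterate term, and that the residual MINRES controls is precisely the residual appearing in \eqref{eq.SQP.inexact.res}. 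With these observations the proof is a one-paragraph citation of \cite{choi2011minres} (or standard Krylov-subspace references) combined with the uniform spectral bounds; for brevity in the paper I would defer the detailed polynomial computation to an appendix or simply cite it.
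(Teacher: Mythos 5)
Your overall strategy --- uniform spectral bounds from Assumption~\ref{ass.M.inverse.bound.subsampled} ($\sigma(\bar M_k)\subseteq[-\kappa_M,-\Lambda^{-1}]\cup[\Lambda^{-1},\kappa_M]$) combined with the Krylov min--max polynomial bound for MINRES --- is sound and is in the same spirit as the paper's argument. The difference is in the key estimate invoked. The paper does not use the even-polynomial symmetrization $\lambda\mapsto\lambda^2$; it cites \cite[Lemma 4]{liu2022newton}, a sharper MINRES bound for indefinite systems that treats the positive and negative eigenvalue clusters separately, yielding the per-iteration rate $\theta=\max\bigl\{\tfrac{\sqrt{\kappa_i^+}-1}{\sqrt{\kappa_i^+}+1},\tfrac{\sqrt{\kappa_i^-}-1}{\sqrt{\kappa_i^-}+1}\bigr\}$ built from the sub-condition-numbers of each cluster (restricted to eigenspaces not orthogonal to the right-hand side), and hence the bound $2\theta^t$ directly. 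Your symmetrization route is more elementary and self-contained, but it pays twice: the relevant condition number becomes $(\Lambda\kappa_M)^2$ rather than the cluster-wise ones, and the exponent is $\lfloor t/2\rfloor$ rather than $t$, so your $\theta$ is quantitatively worse.

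There is one step you should tighten: the claim that the leftover constant in $2\theta_0^{\lfloor t/2\rfloor}\le 2\theta^{t-1}=(2/\theta)\theta^t$ can be ``absorbed by slightly enlarging $\theta$'' fails at $t=1$, where any $\theta<1$ would be required to satisfy $2\theta\ge 2$. The fix is to invoke the monotone non-increase of MINRES residuals, $\|r^{(t)}\|\le\|r^{(t-1)}\|\le\|b_k\|$, which handles $t=1$ whenever $\theta\ge\tfrac12$, and then to choose the exponent more conservatively, e.g.\ $\theta:=\max\{\theta_0,\tfrac18\}^{1/3}$, for which one checks $2\theta_0^{\lfloor t/2\rfloor}\le2\theta^{t}$ for all even $t$ and $2\theta_0^{(t-1)/2}\le2\theta^{t}$ for all odd $t\ge3$. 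With that patch your argument does deliver the lemma as stated, with a valid (if looser) $\theta\in(0,1)$ uniform in $k$.
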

\begin{proof}
The proof can be found in Appendix \ref{app.minres.res.prove}. %\js{$\theta$ depends on condition numbers of $\bar M_k$.}
% Since $\bar M_k$ is nonsingular by assumption \ref{ass.function}, \ref{ass.M.inverse.bound.subsampled}, and \cite[Lemma 16.1]{NoceWrig06}.
% The proof then follows directly from \cite[Lemma 4]{liu2022newton} by taking $i = \psi_+$ and $j = \psi_+ + \psi_0 + 1$ in the statement. We also use the fact that 
% \begin{align*}
%     (I -  \begin{bmatrix} U_+ & U_-   \end{bmatrix} \begin{bmatrix} U_+ & U_-   \end{bmatrix}^T) \begin{bmatrix}
%     {g}_k + J_k^T  y_k \\ c_k
% \end{bmatrix} = U_{\perp} U_{\perp}^T \begin{bmatrix}
%    {g}_k + J_k^T  y_k \\ c_k
% \end{bmatrix} = 0.
% \end{align*}
\end{proof}

We make two comments about the above result. First, the constant $\theta$ has dependence on the condition number of the matrix $\bar M_k$ \eqref{eq.SQP.sto}. Second, if $t = m + n$ MINRES iterations are performed, then the exact solution of the linear system is guaranteed to be obtained under the given assumptions \cite{choi2011minres}.

The next lemma is an inexact (in terms of the solution of the linear system) counterpart of Lemma~\ref{lemma.superlinear.lemma.subsampled}.

% We then introduce a lemma that serves as an inexact counterpart to Lemma \ref{lemma.superlinear.lemma.subsampled} from Section \ref{sec.superlinear}. 

\begin{lemma}
\label{lemma.inexact.linear.intermediate}
  Suppose Assumptions \ref{ass.function}, \ref{ass.M.Lipschitz.det1}, %\ref{ass.H.subsampled}, 
  and \ref{ass.M.inverse.bound.subsampled} hold. For all $k \in \mathbb{N}$, let $|S_k^g| = N$ and $|S_k^H| %= S_H 
  \leq N$. Then, when $\| w_{k} - w^* \| \le r$ for some $r \in \mathbb{R}_{>0}$ (defined in Assumption~\ref{ass.M.Lipschitz.det1}), 
  %\noindent
\begin{align*}
  \| w_{k+1} - w^*  \|  \le \tfrac{   \Lambda L_W}{2}   \| w_{k} - w^* \|^2 + 2 \Lambda\left( \left(\sfrac{(N-|S_{k}^H|)}{N}\right)(\kappa_2 + \mu_{2} \kappa_H) %\tfrac{\sigma_H}{\sqrt{|S_H|}} 
 +  \theta^t L_{\nabla \mathcal{L}} \right) \| w_{k} - w^* \|.
\end{align*}
    %$\| w_{k+1} - w^*  \|  \le \tfrac{   \Lambda L_W}{2}   \| w_{k} - w^* \|^2 + 2 \Lambda\left( \left(\sfrac{(N-|S_{k}^H|)}{N}\right)(\kappa_2 + \mu_{2} \kappa_H) %\tfrac{\sigma_H}{\sqrt{|S_H|}} 
    % +  \theta^t L_{\nabla \mathcal{L}} \right) \| w_{k} - w^* \|$.
% \begin{align*}
%      \| w_{k+1} - w^*  \|  &\le \tfrac{   \Lambda L_W}{2}   \| w_{k} - w^* \|^2 + \Lambda\left( \tfrac{\sigma_H}{\sqrt{|S_H|}} + 2 \theta^t L_{\nabla \mathcal{L}} \right) \| w_{k} - w^* \|.  
% \end{align*}
%where $\theta = \max\left\{ \tfrac{\sqrt{\kappa_i^+} - 1 }{\sqrt{\kappa_i^+} + 1 }, \tfrac{\sqrt{\kappa_i^-} - 1 }{\sqrt{\kappa_i^-} + 1 } \right\}$. 
\end{lemma}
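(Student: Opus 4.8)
The plan is to carry out the standard inexact-Newton perturbation argument that underlies Lemma~\ref{lemma.superlinear.lemma.subsampled} and Theorem~\ref{theorem.quadratic.det}, keeping track of one extra term coming from the MINRES residuals. Write $F(w) := \nabla_w \mathcal{L}(x,y) = (\nabla_x \mathcal{L}(x,y),\ c(x))$ for $w = (x,y)$, so that $F(w^*) = 0$ by \eqref{eq.first_order_stat}, and note that the Jacobian of $F$ at $w$ is exactly $M^{\mathrm{true}}(w) := \nabla^2_w \mathcal{L}(x,y)$, which coincides with the matrix $M_k$ of \eqref{eq.SQP.det} when the exact Hessian of the Lagrangian is used in the $(1,1)$ block, and to which Assumption~\ref{ass.M.Lipschitz.det1} applies. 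Since $|S_k^g| = N$ forces $\bar g_k = g_k$, equation \eqref{eq.SQP.inexact.res} reads $\bar M_k (\bar d_k^{(t)}, \bar\delta_k^{(t)}) = -F(w_k) + (\rho_k^{(t)}, r_k^{(t)})$, and with the unit-step update $w_{k+1} = w_k + (\bar d_k^{(t)}, \bar\delta_k^{(t)})$ one obtains the decomposition $w_{k+1} - w^* = \bar M_k^{-1}\big( \bar M_k (w_k - w^*) - F(w_k) \big) + \bar M_k^{-1}(\rho_k^{(t)}, r_k^{(t)})$.

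First I would bound the ``Newton part''. Using $\|w_k - w^*\| \le r$ and convexity of the radius-$r$ ball, the segment $w_s := w^* + s(w_k - w^*)$ stays in the neighborhood of Assumption~\ref{ass.M.Lipschitz.det1}, so $F(w_k) - F(w^*) = \int_0^1 M^{\mathrm{true}}(w_s)(w_k - w^*)\,ds$, hence $\bar M_k(w_k - w^*) - (F(w_k) - F(w^*)) = \int_0^1 \big( \bar M_k - M^{\mathrm{true}}(w_s) \big)(w_k - w^*)\,ds$. Splitting $\bar M_k - M^{\mathrm{true}}(w_s) = (\bar M_k - M^{\mathrm{true}}(w_k)) + (M^{\mathrm{true}}(w_k) - M^{\mathrm{true}}(w_s))$, the first difference equals $\mathrm{diag}(\bar H_k - H_k,\, 0)$, whose norm is at most $\epsilon_{H_k} = 2\tfrac{N - |S_k^H|}{N}(\kappa_2 + \mu_2 \kappa_H)$ by Lemma~\ref{lemma.diff.g.g.bar}, while the second is at most $L_W (1-s)\|w_k - w^*\|$ by the $L_W$-Lipschitz continuity of $\nabla^2_w\mathcal{L}$. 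Integrating in $s$ and using $\|\bar M_k^{-1}\| \le \Lambda$ (Assumption~\ref{ass.M.inverse.bound.subsampled}) bounds this part by $\Lambda\big( \tfrac{L_W}{2}\|w_k - w^*\|^2 + \epsilon_{H_k}\|w_k - w^*\| \big)$, which is exactly the estimate of Lemma~\ref{lemma.superlinear.lemma.subsampled} with $\epsilon_{g_k} = 0$.

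For the ``inexactness part'', I would invoke Lemma~\ref{lemma.MINRES.res.convergence} to get $\|(\rho_k^{(t)}, r_k^{(t)})\| \le 2\theta^t \|F(w_k)\|$, then use $\|F(w_k)\| = \|F(w_k) - F(w^*)\| \le L_{\nabla\mathcal{L}}\|w_k - w^*\|$ from Assumption~\ref{ass.M.Lipschitz.det1} and again $\|\bar M_k^{-1}\| \le \Lambda$, giving $\|\bar M_k^{-1}(\rho_k^{(t)}, r_k^{(t)})\| \le 2\Lambda\theta^t L_{\nabla\mathcal{L}}\|w_k - w^*\|$. Adding the two contributions, substituting $\Lambda\epsilon_{H_k} = 2\Lambda\tfrac{N-|S_k^H|}{N}(\kappa_2 + \mu_2 \kappa_H)$, and factoring the linear-in-$\|w_k - w^*\|$ terms yields the stated inequality. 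The only points that need care, and which I expect to be the main (if mild) obstacle, are the bookkeeping between the exact Hessian-of-Lagrangian matrix and the subsampled $\bar M_k$ (so that the Lipschitz estimate is applied to $M^{\mathrm{true}}$ and the subsampling error is isolated into $\epsilon_{H_k}$), and confirming that the integration segment remains inside the radius-$r$ neighborhood; the rest is routine and parallels the proof of Lemma~\ref{lemma.superlinear.lemma.subsampled}.
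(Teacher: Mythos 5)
Your proposal is correct and follows exactly the argument the paper intends: the paper omits the proof, stating it is identical to Lemma~3.1 of the cited subsampled Newton work with the Lagrangian viewed as the objective and $w$ as the variable, and your decomposition into the Newton/subsampling part (bounded via Assumption~\ref{ass.M.Lipschitz.det1} and Lemma~\ref{lemma.diff.g.g.bar}) plus the MINRES residual part (bounded via Lemma~\ref{lemma.MINRES.res.convergence} and $\|F(w_k)\| \le L_{\nabla\mathcal{L}}\|w_k-w^*\|$) is precisely that argument. The only cosmetic remark is that your use of $\epsilon_{H_k}$ tacitly invokes Assumption~\ref{ass.individual.f.subsampled}, which the lemma statement does not list but clearly presupposes since its conclusion contains $\kappa_2$ and $\mu_2$.
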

\begin{proof} 
The proof is identical to \cite[Lemma 3.1]{bollapragada2019exact} if we remove the expectation, view the Lagrangian function as the objective function, and view the primal-dual variables as the decision variables. Thus, for brevity we omit the proof.
% By \eqref{eq.SQP.inexact.res}, Lemma \ref{lemma.MINRES.res.convergence}, and 
% Assumptions \ref{ass.function}, \ref{ass.M.Lipschitz.det1}, \ref{ass.H.subsampled}, and \ref{ass.M.inverse.bound.subsampled}
% \begin{align*}
%     \| w_{k+1} - w^*  \| %&=  \left \| w_{k} - w^* + \begin{bmatrix}      \bar {d}_k^{(t)} \\ \bar \delta_k^{(t)} \end{bmatrix} \right \|   
%  &\le   \left \| w_{k} - w^* - \bar M_k^{-1} \begin{bmatrix}        {g}_k + J_k^T  y_k \\ c_k  \end{bmatrix} \right \| +  \left \|  \begin{bmatrix}     \bar {d}_k^{(t)} \\ \bar \delta_k^{(t)}\end{bmatrix} +  \bar M_k^{-1} \begin{bmatrix}     {g}_k + J_k^T  y_k \\ c_k  \end{bmatrix} \right \| 
%     \\&\le  \tfrac{\Lambda L_W}{2}   \| w_{k} - w^* \|^2 + \Lambda\tfrac{\sigma_H}{\sqrt{|S_H|}}  \left\| w_{k} - w^* \right\| +  \left \|  \bar M_k^{-1} \begin{bmatrix}
%        \rho_k^{(t)} \\  r_k^{(t)}
% \end{bmatrix} \right \|   
% %\\&\le  \tfrac{\Lambda L_W}{2}   \| w_{k} - w^* \|^2 + \Lambda\tfrac{\sigma_H}{\sqrt{|S_H|}}  \left\| w_{k} - w^* \right\| + 2  \theta^t \left \| \begin{bmatrix}    {g}_k + J_k^T  y_k \\ c_k \end{bmatrix} \right \| \left \|  \bar M_k^{-1} \right \|  
% \\ &\le  \tfrac{\Lambda L_W}{2}   \| w_{k} - w^* \|^2 + 
% \Lambda\left( \tfrac{\sigma_H}{\sqrt{|S_H|}} + 2 \theta^t L_{\nabla \mathcal{L}} \right) \| w_{k} - w^* \|. 
% \end{align*}
\end{proof}

Under the proximity and unit step size assumptions, we prove a linear rate of convergence for the proposed practical inexact matrix-free SQP method. 

% \js{The changed I have made to Theorem 7.3 (please delete that after reading): first fix the issue about $\epsilon_{H_k}$, then refactor (for better presentation) to make advantage of $\bar \gamma_d$ in Theorem 5.9, instead of having $\tfrac{1}{4} + \tfrac{1}{8}  + \tfrac{1}{8}  \le \tfrac{1}{2}$, the new version is $\tfrac{1}{6} + 2(\tfrac{1}{6}  + \tfrac{1}{6}) \le \tfrac{1}{2}$}.

\begin{theorem}\label{theorem.inexact.linear}
 Suppose Assumptions \ref{ass.function}, \ref{ass.M.Lipschitz.det1}, %\ref{ass.H.subsampled}, 
 and \ref{ass.M.inverse.bound.subsampled} hold. 
For $k \in \mathbb{N}$, let $|S_k^g| = N$ and $|S_k^H| \ge \max \left \{ \tfrac{12\Lambda ( \kappa_2 + \mu_{2} \kappa_H) -1}{12\Lambda ( \kappa_2 + \mu_{2} \kappa_H)}N, 1 \right\} $. If % the number of MINRES iterations satisfies 
$t \ge \frac{\log \left( 12 \Lambda L_{\nabla \mathcal{L}}  \right) }{\log \left( \frac{1}{\theta}  \right) }$ 
% \begin{align*}
%     t \ge \frac{\log \left( 16 \Lambda L_{\nabla \mathcal{L}}  \right) }{\log \left( \frac{1}{\theta}  \right)  }
% \end{align*}
and %Then, if the starting point satisfies 
$\left\|\bar d_0\right\| \leq \bar \gamma_d$ ($\bar \gamma_d$ defined in Theorem~\ref{theorem.superlinear.convergence.deterministic}), 
%$\| w_0-w^* \| \le \frac{1}{2 \Lambda L_W }$, 
then %it follows that 
$\|w_{k+1} - w^* \| \le \frac{1}{2}\|w_{k} - w^* \|$.
% \begin{align*}
%     \|w_{k+1} - w^* \| \le \frac{1}{2}\|w_{k} - w^* \|. 
% \end{align*}
\end{theorem}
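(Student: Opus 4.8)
The plan is to read off the one-step bound from Lemma~\ref{lemma.inexact.linear.intermediate} and then run an induction that simultaneously keeps the iterates inside the ball $\mathcal{B}:=\{w:\|w-w^*\|\le \tfrac{1}{3\Lambda L_W}\}$ and produces the claimed contraction factor. The key observation is that, once $\|w_k-w^*\|\le\tfrac{1}{3\Lambda L_W}$, each of the three terms on the right-hand side of the inequality in Lemma~\ref{lemma.inexact.linear.intermediate} can be bounded by $\tfrac16\|w_k-w^*\|$, one term per hypothesis of the theorem (proximity, Hessian-sample accuracy, MINRES accuracy).

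First I would establish the base case. Since $|S_k^g|=N$ the gradient is exact, so the hypothesis on $\|g_0-\bar g_0\|$ in Lemma~\ref{lemma.d.bound.imply.difference} holds trivially. Noting that $\bar\gamma_d$ (as defined via \eqref{eq.superlinear.starting.finite} in Theorem~\ref{theorem.superlinear.convergence.deterministic}) coincides with the threshold $\min\{\tfrac{\mu_M\kappa_w}{4\kappa_M(1+2\kappa_{J^\dagger}\kappa_W)},\tfrac{\kappa_d}{2}\}$ from Lemma~\ref{lemma.d.bound.imply.difference} taken with $\kappa_w=\tfrac{1}{3\Lambda L_W}$, the assumption $\|\bar d_0\|\le\bar\gamma_d$ yields $\|w_0-w^*\|\le\tfrac{1}{3\Lambda L_W}$; without loss of generality we may assume $\tfrac{1}{3\Lambda L_W}\le r$ (shrinking $\bar\gamma_d$ if necessary), so $w_0\in\mathcal{B}$ and Lemma~\ref{lemma.inexact.linear.intermediate} applies at $k=0$.

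For the inductive step, suppose $\|w_k-w^*\|\le\tfrac{1}{3\Lambda L_W}\le r$. Since $|S_k^g|=N$ and $|S_k^H|\le N$, Lemma~\ref{lemma.inexact.linear.intermediate} gives $\|w_{k+1}-w^*\|\le \tfrac{\Lambda L_W}{2}\|w_k-w^*\|^2 + 2\Lambda\big(\tfrac{N-|S_k^H|}{N}(\kappa_2+\mu_{2}\kappa_H)+\theta^t L_{\nabla\mathcal{L}}\big)\|w_k-w^*\|$. I would then bound the three pieces: (i) the quadratic term satisfies $\tfrac{\Lambda L_W}{2}\|w_k-w^*\|^2\le\tfrac{\Lambda L_W}{2}\cdot\tfrac{1}{3\Lambda L_W}\|w_k-w^*\|=\tfrac16\|w_k-w^*\|$; (ii) the Hessian-subsampling term satisfies $2\Lambda\tfrac{N-|S_k^H|}{N}(\kappa_2+\mu_{2}\kappa_H)\le\tfrac16$, because $|S_k^H|\ge\max\{\tfrac{12\Lambda(\kappa_2+\mu_2\kappa_H)-1}{12\Lambda(\kappa_2+\mu_2\kappa_H)}N,1\}$ forces $\tfrac{N-|S_k^H|}{N}(\kappa_2+\mu_2\kappa_H)\le\tfrac{1}{12\Lambda}$ (and in the degenerate regime $12\Lambda(\kappa_2+\mu_2\kappa_H)<1$ one has $2\Lambda(\kappa_2+\mu_2\kappa_H)<\tfrac16$ directly); (iii) the inexactness term satisfies $2\Lambda\theta^t L_{\nabla\mathcal{L}}\le\tfrac16$, since $t\ge\tfrac{\log(12\Lambda L_{\nabla\mathcal{L}})}{\log(1/\theta)}$ with $\theta\in(0,1)$ gives $\theta^t\le\tfrac{1}{12\Lambda L_{\nabla\mathcal{L}}}$ (again trivial when $12\Lambda L_{\nabla\mathcal{L}}<1$ using $\theta^t\le 1$). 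Summing the three estimates yields $\|w_{k+1}-w^*\|\le\tfrac12\|w_k-w^*\|$, and then $\|w_{k+1}-w^*\|\le\tfrac{1}{6\Lambda L_W}\le\tfrac{1}{3\Lambda L_W}$, so $w_{k+1}\in\mathcal{B}$ and the induction closes.

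The main obstacle, as usual for such ``two-hypotheses-plus-proximity'' contractions, is not the arithmetic but the bookkeeping needed to start and sustain the induction: one must verify that $\bar\gamma_d$ genuinely places $w_0$ inside $\mathcal{B}$ (which requires identifying $\bar\gamma_d$ with the Lemma~\ref{lemma.d.bound.imply.difference} threshold at $\kappa_w=\tfrac{1}{3\Lambda L_W}$ and that this ball lies in the Lipschitz neighborhood of radius $r$), and that the MINRES contraction constant $\theta$ from Lemma~\ref{lemma.MINRES.res.convergence} is a fixed constant independent of $k$ — which holds because it depends only on the uniformly bounded condition number of $\bar M_k$ guaranteed by Assumption~\ref{ass.M.inverse.bound.subsampled}, so a single uniform lower bound on $t$ indeed suffices for all iterations.
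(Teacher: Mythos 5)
Your proposal is correct and follows essentially the same route as the paper: the paper's (terse) proof likewise uses the starting condition to place $w_0$ in the ball of radius $\tfrac{1}{3\Lambda L_W}$ via the argument of Theorem~\ref{theorem.superlinear.convergence.deterministic}, notes that the Hessian-sample and MINRES conditions each cap their respective terms in Lemma~\ref{lemma.inexact.linear.intermediate}, and then runs the same three-times-one-sixth induction (deferred to the analogous result in the unconstrained subsampled-Newton literature). Your write-up simply fills in the bookkeeping the paper leaves implicit, including the correct constant $\tfrac{1}{12\Lambda}$ for the subsampling term and the degenerate cases of the two max/log conditions.
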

{\allowdisplaybreaks
\begin{proof} By the conditions given in the theorem, it follows that 
%Given conditions, we have 
$\left(\sfrac{(N-|S_{k}^H|)}{N}\right)(\kappa_2 + \mu_{2} \kappa_H) \le \tfrac{1}{12}$ and $\Lambda L_{\nabla \mathcal{L}} \theta^t \le \tfrac{1}{12}$. Moreover, as proved in Theorem~\ref{theorem.superlinear.convergence.deterministic}, $\| w_0-w^* \| \le \tfrac{1}{3\Lambda L_W }$. %\js{Then we have $\| w_0-w^* \| \le \frac{1}{2 \Lambda L_W }$ by Lemma \ref{lemma.d.bound.imply.difference}. 
The remainder of the proof is similar to \cite[Theorem 3.2]{bollapragada2019exact}, with $r = \tfrac{1}{3\Lambda L_W }$ in Lemma~\ref{lemma.inexact.linear.intermediate}.
\end{proof}}

% \begin{theorem}\label{theorem.inexact.linear}
%  Suppose Assumptions \ref{ass.function}, \ref{ass.M.Lipschitz.det1}, %\ref{ass.H.subsampled}, 
%  and \ref{ass.M.inverse.bound.subsampled} hold. 
% For $k \in \mathbb{N}$, let $|S_k^g| = N$ and $|S_k^H| = S_H \ge 64  \Lambda \sigma_H$. If % the number of MINRES iterations satisfies 
% $t \ge \frac{\log \left( 16 \Lambda L_{\nabla \mathcal{L}}  \right) }{\log \left( \frac{1}{\theta}  \right) }$ 
% % \begin{align*}
% %     t \ge \frac{\log \left( 16 \Lambda L_{\nabla \mathcal{L}}  \right) }{\log \left( \frac{1}{\theta}  \right)  }
% % \end{align*}
% and %Then, if the starting point satisfies 
% $\left\|\bar d_0\right\| \leq \min \left\{ \tfrac{\mu_M}{ 8 \Lambda L_W\kappa_M (1+   2\kappa_{J^{\dagger}} \kappa_W )}, \tfrac{\kappa_d}{2} \right\}$, 
% %$\| w_0-w^* \| \le \frac{1}{2 \Lambda L_W }$, 
% then %it follows that 
% $\|w_{k+1} - w^* \| \le \frac{1}{2}\|w_{k} - w^* \|$.
% % \begin{align*}
% %     \|w_{k+1} - w^* \| \le \frac{1}{2}\|w_{k} - w^* \|. 
% % \end{align*}
% \end{theorem}
% {\allowdisplaybreaks
% \begin{proof} By the conditions given in the theorem, it follows that 
% %Given conditions, we have 
% $\Lambda \frac{\sigma_H}{\sqrt{S_H}} \le \tfrac{1}{8}$ and $2 \Lambda L_{\nabla \mathcal{L}} \theta^t \le \tfrac{1}{8}$. Moreover, by Lemma \ref{lemma.d.bound.imply.difference}, $\| w_0-w^* \| \le \frac{1}{2 \Lambda L_W }$. %\js{Then we have $\| w_0-w^* \| \le \frac{1}{2 \Lambda L_W }$ by Lemma \ref{lemma.d.bound.imply.difference}. 
% The remainder of the proof is identical to \cite[Theorem 3.2]{bollapragada2019exact}.
% \end{proof}}

\begin{remark} 
    Theorem \ref{theorem.inexact.linear} shows that under the proximity and unit step size assumptions, if a sufficient number of MINRES iterations are employed, then the primal and dual iterates converge at a linear rate. 
    %When the \asb{number} MINRES iterations is sufficiently large, Theorem \ref{theorem.inexact.linear} proves that $\|w_k - w^*\|$ converges linearly under local conditions. 
    The linear rate of convergence %rate 
    is a fixed constant 
    %independent of the condition number of the problem 
    and the number of required MINRES iterations has only a logarithmic dependence on the condition number. 
    %\js{and the number of required MINRES iteration depends on the condition number logithmically}. 
    In the inexact gradient setting, one can prove a similar result if the gradient approximation is sufficiently small as compared to the proximity measure. % (and the other conditions of Theorem \ref{theorem.inexact.linear}). 
    We note that the analysis presented in this section can be integrated into the 
    %one can also \todos{incorporate} the analysis in this section into the analysis of the 
    adaptive algorithm of Section~\ref{sec.analysis}. Finally, in the stochastic setting %of the stochastic problem 
    \eqref{problem.expected_risk}, one requires that the gradient approximation error is sufficiently small %smaller than the proximity measure in expectation 
    and Assumption \ref{ass.second.moment.bound} to derive a linear rate of convergence in expectation. 
\end{remark}

% \subsection{Adaptive Scheme to Adjust $\gamma_d$}

% \js{Note that if we replace line 7 of Algorithm \ref{alg.SubsampledSQP.practical} by the same condition as in line 12, a problem is that there might exist $k$ such that $\|\bar d_k\| > \gamma_d > \sigma_{\min}$ (where $\sigma_{\min}$ depends on unknown parameters and $\gamma_d$ is what it is in itetion $k$),  we then consider the classical line search condition until \eqref{eq.line_search_cond} is satisfied. This could set $\bar \alpha_k = 1$, and we don't change $\gamma_d$. Then we have $x_{k+1} = x_k + \bar d_k$. At next iteration, since $\| \bar d_{k+1} \|$ might get reduced, we could have   $\|\bar d_{k+1}\| \le \gamma_d$, then the modified line search condition is considered until \eqref{eq.line_search_cond_modified}  is satisfied. Suppose that we have $\bar \alpha_{k+1} = 1$, and then 
% $x_{k+2} = x_{k+1} + \bar d_{k+1}$. As a result, $\|\bar d_{k+2}\| > \gamma_d$ could happen. Suppose that we have this cyclic convergence behavior and $\bar \alpha_{\ell} = 1$ is always satisfied for all $\ell \ge k$. However, $\gamma_d$ is not small enough  } 

% \newpage
%%%%%%%%%%%%%%%%%%%%%%%%
%%%%%%%%%%%%%%%%%%%%%%%%
\section{Numerical Experiments}
\label{sec.numerical}

In this section, we demonstrate the empirical performance of our proposed algorithm in both the deterministic and stochastic settings using a MATLAB implementation. We first compare the performance of  Algorithm~\ref{alg.SubsampledSQP.practical} to the classical line search SQP method and other adaptations (discussed in Section~\ref{sec.gap}) on a subset of the %41 deterministic 
nonlinear equality constrained problems from the CUTEst collection \cite{bongartz1995cute}. In the stochastic setting, we compare with the adaptive step size SQP method proposed in \cite{berahas2021sequential} on  equality constrained logistic regression problems. Finally, we compare the exact and inexact variants of Algorithm~\ref{alg.SubsampledSQP.practical}.

%%%%%%%%%%%%%%%%%%%%%%%%
%%%%%%%%%%%%%%%%%%%%%%%%
\subsection{Deterministic Setting}
\label{sec.numerical.det}
The goal of this section is to demonstrate the robustness and efficiency of our proposed modified line search SQP method in the deterministic setting. %Our goal in this subsection is to demonstrate that our proposed modified line search SQP algorithm is robust and efficient in the deterministic setting. 
We compare our proposed algorithm to the classical line search SQP method~\cite{han1977globally} and other adaptations (discussed in Section~\ref{sec.gap}) on the CUTEst collection of test problems. Among 123 equality constrained test problems% in the collection
, we select 41 problems with %that have 
the following characteristics: %those for which: 
(1) $f$ is not a constant function; (2) $n+m \le 1000$; (3) the LICQ holds at all iterates in all runs of all algorithms (4 LICQ failures); %\js{Should we change this? It's a little bit difficult to understand. But I cannot think of alternatives without making it too long};
and, (4) at least one method was able to solve the problem %were solved by at least on of the methods 
(12 unsolved). 

We consider 5 methods in this subsection: 
\textbf{SQP-L1} \cite{han1977globally}, \textbf{2nd-corr} \cite{fukushima1986successive}, \textbf{Watchdog} \cite{chamberlain1982watchdog}, \textbf{SQP-AugLag} \cite{schittkowski1982nonlinear} and \textbf{Our method} (Algorithm~\ref{alg.SubsampledSQP.practical}).
% \textbf{\textbf{SQP-L1}} \cite{han1977globally}, \textbf{2nd-corr} \cite{fukushima1986successive}, \textbf{Watchdog} \cite{chamberlain1982watchdog}, \textbf{AugLag} \cite{schittkowski1982nonlinear} and \textbf{Our method} (Algorithm~\ref{alg.SubsampledSQP.practical}).
%The hyperparameters for each method were set as follows. 
Common parameters, used across methods, were set as: $\tau_{-1} = 1$, $\nu_{\alpha} = 0.5$, $\eta = 10^{-4}$ as in \cite{berahas2021sequential}. Method-specific parameters were set as: 
\begin{itemize}[leftmargin=0.75cm]
    \item \textbf{Our Method} (Algorithm \ref{alg.SubsampledSQP.practical}): %$\tau_{-1} = 1$, $\nu_{\alpha} = 0.5$, $\eta = 10^{-4}$, 
    $\gamma_0 = 0.999\| d_0\|$, $\nu_{\gamma} = 0.7$; 
    %\item \textbf{\textbf{SQP-L1} \cite{NoceWrig06}, 2nd-corr \cite{fukushima1986successive}, Watchdog \cite{chamberlain1982watchdog}}: $\tau_{-1} = 1$, $\nu_{\alpha} = 0.5$, $\eta = 10^{-4}$. 
   %  \item \textbf{Watchdog}: $\tau_{-1} = 1$, $\nu = 0.5$, $\eta = 10^{-4}$, the number of iterates of the relaxed step before restart is set to be 5. 
   % \item \textbf{2nd-corr}: $\tau_{-1} = 1$, $\nu = 0.5$, $\eta = 10^{-4}$. 
   \item \textbf{Watchdog \cite{chamberlain1982watchdog}}: 5 iterations for the relaxed step before a restart, as recommended in \cite[Section 15.6]{NoceWrig06};
   \item \textbf{SQP-AugLag \cite{schittkowski1982nonlinear}}:  $r_{-1} = 10^6$ (tuned over $r_{-1} = \{10,10^2,\cdots,10^8\}$). %,  %$\nu_{\alpha} = 0.5$, $\eta = 10^{-4}$. 
\end{itemize}
%We follow the common choice for the universal parameters $\tau_{-1} = 1$, $\nu_{\alpha} = 0.5$, $\eta = 10^{-4}$ as in \cite{berahas2021sequential}.
Although there are two additional hyperparameters  $\gamma_0$ and $\nu_{\gamma}$ in  Algorithm  \ref{alg.SubsampledSQP.practical}, the performance of the method is robust with respect to these hyperparameters, and the choices stated above work well across most problems tested. 
For \textbf{SQP-AugLag}, we found that the performance is sensitive to the initial penalty parameter $r_{-1}$, and tuned this value. % over $r_{-1} = \{10^1,10^2,\cdots,10^8\}$. %and choose  $r_{-1}$ based on the best overall performance. 
A run terminated with a message of success if %iteration  
for $k \leq 100$,  
\begin{align*}
    \left\|g_k+J_k^T y_k\right\|_{\infty} \leq 10^{-6} \max \left\{1,\left\|g_0+J_0^T y_0\right\|_{\infty}\right\} \text { and }\left\|c_k\right\|_{\infty} \leq 10^{-6} \max \left\{1,\left\|c_0\right\|_{\infty}\right\}.
\end{align*}
Otherwise, the run was considered a failure.

Figure \ref{fig.DM_v6} presents the performance of the methods in terms of iterations and function evaluations using Dolan-Mor\'e performance profiles \cite{dolan2002benchmarking}. As is clear, given the budget of iterations, our proposed method is robust and efficient, and, in fact, slightly outperforms the other methods on this test set. Moreover, we investigate the 7 instances for which the performance of the \textbf{SQP-L1} method differs from that of our proposed line search method indicating that the modified line search condition was triggered (i.e., Line \ref{line.else10}, Algorithm \ref{alg.SubsampledSQP.practical}). For these problems, the number of iterations for which the modified line search is employed is not insignificant; see Table~\ref{table.modified.percentage}. We present a comparison of the two methods in the form of Morales profiles \cite{morales2002numerical} (Figure \ref{fig.Mo}). While the overall performance is comparable, the results suggest that when our proposed method outperforms \textbf{SQP-L1} method, it does so by a significant margin. %As a general observation, for most problems solved, eventually the unit step size is employed by the methods. 

\begin{figure}
    \centering \includegraphics[width=0.45\textwidth,height=0.28\linewidth,clip=true,trim=10 180 50 200]{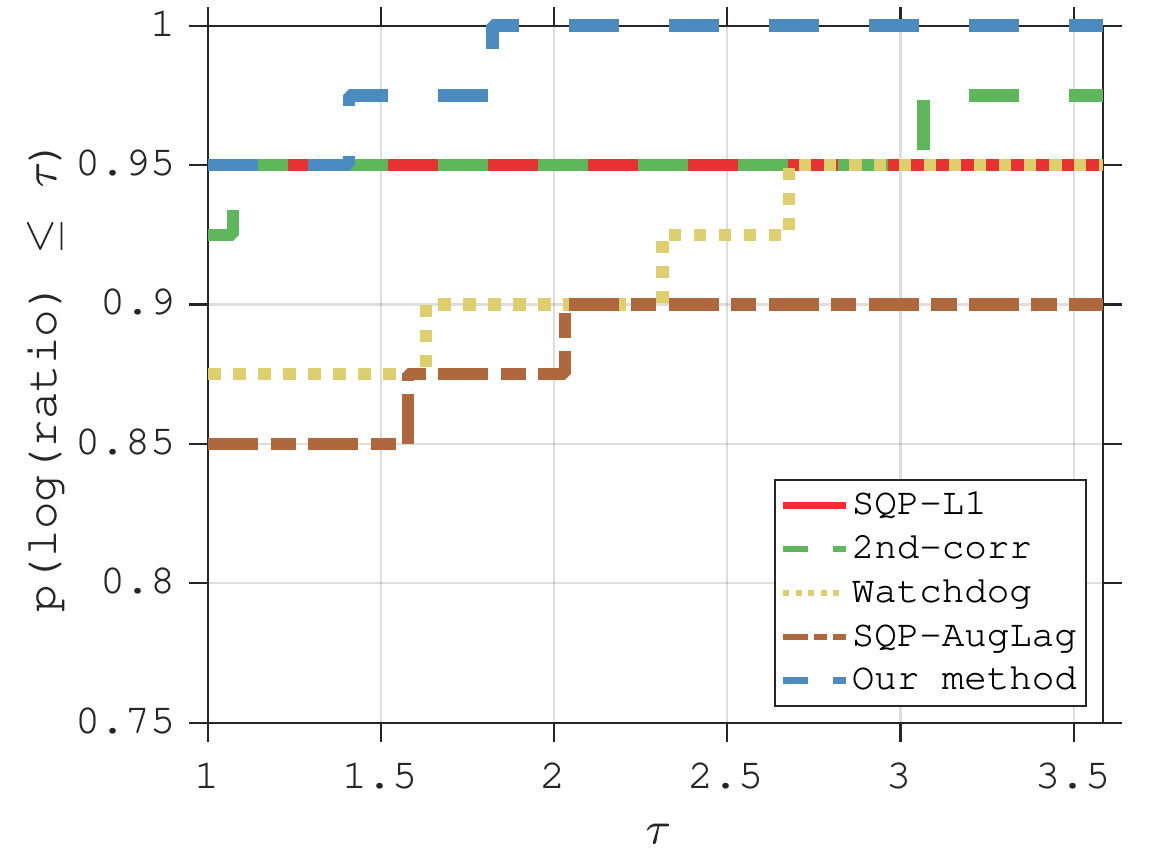}
    \includegraphics[width=0.45\textwidth,height=0.28\linewidth,clip=true,trim=10 180 50 200]{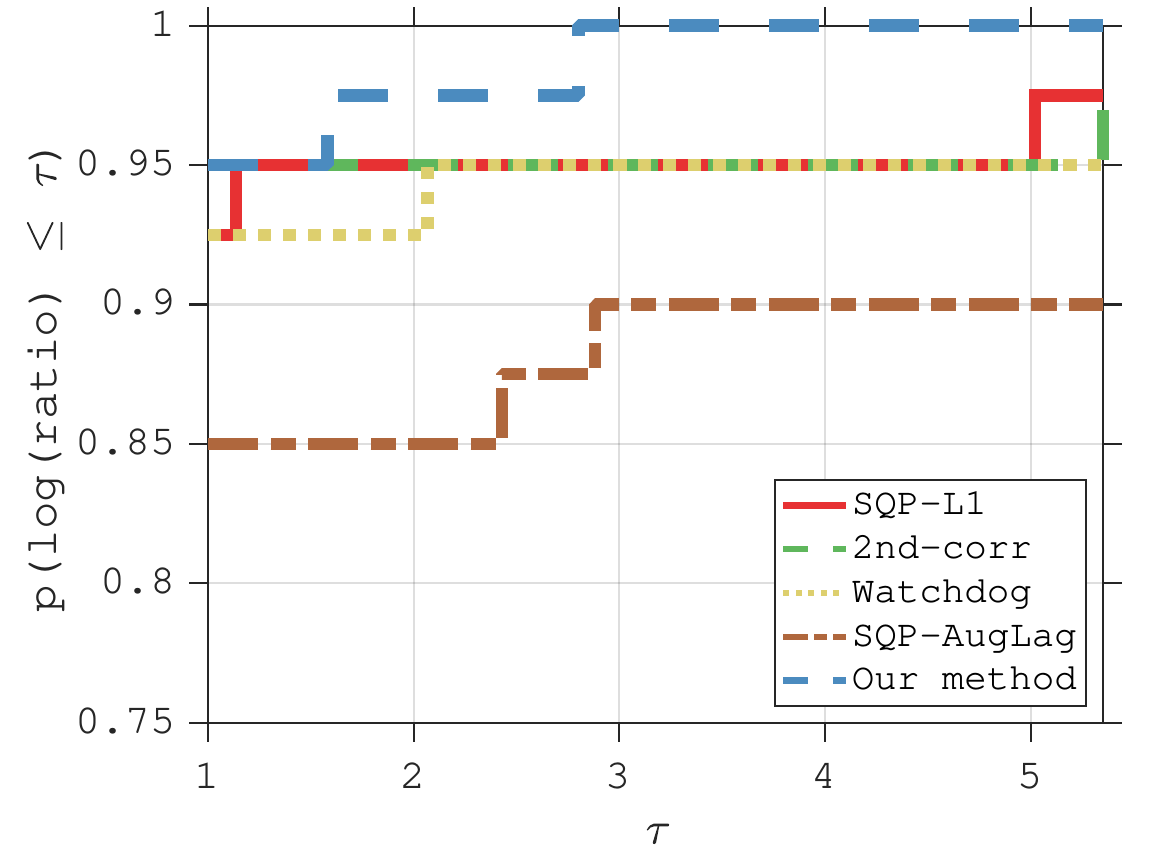}
    \caption{Dolan-Mor\'e  performance profiles \cite{dolan2002benchmarking} comparing methods on CUTEst collection of test problems in terms of iterations (left) and function evaluations (right).
    } \label{fig.DM_v6}
\end{figure}

\begin{table}[H]
% \vspace{-0.5cm}
\caption{Percentage of iterations for which modified line search condition is employed.}
  \centering
  {\footnotesize
\begin{tabular}
{cccccccc}\toprule
\textbf{Problem} & hs027    & biggs3  & hs006 & hs100lnp & bt7 & hs046 & orthregb \\ \midrule
\textbf{Percentage} &  $8\%$       & $23\%$     & $33\%$ & $18\%$ & $14\%$ & $16\%$ & $71\%$   \\      
\bottomrule                                                 \end{tabular}}
\label{table.modified.percentage}
\end{table}

\begin{figure}
    \centering \includegraphics[width=0.45\textwidth,height=0.3\linewidth,clip=true,trim=10 180 50 200]{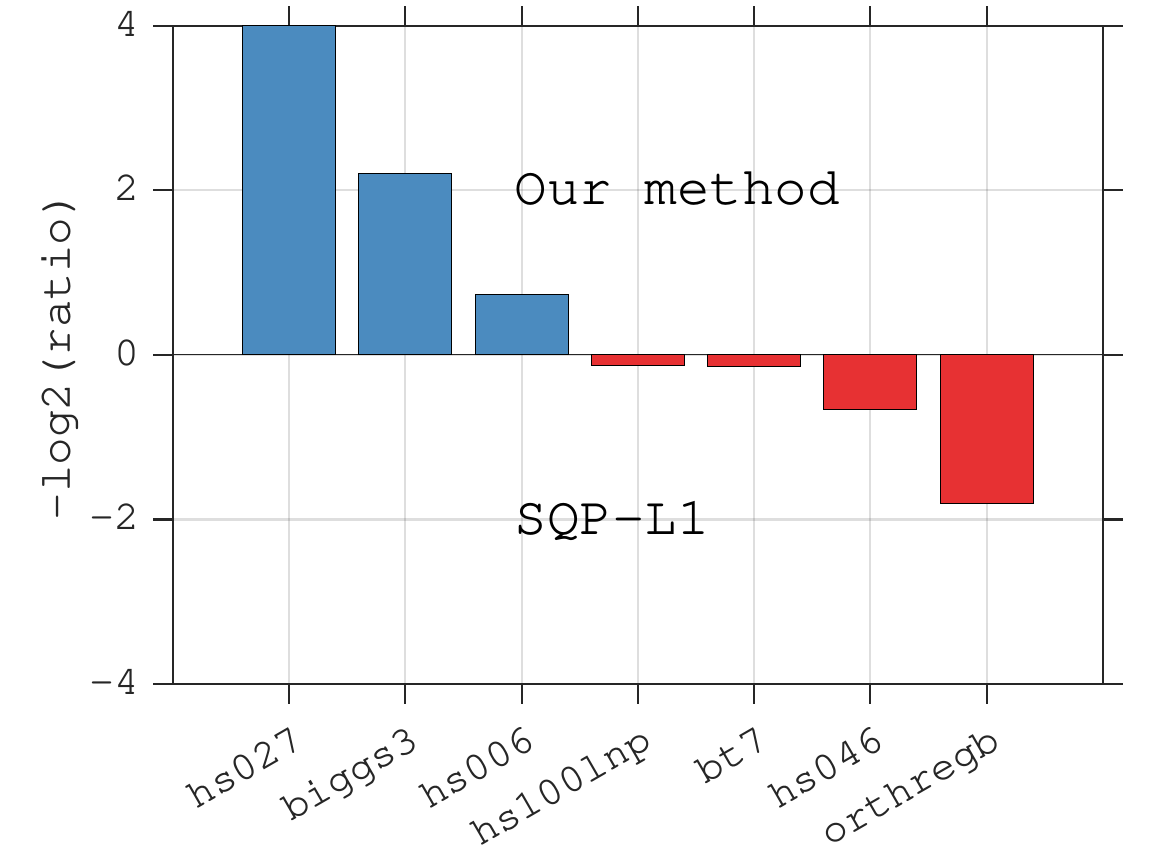}
    \includegraphics[width=0.45\textwidth,height=0.3\linewidth,clip=true,trim=10 180 50 200]{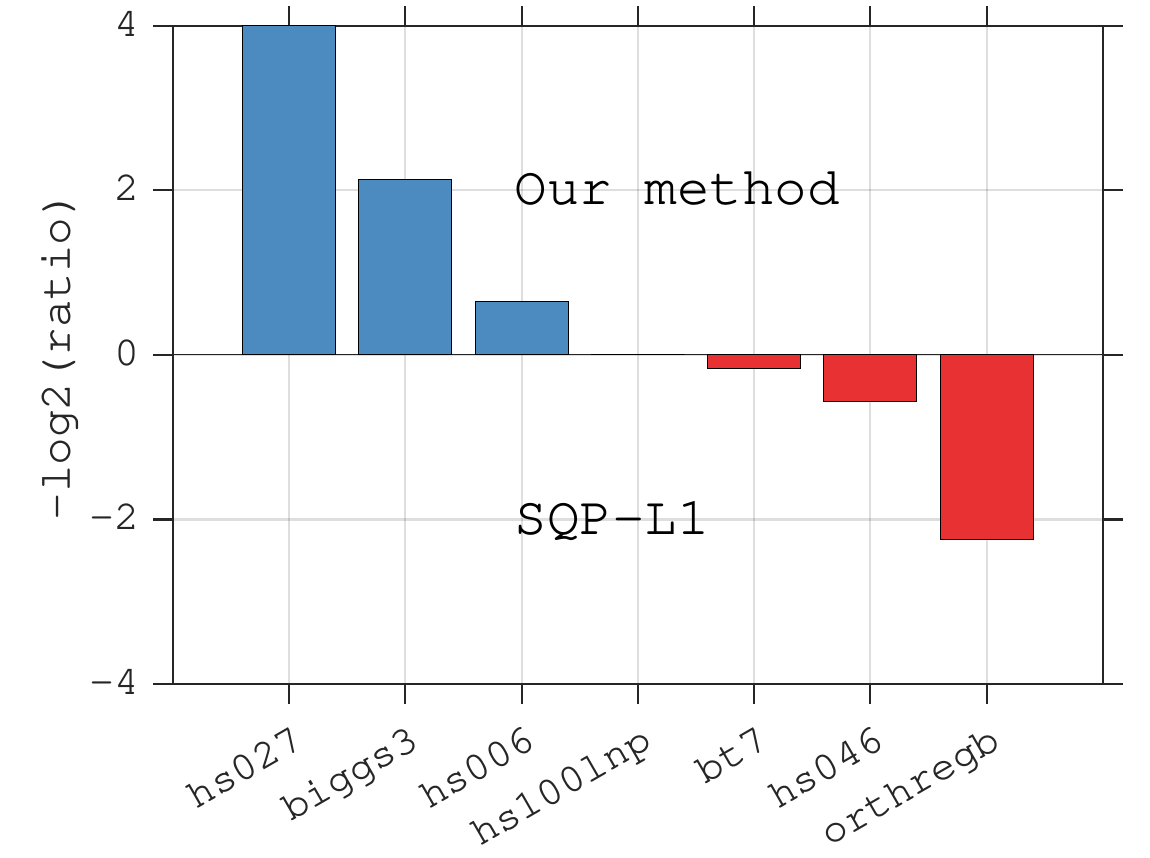}
    \caption{Morales performance profiles \cite{morales2002numerical} comparing the classical line search SQP method (\textbf{SQP-L1}) and our proposed method on seven test problems in terms of iterations (left) and function evaluations (right).} \label{fig.Mo}
\end{figure}

%%%%%%%%%%%%%%%%%%%%%%%%
%%%%%%%%%%%%%%%%%%%%%%%%
\subsection{Stochastic Setting}
\label{sec.numerical.sto}
In this section, we consider the following constrained binary classification problem,
\begin{align}
  &\min_{x\in\mathbb{R}^n}\ f(x) = \frac{1}{N} \sum_{i=1}^N \log \left( 1 + e^{-y_i(X_ix)}\right)\ \text{ s.t. }\ A_1 x=a_1, \quad x^T A_2 x = a_2, \label{eq.log_lin}
\end{align}
where $X \in \mathbb{R}^{N \times n}$ is the data matrix (feature data for $N$ data points), $X_i \in \mathbb{R}^{1 \times n}$ is the $i$th row of $X$, $y \in \{-1,1\}^N$ are the labels (for each data point), and $A_1 \in \mathbb{R}^{m \times n}$ (full row rank), $a_1 \in \mathbb{R}^m$, $A_2 \in \mathbb{R}^{n \times n}$ (positive definite) and $a_2 \in \mathbb{R}$. In these experiments, we consider datasets from the LIBSVM collection \cite{chang2011libsvm}. For brevity, we present results only on two datasets: \texttt{australian}: $n = 14$, $N = 621$; \texttt{mushroom}: $n = 112$, $N = 5500$. %\todos{Additional numerical results can be found in XXX}. 
With regards to the constraints, $m=5$, the matrix and vector $A_1$ and $a_1$ were generated from a normal distribution, the matrix $A_2$ was a positive definite matrix with eigenvalues evenly distributed between 1 and 10, and $a_2 =5$.

In this set of experiments, the methods have access to exact function and gradient information of the objective and constraint functions, but inexact Hessian information of the objective function. The goal of these experiments is to investigate the effect of the Hessian approximation, the step size scheme and search direction (linear system solution) quality. We consider two batch sizes, $b=5\% N$ (small batch) and $b=50\% N$ (large batch) for the Hessian, and also the exact Hessian setting ($b=100\% N$, full batch). We compare the adaptive step size method proposed in \cite{berahas2021sequential} and Algorithm~\ref{alg.SubsampledSQP.practical}. For each step size scheme we consider first-order ($W_k = I$) and second-order ($W_k$ approximation of the Hessian of the Lagrangian) variants. We note that the first-order variants of the methods are essentially deterministic algorithms. A budget of 50 iterations (50$N$ gradient evaluations), 100$N$ function evaluations, and 50$N$ Hessian evaluations was used for all methods. %Each trial of the methods was terminated and the problem was ``solved'' when both feasibility and stationarity errors were below $10^{-12}$ \js{(We don't need this sentence here, the termination rule I used in my experiment was exactly following the budgets we have, when we plot we will do the clipping)} \asb{About the tolerance?}.  
We use the same parameter values for Algorithm~\ref{alg.SubsampledSQP.practical} as described in Section~\ref{sec.numerical.det}. For the adaptive step size method, we use the default values given in \cite[Section 4.2]{berahas2021sequential}. We should note that this adaptive step size scheme requires at least an estimate of the Lipschitz constants of the objective and constraint gradients. %\js{(Should we mention that Lipschitsz constant is required for adaptive step size scheme?)} \asb{Yes please. Could you add this?} 
For all problems and algorithms, the initial primal iterate ($x_0$) was set to a normal random vector scaled to have norm $0.1$, and the multipliers were initialized as $y_0 = \arg\min_{y\in\mathbb{R}^m}\ \|g_0 + J_0^Ty\|_2^2$. Finally, for the small and large batch instances, we ran 10 replications with different random seeds for each problem, dataset, and algorithm, and report the average performance.

Figure~\ref{fig.sto.first.second} presents the evolution of the feasibility and stationarity errors and the step size with respect to iterations and number of Hessian evaluations. Overall, the second-order variants  of the methods (even small batch) converge faster than the first-order variants in terms of iterations. In the small batch setting ($5\%$), Algorithm~\ref{alg.SubsampledSQP.practical} yields smaller step sizes as compared to the adaptive step size scheme counterpart. This observation corroborates Lemma \ref{lemma.dist.small.imply.alpha.1}, which posits that a small Hessian approximation error is requisite. When more accurate Hessian approximations are employed, our results suggest that in terms of iterations the line search variant (Algorithm~\ref{alg.SubsampledSQP.practical}) outperforms the adaptive step size variant. This, of course, comes at the cost of function evaluations that are not required by the adaptive step size method. Finally, as expected, the small batch variants of the two methods outperform the large and full batch variants when the metric of comparison is the number of Hessian evaluations.

\begin{figure}
% \vspace{-1.4cm}
    \centering \includegraphics[width=0.19\textwidth,clip=true,trim=10 180 50 180]{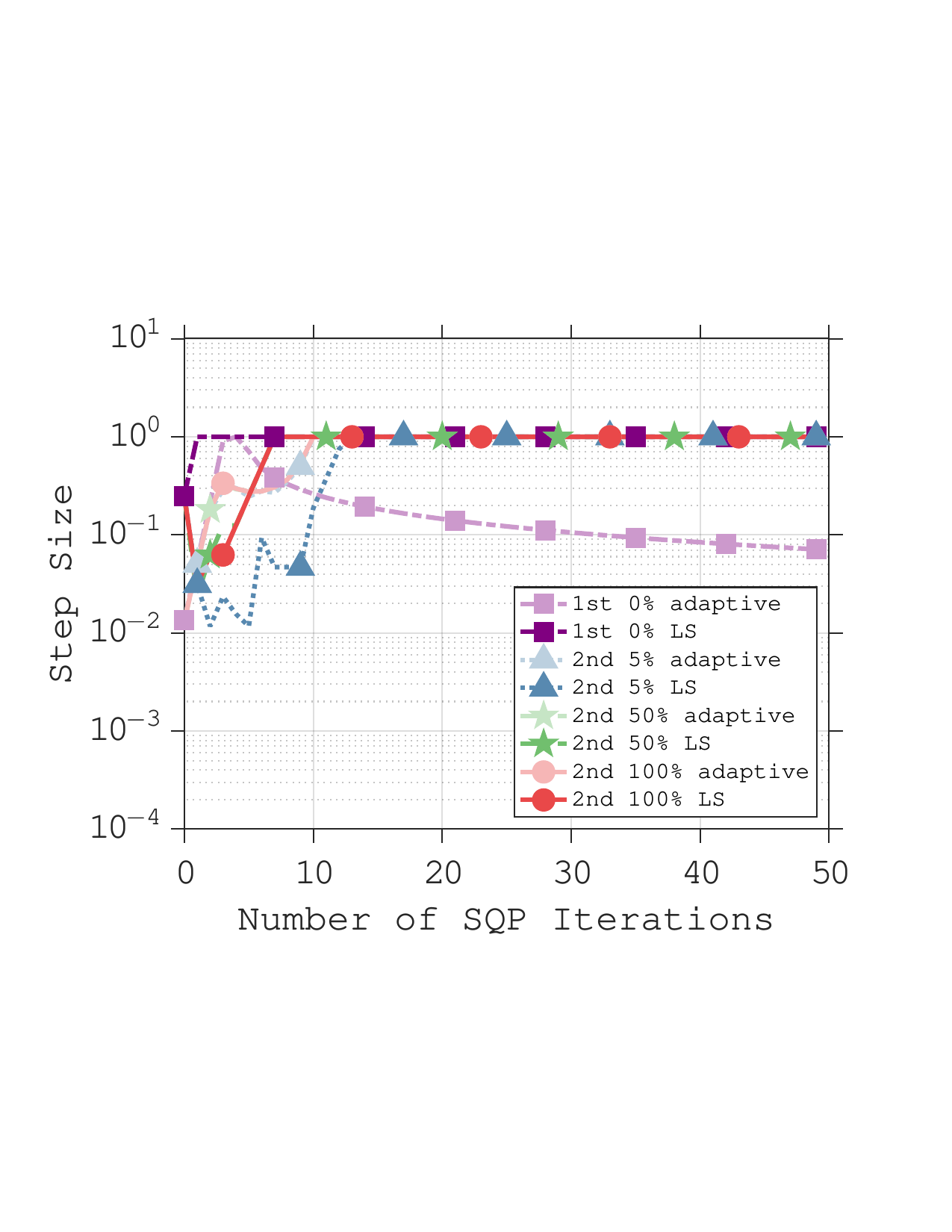}
    \includegraphics[width=0.19\textwidth,clip=true,trim=10 180 50 180]{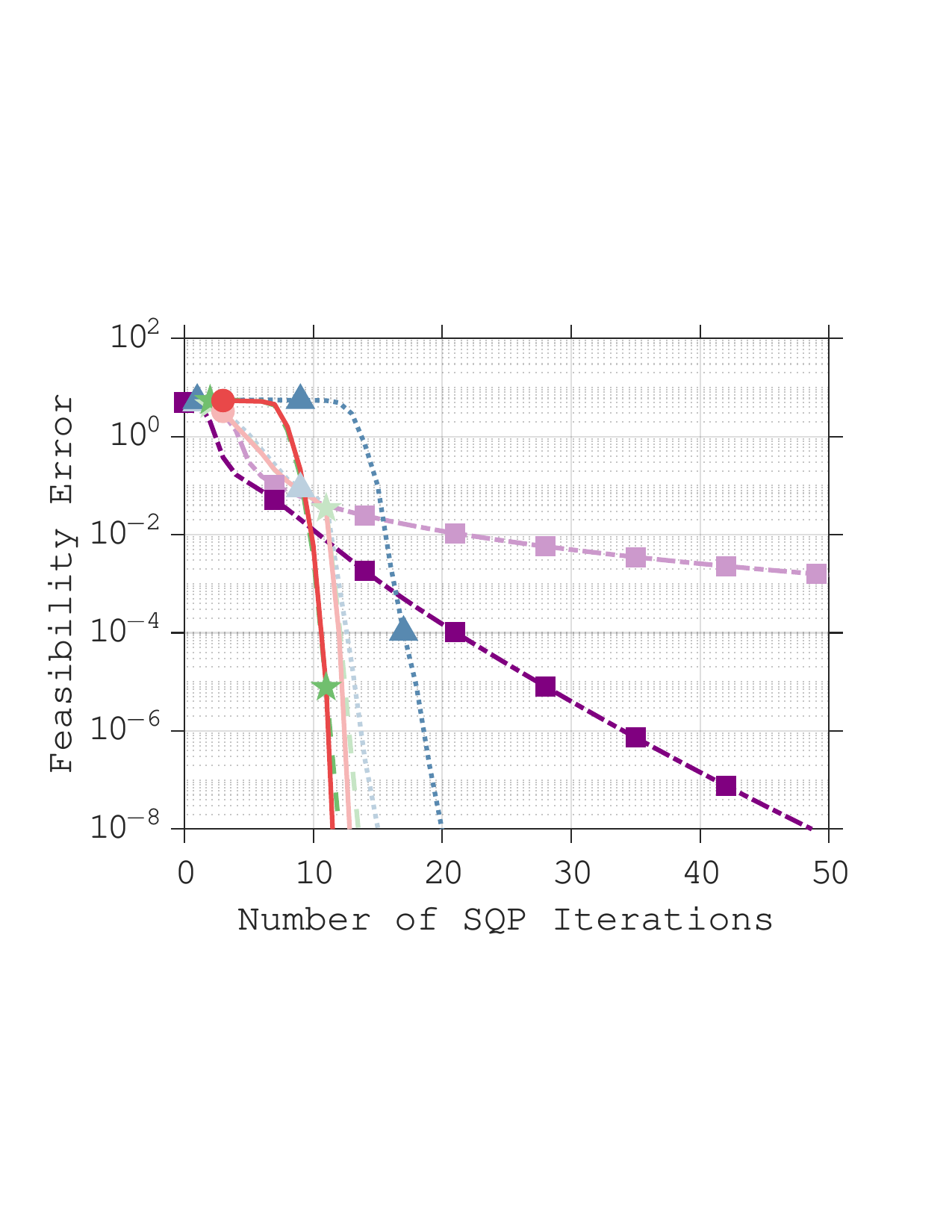}  \includegraphics[width=0.19\textwidth,clip=true,trim=10 180 50 180]{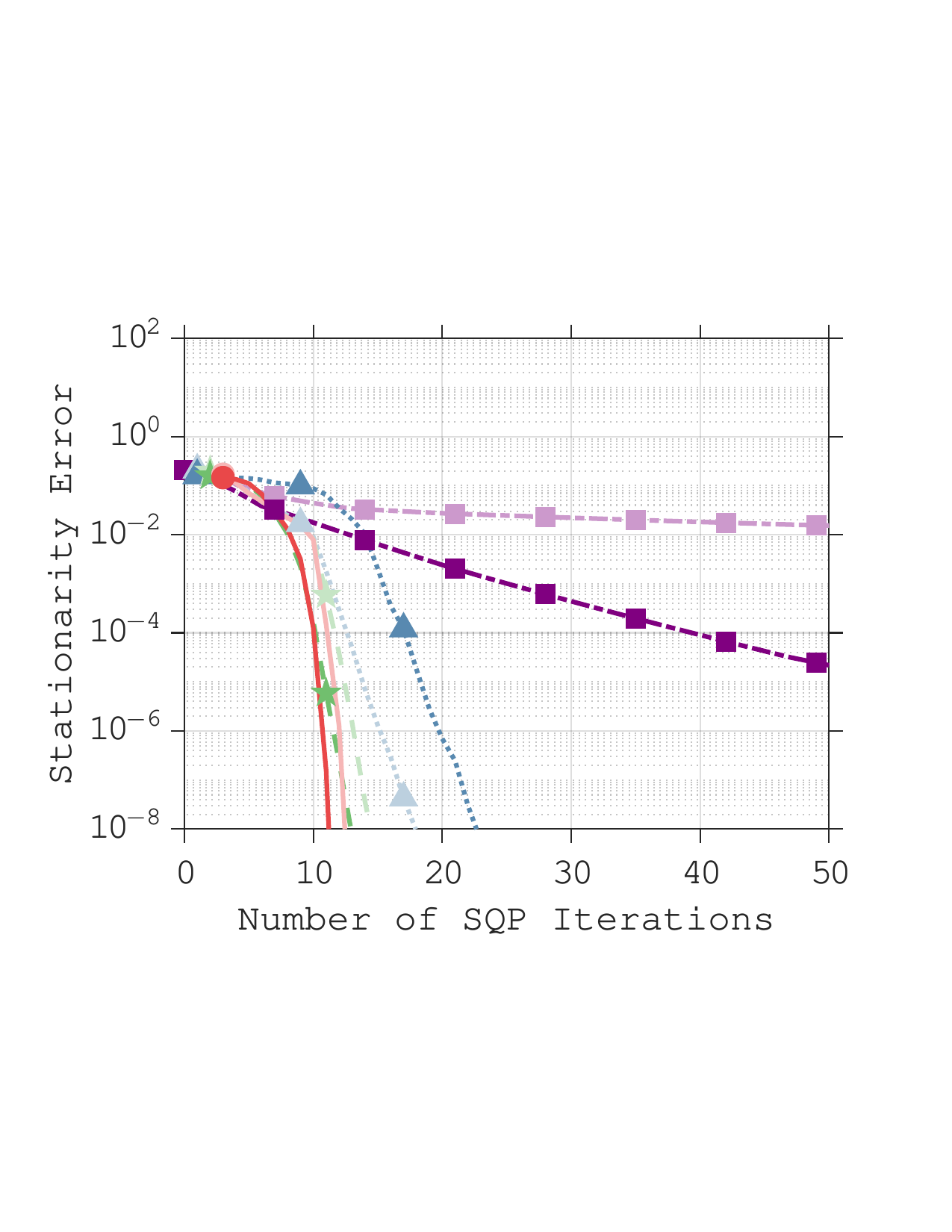}
\includegraphics[width=0.19\textwidth,clip=true,trim=10 180 50 180]{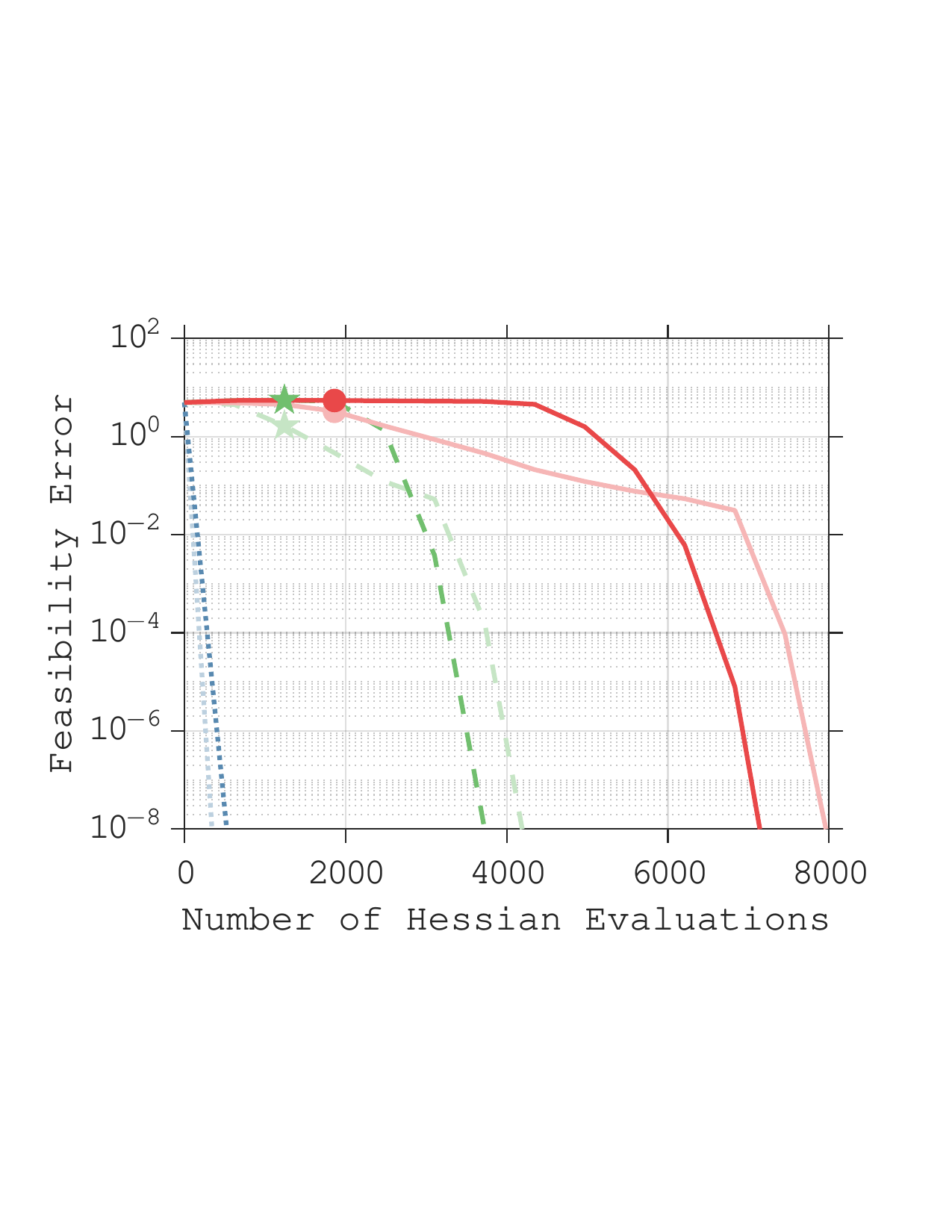} 
   \includegraphics[width=0.19\textwidth,clip=true,trim=10 180 50 180]{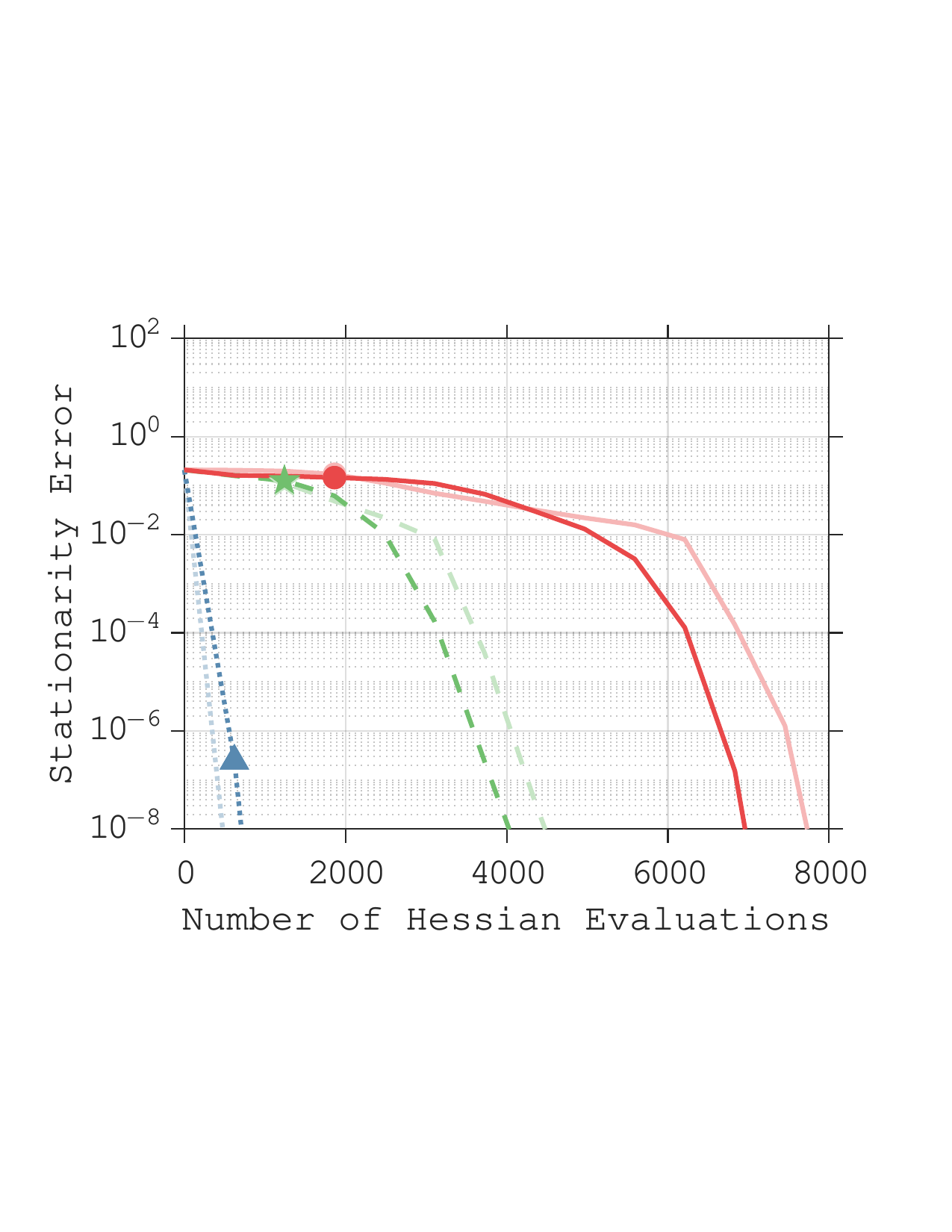} 
   \includegraphics[width=0.19\textwidth,clip=true,trim=10 180 50 180]{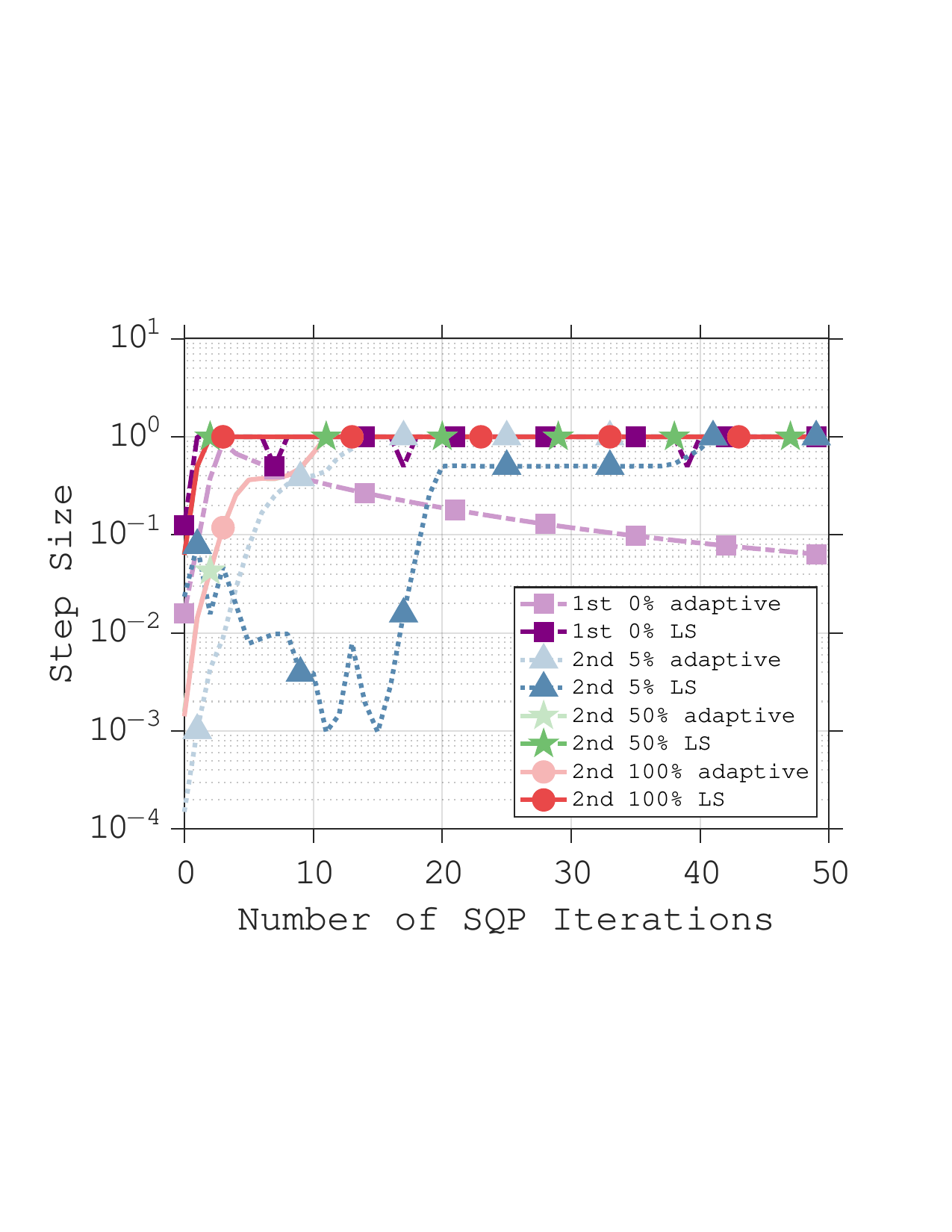}
    \includegraphics[width=0.19\textwidth,clip=true,trim=10 180 50 180]{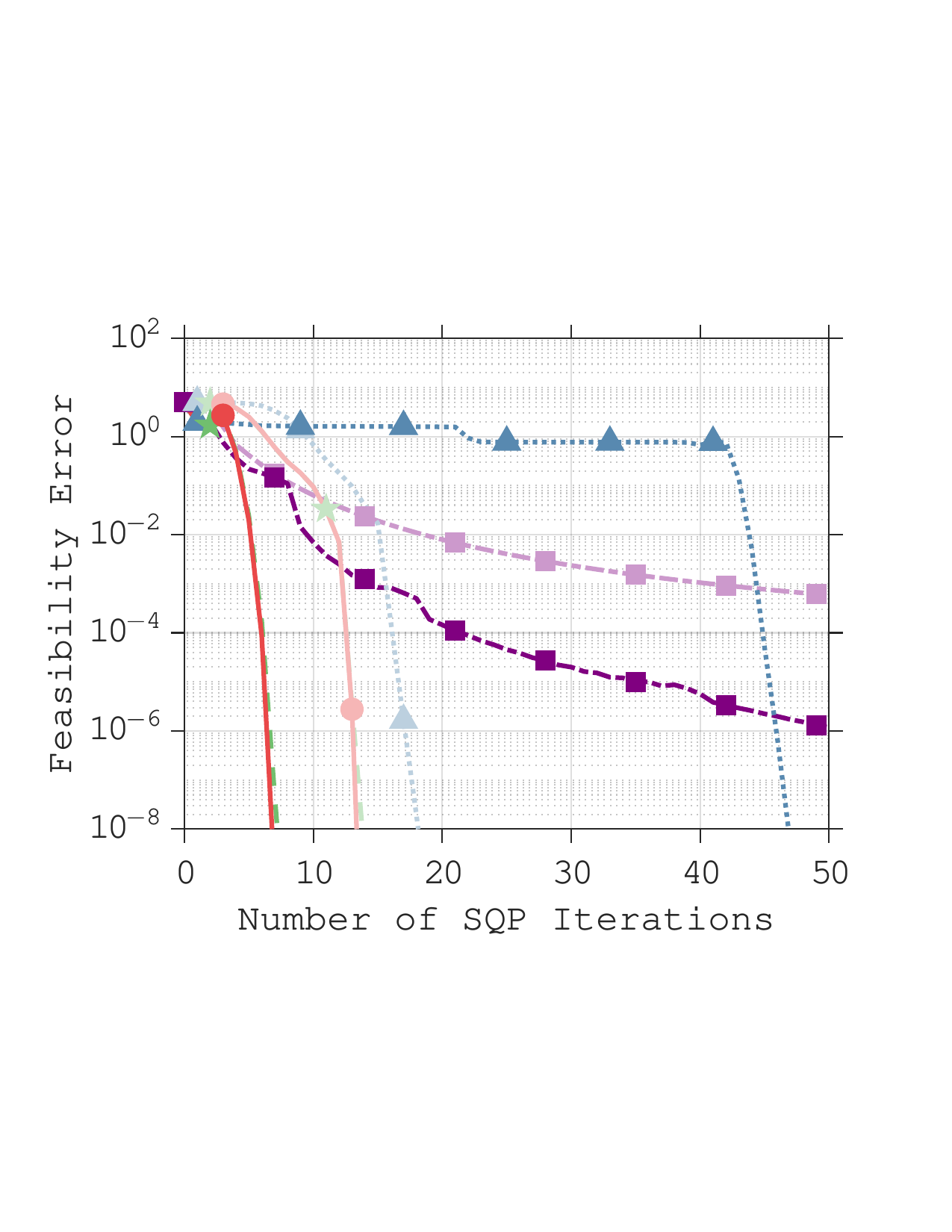}  \includegraphics[width=0.19\textwidth,clip=true,trim=10 180 50 180]{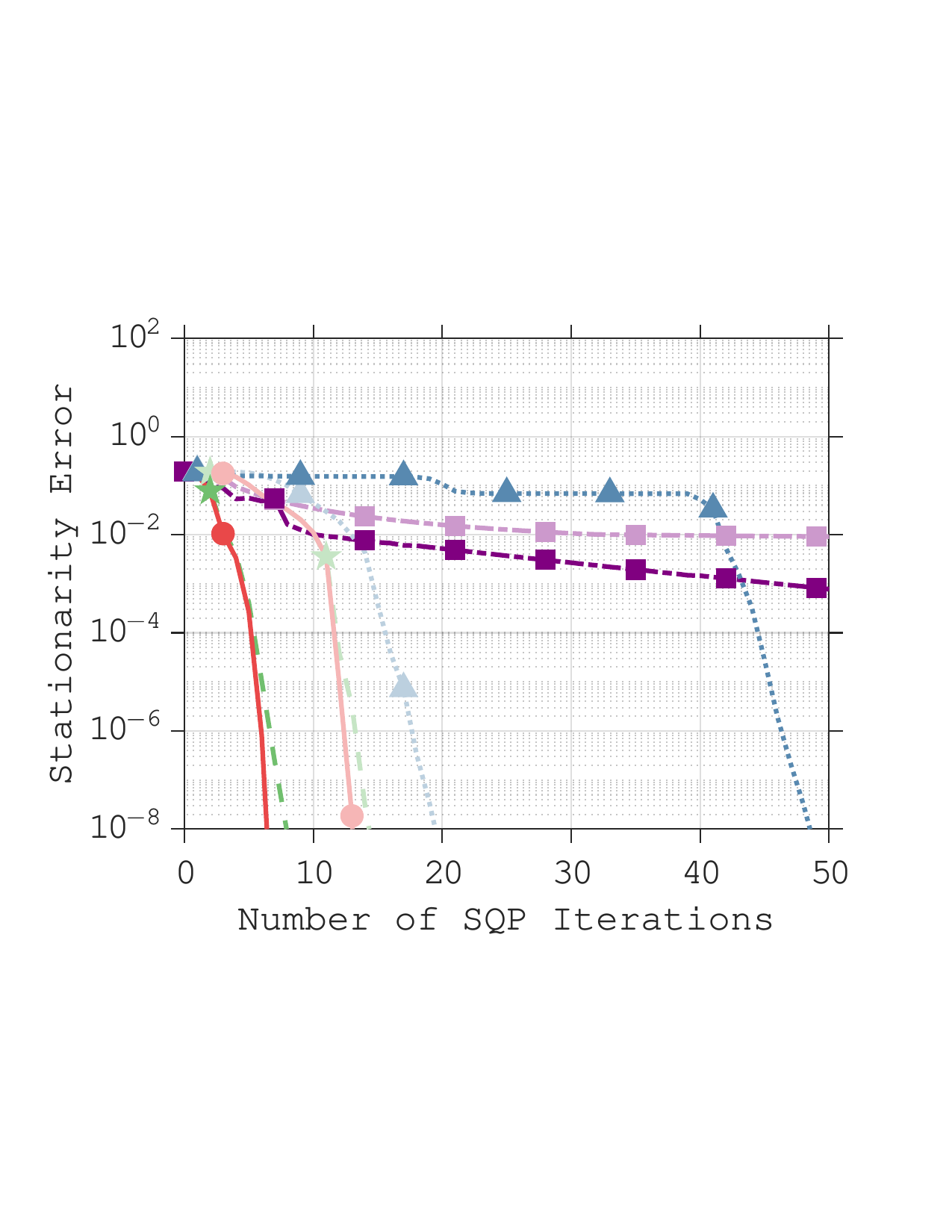}
\includegraphics[width=0.19\textwidth,clip=true,trim=10 180 50 180]{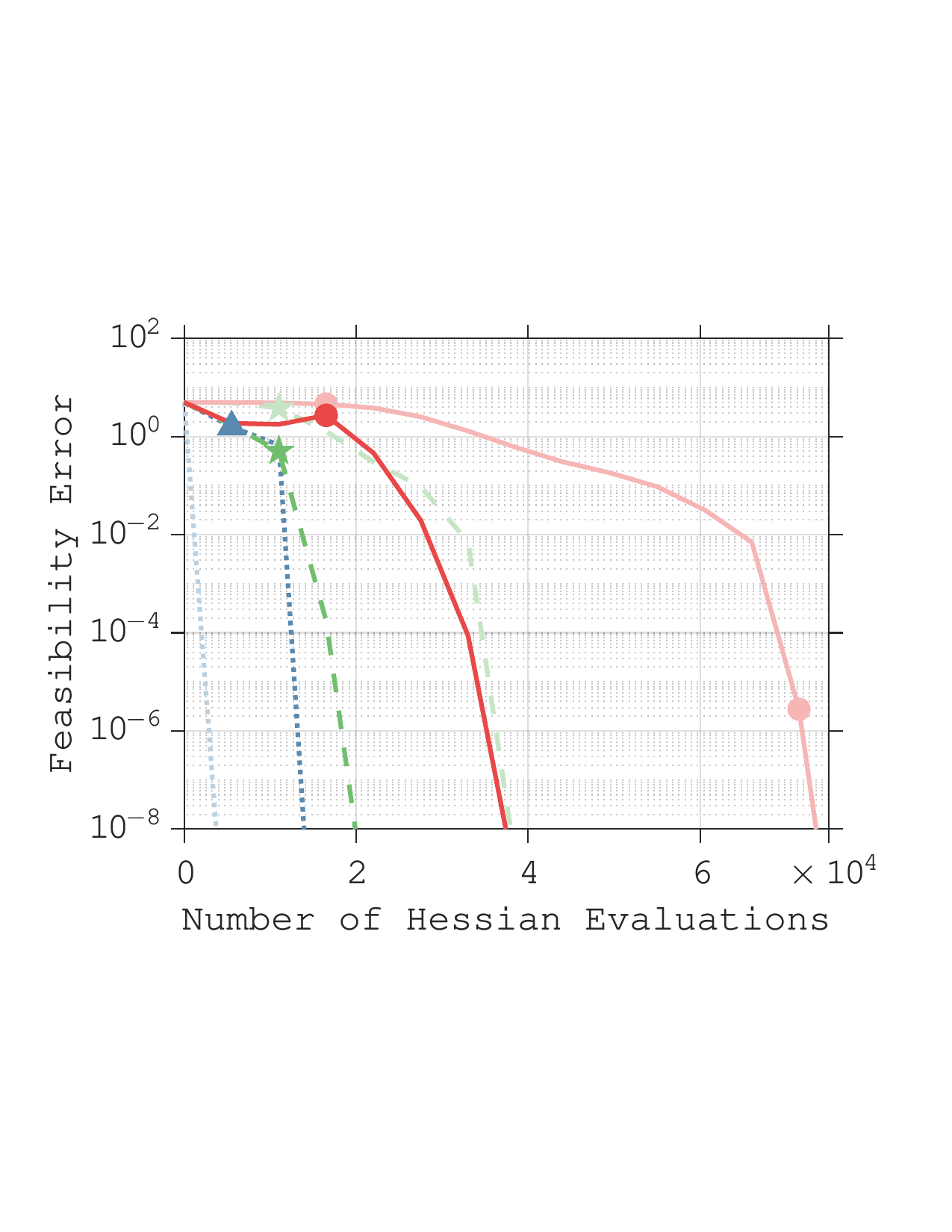} 
   \includegraphics[width=0.19\textwidth,clip=true,trim=10 180 50 180]{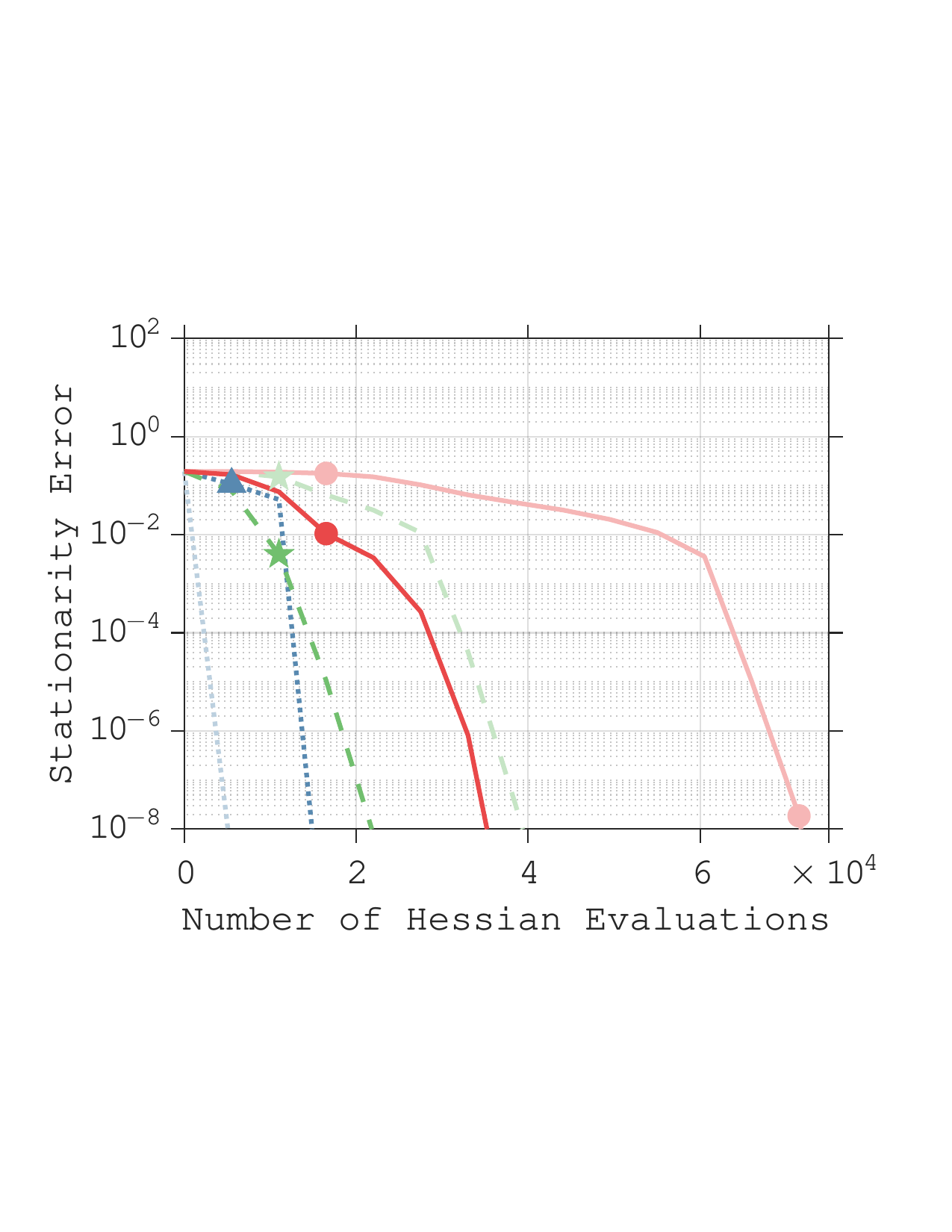} 
        \caption{Comparison of first-order (\textbf{1st}) and second-order (\textbf{2nd}) SQP methods with adaptive step size (\textbf{adaptive}) and line search (\textbf{LS}) schemes on the \texttt{australian} (top) and \texttt{mushroom} (below) datasets~\cite{chang2011libsvm}. True function and gradient information and inexact Hessian information (batch sizes \textbf{5\%},   \textbf{50\%}, \textbf{100\%} of total sample size).
        % true gradient and function value on \texttt{australian} (top) and \texttt{mushroom} (below) dataset, we use batch size as 5\%,   50\%, 100\% of the total sample size to estimate Hessian for second-order SQP methods.  
    } \label{fig.sto.first.second}
\end{figure}

Next, we compare exact and inexact (%in terms of the 
solution of the linear system \eqref{eq.SQP.det}) variants of Algorithm~\ref{alg.SubsampledSQP.practical} in Figure~\ref{fig.inexact_paper}. In these experiments, we utilize the MINRES method \cite{choi2011minres} to solve the linear system \eqref{eq.SQP.det} to a prescribed accuracy. Specifically, the MINRES method is terminated when the residual vectors $\rho_k^{(t)}$ and $r_k^{(t)}$ satisfy
 \begin{equation}
    \left\|\left[\begin{array}{c}
\rho_k^{(t)} \\
r_k^{(t)}
\end{array}\right]\right\|_{\infty} \leq \max \left\{\kappa\left\|\left[\begin{array}{c}
 {g}_k + J_k^T  y_k  \\
c_k
\end{array}\right]\right\|_{\infty}, 10^{-12}\right\}
\end{equation}
 where $\kappa = 10^{-12}$ (exact) and $\kappa = 10^{-1}$ (inexact) is the accuracy parameter. %in the exact and inexact variants, respectively. 
 In addition to the budgets given earlier in this subsection, a budget of $(m+n)N$ total MINRES iterations was imposed. % in these experiments.
 % In addition to the budgets given earlier in this section, in these experiments a budget of $(m+n)N$ total MINRES iterations was imposed. 
 We compare exact and inexact variants of Algorithm \ref{alg.SubsampledSQP.practical} with $b=5\% N$ (small batch), $b=50\% N$ (large batch) and $b=100\% N$ (full batch), and %also %. Moreover, we also 
 %implement 
 an adaptive Hessian batch size scheme, %given by 
$b_k = \min \left\{ \lfloor (1- 0.95^{(k+2)/2}) N \rfloor , N\right\}$, which guarantees that the initial Hessian estimate uses $5\%$ of the total samples. % size.  
In Figure \ref{fig.inexact_paper}, we observe that while exact methods generally yield more precise solutions in terms of iterations, our proposed inexact approach is competitive and requires fewer MINRES iterations to achieve convergence. Moreover, the adaptive methods achieve a good balance across all metrics making them a viable option in practice.

\begin{figure}
    \centering \includegraphics[width=0.19\textwidth,clip=true,trim=10 180 50 180]{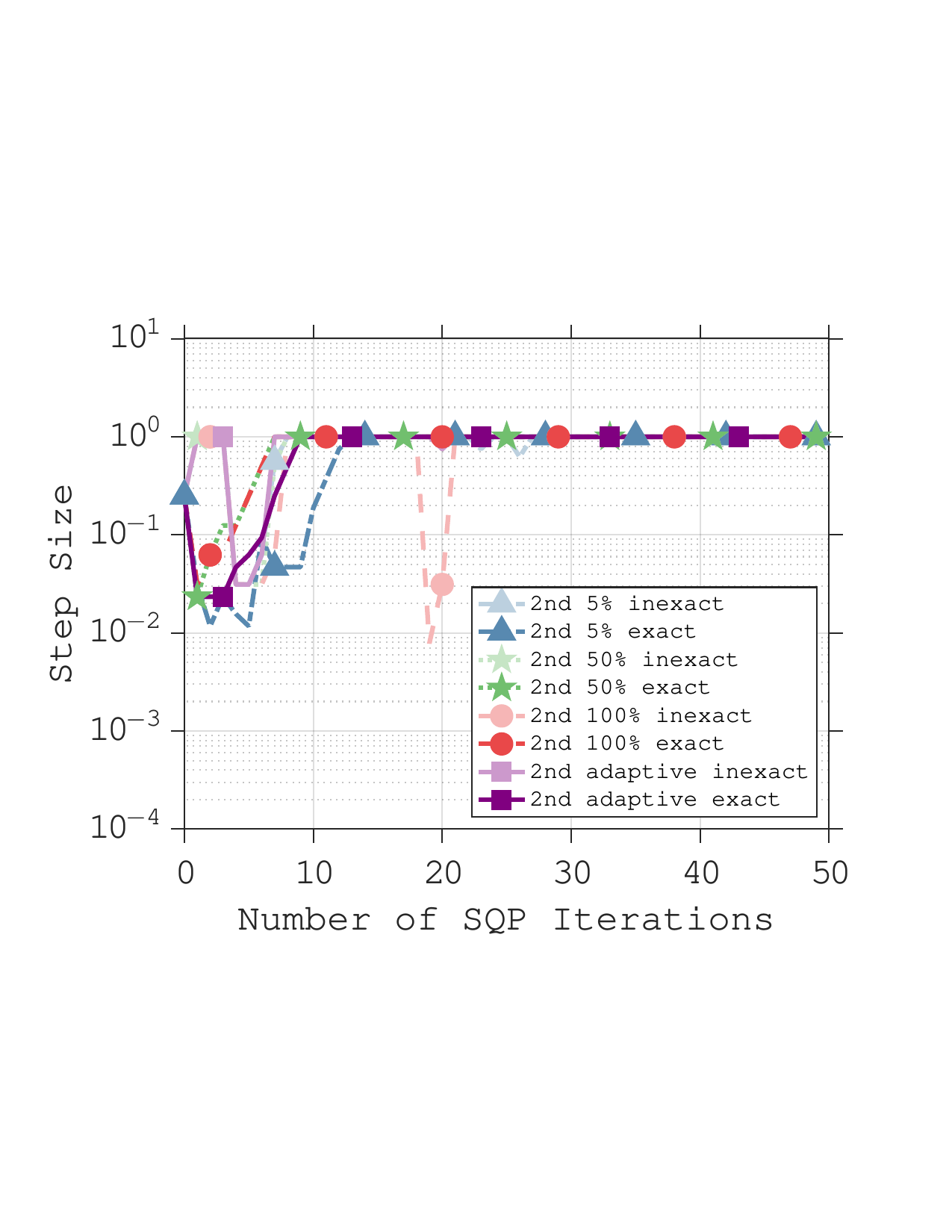}
    \includegraphics[width=0.19\textwidth,clip=true,trim=10 180 50 180]{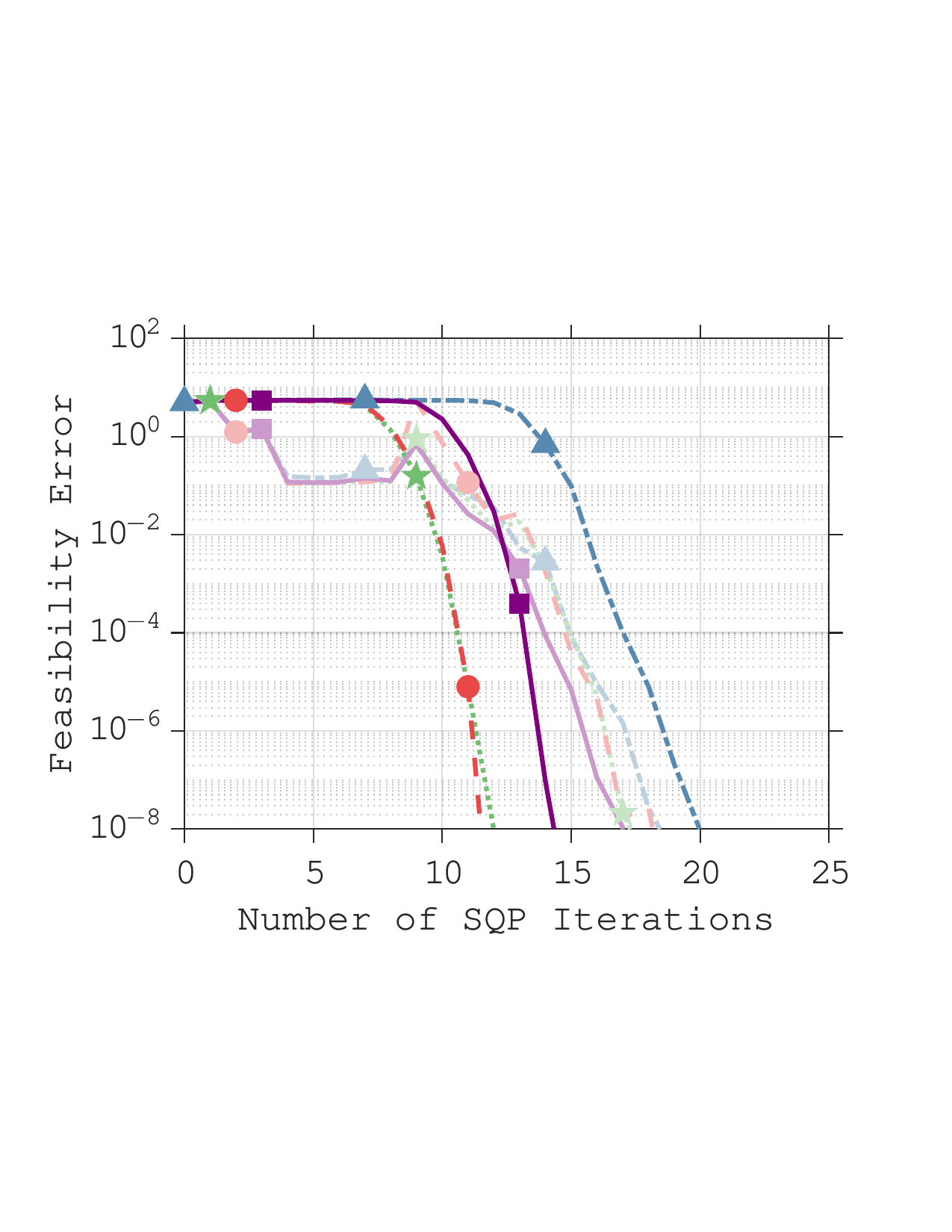}
    \includegraphics[width=0.19\textwidth,clip=true,trim=10 180 50 180]{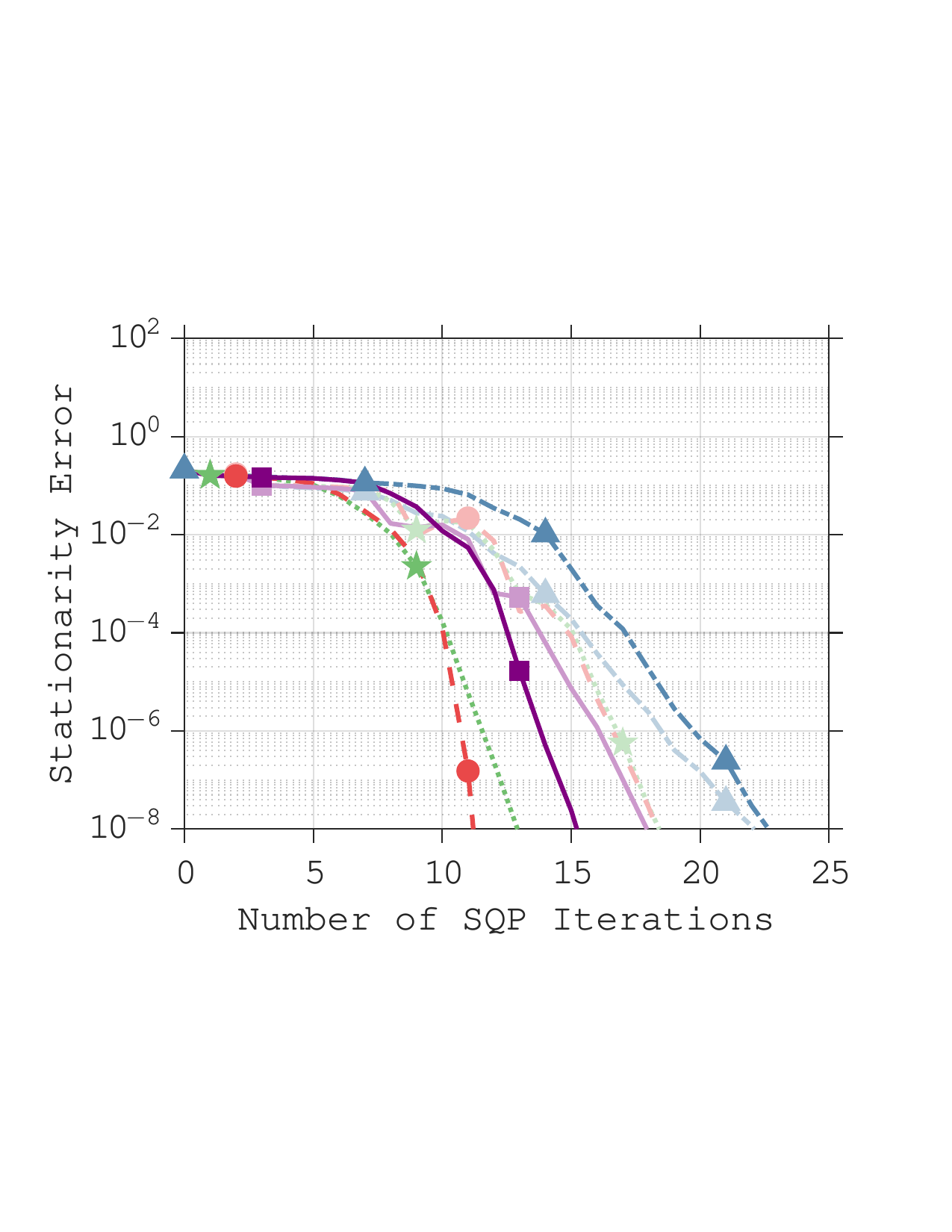}
       \includegraphics[width=0.19\textwidth,clip=true,trim=10 180 50 180]{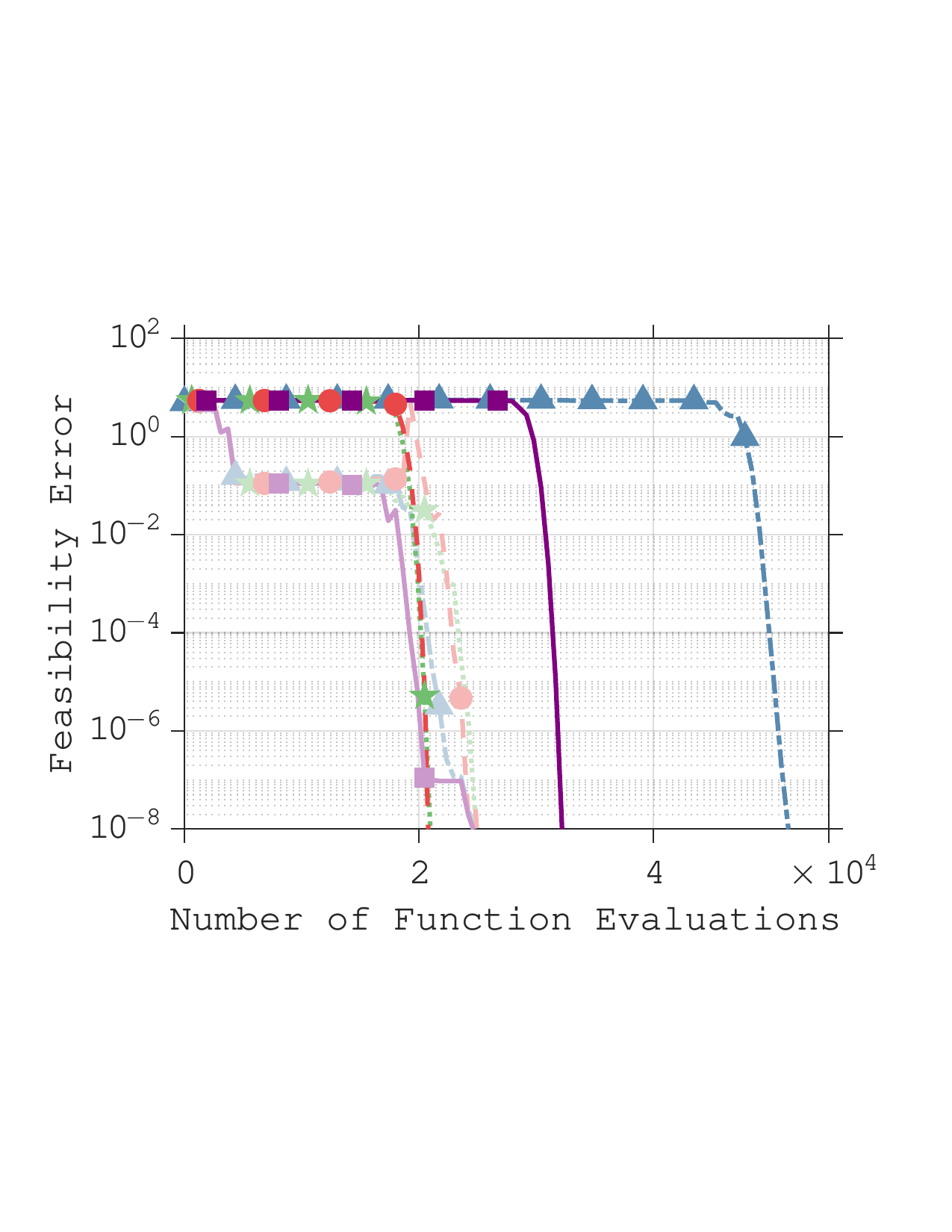}    \includegraphics[width=0.19\textwidth,clip=true,trim=10 180 50 180]{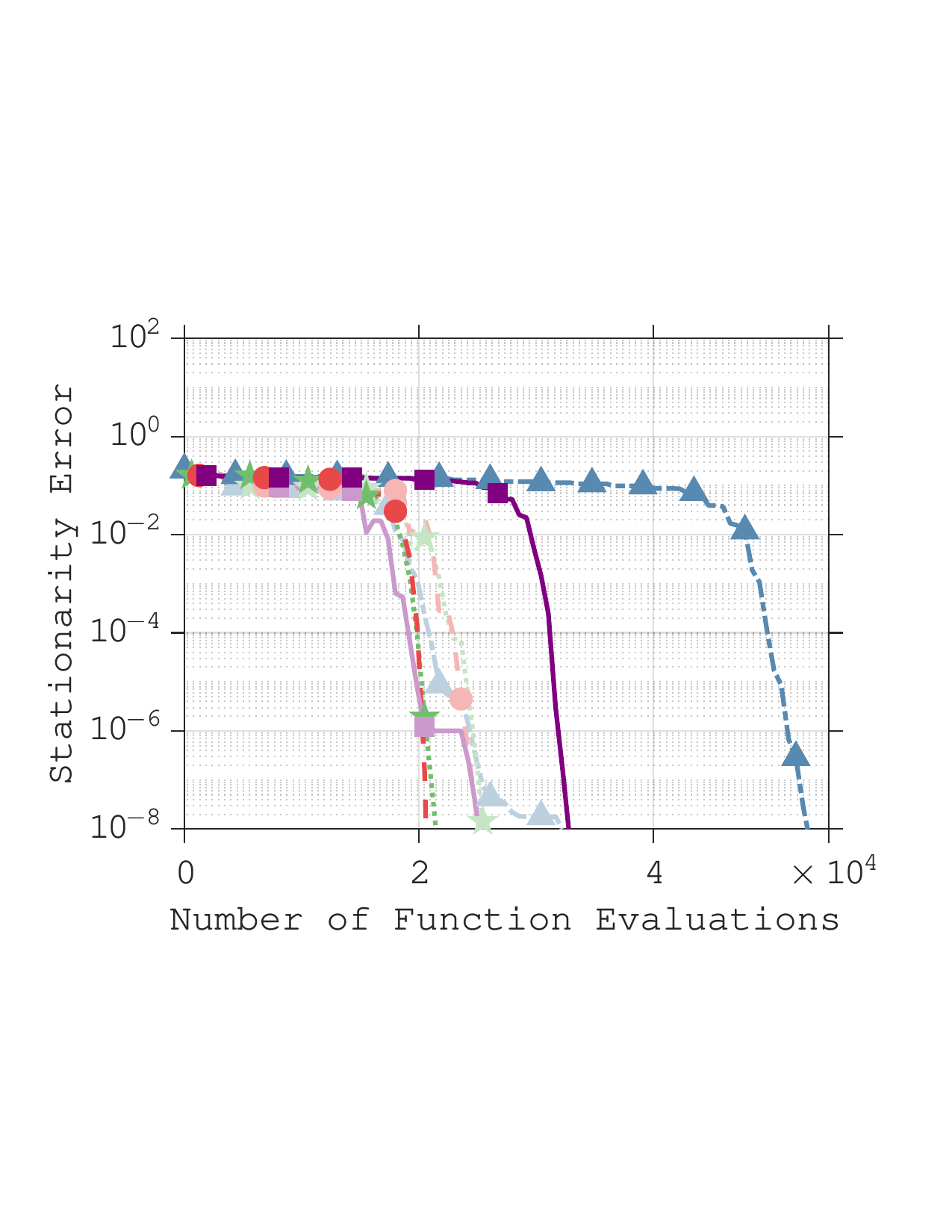}
        \includegraphics[width=0.19\textwidth,clip=true,trim=10 180 50 180]{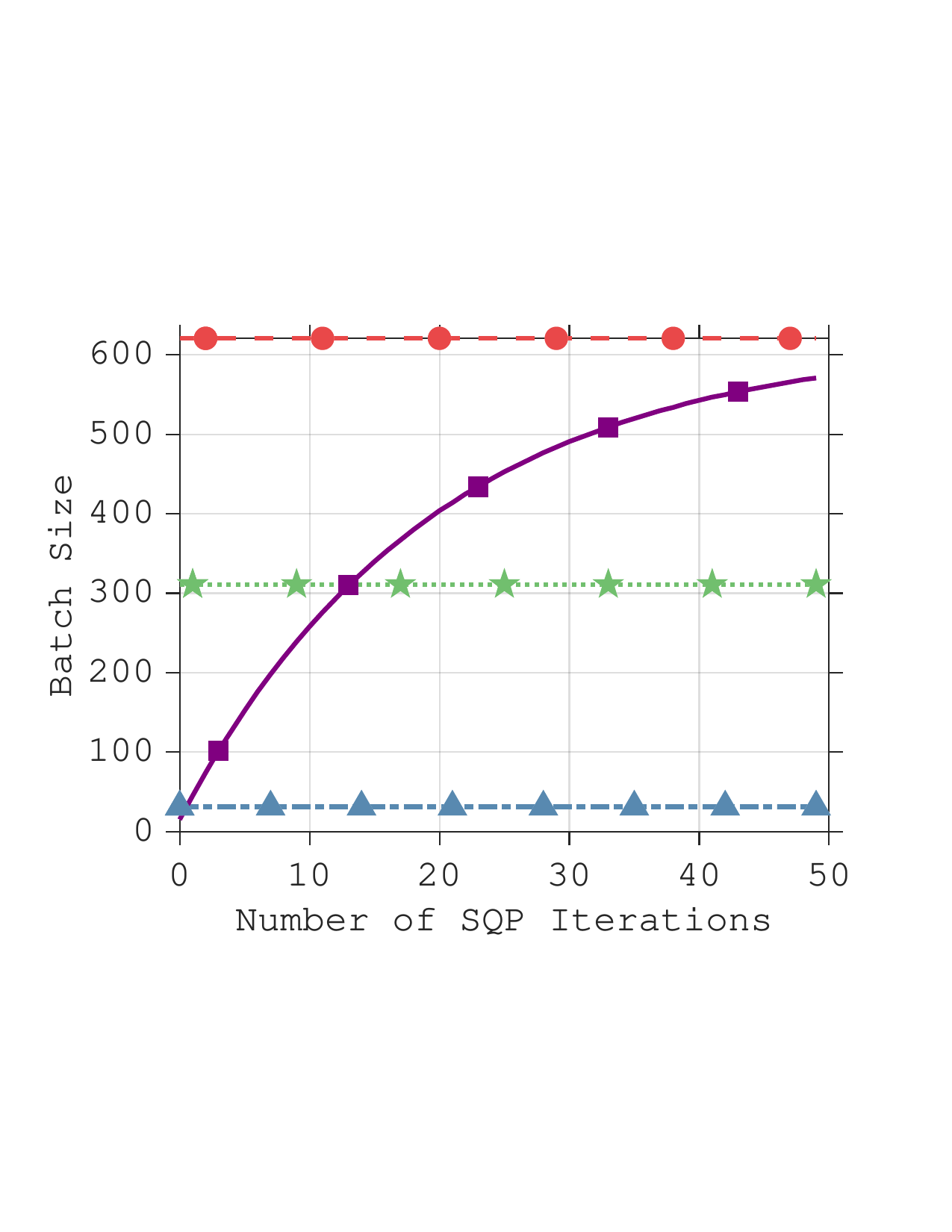} 
    \includegraphics[width=0.19\textwidth,clip=true,trim=10 180 50 180]{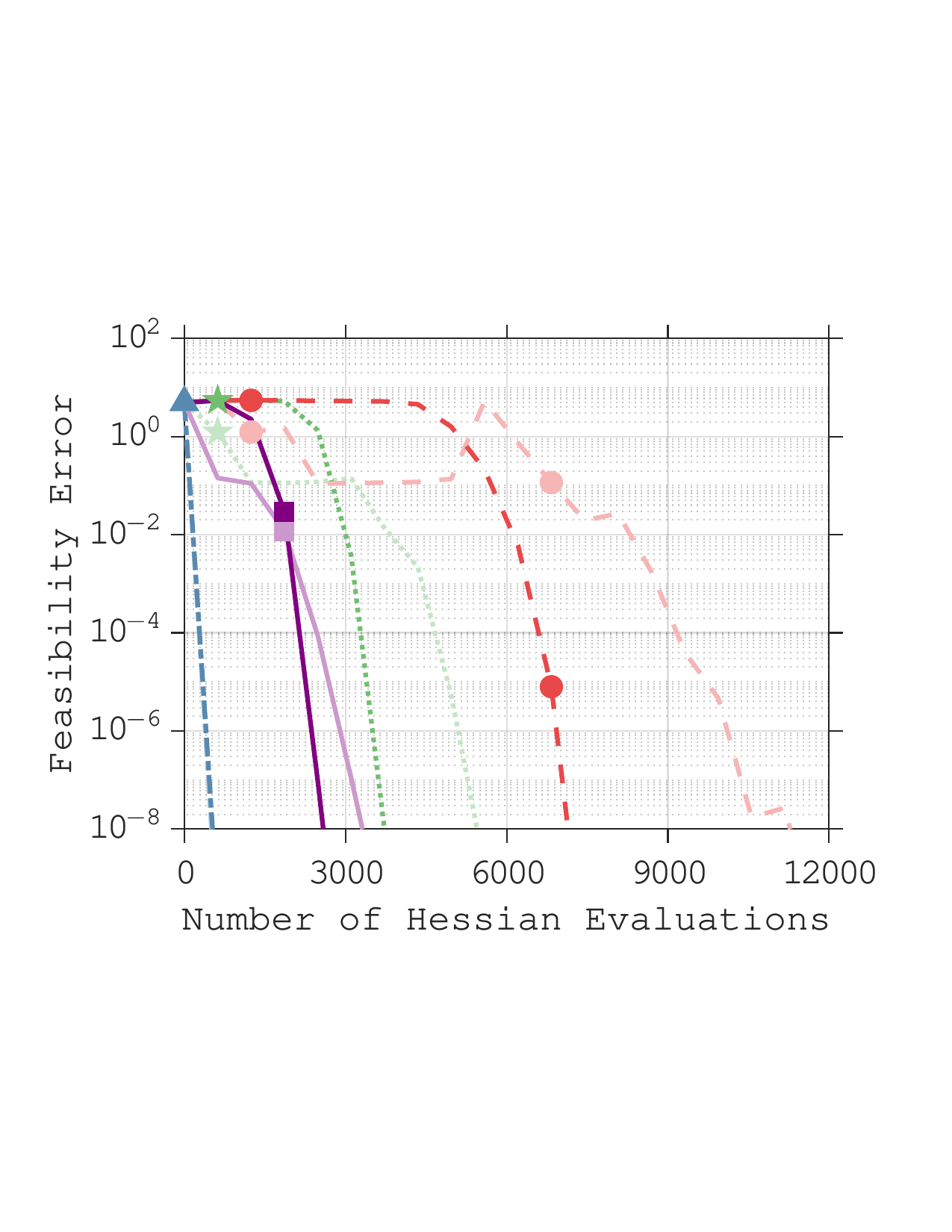}
    \includegraphics[width=0.19\textwidth,clip=true,trim=10 180 50 180]{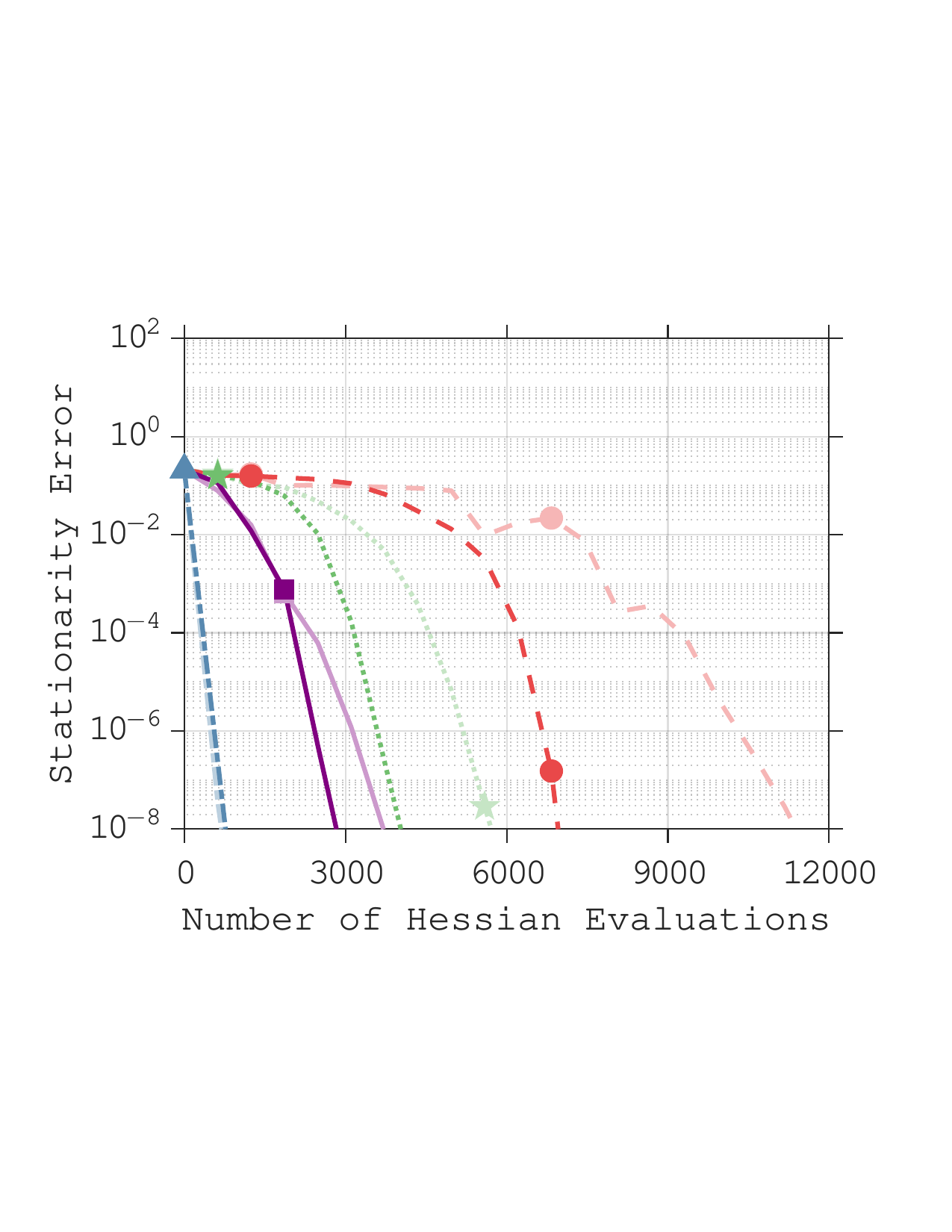}   
   \includegraphics[width=0.19\textwidth,clip=true,trim=10 180 50 180]{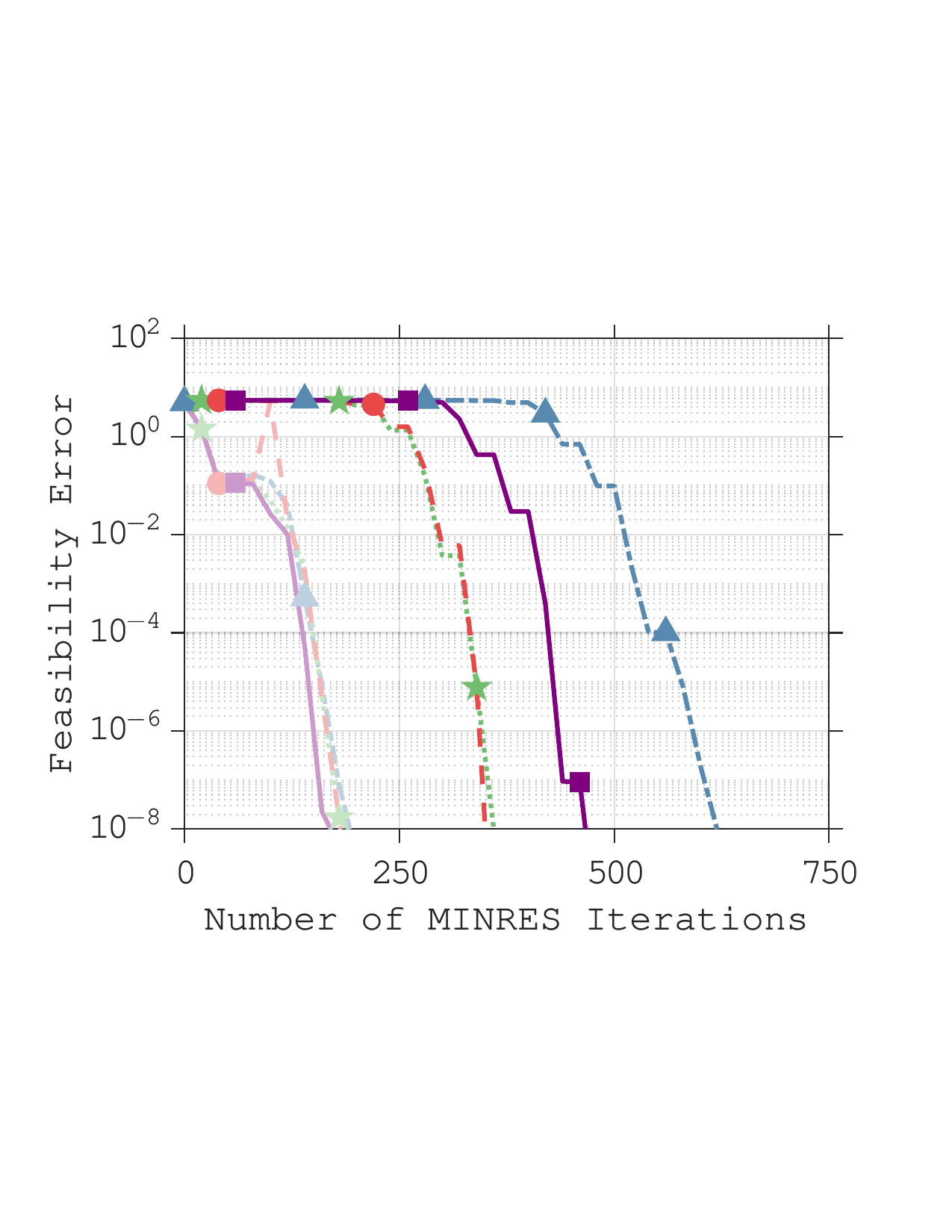}
    \includegraphics[width=0.19\textwidth,clip=true,trim=10 180 50 180]{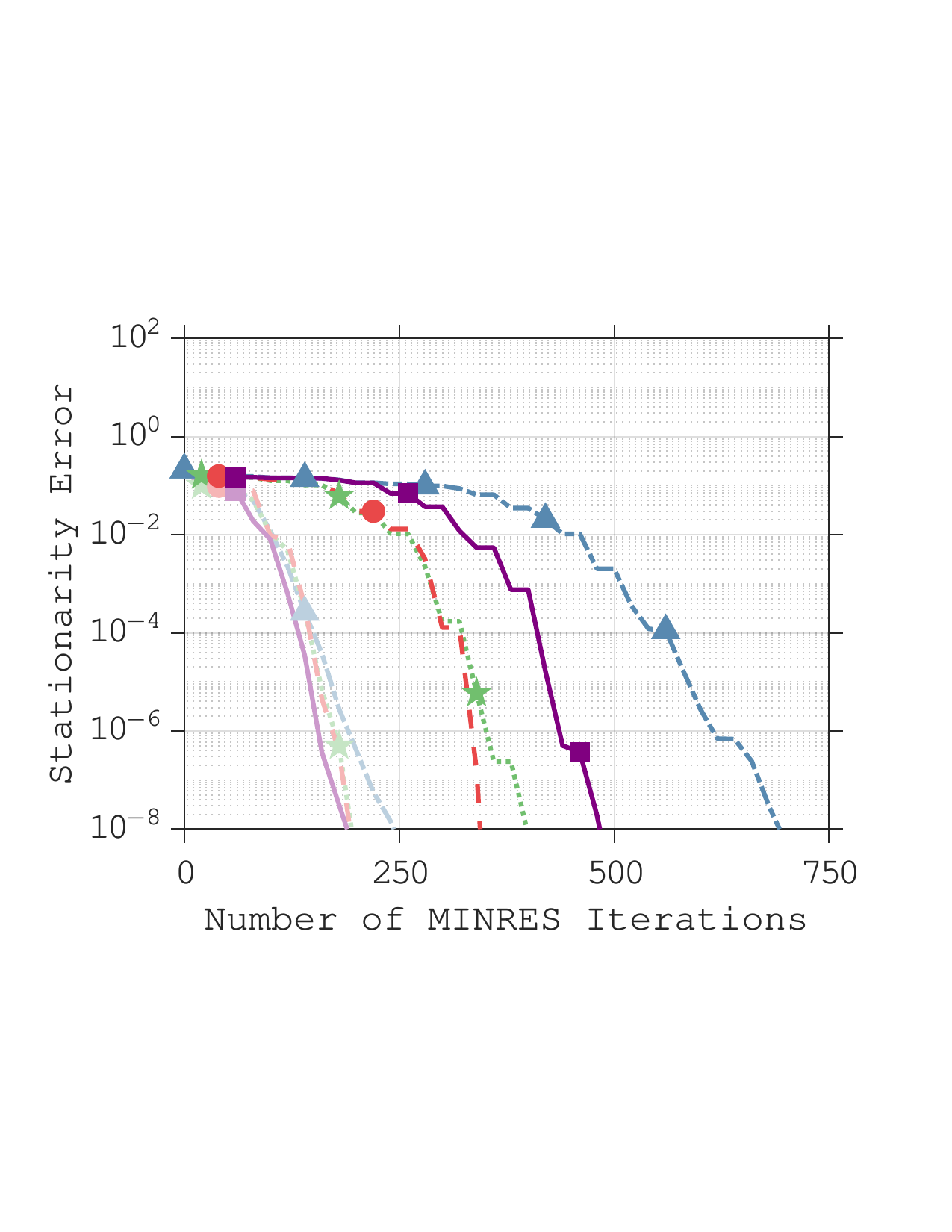}      
    %\caption{Comparison of exact and inexact second-order SQP method using true gradient and function value on australian dataset, we use batch size as 5\%, 50\%, 100\% of the total sample size and adaptive batch to estimate Hessian. 
    %} \label{fig.inexact1}
        \caption{
        Comparison of \textbf{exact} and \textbf{inexact} second-order (\textbf{2nd}) line search SQP methods with adaptive step size (\textbf{adaptive}) and line search (\textbf{LS}) schemes on the \texttt{australian} dataset~\cite{chang2011libsvm}. True function and gradient information and inexact Hessian information (batch sizes \textbf{5\%},   \textbf{50\%}, \textbf{100\%} of total sample size, and \textbf{adaptive}).} \label{fig.inexact_paper}
\end{figure}

%%%%%%%%%%%%%%%%%%%%%%%%
%%%%%%%%%%%%%%%%%%%%%%%%
\section{Final Remarks}
\label{sec.remark}

This paper introduces a novel second-order line search sequential quadratic programming (SQP) method designed for solving general nonlinear equality constrained optimization problems in both deterministic and stochastic settings. Our proposed methodology not only capitalizes on the global convergence properties of classical line search SQP methods, but also alleviates the adverse Maratos effect by selectively employing a carefully designed modified line search, and %thus, 
is endowed with local superlinear convergence guarantees without  the need for a priori knowledge of problem-specific constants. For large-scale problems, we have developed an inexact matrix-free variant, which integrates the MINRES method, offering a compromise between efficiency and accuracy. Numerical experiments conducted on the CUTEst test set and logistic regression problems demonstrate the efficacy, efficiency and robustness of the proposed method. Finally, we highlight the adaptivity of our algorithm that makes our approach not only theoretically appealing but also  practical. % for a wide range of equality constrained optimization problems.

% \newpage

% \begin{table}[ht]
% \caption{Page Count}
%   \centering
%   {\footnotesize
% \begin{tabular}
% {lcc}\toprule
% \textbf{Section}    & Current page  & Ideal page \\ \midrule
% 1      & $4$          & $3.5$ \\ \hdashline
% 2      & $5$         & $2$\\ \hdashline
% 3      & $2.5$         & $2$ \\ \hdashline         4      & $5$         & $4$ \\ \hdashline         
% 5      & $3$          & $3$\\ \hdashline
% 6      & $6.3$         & $5.5$\\ \hdashline
% 7      & $1.5$         & $1.5$\\ \hdashline         8     & $3.5$         & $3$\\\hdashline       
% 9     & $0.5$         & $0.3$\\ \hdashline
% Total    & $31.8$         & $26$\\ 
% \bottomrule                                                 \end{tabular}}
% \label{table.dataset.summary}
% \end{table}

\bibliographystyle{plain}
\bibliography{reference}

\begin{thebibliography}{10}

\bibitem{andradottir1998simulation}
Sigr{\'u}n Andrad{\'o}ttir.
\newblock Simulation optimization.
\newblock {\em Handbook of simulation}, pages 307--334, 1998.

\bibitem{berahas2020investigation}
Albert~S Berahas, Raghu Bollapragada, and Jorge Nocedal.
\newblock An investigation of newton-sketch and subsampled newton methods.
\newblock {\em Optimization Methods and Software}, 35(4):661--680, 2020.

\bibitem{berahas2022adaptive}
Albert~S Berahas, Raghu Bollapragada, and Baoyu Zhou.
\newblock An adaptive sampling sequential quadratic programming method for equality constrained stochastic optimization.
\newblock {\em arXiv preprint arXiv:2206.00712}, 2022.

\bibitem{berahas2021global}
Albert~S Berahas, Liyuan Cao, and Katya Scheinberg.
\newblock Global convergence rate analysis of a generic line search algorithm with noise.
\newblock {\em SIAM Journal on Optimization}, 31(2):1489--1518, 2021.

\bibitem{berahas2021stochastic}
Albert~S Berahas, Frank~E Curtis, Michael~J O’Neill, and Daniel~P Robinson.
\newblock A stochastic sequential quadratic optimization algorithm for nonlinear-equality-constrained optimization with rank-deficient jacobians.
\newblock {\em Mathematics of Operations Research}, 2023.

\bibitem{berahas2021sequential}
Albert~S Berahas, Frank~E Curtis, Daniel Robinson, and Baoyu Zhou.
\newblock {Sequential Quadratic Optimization for Nonlinear Equality Constrained Stochastic Optimization}.
\newblock {\em SIAM Journal on Optimization}, 31(2):1352--1379, 2021.

\bibitem{berahas2022accelerating}
Albert~S Berahas, Jiahao Shi, Zihong Yi, and Baoyu Zhou.
\newblock Accelerating stochastic sequential quadratic programming for equality constrained optimization using predictive variance reduction.
\newblock {\em Computational Optimization and Applications}, 86(1):79--116, 2023.

\bibitem{berahas2023sequential}
Albert~S Berahas, Miaolan Xie, and Baoyu Zhou.
\newblock A sequential quadratic programming method with high probability complexity bounds for nonlinear equality constrained stochastic optimization.
\newblock {\em arXiv preprint arXiv:2301.00477}, 2023.

\bibitem{biegler2003large}
Lorenz~T Biegler, Omar Ghattas, Matthias Heinkenschloss, and Bart van Bloemen~Waanders.
\newblock Large-scale pde-constrained optimization: an introduction.
\newblock In {\em Large-Scale PDE-Constrained Optimization}, pages 3--13. Springer, 2003.

\bibitem{bollapragada2019exact}
Raghu Bollapragada, Richard~H Byrd, and Jorge Nocedal.
\newblock Exact and inexact subsampled newton methods for optimization.
\newblock {\em IMA Journal of Numerical Analysis}, 39(2):545--578, 2019.

\bibitem{bongartz1995cute}
Ingrid Bongartz, Andrew~R Conn, Nick Gould, and Ph~L Toint.
\newblock Cute: Constrained and unconstrained testing environment.
\newblock {\em ACM Transactions on Mathematical Software}, 21(1):123--160, 1995.

\bibitem{bottou2018optimization}
L{\'e}on Bottou, Frank~E Curtis, and Jorge Nocedal.
\newblock Optimization methods for large-scale machine learning.
\newblock {\em SIAM review}, 60(2):223--311, 2018.

\bibitem{byrd2008inexact}
Richard~H Byrd, Frank~E Curtis, and Jorge Nocedal.
\newblock {An inexact SQP method for equality constrained optimization}.
\newblock {\em SIAM Journal on Optimization}, 19(1):351--369, 2008.

\bibitem{byrd1987trust}
Richard~H Byrd, Robert~B Schnabel, and Gerald~A Shultz.
\newblock A trust region algorithm for nonlinearly constrained optimization.
\newblock {\em SIAM Journal on Numerical Analysis}, 24(5):1152--1170, 1987.

\bibitem{cartis2018global}
Coralia Cartis and Katya Scheinberg.
\newblock Global convergence rate analysis of unconstrained optimization methods based on probabilistic models.
\newblock {\em Mathematical Prog.}, 169:337--375, 2018.

\bibitem{chamberlain1982watchdog}
RM~Chamberlain, MJD Powell, C~Lemarechal, and HC~Pedersen.
\newblock The watchdog technique for forcing convergence in algorithms for constrained optimization.
\newblock {\em Algorithms for constrained minimization of smooth nonlinear functions}, pages 1--17, 1982.

\bibitem{chang2011libsvm}
Chih-Chung Chang and Chih-Jen Lin.
\newblock Libsvm: a library for support vector machines.
\newblock {\em ACM transactions on intelligent systems and technology (TIST)}, 2(3):1--27, 2011.

\bibitem{chatterjee2016constrained}
Nilanjan Chatterjee, Yi-Hau Chen, Paige Maas, and Raymond~J Carroll.
\newblock Constrained maximum likelihood estimation for model calibration using summary-level information from external big data sources.
\newblock {\em Journal of the American Statistical Association}, 111(513):107--117, 2016.

\bibitem{chen2020penalty}
Zhongwen Chen, Yu-Hong Dai, and Jiangyan Liu.
\newblock A penalty-free method with superlinear convergence for equality constrained optimization.
\newblock {\em Computational Optimization and Applications}, 76:801--833, 2020.

\bibitem{chen2022line}
Zhongwen Chen, Yu-Hong Dai, and Tauyou Zhang.
\newblock A line search penalty-free sqp method for equality-constrained optimization without maratos effect.
\newblock {\em IMA Journal of Numerical Analysis}, 42(4):3771--3802, 2022.

\bibitem{choi2011minres}
Sou-Cheng~T Choi, Christopher~C Paige, and Michael~A Saunders.
\newblock Minres-qlp: A krylov subspace method for indefinite or singular symmetric systems.
\newblock {\em SIAM Journal on Scientific Computing}, 33(4):1810--1836, 2011.

\bibitem{coleman1982nonlinear}
Thomas~F Coleman and Andrew~R Conn.
\newblock Nonlinear programming via an exact penalty function: asymptotic analysis.
\newblock {\em Mathematical programming}, 24(1):123--136, 1982.

\bibitem{conn2000trust}
Andrew~R Conn, Nicholas~IM Gould, and Philippe~L Toint.
\newblock {\em Trust region methods}.
\newblock SIAM, 2000.

\bibitem{curtis2023worst}
Frank~E Curtis, Michael~J O’Neill, and Daniel~P Robinson.
\newblock Worst-case complexity of an sqp method for nonlinear equality constrained stochastic optimization.
\newblock {\em Mathematical Programming}, pages 1--53, 2023.

\bibitem{curtis2021inexact}
Frank~E Curtis, Daniel~P Robinson, and Baoyu Zhou.
\newblock Inexact sequential quadratic optimization for minimizing a stochastic objective function subject to deterministic nonlinear equality constraints.
\newblock {\em arXiv preprint arXiv:2107.03512}, 2021.

\bibitem{curtis2023sequential}
Frank~E Curtis, Daniel~P Robinson, and Baoyu Zhou.
\newblock Sequential quadratic optimization for stochastic optimization with deterministic nonlinear inequality and equality constraints.
\newblock {\em arXiv preprint arXiv:2302.14790}, 2023.

\bibitem{deb2012optimization}
Kalyanmoy Deb.
\newblock {\em Optimization for engineering design: Algorithms and examples}.
\newblock Learning Ltd., 2012.

\bibitem{dolan2002benchmarking}
Elizabeth~D Dolan and Jorge~J Mor{\'e}.
\newblock Benchmarking optimization software with performance profiles.
\newblock {\em Mathematical programming}, 91:201--213, 2002.

\bibitem{erdogdu2015convergence}
Murat~A Erdogdu and Andrea Montanari.
\newblock Convergence rates of sub-sampled newton methods.
\newblock {\em Advances in Neural Information Processing Systems}, 28, 2015.

\bibitem{fang2022fully}
Yuchen Fang, Sen Na, Michael~W Mahoney, and Mladen Kolar.
\newblock Fully stochastic trust-region sequential quadratic programming for equality-constrained optimization problems.
\newblock {\em arXiv preprint arXiv:2211.15943}, 2022.

\bibitem{fletcher2006second}
Roger Fletcher.
\newblock Second order corrections for non-differentiable optimization.
\newblock In {\em Numerical Analysis: Proceedings of the 9th Biennial Conference Held at Dundee, Scotland, June 23--26, 1981}, pages 85--114. Springer, 2006.

\bibitem{fong2012cg}
David Chin-Lung Fong and Michael Saunders.
\newblock Cg versus minres: An empirical comparison.
\newblock {\em Sultan Qaboos University Journal for Science [SQUJS]}, 17(1):44--62, 2012.

\bibitem{fu1994optimization}
Michael~C Fu.
\newblock Optimization via simulation: A review.
\newblock {\em Annals of OR}, 53:199--247, 1994.

\bibitem{fukushima1986successive}
Masao Fukushima.
\newblock A successive quadratic programming algorithm with global and superlinear convergence properties.
\newblock {\em Mathematical Programming}, 35(3):253--264, 1986.

\bibitem{gabay1982reduced}
Daniel Gabay.
\newblock Reduced quasi-newton methods with feasibility improvement for nonlinearly constrained optimization.
\newblock {\em Algorithms for Constrained Minimization of Smooth Nonlinear Functions}, pages 18--44, 1982.

\bibitem{geyer1991constrained}
Charles~J Geyer.
\newblock Constrained maximum likelihood exemplified by isotonic convex logistic regression.
\newblock {\em Journal of the American Statistical Association}, 86(415):717--724, 1991.

\bibitem{gill2017stabilized}
Philip~E Gill, Vyacheslav Kungurtsev, and Daniel~P Robinson.
\newblock A stabilized sqp method: superlinear convergence.
\newblock {\em Mathematical Programming}, 163(1-2):369--410, 2017.

\bibitem{gill2005snopt}
Philip~E Gill, Walter Murray, and Michael~A Saunders.
\newblock Snopt: An sqp algorithm for large-scale constrained optimization.
\newblock {\em SIAM review}, 47(1):99--131, 2005.

\bibitem{gu2011secant}
Chao Gu and Detong Zhu.
\newblock A secant algorithm with line search filter method for nonlinear optimization.
\newblock {\em Applied mathematical modelling}, 35(2):879--894, 2011.

\bibitem{han1979exact}
S~P Han and Olvi~L Mangasarian.
\newblock Exact penalty functions in nonlinear programming.
\newblock {\em Mathematical programming}, 17:251--269, 1979.

\bibitem{han1976superlinearly}
Shih-Ping Han.
\newblock Superlinearly convergent variable metric algorithms for general nonlinear programming problems.
\newblock {\em Mathematical Programming}, 11(1):263--282, 1976.

\bibitem{han1977globally}
Shih-Ping Han.
\newblock A globally convergent method for nonlinear programming.
\newblock {\em Journal of optimization theory and applications}, 22(3):297--309, 1977.

\bibitem{liu2022newton}
Yang Liu and Fred Roosta.
\newblock A newton-mr algorithm with complexity guarantees for nonconvex smooth unconstrained optimization.
\newblock {\em arXiv preprint arXiv:2208.07095}, 2022.

\bibitem{maratos1978exact}
Nicholas Maratos.
\newblock Exact penalty function algorithms for finite dimensional and control optimization problems.
\newblock 1978.

\bibitem{marquez2017imposing}
Pablo M{\'a}rquez-Neila, Mathieu Salzmann, and Pascal Fua.
\newblock Imposing hard constraints on deep networks: Promises and limitations.
\newblock {\em arXiv: 1706.02025}, 2017.

\bibitem{mayne1982surperlinearly}
David~Q Mayne and Elijah Polak.
\newblock {\em A surperlinearly convergent algorithm for constrained optimization problems}.
\newblock Springer, 1982.

\bibitem{morales2002numerical}
Jos{\'e}~Luis Morales.
\newblock {A numerical study of limited memory BFGS methods}.
\newblock {\em Applied Mathematics Letters}, 15(4):481--487, 2002.

\bibitem{na2023adaptive}
Sen Na, Mihai Anitescu, and Mladen Kolar.
\newblock An adaptive stochastic sequential quadratic programming with differentiable exact augmented lagrangians.
\newblock {\em Math. Prog.}, 199(1-2):721--791, 2023.

\bibitem{na2023inequality}
Sen Na, Mihai Anitescu, and Mladen Kolar.
\newblock Inequality constrained stochastic nonlinear optimization via active-set sequential quadratic programming.
\newblock {\em Mathematical Prog.}, pages 1--75, 2023.

\bibitem{na2022asymptotic}
Sen Na and Michael~W Mahoney.
\newblock Asymptotic convergence rate and statistical inference for stochastic sequential quadratic programming.
\newblock {\em arXiv preprint arXiv:2205.13687}, 2022.

\bibitem{NoceWrig06}
Jorge Nocedal and Stephen Wright.
\newblock {\em Numerical optimization}.
\newblock Springer Series in Operations Research and Financial Engineering. Springer-Verlag New York, New York, 2006.

\bibitem{oztoprak2023constrained}
Figen Oztoprak, Richard Byrd, and Jorge Nocedal.
\newblock Constrained optimization in the presence of noise.
\newblock {\em SIAM Journal on Optimization}, 33(3):2118--2136, 2023.

\bibitem{palomares1976superlinearly}
UM~Garcia Palomares and Olvi Mangasarian.
\newblock Superlinearly convergent quasi-newton algorithms for nonlinearly constrained optimization problems.
\newblock {\em Mathematical Prog.}, 11(1):1--13, 1976.

\bibitem{paquette2020stochastic}
Courtney Paquette and Katya Scheinberg.
\newblock A stochastic line search method with expected complexity analysis.
\newblock {\em SIAM Journal on Optimization}, 30(1):349--376, 2020.

\bibitem{pilanci2017newton}
Mert Pilanci and Martin~J Wainwright.
\newblock Newton sketch: A near linear-time optimization algorithm with linear-quadratic convergence.
\newblock {\em SIAM Journal on Optimization}, 27(1):205--245, 2017.

\bibitem{polak1971computational}
Elijah Polak.
\newblock {\em Computational methods in optimization: a unified approach}, volume~77.
\newblock Academic press, 1971.

\bibitem{powell1978convergence}
Michael~JD Powell.
\newblock The convergence of variable metric methods for nonlinearly constrained optimization calculations.
\newblock In {\em Nonlinear programming 3}, pages 27--63. Elsevier, 1978.

\bibitem{powell2006fast}
Michael~JD Powell.
\newblock A fast algorithm for nonlinearly constrained optimization calculations.
\newblock In {\em Numerical Analysis: Proceedings of the Biennial Conference Held at Dundee, June 28--July 1, 1977}, pages 144--157. Springer, 2006.

\bibitem{powell1986recursive}
Michael~JD Powell and Y~Yuan.
\newblock A recursive quadratic programming algorithm that uses differentiable exact penalty functions.
\newblock {\em Mathematical programming}, 35:265--278, 1986.

\bibitem{rao2019engineering}
Singiresu~S Rao.
\newblock {\em Engineering optimization: theory and practice}.
\newblock John Wiley \& Sons, 2019.

\bibitem{rees2010optimal}
Tyrone Rees, H~Sue Dollar, and Andrew~J Wathen.
\newblock Optimal solvers for pde-constrained optimization.
\newblock {\em SIAM Journal on Scientific Computing}, 32(1):271--298, 2010.

\bibitem{roosta2019sub}
Farbod Roosta-Khorasani and Michael~W Mahoney.
\newblock Sub-sampled newton methods.
\newblock {\em Mathematical Programming}, 174(1):293--326, 2019.

\bibitem{roy2018geometry}
Soumava~Kumar Roy, Zakaria Mhammedi, and Mehrtash Harandi.
\newblock Geometry aware constrained optimization techniques for deep learning.
\newblock In {\em Proceedings of the IEEE Conference on Computer Vision and Pattern Recognition}, pages 4460--4469, 2018.

\bibitem{schittkowski1982nonlinear}
Klaus Schittkowski.
\newblock The nonlinear programming method of wilson, han, and powell with an augmented lagrangian type line search function: Part 1: Convergence analysis.
\newblock {\em Numerische Mathematik}, 38:83--114, 1982.

\bibitem{schittkowski2012nonlinear}
Klaus Schittkowski.
\newblock {\em Nonlinear programming codes: information, tests, performance}, volume 183.
\newblock Springer Science \& Business Media, 2012.

\bibitem{shi2021sqp}
Jiahao Shi and James~C Spall.
\newblock Sqp-based projection spsa algorithm for stochastic optimization with inequality constraints.
\newblock In {\em 2021 American control conference (ACC)}, pages 1244--1249. IEEE, 2021.

\bibitem{sra2012optimization}
Suvrit Sra, Sebastian Nowozin, and Stephen~J Wright.
\newblock {\em Optimization for machine learning}.
\newblock Mit Press, 2012.

\bibitem{ulbrich2004superlinear}
Stefan Ulbrich.
\newblock On the superlinear local convergence of a filter-sqp method.
\newblock {\em Mathematical Programming}, 100:217--245, 2004.

\bibitem{uryasev2013stochastic}
Stanislav Uryasev and Panos~M Pardalos.
\newblock {\em Stochastic optimization: algorithms and applications}, volume~54.
\newblock {Springer Science \& Business Media}, 2013.

\bibitem{vapnik1991principles}
Vladimir Vapnik.
\newblock Principles of risk minimization for learning theory.
\newblock {\em Advances in neural information processing systems}, 4, 1991.

\bibitem{vardi1985trust}
Avi Vardi.
\newblock A trust region algorithm for equality constrained minimization: convergence properties and implementation.
\newblock {\em SIAM Journal on Numerical Analysis}, 22(3):575--591, 1985.

\bibitem{vrakopoulou2014stochastic}
Maria Vrakopoulou, Johanna~L Mathieu, and G{\"o}ran Andersson.
\newblock Stochastic optimal power flow with uncertain reserves from demand response.
\newblock In {\em 2014 47th Hawaii International Conference on System Sciences}, pages 2353--2362. IEEE, 2014.

\bibitem{wachter2005line}
Andreas W{\"a}chter and Lorenz~T Biegler.
\newblock Line search filter methods for nonlinear programming: Motivation and global convergence.
\newblock {\em SIAM Journal on Optimization}, 16(1):1--31, 2005.

\bibitem{whittaker1920course}
Edmund~Taylor Whittaker and George~Neville Watson.
\newblock {\em A course of modern analysis: an introduction to the general theory of infinite processes and of analytic functions; with an account of the principal transcendental functions}.
\newblock University press, 1920.

\bibitem{wilson1963simplicial}
Robert~B Wilson.
\newblock A simplicial algorithm for concave programming.
\newblock {\em Ph. D. Dissertation, Graduate School of Bussiness Administration}, 1963.

\bibitem{wood2013power}
Allen~J Wood, Bruce~F Wollenberg, and Gerald~B Shebl{\'e}.
\newblock {\em Power generation, operation, and control}.
\newblock John Wiley \& Sons, New Jersey, USA, 2013.

\bibitem{wright1999numerical}
Stephen Wright, Jorge Nocedal, et~al.
\newblock Numerical optimization.
\newblock {\em Springer Science}, 35(67-68):7, 1999.

\bibitem{ziemba2014stochastic}
William~T Ziemba and Raymond~G Vickson.
\newblock {\em Stochastic optimization models in finance}.
\newblock Aca. Press, 2014.

\end{thebibliography}

\appendix

\section{Appendix: Proof of Lemma~\ref{lemma.diff.g.g.bar}}\label{app.lemma_2}
\begin{proof}  
It follows by Assumptions~\ref{ass.function} and \ref{ass.individual.f.subsampled}, the triangular inequality that 
\begin{align*}
\left\|g_k -\bar g_k\right\| & =\left\|\left(\tfrac{N-\left|S_{k}^g\right|}{N\left|S_{k}^g\right|}\right) \sum_{i \in S_{k}^g} \nabla f_i\left(x_k\right)-\tfrac{1}{N} \sum_{i \in [N]\setminus S_{k}^g} \nabla f_i\left(x_k\right)\right\| \\
%& \leq\left(\tfrac{N-\left|S_{k}^g\right|}{N\left|S_{k}^g\right|}\right)\left\|\sum_{i \in S_{k}^g} \nabla f_i\left(x_k\right)\right\|+\tfrac{1}{N}\left\|\sum_{i \in [N]\setminus S_{k}^g} \nabla f_i\left(x_k\right)\right\|\\
& \leq\left(\tfrac{N-\left|S_{k}^g\right|}{N\left|S_{k}^g\right|}\right) \sum_{i \in S_{k}^g}\left\|\nabla f_i\left(x_k\right)\right\|+\tfrac{1}{N} \sum_{i \in [N]\setminus S_{k}^g}\left\|\nabla f_i\left(x_k\right)\right\| \\
& \leq 2 \left(\tfrac{N-\left|S_{k}^g\right|}{N}\right)(\kappa_1 + \beta_1 \| \nabla f(x) \|) \le  2 \left(\tfrac{N-\left|S_{k}^g\right|}{N}\right)(\kappa_1 + \beta_1 \kappa_g ).
\end{align*}

Note that the inequalities still hold for any types of norm. Similarly, 
\begin{align*}
    \left\|H_k -\bar H_k\right\|  &\leq 2 \left(\tfrac{N-\left|S_{k}^H\right|}{N}\right)(\kappa_2 + \beta_2 \| \nabla^2 f(x) \|)  \le 2 \left(\tfrac{N-\left|S_{k}^H\right|}{N}\right)(\kappa_2 + \beta_2 \kappa_H). \\ |f_k -\bar f_k|  &\leq 2 \left(\tfrac{N-\left|S_{k}^f\right|}{N}\right)(\kappa_0 + \beta_0 | f(x) |)  \le 2 \left(\tfrac{N-\left|S_{k}^f\right|}{N}\right)(\kappa_0 + \beta_0\kappa_f). 
\end{align*} 
\end{proof}

\section{Appendix: Proof of Theorem \ref{theorem.quadratic.det}}
\label{appendix.quadratic.theorem}

\begin{proof}
Replacing stochastic quantities $\bar d_k, \bar \delta_k$ with deterministic quantities $d_k, \delta_k$, it follows by \eqref{eq.delta.ub.d}  that $\| \delta_k \| \le 2\kappa_{J^{\dagger}} \kappa_W \| d_k \|$. 
When $\| d_k\| \le \kappa_d$, it then follows by assumptions \ref{ass.d.bound.imply.difference.det} that 
\begin{align}
    \|w_k - w^*\|& \le  2\mu_M^{-1} \left\|\begin{bmatrix}
     g_k + J_k^T y_k \\ c_k
\end{bmatrix}\right\| \le 2\mu_M^{-1} \left\|  M_k  M_k^{-1}\begin{bmatrix}
  g_k + J_k^T y_k \\ c_k
\end{bmatrix}  \right\| \notag \\ & \le  2\mu_M^{-1}  \kappa_M \left\|\begin{bmatrix}
 d_k  \\  \delta_k
\end{bmatrix}\right\|   \le  2\mu_M^{-1}  \kappa_M (1+   2\kappa_{J^{\dagger}} \kappa_W ) \|  d_k\| \label{eq.relation.w_k.d_k.det}.
\end{align}
If $\|d_0\| \le \tfrac{1}{ 2\mu_M^{-1}  \kappa_M (1+   2\kappa_{J^{\dagger}} \kappa_W ) \Lambda L_W }$, we have $\|w_0 - w^* \| \le \tfrac{1}{\Lambda L_W}$ by \eqref{eq.relation.w_k.d_k.det}.
It then follows by Cauchy-Schwartz inequality and assumption \ref{ass.M.Lipschitz.det1} that when $r = \tfrac{1}{\Lambda L_W}$, 
\begin{align}
\label{eq.w_k+1.det}
  &\  \| w_{k+1} - w^* \| %&= \left\| w_{k} - w^* + \begin{bmatrix}     d_k \\  \delta_k \end{bmatrix} \right\| \notag \\
=\left\| w_{k} - w^* -  M_k^{-1} \begin{bmatrix}
    g_k  + J_k^T y_k \\  c_k
\end{bmatrix} \right\|  \notag \\ 
% &= \left\| M_k^{-1} \left(  M_k( w_{k} - w^*) - \begin{bmatrix}
%     g_k  + J_k^T y_k \\  c_k
% \end{bmatrix} \right) \right\|   \notag \\ 
  \le &\  \left\| M_k^{-1} \right\| \left\|  M_k( w_{k} - w^*) - \begin{bmatrix}
   g_k  + J_k^T  y_k \\  c_k
\end{bmatrix} \right\| \le \tfrac{\Lambda L_W}{2}   \| w_{k} - w^* \|^2  .  
\end{align}

% Define $M(w_k)$ to be $M_k$ with the associated variables within being $x_k$ and $y_k$. 
% By Assumption \ref{ass.M.Lipschitz.det1}, we have 
% \begin{align}
%    \left\|  M_k(w_{k} - w^*) - \begin{bmatrix}
%    g_k  + J_k^T  y_k \\  c_k
% \end{bmatrix} \right\| &\le   \| w_{k} - w^* \|\left \| \int_{t=0}^1 [ M(w_k) - M(w_k + t(w^* - w_k) )dt ]  \right \| \notag \\ &\le
% \| w_{k} - w^* \|\left \| \int_{t=0}^1 [ M(w_k) - M(w_k + t(w^* - w_k) )dt ]  \right \| \notag \\ &\le \| w_{k} - w^* \|^2 \int_{t=0}^1 L_W t dt \notag \\ &= \tfrac{ L_W}{2}   \| w_{k} - w^* \|^2 \label{eq.M_k}.
% \end{align}
%The desired result follows by %\eqref{eq.quadratic.convergence.thm} is therefore satisfied by Assumption \ref{ass.M.inverse.bound.det1}. 
%Further, if $\|d_0\| \le \tfrac{1}{ 2\mu_M^{-1}  \kappa_M (1+   2\kappa_{J^{\dagger}} \kappa_W ) \Lambda L_W }$, we have $\|w_0 - w^* \| \le \tfrac{1}{\Lambda L_W}$ by \eqref{eq.relation.w_k.d_k.det}. 
We then use induction to prove $\|w_k - w^* \| \le \tfrac{1}{\Lambda L_W}$. The base case holds trivially, suppose that the statement holds for iteration $k$, then it follows from \eqref{eq.w_k+1.det} that 
\begin{equation} 
    \|  w_{k+1} - w^*\| \le  \tfrac{\Lambda L_W}{2}   \| w_{k} - w^* \|^2 \le  \tfrac{\Lambda L_W}{2} \| w_{k} - w^* \|  \| w_{k} - w^* \| \le \tfrac12  \| w_{k} - w^* \|.
\end{equation}
Hence, $\| w_{k} - w^* \| \to 0$ at a Q-quadratic rate.
\end{proof}

\section{Appendix: Proof of Theorem  \ref{theorem.superlinear.convergence.deterministic}} \label{app.superlinear.theorem}
\begin{proof}
 We use induction to show that for all $k$,
 \begin{align}
 \left\|w_k-w^*\right\|\leq \tfrac{1}{3 \Lambda L_W} \xi_k.
     \label{eq.desired.distance.induction}
 \end{align}
 By condition $(\romannumeral1)$, the initial gradient approximation satisfies %it follows that
%Since %$\|g_k - \bar g_k\| \le \frac{\beta_k^{k/2}}{6 \Lambda^2 L_W} \le \frac{\beta_1^{1/2}}{6 \Lambda^2 L_W} \le \min \left\{ \frac{\mu_M }{12 \Lambda L_W}, \frac{\kappa_d}{2\Lambda} \right\}$ for $k\ge 1$, and 
$\|g_0 - \bar g_0\| \le  \min \left\{ \tfrac{\mu_M }{12 \Lambda L_W}, \tfrac{\kappa_d}{2\Lambda} \right\}$. The base case, $k=0$, is therefore satisfied.
Moreover, by  Lemma \ref{lemma.d.bound.imply.difference}, it follows that $\left\|w_0-w^*\right\| \le \tfrac{1}{3 \Lambda L_W}$.
Define sequences $\{ \xi_k \}_{k=0}^{\infty}$ and $\{ \rho_k \}_{k=0}^{\infty}$ as follows:
 \begin{align}
  \label{def.rho.tau}
    \xi_{k+1}=\max \left\{\xi_k \rho_k, \beta_{k+1}^{(k+1) / 4}\right\}, \quad \xi_0=1, \quad \rho_k=\tfrac{\xi_k}{6}+\tfrac{1}{4} \tfrac{N-|S_k^H|}{N-|S_0^H|}+\tfrac{\beta_{k}^{k/4}}{2 } .
 \end{align}
%The base case, $k=0$, is satisfied by lemma \ref{lemma.d.bound.imply.difference}. 
Assume %Let us assume that  
\eqref{eq.desired.distance.induction} is true for iteration $k$, and consider iteration $k+1$,%. It follows that 
\begin{align*}
    &\ \left\|w_{k+1}-w^*\right\| % &\leq \frac{\Lambda L_W }{2}\left\|w_k-w^*\right\|^2+ \Lambda \|H_k - \bar H_k\| \left\|w_k-w^*\right\|+ \Lambda \|g_k - \bar g_k\|  \\
\\ \leq  &\  \tfrac{\Lambda L_W }{2}\left\|w_k-w^*\right\|^2 + \tfrac{2(\kappa_2 + \mu_{2} \kappa_H ) \Lambda  (N-|S_k^H|)}{N}  \left\|w_k-w^*\right\| + \tfrac{ 2(\kappa_1 + \mu_{1} \kappa_g ) \beta_{k}^{k/2}}{12 \Lambda L_W (\kappa_1 + \mu_{1} \kappa_g )} \\
 \leq&\ \tfrac{\xi_k}{3 \Lambda L_W} \left(\tfrac{\xi_k}{6}\right)+\tfrac{\xi_k}{3 \Lambda L_W} \left(\tfrac{1}{4} \tfrac{N-|S_k^H|}{N-|S_0^H|}\right)+\tfrac{1}{3 \Lambda L_W}\left(\tfrac{\beta_{k}^{k/2}}{2}\right) \\
 =&\ \tfrac{\xi_k}{3 \Lambda L_W} \left( \tfrac{\xi_k}{6}+\tfrac{1}{4} \tfrac{N-|S_k^H|}{N-|S_0^H|}+\tfrac{\beta_{k}^{k/2}}{2 \xi_k}\right) \\
 \leq&\ \tfrac{\xi_k}{3 \Lambda L_W}\left( \tfrac{\xi_k}{6}+\tfrac{1}{4} \tfrac{N-|S_k^H|}{N-|S_0^H|}+\tfrac{\beta_{k}^{k/4}}{2 }\right) =\tfrac{\xi_k}{3 \Lambda L_W} \rho_k  \leq \tfrac{\xi_{k+1} }{3 \Lambda L_W}. 
\end{align*}
% \begin{align*}
% \left\|w_{k+1}-w^*\right\| &\leq \frac{\Lambda L_W }{2}\left\|w_k-w^*\right\|^2+ \Lambda \|H_k - \bar H_k\| \left\|w_k-w^*\right\|+ \Lambda \|g_k - \bar g_k\|  \\
% & \leq  \frac{\Lambda L_W }{2}\left\|w_k-w^*\right\|^2 + \frac{4\Lambda  \kappa_W (N-|S_0^H|)}{N} \frac{N-|S_k^H|}{N-|S_0^H|} \left\|w_k-w^*\right\| + 4 \Lambda \kappa_g  \beta_{k}^{k/2}   \\
% & \leq \frac{\xi_k}{3 \Lambda L_W} \left(\frac{\xi_k}{6}\right)+\frac{\xi_k}{3 \Lambda L_W} \xi_k\left(\frac{1}{4} \frac{N-|S_k^H|}{N-|S_0^H|}\right)+\frac{1}{3 \Lambda L_W}\left(\frac{\beta_{k}^{k/2}}{2}\right) \\
% & =\frac{\xi_k}{3 \Lambda L_W} \left[\frac{\xi_k}{6}+\frac{1}{4} \frac{N-|S_k^H|}{N-|S_0^H|}+\frac{\beta_{k}^{k/2}}{2 \xi_k}\right] \\
% & \leq \frac{\xi_k}{3 \Lambda L_W}\left[\frac{\xi_k}{6}+\frac{1}{4} \frac{N-|S_k^H|}{N-|S_0^H|}+\frac{\beta_{k}^{k/4}}{2 }\right] \\
% & =\frac{\xi_k}{3 \Lambda L_W} \rho_k  \leq \frac{\xi_{k+1} }{3 \Lambda L_W} 
% \end{align*}
 \eqref{eq.desired.distance.induction} is therefore satisfied on iteration $k+1$. We then use induction to verify that $\xi_k<1$ for $k\ge 1$. Considering the base case where $k = 1$, we find that
% To prove the R-superlinear convergence, we show that the sequence $\xi_k$ converges superlinearly to 0. First, we use induction to show that $\xi_k<1$ for $k\ge 1$. For the base case $k=1$ we have,
 \begin{align*}
\rho_0=\tfrac{\xi_0}{6}+\tfrac{1}{4}+\tfrac{1}{2}=%\tfrac{1}{6}+\tfrac{1}{4}+\tfrac{1}{2}=
\tfrac{11}{12}<1, \;\; \text{ and } \;\;
\xi_1=\max \left\{\xi_0 \rho_0, \beta_{1}^{1 / 4}\right\}=\max \left\{\rho_0, \beta_{1}^{1 / 4}\right\}<1 .
\end{align*}
Let us assume that $\xi_k<1$ for some $k>1$. By the fact that $\left\{\left|S_k^H\right|\right\}$ is increasing and $\left\{\beta_{k}\right\}$ is decreasing, we have
 \begin{align*}
\xi_{k+1} \leq \max \left\{\rho_k, \beta_{1}^{(k+1) / 4}\right\}<1, \;\  \rho_k=\tfrac{\xi_k}{6}+\tfrac{1}{4}  \tfrac{N-|S_k^H|}{N-|S_0^H|}+\tfrac{\beta_{k}^{k/4}}{2 }\leq \tfrac{1}{6}+\tfrac{1}{4}+\tfrac{1}{2}=\tfrac{11}{12}<1.
\end{align*}
%which proves that $\xi_k<1$ for all $k>1$ Moreover, since 
This substantiates that $\rho_k \leq 11 / 12$, which implies $\left\{\xi_k\right\} \to 0$ along with the fact that $\beta_{k+1} < \beta_k \le 1$. 
%we see from the first definition in \eqref{def.rho.tau} that the sequence $\left\{\xi_k\right\}$ converges to zero.
% This implies by the second definition in \eqref{def.rho.tau} that $\left\{\rho_k\right\} \rightarrow 0$.
Consequently, it follows by \eqref{def.rho.tau} that $\left\{\rho_k\right\} \rightarrow 0$.
Based on these findings, we deduce that
 \begin{align*}
\lim _{k \rightarrow \infty} \tfrac{\xi_{k+1}}{\xi_k} %& = \todos{\lim _{k \rightarrow \infty} \tfrac{\max \left\{\xi_k \rho_k, \beta_{k+1}^{(k+1) / 4}\right\}}{\xi_k}} \\
 =\lim _{k \rightarrow \infty} \max \left\{\rho_k, \tfrac{\beta_{k+1}^{(k+1) / 4}}{\xi_k}\right\} & \leq \lim _{k \rightarrow \infty} \max \left\{\rho_k,\left(\tfrac{\beta_{k+1}}{\beta_{k}}\right)^{k / 4} \beta_{k+1}^{1 / 4}\right\} = 0. %\\
%& \leq \lim _{k \rightarrow \infty} \max \left\{\rho_k, \beta_{k+1}^{1 / 4}\right\}=0.
\end{align*}
\end{proof}

\section{Appendix: Proof of Lemma  \ref{lemma.MINRES.res.convergence}} \label{app.minres.res.prove}
\begin{proof}
Since $\bar M_k$ is nonsingular by assumption \ref{ass.function}, \ref{ass.M.inverse.bound.subsampled}, and \cite[Lemma 16.1]{NoceWrig06}. Let $\Theta(\bar M_k)$ denote the set of eigenvalues of $\bar M_k$ and  $\mathcal{E}_{\lambda}(\bar M_k)$ denote the eigenspace corresponding to an eigenvalue $\lambda \in \Theta(\bar M_k)$. Suppose that $\Theta_{+,\max}, \Theta_{+,\min}$ are the maximum and the minimum of positive element in  $\Theta(\bar M_k) \backslash   \Theta_{\perp}(\bar M_k)  $, $\Theta_{-,\max}, \Theta_{-,\min}$ are the negative element in  maximum and the minimum in  $\Theta(\bar M_k) \backslash   \Theta_{\perp}(\bar M_k) $. Let
$\kappa_i^+ := \tfrac{\Theta_{+,\max}}{\Theta_{+,\min}}$,  $\kappa_i^- := \tfrac{\Theta_{-,\min}}{\Theta_{-,\max}}$, and $\theta = \max\left\{ \tfrac{\sqrt{\kappa_i^+} - 1 }{\sqrt{\kappa_i^+} + 1 }, \tfrac{\sqrt{\kappa_i^-} - 1 }{\sqrt{\kappa_i^-} + 1 } \right\}$. The proof then follows directly from \cite[Lemma 4]{liu2022newton} by the fact that $\bar M_k$ does not have zero eigenvalues and taking $i = \psi_+$, $j = \psi_+ + \psi_0 + 1$ in its context.  
\end{proof}

\end{document}